\newcommand*{\mc}[1]{\mathcal{#1}}
\newcommand*{\mb}[1]{\mathbb{#1}}
\newcommand*{\ms}[1]{\mathsf{#1}}
\newcommand*{\mo}[1]{\mathbf{#1}}
\newcommand*{\mf}[1]{\mathfrak{#1}}
\newcommand{\N}{\mathbb{N}}
\newcommand{\Nn}{\mathbb{N}_0}
\newcommand{\Z}{\mathbb{Z}}
\newcommand{\R}{\mathbb{R}}
\newcommand{\Rp}{\R_{\geq0}}
\newcommand{\Rpp}{\R_{>0}}
\newcommand{\Rppo}{\R_{>1}}
\newcommand{\ball}{\mathrm{B}}
\newcommand{\nset}[2]{{\left\llbracket #1,#2\right\rrbracket}}
\newcommand{\nnset}[1]{{\left\llbracket #1\right\rrbracket}}
\newcommand{\nnsetof}[1]{{\llbracket #1\rrbracket}}
\newcommand{\nnzset}[1]{{\left\llbracket 0, #1\right\rrbracket}}
\newcommand{\lcb}{\left\lbrace} 
\newcommand{\rcb}{\right\rbrace} 
\newcommand{\cb}[1]{\lcb #1 \rcb} 
\newcommand{\cbOf}[1]{\mathopen{}\lcb #1 \rcb\mathclose{}} 
\newcommand{\lab}{\left[} 
\newcommand{\rab}{\right]} 
\newcommand{\ab}[1]{\lab #1 \rab} 
\newcommand{\abOf}[1]{\!\ab{#1}} 
\newcommand{\lb}{\left(} 
\newcommand{\rb}{\right)} 
\newcommand{\br}[1]{\lb #1 \rb} 
\newcommand{\brOf}[1]{\!\br{#1}} 
\newcommand{\abs}[1]{\left| #1 \right|} 
\newcommand{\normof}[1]{\Vert#1\Vert}
\newcommand{\normOf}[1]{\left\Vert#1\right\Vert}
\renewcommand{\Pr}{\mathbf{P}} 
\newcommand{\PrOf}[1]{\Pr\brOf{#1}}
\newcommand{\E}{\mathbf{E}} 
\newcommand{\Eof}[1]{\E[#1]}
\newcommand{\EOf}[1]{\E\abOf{#1}}
\newcommand{\EOff}[2]{\E_{#1}\abOf{#2}}
\newcommand{\supNormof}[1]{\vert#1\vert_\infty}
\newcommand{\supNormOf}[1]{\abs{#1}_\infty}
\newcommand{\opNormof}[1]{\Vert#1\Vert_{\mathsf{op}}}
\newcommand{\opNormOf}[1]{\left\Vert#1\right\Vert_{\mathsf{op}}}
\newcommand{\euclof}[1]{\Vert#1\Vert_2}
\newcommand{\euclOf}[1]{\left\Vert#1\right\Vert_2}
\newcommand{\sizedMid}[2]{#1 \, \kern-\nulldelimiterspace\mathopen{}\left| \vphantom{#1}\,#2\right.\mathclose{}\kern-\nulldelimiterspace}
\newcommand{\setByEle}[2]{\cb{\sizedMid{#1}{#2}}}
\newcommand{\setByEleInText}[2]{\{#1 \mid #2\}}
\newcommand{\tr}{^{\!\top}\!} 
\newcommand{\pr}{^\prime}
\newcommand{\asymleq}{\preccurlyeq}
\newcommand{\asymlt}{\prec}
\newcommand{\asymgeq}{\succcurlyeq}
\newcommand{\asymgt}{\succ}
\newcommand{\asymeq}{\asymp}
\newcommand{\ind}{\mathds{1}}
\newcommand{\indOfOf}[2]{\ind_{\!#1}\!\brOf{#2}}%
\newcommand{\eqcm}{\,,} 
\newcommand{\eqfs}{\,.}
\newcommand{\dl}{\mathrm{d}} 
\newcommand{\supp}{\operatorname{supp}} 
\renewcommand{\subset}{\subseteq}
\DeclareMathOperator*{\argmin}{arg\,min}
\DeclareMathOperator*{\argmax}{arg\,max}
\newcommand{\const}[1]{C_{\mathsf{#1}}}
\newcommand{\assuRef}[1]{\texorpdfstring{\protect\hyperlink{assu#1}{\textsc{#1}}}{}}
\newcommand{\newAssuRef}[1]{\hypertarget{assu#1}{\textsc{#1}}}
\newcommand{\indset}[3]{#1_{\nset{#2}{#3}}}
\newcommand{\noise}{\varepsilon}
\newcommand{\ftrue}{f^\star}
\newcommand{\festi}{\hat f}
\newcommand{\utrue}{u^\star}
\newcommand{\dutrue}{\dot u^\star}
\newcommand{\uesti}{\hat u}
\newcommand{\duesti}{\hat{\dot u}}
\newcommand{\stepsize}{\Delta\!t}
\newcommand{\FSmooth}{\mathcal{F}_{d,\beta}}
\newcommand{\dm}[1]{d_{\mathsf{#1}}} 
\newcommand{\Lbeta}{L_{\beta}} 
\newcommand{\uztrue}{x_1}
\newcommand{\ttrue}{\tau^\star}
\newcommand{\normop}[1]{\normof{#1}_{\ms{op}}}
\newcommand{\nn}{t_{\ms {NN}}}
\newcommand{\Op}{\mo O_{\Pr}}
\newcommand{\Incr}{\Upsilon}
\newcommand{\incr}{\iota}
\newcommand{\itrue}{\incr^\star}
\newcommand{\iesti}{\hat\incr}
\newcommand{\ptrue}{p^\star}
\newcommand{\pesti}{\hat p}
\newcommand{\dptrue}{\dot p^\star}
\newcommand{\dpesti}{\dot{\hat p}}
\newcommand{\Esti}{\mc E}
\newcommand{\dEsti}{\widetilde{\mc E}}
\newcommand{\hull}{\ms{ch}}
\newcommand{\evmin}{\lambda_{\ms{min}}}
\newcommand{\Poly}[2]{\mc P_{#1, #2}}
\newcommand{\rate}{\Gamma}
\newcommand{\drate}{\Lambda}
\DeclareMathOperator{\diam}{\mathsf{diam}}
\newbox{\myorcidthanksbox}
\sbox{\myorcidthanksbox}{\large\includegraphics[height=1.8ex]{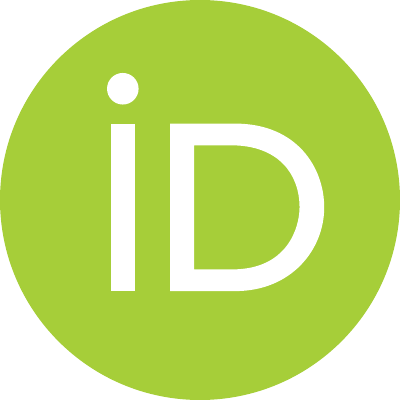}}
\newcommand{\orcidthanks}[1]{%
    \href{https://orcid.org/#1}{\raisebox{-0.5ex}{\usebox{\myorcidthanksbox}}\,#1}}
\def\postBoxSkip{1.0ex}
\def\postBoxSkipCmd{\vskip\postBoxSkip}
\def\preBoxSkip{1.0ex}
\def\preBoxSkipCmd{\vskip\preBoxSkip}
\declaretheoremstyle[
	bodyfont=\normalfont,
	postfoothook={\postBoxSkipCmd},
	preheadhook={\preBoxSkipCmd},
	mdframed={
		backgroundcolor = black!2,
		startcode={},
}]{ruledBoxStyle}
\declaretheoremstyle[
	bodyfont=\normalfont,
	postfoothook={\postBoxSkipCmd},
	preheadhook={\preBoxSkipCmd},
	mdframed={
		backgroundcolor=white,
}]{ruledBoxStyleWhite}
\declaretheoremstyle[
	bodyfont=\normalfont,
	postfoothook={\postBoxSkipCmd},
	preheadhook={\preBoxSkipCmd},
	mdframed={
		backgroundcolor=black!2,
		linecolor = black!2,
		tikzsetting = {
			draw = black,
			line width = 2pt,%
			dashed,%
			dash pattern = on 10pt off 3pt
		},
}]{dashedBoxStyle}
\declaretheoremstyle[
	bodyfont=\normalfont,
	postfoothook={\postBoxSkipCmd},
	preheadhook={\preBoxSkipCmd},
	mdframed={
		linecolor = white,
		startcode={},
		tikzsetting = {
			draw = black,
			line width = 1pt,%
			loosely dotted,
		},
	}
]{dashedStyle}
\declaretheoremstyle[
	bodyfont=\normalfont,
	postfoothook={\postBoxSkipCmd},
	preheadhook={\preBoxSkipCmd},
	mdframed={
		linecolor = black,
		innerlinewidth=1pt,outerlinewidth=1pt,
		middlelinewidth=1pt,
		linecolor=black,middlelinecolor=white,
		startcode={},
	}
]{doubleStyle}
\declaretheoremstyle[
	bodyfont=\normalfont,
	postfoothook={\postBoxSkipCmd},
	preheadhook={\preBoxSkipCmd},
	mdframed={
		backgroundcolor = black!4,
		linecolor = black!4,
		startcode={},
}]{boxStyle}
\declaretheoremstyle[
	headfont=\normalfont\itshape,
	notefont=\normalfont\itshape,
	notebraces={}{},
	bodyfont=\normalfont,
	qed=\qedsymbol,
	numbered=no,
	headindent=0pt,
	postheadspace=1ex,
	name={Proof},
	postheadhook={},
	mdframed={
		hidealllines = true,
		innerrightmargin = 0pt,
		innerleftmargin = 0pt,
		innertopmargin = 0pt,
		innerbottommargin = 0pt,
		leftmargin = 0pt,
		rightmargin = 0pt,
	}
]{proofStyle}
\declaretheoremstyle[
	bodyfont=\normalfont,
	postfoothook={\postBoxSkipCmd},
	preheadhook={\preBoxSkipCmd},
	mdframed={
		backgroundcolor = white,
		linecolor = black,
		startcode={},
		leftline = false,
		rightline = false,
}]{tobBottomStyle}
\declaretheoremstyle[
bodyfont=\normalfont,
]{standardStyle}
\declaretheorem[style=ruledBoxStyle,name=Definition, numberwithin=section]{definition}
\declaretheorem[style=ruledBoxStyle,name=Lemma,numberwithin=section,numberlike=definition]{lemma}
\declaretheorem[style=ruledBoxStyle,name=Proposition,numberwithin=section,numberlike=definition]{proposition}
\declaretheorem[style=ruledBoxStyle,name=Theorem,numberwithin=section,numberlike=definition]{theorem}
\declaretheorem[style=ruledBoxStyle,name=Corollary,numberwithin=section,numberlike=definition]{corollary}
\declaretheorem[style=boxStyle,name=Remark,numberwithin=section,numberlike=definition]{remark}
\declaretheorem[style=boxStyle,name=Notation,numberwithin=section,numberlike=definition]{notation}
\declaretheorem[style=dashedStyle,name=Assumption,numberwithin=section,numberlike=definition]{assumption}
\title{Nonparametric Estimation of Ordinary Differential Equations: Snake and Stubble}
\date{}
\author[1,2]{Christof Schötz\thanks{math@christof-schoetz.de, \orcidthanks{0000-0003-3528-4544}}}
\affil[1]{Potsdam Institute for Climate Impact Research}
\affil[2]{Technical University of Munich}
\begin{document}
\maketitle
\begin{abstract}
	We study nonparametric estimation in dynamical systems described by ordinary differential equations (ODEs). Specifically, we focus on estimating the unknown function $f \colon \mathbb{R}^d \to \mathbb{R}^d$ that governs the system dynamics through the ODE $\dot{u}(t) = f(u(t))$, where observations $Y_{j,i} = u_j(t_{j,i}) + \varepsilon_{j,i}$ of solutions $u_j$ of the ODE are made at times $t_{j,i}$ with independent noise $\varepsilon_{j,i}$.

We introduce two novel models -- the Stubble model and the Snake model -- to mitigate the issue of observation location dependence on $f$, an inherent difficulty in nonparametric estimation of ODE systems. In the Stubble model, we observe many short solutions with initial conditions that adequately cover the domain of interest. Here, we study an estimator based on multivariate local polynomial regression and univariate polynomial interpolation. In the Snake model we observe few long trajectories that traverse the domain on interest. Here, we study an estimator that combines univariate local polynomial estimation with multivariate polynomial interpolation.

For both models, we establish error bounds of order $n^{-\frac{\beta}{2(\beta +1)+d}}$ for $\beta$-smooth functions $f$ in an infinite dimensional function class of Hölder-type and establish minimax optimality for the Stubble model in general and for the Snake model under some conditions via comparison to lower bounds from parallel work.
\end{abstract}
\textbf{Keywords:} Ordinary Differential Equations, Nonparametric Regression, Minimax Optimal, Rate of Convergence
\tableofcontents
\section{Introduction}\label{sec:intro}
In statistics, we want to extract information from observations of the world. The workings of the world are described in physics (and many other sciences) with differential equations \cite{Strogatz2024}. Hence, statistical models of dynamical systems described by differential equations seem to be not only compelling but almost inevitable for study.

One of the simplest non-trivial instance of such a model can be described in the equations
\begin{equation*}
    Y_i = u(t_i) + \noise_i\eqcm i =1, \dots, n
    \qquad
    \dot u(t)  = f(u(t))
    \eqcm
\end{equation*}
where $\dot u$ denotes the time-derivative, i.e., $(\dl u)/(\dl t)$.
Here, $u\colon\R\to\R^d$ is the solution to an ordinary differential equation (ODE) described by an unknown function $f\colon\R^d\to\R^d$. It is measured at times $t_1 \leq \dots \leq t_n$, which creates our observations $Y_i$ by adding measurement noise $\noise_i$ to the true state of the system $u(t_i)$. There are different objectives one could have for such a model: reconstructing the solution $u$ in the observed time interval $[t_1, t_n]$ (\textit{reanalysis}, borrowing a term from climate modeling), predicting a future state $u(t)$, $t > t_n$ (\textit{forecasting}, as in weather forecast), or estimation of $f$ (\textit{learning the dynamics} of the dynamical system). Here, we are interested in the estimation of $f$, which is fundamental also for the other tasks as an estimate $\hat f$ allows us to create an estimate $\hat u$ by solving $\dot{\hat u}(t) = \hat f(\hat u(t))$.

We want to estimate $f$ in a nonparametric model, i.e., we assume that $f$ belongs to an infinite dimensional class of smooth functions, but we do not make any assumptions on its precise functional form. The parametric version of this model, in which $f$ belongs to a finite dimensional class of functions (e.g., polynomials of fixed degree) has been studied in \cite{Brunel_2008, qi10, gugushvili12, dattner15}. There, it is shown that the typical parametric $\sqrt{n}$-rate of convergence can be achieved. An overview of the state of the art for statistics of dynamical systems (mostly concerned with parametric settings) can be found in \cite{McGoff15, Ramsay_2017, Dattner2020}.

In the nonparametric setting, different algorithms for estimation have been proposed, e.g., \cite{heinonen18, chen18, gottwald21}.
In \cite{marzouk2023}, theoretical results for learning dynamics nonparametrically are shown, but in a density estimation context that is rather different from the regression-type model studied here. To the best of the author's knowledge, only \cite{Lahouel2023} takes a theoretical view on the ODE regression problem (additionally to proposing a new algorithm based on reproducing kernel Hilbert spaces). They show an error bound of order $n^{-1/4}$ (root mean squared error) for the reanalysis problem \cite[Theorem 1]{Lahouel2023}. This result cannot be optimal as standard nonparametric regression for $u$ (ignoring the ODE constraints) yields an error of smaller order, namely $n^{-\beta/(2\beta+1)}$, where $\beta$ describes the smoothness of $u$ (in the case of \cite[Theorem 1]{Lahouel2023}, we have $\beta=3$).

Estimation in the nonparametric setting seem much harder than in the parametric one: If $f$ is a smooth function with bounds on its derivative but otherwise unrestricted, an observation $Y_i$ only yields information on $f(x)$ for $x$ close to $u(t_i)$, i.e., the information is local. In contrast, each observation from an ODE where $f$ is known to be a polynomial of degree at most $N$ carries information for all coefficients of said polynomial, i.e., the information is global. What makes estimation in a nonparametric ODE model more difficult than in a nonparametric regression model (which also suffers from locality of information) is the fact that the location of the observation $u(t_i)$ is itself depends on the unknown $f$.

We introduce two models, called \textit{Snake model} and \textit{Stubble model}, which provide a remedy of this problem. For each of the two models, we propose estimators and study their rate of convergence. We find that in both instances, one obtains an error bound (root mean squared error) of order
\begin{equation}\label{eq:bestrate}
    n^{-\frac{\beta}{2(\beta +1)+d}}
\end{equation}
for $\beta$-smooth functions $f$ under optimal circumstances. This error bound is minimax optimal as we have corresponding lower bounds in the parallel work \cite{lowerbounds}.
\subsection{Contributions}
We now give more details on the contributions of this work.

\textbf{A general model.}
In general, we assume the true model function $\ftrue$ to be an element of Hölder-type smoothness class denoted as $\bar\Sigma^{d\to d}(\beta, \indset L0\beta)$, which means that the $k$-th derivative of $f$ for $k\in\nnzset\beta := \{0,\dots, \beta\}$ is bounded by $L_k\in\Rpp$, i.e., $\sup_{x\in\R^d} \normop{D^k f(x)} \leq L_k$, where $\normop{\cdot}$ is the operator norm. Then, for given initial conditions $x_1, \dots, x_m$, we observe the solution $t\mapsto U(\ftrue, x_j, t)$ of the ODE $\dot u(t) = \ftrue(u(t))$ with $U(\ftrue, x_j, 0) = x_j$ at time points $i\stepsize$ with time step $\stepsize\in\Rpp$, i.e.,
\begin{equation*}
    Y_{j,i} = U(\ftrue, x_j, i\stepsize) + \noise_{j,i}\eqcm\ j \in \nset{1}{m}, i \in \nset{1}{n_j}
    \eqfs
\end{equation*}
In total, we have $n = \sum_{j=1}^{m} n_j$ observations.
The noise variables $\noise_{j,i}$ are assumed to be independent, centered, and to have a finite second moment. Because of the dependency of the location of the observation $u(t_{j,i})$ on $\ftrue$, in general, consistent estimation in all of $[0,1]^d$ is impossible. Thus, we introduce the Stubble model and the Snake model, which restrict the general model.

\textbf{The Stubble model.}
We use the notation $\asymeq$ and $\asymleq$ to mean \textit{asymptotically equal} and \textit{asymptotically  lower than} (up to a positive constant), respectively (see \cref{not:asym}).
In the Stubble model, we observe many ($m \asymeq n$) short ($n_j \asymleq 1$) solutions. We assume that their initial conditions $x_j$ cover the domain of interest $[0,1]^d$ suitably. For this model, we construct an estimator $\festi$ based on a multivariate local polynomial estimator and a univariate polynomial interpolation, which is similar to the Adams--Bashforth method for numerical solutions of ODEs \cite[Chapter 24]{Butcher2016}. We obtain
 \begin{equation}\label{eq:intro:stubble:general:rate}
    \EOf{\euclOf{\festi(x) - \ftrue(x)}^2}
    \asymleq \br{\stepsize^2 n}^{-\frac{2\beta}{2\beta+d}} + \stepsize^{2\beta}
\end{equation}
for all $x\in[0,1]^d$, see \cref{cor:stubble:general}. The rate is shown to be minimax optimal by comparing it to lower bounds in \cite{lowerbounds}.

\textbf{The Snake model.}
In the Snake model, we observe few ($m \asymleq 1$) long ($n_j \asymeq n$) solutions. We require the trajectories $U(\ftrue, x_j, [0, n_j \stepsize])$ to cover the domain of interest $[0,1]^d$ suitably.
For this model, we construct an estimator $\festi$ based on a univariate local polynomial estimator and a multivariate polynomial interpolation.
Then
\begin{equation}\label{eq:intro:snake:general:rate}
    \sup_{x\in [0,1]^d}\euclOf{\festi(x) - \ftrue(x)}^2 \in
    \Op\brOf{\delta^{2\beta} + \br{\stepsize \log n}^{\frac{2\beta}{2(\beta+1)+1}}}
    \eqcm
\end{equation}
see \cref{cor:snake:general}, where we want to view $\delta\in\Rpp$ for now as the largest distance between a point $x\in[0,1]^d$ and its closest state $U(\ftrue, x_j, t)$, $t\in[0, n_j\stepsize]$. This interpretation is true for the case $\beta = 1$, but if $\beta > 1$, the definition is more complex and the result more restrictive. By comparing it to lower bounds in \cite{lowerbounds}, the rate is shown to be minimax optimal if $\delta$ and $\stepsize$ are in a certain relation. For the rate to be optimal, we essentially require observations to be rather dense in time, and temporally distant parts of the trajectories $U(\ftrue, x_j, [0, n_j\stepsize])$ to be distant enough in state space.

\textbf{Connection and further results.}
Note the complementary nature of the two models and their estimators. In spite of this, in the optimal setting regarding $\stepsize$ and $\delta$, we obtain an error bound of the order given in \eqref{eq:bestrate} in both models (up to a $\log$-factor when considering the sup-norm), see \cref{cor:stubble:fast} and \cref{cor:snake:general:fast}.
For both models, we begin our discussion with the case $\beta = 1$, which allows for simple estimators and a gentle introduction to the main ideas as well as slightly stronger and more specific results.
See \cref{thm:stubble:lip} and \cref{cor:stubble:lip} for the Stubble model, and \cref{thm:snake:lip} and \cref{cor:snake:lip} for the Snake model. For the case of general smoothness $\beta\in\N$, we first show \emph{black box} results for estimation strategies that can be used with an arbitrary regression estimator, \cref{thm:stubble:general} and \cref{thm:snake:general}, respectively. If the chosen regression estimator achieves the minimax rate for a standard nonparametric regression problem, the estimation strategies achieve the rates \eqref{eq:intro:stubble:general:rate} and \eqref{eq:intro:snake:general:rate}, respectively. In \cref{cor:stubble:general} and \cref{cor:snake:general}, we apply the general results with the (minimax optimal) local polynomial estimator as regression estimator.
Furthermore, we study how the smoothness of $f$ influences the smoothness of $t\mapsto U(f, x, t)$ and the smoothness of $x\mapsto U(f, x, t)$ in \cref{coro:trajSmooth} and \cref{lmm:highDerivIncrem}, respectively, which may be of independent interest.
\subsection{Overview}
The remaining article is structured as follows.
In section \ref{sec:prelim}, we introduce basic concepts for the study of ordinary differential equations (section \ref{ssec:prelim:ode}), give a full formal description of the general model (section \ref{ssec:prelim:model}), argue why this model is not useful without further restriction (section \ref{ssec:prelim:restrict}), and introduce some notation for standard regression problems that allows us to formulate estimation strategies referring to arbitrary regression estimators (section \ref{ssec:prelim:regression}). Sections \ref{sec:stubble} and \ref{sec:snake} formally introduce the Stubble and Snake model, respectively, and describe the estimators $\festi$ of $\ftrue$, upper bounds on the error $\euclof{\festi(x) - \ftrue(x)}$ and their proofs. Both sections are separated into two parts: The first one (section \ref{ssec:stubble:lipschitz} and \ref{ssec:snake:lipschitz} respectively) concerns the Lipschitz-case ($\beta=1$). The second one (section \ref{ssec:stubble:general} and \ref{ssec:snake:general} respectively) concerns the general case ($\beta\in\N$).
Extensions to the main results are discussed in section \ref{sec:extension}.
The appendix makes this article largely self-contained: 
We first discuss how our ODE model relates to other more established statistical models in appendix \ref{app:sec:relation}.
Then we state some basics of multivariate derivatives and smoothness classes (appendix \ref{app:sec:derivative}). We recall classical results on the upper error bound for multivariate local polynomial regression in appendix \ref{app:sec:localreg}. In appendix \ref{app:sec:interpol}, uni- and multivariate polynomial interpolation are discussed. And finally, appendix \ref{app:sec:ode} provides smoothness results for the solutions of ODEs.

\section{Preliminaries}\label{sec:prelim}
In this section, we recall some basic concepts related to ODEs, we introduce a general statistical model for observing solutions of ODEs, we explain why this model requires further restriction in order to be a useful model, and we introduce some terminology for a standard regression problem that will later allow us to construct ODE estimators in a generic (black box) fashion.
\subsection{Ordinary Differential Equations}\label{ssec:prelim:ode}
We introduce some basic terminology and fundamental properties concerning ODEs. See also \cite{Hartman2002, Arnold2006}.
\begin{notation}\mbox{ }
    \begin{enumerate}[label=(\roman*)]
        \item
        Let $\Z$ be the set of integers and $\R$ the set of reals. For $\mb K \in \{\Z, \R\}$ and $a\in\R$, denote $\mb K_{>a} = \setByEleInText{x\in\mb K}{x>a}$.
        Define $\mb K_{\geq a}$, $\mb K_{<a}$, $\mb K_{\leq a}$ accordingly.
        \item
        Set $\N := \Z_{\geq 1}, \N_0 := \Z_{\geq 0}$. Let $a,b\in\Z$ with $a \leq b$. Set $\nset ab := \mb Z_{\geq a} \cap \mb Z_{\leq b}$. Set $\nnset a := \nset 1a$.
    \end{enumerate}
\end{notation}
Let $d\in\N$. Let $f\colon\R^d\to\R^d$. For a differentiable function $u\colon\R\to\R^d$, denote its derivative as $\dot u\colon\R\to\R^d$.
Then
\begin{equation}\label{eq:ode}
    \dot u(t) = f(u(t))\qquad\text{for } t\in\R
\end{equation}
is an \textit{autonomous}, \textit{first-order}, \textit{ordinary differential equation}.
It is of first-order, as \eqref{eq:ode} only depends on the first derivative of $u$. It is autonomous, as the right-hand side term $f(u(t))$ only depends on $t$ via $u$. In contrast, a non-autonomous, first-order ODE has the form $\dot u(t) = g(t, u(t))$ for a function $g\colon\R\times\R^d\to\R^d$.
Any differentiable $u\colon\R\to\R^d$ that fulfills \eqref{eq:ode} is a \textit{solution} to the ODE. The domain of $u$ is called \textit{time}. The codomain of $u$ is called \textit{state space}. A single element of the state space is a \textit{state}. The image of $u$ is called \textit{trajectory}.
We call $f$ the \textit{model function} of the ODE. Let $x \in\R^d$. Consider the requirement
\begin{equation}\label{eq:ic}
    u(0) = x
    \eqfs
\end{equation}
We call $x$ the \textit{initial conditions}. We call \eqref{eq:ode} together with \eqref{eq:ic} \textit{initial value problem} (IVP). If $u$ fulfills \eqref{eq:ode} and \eqref{eq:ic}, it is a solution to the IVP.

Assume that $f$ is (globally) Lipschitz continuous. Then the IVP \eqref{eq:ode}, \eqref{eq:ic} has a unique solution (Picard--Lindelöf theorem, also known as the Cauchy--Lipschitz theorem). Denote this solution as $U(f, x, \cdot) \colon \R\to\R^d, t\mapsto U(f, x, t)$.
The function $U(f, \cdot, \cdot) \colon \R^d\times \R\to\R^d$ is called \textit{flow} of the ODE \eqref{eq:ode}.
Note the \textit{semigroup property} of the flow,
\begin{align*}
    U(f, x, 0) &= x\eqcm\\
    U(f, x, s+t) &= U(f, U(f, x, s), t)
\end{align*}
for all $x\in\R^d$, $s,t\in\R$.
For a given time step $\stepsize\in\Rpp$, the mapping $x \mapsto U(f, x, \stepsize)$ is called \textit{propagator}. If $u$ is a solution to the ODE \eqref{eq:ode}, then $u(t+\stepsize) =
U(f, u(t), \stepsize)$ for all $t\in\R$. Furthermore, we call
\begin{equation*}
    \Incr(f, \stepsize, x) := U(f, x, \stepsize) - x = U(f, x, \stepsize) - U(f, x, 0)
\end{equation*}
the \textit{increment}.
If $u$ is a solution to the ODE \eqref{eq:ode}, then $u(t+\stepsize) = u(t) + \Incr(f, \stepsize, u(t))$ for all $t\in\R$.

The setting of autonomous, first-order ODEs is not a strong restriction: Consider the $d$-dimensional, non-autonomous ODE of order $\ell$,
\begin{equation}\label{eq:odehigh}
    v^{(\ell)}(t) = g(t, v(t), v^{(1)}(t), \dots, v^{(\ell-1)}(t))
    \eqcm
\end{equation}
where $v^{(k)}$ for $k\in\nnzset{\ell}$ denotes the $k$-th derivative of the $\ell$-times differentiable function $v\colon\R\to\R^d$ and $g \colon\R\times (\R^d)^{\ell}\to\R^d$ is a function. We represent the derivatives and the time variable with new variables: $u_{k} = v^{(k-1)}$ for $k\in\nnset{\ell}$ and $u_{\ell+1}(t) = t$. For $u(t) = (u_1(t), \dots, u_{\ell+1}(t))\tr$, we obtain the ODE
\begin{equation}\label{eq:odelarge}
    \begin{pmatrix*}
        \dot u_1(t)\\
        \vdots \\
        \dot u_{\ell-1}(t)\\
        \dot u_\ell(t)\\
        \dot u_{\ell+1}(t)
    \end{pmatrix*}
    =
    \begin{pmatrix*}
        u_2(t)\\
        \vdots \\
        u_\ell(t)\\
        g(u_{\ell+1}(t), u_1(t), \dots, u_\ell(t))\\
        1
    \end{pmatrix*}
    \eqfs
\end{equation}
It is of the form $\dot u(t) = f(u(t))$ for a suitably chosen $f\colon\R^{\tilde d}\to\R^{\tilde d}$, where $\tilde d := d\ell+1$. Hence, \eqref{eq:odelarge} is a $\tilde d$-dimensional, autonomous, first-order ODE. If $u$ is a solution to the ODE \eqref{eq:odelarge}, then $u_1$ is a solution of the ODE \eqref{eq:odehigh}.
\subsection{Formal Description of the General Model}\label{ssec:prelim:model}
We introduce a general model for observations from solutions of an ODE. It is similar to the one in \cite{Lahouel2023}.
\begin{notation}\mbox{ }\label{nota:formalmodel}
    \begin{enumerate}[label=(\roman*)]
        \item
        Let $a,b\in\Z$ with $a \leq b$. For $i\in\nset ab$, let $x_i$ be some object. Set $\indset xab := (x_i)_{i\in\nset ab}$.
        \item\label{nota:formalmodel:norm}
        Let $d\in\N$. For $x\in\R^d$, let $\euclOf{x}$ denote the Euclidean norm. Let $k\in\N$. For a matrix $A \in\R^{k\times d}$, denote the operator norm as $\opNormOf{A} := \sup_{v\in\R^d, \euclof{v}=1}\euclof{A v}$.
        Let $\dm x, \dm y\in\N$.
        Let $f\colon\R^{\dm x}\to\R^{\dm y}$.
        Denote the sup norm of $f$ as $\supNormof{f} := \sup_{x\in\R^{\dm x}} \euclOf{f(x)}$. For linear operators, like the derivative $Df(x)$, we abuse the notation slightly and set $\supNormof{Df} := \sup_{x\in\R^{\dm x}}\opNormOf{Df(x)}$. For more details on derivatives and their norms, see appendix \ref{app:sec:derivative}.
        \item
        Let $\dm{x}, \dm{y}, \beta\in\N$. Denote the set of $\beta$-times continuously differentiable functions $f\colon \R^{\dm x} \to \R^{\dm y}$ as $\mc D^\beta(\R^{\dm x},\R^{\dm y})$. Let $\indset L0\beta\subset\Rpp \cup \{\infty\}$. Denote by $\bar\Sigma^{\dm x\to \dm y}(\beta, \indset{L}{0}{\beta}) \subset \mc D^\beta(\R^{\dm x},\R^{\dm y})$ the set of $\beta$-times continuously differentiable functions $f\colon \R^{\dm x} \to \R^{\dm y}$ with $\supNormOf{D^k f} \leq L_k$ for $k\in\nnzset\beta$.
        More details on derivatives and the smoothness class $\bar\Sigma^{\dm x\to \dm y}(\beta, \indset{L}{0}{\beta})$ are described in appendix \ref{app:sec:derivative}.
    \end{enumerate}
\end{notation}
Let $d\in\N$ be the dimension of the state space $\R^d$. Let $\beta\in\N$ be the smoothness parameter. Let $\indset{L}{0}{\beta}\subset\Rpp$ be the Lipschitz parameters. Let $\FSmooth := \bar\Sigma^{d\to d}(\beta, \indset{L}{0}{\beta})$ be the smoothness class. Let the true model function be $\ftrue\in\FSmooth$. Let $m\in\N$ be the number of observed solutions and let $x_1, \dots, x_m \in \R^d$ be their initial conditions. For $j\in\nnset m$, let $n_j \in\N$ be the number of observations for the $j$-th solution. Denote the total number of observations as $n := \sum_{j=1}^{m} n_j$. For $j\in\nnset m$, let $T_j\in\Rpp$ be the observation duration of the $j$-th solution. For $j\in\nnset m, i\in\nnzset{n_j}$, let $t_{j,i}\in\Rp$ be the observation times such that $0 = t_{j,0}\leq t_{j,1} \leq \dots \leq t_{j,n_j} = T_j$.
Let $\sigma\in\Rp$ be the variance bound of the noise. Let the noise variables $(\epsilon_{j,i})_{j\in\nnset m, i\in\nnset{n_j}}$ be independent $\R^d$-valued random variables such that $\Eof{\epsilon_{j,i}} = 0$ and $\Eof{\euclof{\epsilon_{j,i}}^2} \leq \sigma^2$. For $j\in\nnset m$, $i\in\nnset{n_j}	$, let the observations be
\begin{equation*}
    Y_{j,i} := U(\ftrue, x_j, t_{j,i}) + \epsilon_{j, i}
    \eqfs
\end{equation*}
We observe $Y_{j,i}$ and know $x_j$ and $t_{j,i}$, but $\ftrue$ is unknown and to be estimated.
We assume $d$, $\beta$, $\indset{L}{0}{\beta}$, and $\sigma$ to be fixed and we are interested in upper bounds when $n\to\infty$ for the mean squared error for estimators $\festi$ of  $\ftrue$ on the domain of interest $[0,1]^d$.
\subsection{Need for Restriction of the General Model}\label{ssec:prelim:restrict}
We argue that the model introduced in section \ref{ssec:prelim:model} is not suitable for estimation of the model function in a fixed domain of interest and give first informal descriptions of two possible remedies of the problem: the Snake model and the Stubble model.
\begin{notation}\label{not:asym}\mbox{ }
        Let $(a_n)_{n\in\N}, (b_n)_{n\in\N} \subset \Rpp$. Write
        \begin{align*}
            a_n \asymleq b_n &\ \text{ for } \limsup_{n\to\infty} a_n/b_n < \infty\eqcm\\
            a_n \asymlt b_n &\ \text{ for } \limsup_{n\to\infty} a_n/b_n = 0\eqcm\\
            a_n \asymeq b_n &\  \text{ for } a_n \preccurlyeq b_n \text{ and } b_n \preccurlyeq a_n\eqfs
        \end{align*}
        Define $\asymgt$ and $\asymgeq$ accordingly.
\end{notation}

For the general model, consistent estimation is not possible when considering the domain of interest $[0,1]^d$. We can easily construct settings, where we will never make any observation in say in the ball $\ball^d(0, 1/3) = \setByEleInText{x\in\R^d}{\euclof{x} \leq 1/3}$, i.e., $u(t_{j,i}) \not\in\ball^d(0, 1/3)$ for all $j\in\nnset m, i\in\nnzset{n_j}$, no matter how large $n$ is (e.g., a model function that results in periodic circular trajectories centered at the origin with radius $2/3$ when started from initial conditions $x_j \in \R^2\times \{0\}^{d-2}$ with $\euclof{x_j} = 2/3$). As we are considering a nonparametric class of model functions $\ftrue\in\FSmooth$, we have, for the maximal risk of any estimator $\festi$,
\begin{equation*}
    \sup_{\ftrue\in\FSmooth}\EOff{\ftrue}{\euclOf{\ftrue(0) - \festi(0)}^2} \asymgeq 1
    \eqfs
\end{equation*}

This behavior is different from parametric ODE models (finite dimensional class $\FSmooth$): In the nonparametric setting, observations contain only local information (information about $\ftrue(x)$ for $x$ close to $u(t_{j,i})$), whereas in a parametric setting, observations typically contain global information so that consistent estimation is possible under mild restrictions \cite{Brunel_2008, qi10, gugushvili12, dattner15}.

In this work, we introduce two restrictions to the general nonparametric model that make consistent estimation possible:

In the \textit{Stubble model} (section \ref{sec:stubble}), we require the initial conditions $(x_j)_{j\in\nnset m}$ to suitably cover the domain of interest $[0,1]^d$. By doing so, we ensure that we obtain information from every part of the domain of interest. In this model, we assume to observe many short trajectories, i.e., $m \asymgt 1$ and $\max_{j\in\nnset m}n_j \asymleq 1$, which looks like \textit{stubble}.

In the \textit{Snake model} (section \ref{sec:snake}), we directly require the trajectories $U(\ftrue, x_j, [0, T_j])$ to cover the domain of interest $[0,1]^d$. In this model, we assume to observe few long trajectories, i.e., $\min_{j\in\nnset m}n_j \asymgt 1$. In the extreme case, we have $m=1$. Consistent estimation on all of $[0,1]^d$ with one observed solution is only possible, if its trajectory covers $[0, 1]^d$ suitably. Thus, this trajectory must not intersect itself, as otherwise it would mean that the solution is periodic. This behavior of self-avoidance in a bounded domain resembles the video game \textit{Snake} \cite[chapter 22]{donovan2010replay}.
\subsection{Standard Regression Model}\label{ssec:prelim:regression}
In this article, we present estimation strategies based on black-box regression estimators. To formalize what we mean by \textit{black-box regression estimators} and to be able to talk about their properties, we introduce some terminology in this section and give some examples at the end of the section.
\begin{notation}\mbox{ }
    \begin{enumerate}[label=(\roman*)]
        \item
        Let $(X_n)_{n\in\N}$ be a sequence of real-valued random variables. Let $(a_n)_{n\in\N} \subset \Rpp$. We use the \textit{big O in probability} notation: By $X_n \in \Op(a_n)$, we mean that for all $\epsilon \in\Rpp$, there are $B\in \Rpp$ and $n_0\in\N$ such that
        \begin{equation*}
            \forall n\in \N_{\geq n_0}\colon \PrOf{\abs{\frac{X_n}{a_n}} > B} < \epsilon\eqfs
        \end{equation*}
        \item
        Let $d\in\N$, $k\in\nnset d$, $v = (v_1, \dots, v_d)\in\R^d$. Denote the projection to the $k$-th dimension as $\Pi_k$, i.e., $\Pi_k v = v_k$.
    \end{enumerate}
\end{notation}
Let $\dm x, \dm y\in\N$ be the dimensions of the predictor and the target, respectively.
Let $\mc F$ be a set of functions of the form $\R^{\dm x}\to \R^{\dm y}$, the \textit{smoothness class}.
Let $\ftrue \in\mc F$ be the true \textit{regression function}.
Let $n\in\N$ be the number of observations.
Let $T\in\Rpp$ be the extent of the \textit{domain of interest} $[0, T]^{\dm x}$. For $i\in\nnset n$, let $x_i\in[0, T]^{\dm x}$.
Let $\sigma \in \Rpp$ be the \textit{variance bound} of the noise.
For $i\in\nnset n$, let $\noise_i$ be independent $\R^{\dm y}$-valued random variables with $\Eof{\epsilon_i} = 0$ and $\Eof{\euclof{\epsilon_i}^2} \leq \sigma^2$. We call $\noise_i$  the \textit{noise}.
For $i\in\nnset n$, set the observations as
\begin{equation}\label{eq:prelim:regression}
    Y_i = \ftrue(x_i) + \noise_i
    \eqfs
\end{equation}
The standard regression problem is to estimate $\ftrue$ with an estimator $\festi$ given the \textit{data} $\mc D := (x_i, Y_i)_{i\in\nnset n}$. We denote this statistical model as $\mf P^{\dm x \to \dm y}(\mc F, T, (x_i)_{i\in\nnset n}, \sigma)$.

An \textit{estimator for the regression function} $\ftrue$ in $\mf P^{\dm x \to \dm y}(\mc F, T, (x_i)_{i\in\nnset n}, \sigma)$ is any measurable map
\begin{equation*}
    \Esti^{\dm x\to\dm y}(\cdot, \cdot)\colon(\R^{\dm x}\times\R^{\dm y})^n \times \R^{\dm x} \to \R^{\dm y}
    \eqfs
\end{equation*}
Given the data $\mc D$, it estimates the regression function $\ftrue$ as $x \mapsto \festi(x) := \Esti^{\dm x\to\dm y}(\mc D, x)$.
We define the term \textit{regression estimator for the derivative} $D\ftrue$ of the regression function accordingly.

We say $\Esti^{\dm x\to\dm y}$ is the \textit{componentwise} estimator $\Esti^{\dm x\to1}$, when we apply $\Esti^{\dm x\to1}$ in each target dimension:
\begin{equation*}
    \Esti^{\dm x\to\dm y}(\mc D, \cdot)\colon\R^{\dm x}\to\R^{\dm y},\ x\mapsto
    \br{\Esti^{\dm x\to 1}((x_i, \Pi_1 Y_i)_{i\in\nnset n}, x), \dots, \Esti^{\dm x\to 1}((x_i, \Pi_{\dm y} Y_i)_{i\in\nnset n}, x)}
    \eqfs
\end{equation*}

The \textit{maximal risk} for the model $\mf P_n := \mf P^{\dm x \to \dm y}(\mc F, T, (x_i)_{i\in\nnset n}, \sigma)$ at a point $x_0\in[0, T]^{\dm x}$ of an estimator $\festi = \Esti^{\dm x\to\dm y}(\mc D, \cdot)$ is
\begin{equation}\label{eq:stdregrate}
    r(\festi, \mf P_n, x_0) := \sup_{\Pr\in\mf P_n} \EOff{\Pr}{\euclOf{\festi(x_0) - \ftrue_{\Pr}(x_0)}^2}
    \eqcm
\end{equation}
where $\Pr\in\mf P_n$ is the probability measure that gives rise to \eqref{eq:prelim:regression} with $\ftrue = \ftrue_\Pr$.
We say that $r^{\ms{pr},\ms{sup}}_0(\festi, \mf P_n) \in \Rpp$ is a sup-norm error bound in probability of an estimator $\festi = \Esti^{\dm x\to\dm y}(\mc D, \cdot)$ if, for all $\Pr\in\mf P_n$, for $n\to\infty$, we have
\begin{equation}\label{eq:reg:supnprmrisk}
    \sup_{x\in[0, T]^{\dm x}}\euclOf{\festi(x) - \ftrue_\Pr(x)} \in \Op\brOf{r_0^{\ms{pr},\ms{sup}}(\festi, \mf P_n)}
    \eqfs
\end{equation}
We define the sup-norm error bound of the first derivative accordingly and denote it by $r_1^{\ms{pr},\ms{sup}}(\widehat{Df}, \mf P_n)$.

An example of a regression estimator $\festi$ suitable for the Hölder-type smoothness class $\mc F = \Sigma^{\dm x \to \dm y}(\beta, L)$ with $\beta,L\in\Rpp$ (see \cref{def:Hoelder}), is the componentwise local polynomial estimator described in appendix \ref{app:sec:localreg}. \textit{Suitable} here means that it achieves the lowest values possible for $r(\festi, \mf P_n, x_0)$ for large enough $n\in\N$, up to a constant.

Further examples of (nonparametric) regression estimators that may be used in the general estimation strategies constructed in this article include orthogonal series estimators \cite[chapter 1.7]{Tsybakov09Introduction}, wavelets \cite{Antoniadis2007}, penalized splines \cite{Xiao2019}, and neural networks \cite{SchmidtHieber2020}.
\section{The Stubble Model and Estimation of the Increment Map}\label{sec:stubble}
In the Stubble model, we observe many solutions to an ODE with different initial conditions. For each solution, we have only a few observations. The initial conditions are known and located so that they cover the domain of interest.

In this section, we first present an explicit estimation procedure for a Liptschitz-continuous class of model functions with optimal rate of convergence of the mean squared error at a point. The estimation procedure is based on a local constant estimator for the increment map and a linear interpolation for the model function.  In the second part of this section, we generalize these results: For a general Hölder-smoothness class, we present a black box estimation strategy based on an arbitrary multivariate regression estimator for different increment maps and a univariate polynomial interpolation for the model function. If the nonparametric estimator enjoys certain optimality criteria with respect to the standard nonparametric regression problem, it induces an optimal procedure for nonparametric ODE estimation.
\subsection{Lipschitz}\label{ssec:stubble:lipschitz}
In this specific instance of the general ODE estimation model, we consider Lipschitz-continuous functions $f$, initial conditions $x_j$ on a uniform grid, and an estimation procedure for $\ftrue$ that is based on the local constant estimator for the increments $\Incr(\ftrue, \stepsize, \cdot)$.
\subsubsection{Model}\label{sssec:stubble:lipschitz:model}
The following is a restriction of the general model of section \ref{ssec:prelim:model}.

Let $d\in\N$. Set $\beta=1$. Let $L_0, L_1 \in \Rpp$.
Set $\mc F := \bar\Sigma^{d\to d}(1, L_0, L_1)$.
Let $\ftrue\in\mc F$ and $n_0\in\N$.
Set $n := n_0^d$.
Let $x_i \in [0,1]^d$ for $i\in\nnset n$ form a uniform grid in $[0,1]^d$, i.e.,
\begin{equation*}
	\cb{x_1, \dots, x_n} =
	\setByEle{
		\begin{pmatrix}
			\frac{k_1}{n_0}, \dots, \frac{k_d}{n_0}
		\end{pmatrix}\tr
	}{k_1, \dots, k_d\in\nnset{n_0}}
    \eqfs
\end{equation*}
Let $\sigma\in\Rp$.
Let $\epsilon_j := \epsilon_{j,1}$, $j\in\nnset{n}$ be independent $\R^d$-valued random variables such that $\Eof{\epsilon_j} = 0$ and $\Eof{\euclof{\epsilon_j}^2} \leq \sigma^2$.
Let $\stepsize\in\Rpp$.
Set
\begin{equation*}
	Y_j := Y_{j, 1} := U(\ftrue, x_j, \stepsize) + \epsilon_j
	\eqfs
\end{equation*}
We observe $Y_j$ and know $x_j$ and $\stepsize$, but $\ftrue$ is unknown and to be estimated.
We assume $d$, $L_0, L_1$, and $\sigma$ to be fixed. We are interested in upper bounds for the mean squared error at a point $x_0 \in [0,1]^d$ depending on the asymptotics of $n$ and $\stepsize$.
\subsubsection{Estimator}\label{sssec:stubble:lipschitz:esti}
For $f\in\mc F$, $\stepsize\in\Rp$, $x\in\R^d$, recall the definition of the increment map $\Incr(f, \stepsize, x) = U(f, x, \stepsize) - x$ and set $\itrue(x) := \Incr(\ftrue, \stepsize, x)$.

Let $\iesti$ be the componentwise (see section \ref{ssec:prelim:regression}) local constant estimator (Nadaraya--Watson) of $\itrue$ with kernel $K$ using the data $(x_j, Y_j)_{j\in\nnset n}$ and a bandwidth of optimal order. See appendix \ref{app:sec:localreg} with $\beta=1, \ell = 0, s=0$ for details on the local constant estimator.

For  $x\in\R^d$, define the scaled increment estimator of $\ftrue$ as
\begin{equation}\label{eq:stubble:lipschitz:festi}
	\festi(x) := \frac{\iesti(x)}{\stepsize}
	\eqfs
\end{equation}
\subsubsection{Result}
\begin{assumption}\label{ass:strictkernel}\mbox{ }
    \begin{itemize}
        \item \newAssuRef{StrictKernel}: The kernel $K \colon \Rp \to \R$ is Lipschitz-continuous, nonnegative, non-trivial ($K\neq 0$), and there is $\const{ker}\in\Rpp$ such that
        \begin{equation*}
            K(x) \leq \const{ker} \ind_{[0, 1]}(x)
        \end{equation*}
        for all $x\in\Rp$.
    \end{itemize}
\end{assumption}
\begin{theorem}\label{thm:stubble:lip}
    Use the model of section \ref{sssec:stubble:lipschitz:model} and the estimator of section \ref{sssec:stubble:lipschitz:esti}.
    Assume \assuRef{StrictKernel}.
	Assume $\stepsize \asymleq 1$.
	Then
	\begin{equation*}
		\EOf{\euclOf{\festi(x_0)-\ftrue(x_0)}^2} \asymleq \br{\stepsize^2 n}^{-\frac{2}{2+d}} + \stepsize^2
		\eqfs
	\end{equation*}
\end{theorem}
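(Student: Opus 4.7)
My plan is to split the error into a \emph{numerical} (or scheme) error and a \emph{statistical} error via
\begin{equation*}
    \festi(x_0) - \ftrue(x_0) \;=\; \frac{\iesti(x_0) - \itrue(x_0)}{\stepsize} \;+\; \left(\frac{\itrue(x_0)}{\stepsize} - \ftrue(x_0)\right)\eqfs
\end{equation*}
For the numerical term I use the integral representation $\itrue(x_0) = \int_0^\stepsize \ftrue(U(\ftrue, x_0, s))\, \dl s$, which follows from the definition of the increment and the ODE. Since $\supNormof{\ftrue} \leq L_0$, the state stays within $\euclof{U(\ftrue, x_0, s) - x_0} \leq s L_0$, and using that $\ftrue$ is $L_1$-Lipschitz gives $\euclof{\itrue(x_0)/\stepsize - \ftrue(x_0)} \leq L_0 L_1 \stepsize/2$, so the squared contribution is $O(\stepsize^2)$.

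For the statistical term, the crucial observation is that $\itrue$ is not merely Lipschitz but is Lipschitz with a constant of order $\stepsize$: by Grönwall's inequality, $\euclof{U(\ftrue, x, s) - U(\ftrue, y, s)} \leq e^{L_1 s} \euclof{x - y}$, so
\begin{equation*}
    \euclOf{\itrue(x) - \itrue(y)} \leq \int_0^\stepsize L_1\, e^{L_1 s}\, \euclof{x-y}\, \dl s \;\leq\; C\, \stepsize\, \euclof{x-y}
\end{equation*}
for some constant $C$ depending on $L_1$ (recall $\stepsize \asymleq 1$). Also $\supNormof{\itrue} \leq L_0 \stepsize$. Treating the estimation of $\itrue$ as a standard regression problem with regression targets $Y_j - x_j = \itrue(x_j) + \epsilon_j$ (or equivalently, using that the Nadaraya--Watson estimator is invariant under additive shifts by $x$), I invoke the classical bias--variance bound for the Nadaraya--Watson estimator with bandwidth $h$ on the uniform grid and kernel satisfying \assuRef{StrictKernel} (appendix \ref{app:sec:localreg} with $\beta=1$): the squared bias is $O((C\stepsize\, h)^2)$ and the variance is $O(\sigma^2 / (n h^d))$.

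Dividing by $\stepsize^2$ to account for the rescaling from $\iesti$ to $\festi$ yields
\begin{equation*}
    \EOf{\euclOf{\festi(x_0) - \itrue(x_0)/\stepsize}^2} \;\asymleq\; h^2 + \frac{\sigma^2}{\stepsize^2\, n\, h^d}\eqfs
\end{equation*}
Choosing $h \asymeq (\stepsize^2 n)^{-1/(d+2)}$ balances these terms and yields a statistical error of order $(\stepsize^2 n)^{-2/(d+2)}$. Combining with the $O(\stepsize^2)$ numerical error via $(a+b)^2 \leq 2a^2 + 2b^2$ gives the claimed bound.

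\textbf{Main obstacle.} The key quantitative point is establishing that the Lipschitz constant of $\itrue$ scales \emph{linearly} in $\stepsize$ (and not merely as a fixed constant independent of $\stepsize$). Without this scaling, the bias term would be $O(h^2)$ after division by $\stepsize^2$ would overwhelm the variance scaling, and the extra $\stepsize^2$ factor inside the parenthesis on the right-hand side of the claim would be lost. The uniform grid assumption on $(x_j)_{j\in\nnset n}$ and the strict compact-support kernel assumption \assuRef{StrictKernel} ensure that the standard Nadaraya--Watson bounds apply cleanly without technicalities about boundary effects or denominator degeneracy, since for $h \asymgt n^{-1/d}$ the kernel window around any $x_0 \in [0,1]^d$ contains enough grid points to give a non-degenerate denominator of the correct order.
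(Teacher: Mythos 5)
Your proposal is correct and follows essentially the same route as the paper: the same split into $\stepsize^{-1}(\iesti-\itrue)$ plus the approximation error $\itrue/\stepsize-\ftrue$, the same integral bound $\frac12 L_0L_1\stepsize$ for the latter, and the same key observation that $\itrue$ has Lipschitz constant $e^{L_1\stepsize}-1\asymleq\stepsize$, fed into the standard Nadaraya--Watson bias--variance trade-off with $h\asymeq(\stepsize^2 n)^{-1/(2+d)}$. The only (immaterial) difference is that you obtain the increment's Lipschitz constant by Grönwall applied to the difference of two solutions, whereas the paper bounds the derivative of the flow with respect to the initial condition (\cref{lmm:derivIncrem}); both yield the identical constant.
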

\begin{remark}
    By \cite[Corollary 3.11]{lowerbounds}, the error rate in \cref{thm:stubble:lip} is minimax optimal.
\end{remark}
The following corollary minimizes the error bound over $\stepsize$, i.e., it shows the asymptotic behavior of $\stepsize$ that allows the best estimates of $f$ for the same amount of data.
\begin{corollary}\label{cor:stubble:lip}
    Use the model of section \ref{sssec:stubble:general:model} and the estimator of section \ref{sssec:stubble:lipschitz:esti}.
    Assume \assuRef{StrictKernel}.
	Assume $\stepsize \asymeq n^{-\frac1{4+d}}$. Then
	\begin{equation*}
		\EOf{\euclOf{\festi(x_0)-\ftrue(x_0)}^2} \asymleq n^{-\frac2{4+d}}
		\eqfs
	\end{equation*}
\end{corollary}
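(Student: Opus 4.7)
The plan is to apply \cref{thm:stubble:lip} directly and balance the two terms in the resulting upper bound. Since the assumption $\stepsize \asymeq n^{-1/(4+d)}$ in particular gives $\stepsize \asymleq 1$, the hypotheses of \cref{thm:stubble:lip} are satisfied, so I may use
\begin{equation*}
    \EOf{\euclOf{\festi(x_0)-\ftrue(x_0)}^2} \asymleq \br{\stepsize^2 n}^{-\frac{2}{2+d}} + \stepsize^2
\end{equation*}
as a starting point.

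Next I would compute each term separately under the assumed scaling. For the second term, $\stepsize^2 \asymeq n^{-2/(4+d)}$ is immediate. For the first term, $\stepsize^2 n \asymeq n^{1 - 2/(4+d)} = n^{(2+d)/(4+d)}$, so
\begin{equation*}
    \br{\stepsize^2 n}^{-\frac{2}{2+d}} \asymeq n^{-\frac{2}{2+d}\cdot\frac{2+d}{4+d}} = n^{-\frac{2}{4+d}} \eqfs
\end{equation*}
Adding the two terms of matching order yields the claimed bound $n^{-2/(4+d)}$.

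There is essentially no obstacle: the choice $\stepsize \asymeq n^{-1/(4+d)}$ is precisely the one that equates the variance-type term $(\stepsize^2 n)^{-2/(2+d)}$ (arising from the local constant regression of the increment with effective sample size proportional to $\stepsize^2 n$ after rescaling by $\stepsize^{-1}$) with the bias-type term $\stepsize^2$ (arising from approximating $\ftrue(x)$ by $\Incr(\ftrue,\stepsize,x)/\stepsize$). Solving $(\stepsize^2 n)^{-2/(2+d)} = \stepsize^2$ for $\stepsize$ gives the exponent $-1/(4+d)$, confirming that this is the optimal choice and that no improvement is possible from the bound in \cref{thm:stubble:lip} alone.
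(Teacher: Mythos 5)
Your proof is correct and is exactly the argument the paper intends (the corollary is stated without a separate proof, being an immediate consequence of \cref{thm:stubble:lip}): you verify $\stepsize \asymleq 1$, substitute $\stepsize \asymeq n^{-1/(4+d)}$ into both terms of the theorem's bound, and check that they balance at order $n^{-2/(4+d)}$. Nothing is missing.
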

\subsubsection{Proof}
\begin{lemma}\label{lmm:stubble:smooth}
	Assume $f\in\mc F$.
	Let $\incr := \Incr(f, \stepsize, \cdot)$.
	Then $\incr \in\bar\Sigma^{d\to d}(1, \tilde L_0, \tilde L_1)$ with
	\begin{equation*}
		\tilde L_0 = \stepsize L_0\,,\quad \tilde L_1 = \exp(L_1 \stepsize )-1
		\eqfs
	\end{equation*}
\end{lemma}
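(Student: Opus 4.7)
The plan is to verify the two required sup-norm bounds on $\iota$ and $D\iota$ separately, using the integral representation of the ODE solution together with the variational equation and Grönwall's inequality.

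\textbf{Sup-norm bound on $\iota$.} First I would write $\iota$ as a time integral of $f$ along the trajectory. Since $U(f, x, \cdot)$ solves $\dot u = f(u)$ with $u(0) = x$, the fundamental theorem of calculus gives
\begin{equation*}
    \iota(x) = U(f, x, \stepsize) - x = \int_0^{\stepsize} f\br{U(f, x, s)}\,\dl s\eqfs
\end{equation*}
Estimating under the integral and using $\supNormof{f} \leq L_0$ yields $\euclof{\iota(x)} \leq \stepsize L_0 = \tilde L_0$ for every $x$.

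\textbf{Sup-norm bound on $D\iota$ via the variational equation.} Set $M(t) := D_x U(f, x, t)$, the Jacobian of the flow with respect to its initial condition. Since $f$ is $C^1$ with bounded derivative, standard ODE theory (smooth dependence on initial data; see appendix \ref{app:sec:ode}) guarantees that $M$ exists, is continuous in $t$, and satisfies the variational equation
\begin{equation*}
    \dot M(t) = Df\br{U(f, x, t)} \cdot M(t)\eqcm \qquad M(0) = I_d\eqfs
\end{equation*}
In particular $\iota$ is $C^1$ and $D\iota(x) = M(\stepsize) - I_d$. Integrating the variational equation and taking operator norms,
\begin{equation*}
    \normop{M(t)} \leq 1 + L_1 \int_0^t \normop{M(s)}\,\dl s\eqcm
\end{equation*}
so Grönwall's inequality gives $\normop{M(t)} \leq \exp(L_1 t)$. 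Plugging this back in,
\begin{equation*}
    \normop{M(\stepsize) - I_d} \leq \int_0^{\stepsize} L_1 \normop{M(s)}\,\dl s \leq \int_0^{\stepsize} L_1 \exp(L_1 s)\,\dl s = \exp(L_1 \stepsize) - 1 = \tilde L_1\eqcm
\end{equation*}
which is the required bound on $\supNormof{D\iota}$.

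\textbf{Combining and verifying smoothness class membership.} Both bounds hold uniformly in $x$, and $\iota$ is $C^1$ because $Df$ is continuous and the flow depends continuously differentiably on initial conditions (Picard--Lindelöf plus smooth dependence, as provided in the appendix on ODE smoothness). Hence $\iota \in \bar\Sigma^{d\to d}(1, \tilde L_0, \tilde L_1)$. The main (minor) obstacle is just to justify cleanly that the formal derivative $D_x U$ exists and satisfies the variational equation under our assumption $f \in \bar\Sigma^{d\to d}(1, L_0, L_1)$; this is exactly the smooth-dependence statement provided in appendix \ref{app:sec:ode}, so the rest is a direct Grönwall computation.
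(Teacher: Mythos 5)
Your proof is correct and follows essentially the same route as the paper: the $L_0$ bound via the integral representation $\iota(x)=\int_0^{\stepsize} f(U(f,x,s))\,\dl s$ is identical, and your variational-equation-plus-Grönwall argument for $\supNormof{D\iota}\leq \exp(L_1\stepsize)-1$ is exactly the content of \cref{lmm:derivIncrem}, which the paper simply cites at this point rather than re-deriving. The level of rigor concerning existence of $D_xU$ matches the paper's appendix treatment, so no gap.
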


\begin{proof}
	By the definition of $U$ and the fundamental theorem of calculus,
	\begin{equation*}
		U(f, x, t) = x + \int_0^t f(U(f, x, s)) \dl s
		\eqfs
	\end{equation*}
	Plugging this into the definition of $\incr$ yields
	\begin{equation}\label{eq:incrementintegral}
		\incr(x) = U(f, x, \stepsize) - U(f, x, 0) = \int_0^{\stepsize} f(U(f, x, s)) \dl s
		\eqfs
	\end{equation}
	Thus, as $f$ is uniformly bounded by $L_0$,
	\begin{equation*}
		\abs{\incr(x)} \leq  \int_0^{\stepsize} \abs{f(U(f, x, s))} \dl s \leq \int_0^{\stepsize} L_0 \dl s = \stepsize L_0
		\eqfs
	\end{equation*}
	This is the value of $\tilde L_0$. \cref{lmm:derivIncrem} yields the value of $\tilde L_1$ by
	\begin{equation*}
		\abs{D \incr}_\infty \leq \exp(L_1 t)-1
		\eqfs
	\end{equation*}
\end{proof}
\begin{lemma}\label{lmm:stubble:lip:nw}
	Assume $\stepsize \asymleq 1$.
	Then, for all $x_0\in[0,1]^d$,
	\begin{equation*}
		\EOf{\euclOf{\iesti(x_0)-\itrue(x_0)}^2} \asymleq \br{\stepsize^d n^{-1}}^{\frac{2}{2+d}}
		\eqfs
	\end{equation*}
\end{lemma}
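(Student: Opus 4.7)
The plan is to recast the problem as a standard multivariate nonparametric regression problem for $\itrue$ and then invoke the componentwise local constant (Nadaraya--Watson) error bound recalled in appendix \ref{app:sec:localreg}. Since
\begin{equation*}
    Y_j = U(\ftrue, x_j, \stepsize) + \epsilon_j = x_j + \itrue(x_j) + \epsilon_j \eqcm
\end{equation*}
the shifted observations $Y_j - x_j$ form a standard regression dataset for $\itrue$ on the uniform grid $(x_j)_{j \in \nnset n} \subset [0,1]^d$ of mesh size $n_0^{-1} = n^{-1/d}$, with independent centered noise of variance at most $\sigma^2$, and $\iesti$ is the corresponding Nadaraya--Watson estimator applied coordinatewise.

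By \cref{lmm:stubble:smooth}, $\itrue \in \bar\Sigma^{d\to d}(1, \tilde L_0, \tilde L_1)$ with $\tilde L_1 = e^{L_1 \stepsize} - 1$. The key observation is that, because $\stepsize \asymleq 1$, a first-order Taylor expansion of $\exp$ yields $\tilde L_1 \asymleq \stepsize$; that is, the regression function to be estimated is not merely Lipschitz, it is Lipschitz with a constant of order $\stepsize$. The standard bias--variance decomposition for the multivariate local constant estimator with bandwidth $h$ (appendix \ref{app:sec:localreg}, specialized to $\beta = 1$, $\ell = 0$, $s = 0$) then gives
\begin{equation*}
    \EOf{\euclOf{\iesti(x_0) - \itrue(x_0)}^2}
    \asymleq (\tilde L_1 h)^2 + \frac{\sigma^2}{n h^d}
    \asymleq \stepsize^2 h^2 + \frac{1}{n h^d}
\end{equation*}
uniformly in $x_0 \in [0,1]^d$, as soon as $h$ is large compared to the grid spacing $n^{-1/d}$ so that the denominator of the Nadaraya--Watson ratio is bounded below; this lower bound is where the support condition and the non-triviality of the kernel in \assuRef{StrictKernel} are used. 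Balancing the bias and variance terms at
\begin{equation*}
    h \asymeq (n \stepsize^2)^{-1/(d+2)}
\end{equation*}
gives the target rate $(\stepsize^d n^{-1})^{2/(d+2)}$.

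The main delicate step is to track the factor $\tilde L_1 \asymleq \stepsize$ through the classical bias estimate of Nadaraya--Watson; it is precisely this $\stepsize$-scaling, which would not appear in a naive application of a Lipschitz regression bound, that yields $\stepsize^d$ (rather than $1$) inside the rate. A minor sanity check is that the optimal bandwidth exceeds the grid spacing, i.e.\ $(n\stepsize^2)^{-1/(d+2)} \asymgt n^{-1/d}$; in the degenerate regime where this fails, the trivial bound $\supNormof{\itrue} \leq \tilde L_0 \asymleq \stepsize$ from \cref{lmm:stubble:smooth} already supplies an error of order $\stepsize^2$, which is compatible with the claimed bound.
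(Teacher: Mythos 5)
Your proposal is correct and follows essentially the same route as the paper: both arguments rest on \cref{lmm:stubble:smooth} giving $\tilde L_1 = \exp(L_1\stepsize)-1 \asymleq \stepsize$, followed by the local-constant regression bound of appendix \ref{app:sec:localreg} (via \cref{prp:uniformgrid} for the grid conditions) with the Lipschitz constant tracked explicitly and the bandwidth chosen to balance $\stepsize^2 h^2$ against $(nh^d)^{-1}$. The paper simply cites \cref{cor:localpoly} for the balancing step that you carry out by hand, and your remark about the degenerate bandwidth regime is harmless but vacuous here, since $\stepsize \asymleq 1$ already guarantees the optimal bandwidth dominates the grid spacing for large $n$.
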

\begin{proof}
	According to \cref{lmm:stubble:smooth}, $\itrue$ is Lipschitz-continuous with constant $\tilde L_1 = \exp(L_1 \stepsize)-1$. As we consider $L_1$ to be constant and $\stepsize\asymleq 1$, we obtain $\tilde L_1 \asymleq \stepsize$.

    By \cref{cor:localpoly} and \cref{prp:uniformgrid}, the local constant estimator $\iesti$ with optimal bandwidth then fulfills
    \begin{equation*}
        \EOf{\euclOf{\iesti(x_0)-\itrue(x_0)}^2} \asymleq \stepsize^{\frac{2d}{2+d}} n^{-\frac{2}{2+d}}
        \eqfs
    \end{equation*}
\end{proof}
\begin{lemma}\label{lmm:stubble:lip:approx}
	Let $\stepsize\in\Rpp$. Assume $f\in\mc F$. Let $\incr := \Incr(f, \stepsize, \cdot)$.
	Then
	\begin{equation*}
		\sup_{x\in[0,1]^d}\euclOf{\frac{\incr(x)}{\stepsize} - f(x)} \leq \frac12 L_0  L_1 \stepsize
		\eqfs
	\end{equation*}
\end{lemma}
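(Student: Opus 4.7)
The plan is to use the integral representation of the increment established in the proof of \cref{lmm:stubble:smooth}, namely
\begin{equation*}
    \incr(x) = \int_0^{\stepsize} f(U(f, x, s)) \dl s
    \eqcm
\end{equation*}
which immediately yields $\incr(x)/\stepsize - f(x) = \stepsize^{-1}\int_0^{\stepsize}\br{f(U(f,x,s)) - f(U(f,x,0))}\dl s$, since $U(f,x,0) = x$. The quantity inside the integral measures how far $f$ moves along the trajectory over a short time $s$, so it should be controlled by the product of the Lipschitz constant of $f$ and the distance $U(f,x,s)$ has traveled.

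First I would bound the displacement along the trajectory: by the same integral representation (applied with upper limit $s$ instead of $\stepsize$) and the bound $\supNormof{f}\leq L_0$, one obtains $\euclof{U(f,x,s) - x} \leq s L_0$ for every $s\in[0,\stepsize]$. Second, since $f\in\mc F = \bar\Sigma^{d\to d}(1, L_0, L_1)$, the map $f$ is $L_1$-Lipschitz, hence $\euclof{f(U(f,x,s)) - f(x)} \leq L_1 \euclof{U(f,x,s) - x} \leq L_0 L_1 s$.

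Combining these two estimates under the integral and integrating gives
\begin{equation*}
    \euclOf{\frac{\incr(x)}{\stepsize} - f(x)} \leq \frac{1}{\stepsize} \int_0^{\stepsize} L_0 L_1 s \dl s = \frac{1}{2} L_0 L_1 \stepsize
    \eqcm
\end{equation*}
uniformly in $x\in[0,1]^d$ (in fact, uniformly in $x\in\R^d$), which is the claimed bound.

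There is no real obstacle here: both the displacement bound and the Lipschitz bound for $f$ are direct consequences of the assumption $f\in\mc F$ together with the integral form of the flow, and the rest is a one-line calculation. The only subtlety worth noting is that the constant $1/2$ comes precisely from integrating $s$ over $[0,\stepsize]$, which is what turns the pointwise $\landauOOf{s}$ error into a $\stepsize/2$ prefactor after averaging.
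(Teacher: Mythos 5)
Your proof is correct and follows essentially the same route as the paper: both start from the integral representation $\incr(x) = \int_0^{\stepsize} f(U(f,x,s))\,\dl s$, use the $L_1$-Lipschitz property of $f$ together with the displacement bound $\euclof{U(f,x,s)-x} \leq L_0 s$, and integrate to obtain the factor $\frac12 L_0 L_1 \stepsize$. The only cosmetic difference is that the paper writes the displacement bound as a double integral while you integrate $L_0 L_1 s$ directly; these are the same computation.
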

\begin{proof}
	Let $x\in[0,1]^d$. With \eqref{eq:incrementintegral}, as $f$ is $L_1$-Lipschitz,
	\begin{align*}
		\euclOf{\frac{\incr(x)}{\stepsize} - f(x)}
		&=
		\euclOf{\frac1{\stepsize}\int_0^{\stepsize} f(U(f, x, s)) - f(x)\dl s}
		\\&\leq
		\frac{L_1}{\stepsize}\int_0^{\stepsize} \euclOf{U(f, x, s) - x} \dl s
        \eqfs
	\end{align*}
	Using $U(f, x, s) = x + \int_0^s f(U(f, x, r)) \dl r$, we obtain
	\begin{align*}
		\int_0^{\stepsize} \euclOf{U(f, x, s) - x} \dl s
		&\leq
		\int_0^{\stepsize} \int_0^s \euclOf{f(U(f, x, r))} \dl r \dl s
		\\&\leq
		\int_0^{\stepsize}\int_0^s L_0  \dl r \dl s
		\\&=
		\frac12 L_0 \stepsize^2
		\eqfs
	\end{align*}
	Combining the above inequalities yields
	\begin{align*}
		\euclOf{\frac{\incr(x)}{\stepsize} - f(x)}
		&\leq
		\frac12 L_0  L_1 \stepsize
		\eqfs
	\end{align*}
	As this inequality holds for all $x\in[0,1]^d$, we have finished the proof.
\end{proof}
\begin{proof}[Proof of \cref{thm:stubble:lip}]
	Recall the definition of $\festi$ in \eqref{eq:stubble:lipschitz:festi} and combine the bounds in \cref{lmm:stubble:lip:nw} and
	\cref{lmm:stubble:lip:approx} to obtain
	\begin{align*}
		\EOf{\euclOf{\festi(x_0)-\ftrue(x_0)}^2}
		&=
		\EOf{\euclOf{\frac{\iesti(x_0)-\itrue(x_0)}{\stepsize}+\frac{\itrue(x_0)}{\stepsize}-\ftrue(x_0)}^2}
		\\&\asymleq
		\frac1{\stepsize^2} \EOf{\euclOf{\iesti(x_0)-\itrue(x_0)}^2} +
		\euclOf{\frac{\itrue(x_0)}{\stepsize} - \ftrue(x_0)}^2
		\\&\asymleq
		\stepsize^{-\frac{4}{2+d}} n^{-\frac{2}{2+d}} + \stepsize^2
		\eqfs
	\end{align*}
\end{proof}
\subsection{General}\label{ssec:stubble:general}
We generalize the Lipschitz setting of section \ref{ssec:stubble:lipschitz} in the following way: We consider an arbitrary smoothness parameter $\beta\in\N$ for the model function $\ftrue$. The initial conditions are not restricted to a uniform grid, but must still be \textit{uniform enough} in some sense. We present an estimation strategy that can be used with any regression estimator. The convergence rate results are given in a black box fashion, i.e., they depend on the convergence rates of the chosen regression estimator. If that estimator achieves the optimal rate of convergence for a standard nonparametric regression problem, the resulting ODE estimator is also optimal in a minimax sense.
\begin{notation}\mbox{ }\label{not:poly}
    \begin{enumerate}[label=(\roman*)]
        \item
        Let $d\in\N$. Let $\alpha\in\N_0^d$.
        \begin{itemize}
            \item Denote $\abs{\alpha} := \sum_{i=1}^d \alpha_i$,
            \item Denote $\alpha! := \prod_{i=1}^d \alpha_i!$,
            \item Let $x\in\R^d$. Denote $x^\alpha = \prod_{i=1}^d x_i^{\alpha_i}$.
        \end{itemize}
        \item
        For $\ell,d\in\N$, denote $\Poly d\ell$ be the set of all polynomials of degree at most $\ell$ defined over $\R^d$,
        \begin{equation*}
            \Poly d\ell := \setByEle{p\colon\R^d\to\R}{p(x) = \sum_{\alpha\in\N_0^d,\abs{\alpha}\leq\ell} \beta_\alpha x^\alpha\,,\ \beta_\alpha\in\R}
            \eqfs
        \end{equation*}
        \item
        Let $A$ be a set. Denote the indicator function of $A$ as $\indOfOf{A}{\cdot}$.
    \end{enumerate}
\end{notation}
\subsubsection{Model}\label{sssec:stubble:general:model}
The following is a restriction of the general model of section \ref{ssec:prelim:model}.

Let $d\in\N$, $\beta\in\N$, and $\indset L0\beta\subset \Rpp$. Set the smoothness class as $\FSmooth := \bar\Sigma^{d\to d}(\beta, \indset{L}{0}{\beta})$, see \cref{def:Hoelder}. Let $\ftrue\in\FSmooth$. Let $m\in\N$ and $x_1, \dots, x_m \in \R^d$. Let $\stepsize\in\Rpp$. Set $n_j = \beta$ for all $j\in\nnset m$. Set $t_{j,i} = t_i = i\stepsize$ for $i \in \nnzset{\beta}$.
Let $\sigma\in\Rp$. Let $(\epsilon_{j,i})_{j\in\nnset m, i\in\nnset\beta}$ be independent $\R^d$-valued random variables such that $\Eof{\epsilon_{j, i}} = 0$ and $\Eof{\euclof{\epsilon_{j, i}}^2} \leq \sigma^2$. For $j\in\nnset m$, $i\in\nnset \beta$, let
\begin{equation*}
    Y_{j,i} = U(\ftrue, x_j, i\stepsize) + \epsilon_{j, i}
    \eqfs
\end{equation*}
We observe $Y_{j,i}$, and know $x_j$ and $\stepsize$, but $\ftrue$ is unknown and to be estimated.
We assume $d$, $\indset{L}{0}{\beta}$, and $\sigma$ to be fixed and we are interested in asymptotic upper bounds for the mean squared error at a point $x_0 \in [0,1]^d$ depending on $n$ and $\stepsize$.
\begin{remark}\mbox{ }
        The results below are the same (up to a constant) if we allow $n_j\geq \beta$ as long as $m \asymeq n$.
\end{remark}
\subsubsection{Estimation}\label{sssec:stubble:general:esti}
For $i\in\nnset{\beta}$, let $\itrue_{i} = \Incr(\ftrue, t_i, \cdot)$. Let $L\in\Rpp$. Let $\Esti^{d\to d}$ be an arbitrary regression estimator for the standard regression model $\mf P := \mf P^{d \to d}(\Sigma^{d\to d}(\beta, L), 1, (x_j)_{j\in\nnset m}, \sigma)$, see section \ref{ssec:prelim:regression}. For $i\in\nnset\beta$, estimate the increment maps $\itrue_{i}$ by $\iesti_{i} := \Esti^{d\to d}((x_j, Y_{j,i}-x_j)_{j\in\nnset m}, \cdot)$. For convenience, set $\iesti_{0}(x) := 0$ for all $x\in\R^d$.
Denote the maximal risk of the regression estimator, see \eqref{eq:stdregrate}, as,
\begin{equation*}
    r(d, \beta, L, \sigma, (x_j)_{j\in\nnset m}) := \sup_{x\in[0,1]^d}r(\Esti^{d\to d}, \mf P, x)
    \eqfs
\end{equation*}

For $x_0\in\R^d$, let $\pesti(x_0, \cdot) := (\pesti_1(x_0, \cdot), \dots, \pesti_d(x_0, \cdot))$, where $\pesti_k(x_0, \cdot)\in\Poly1\beta$ is the (univariate) polynomial of at most degree $\beta$ that interpolates $(t_i, \Pi_k\iesti_{i}(x_0))_{i\in\nnzset\beta}$. To estimate $\ftrue$, we locally approximate $U(\ftrue, x_0, t)$ by $x_0 + \pesti(x_0, t)$, i.e., we consider $\dpesti(x_0, t) \approx \ftrue(x_0 + \pesti(x_0, t))$. As $\ftrue(x_0) = \ftrue(x_0 + \pesti(x_0, 0))$, this approach yields the estimator
\begin{equation}\label{eq:stubble:general:festi}
	\festi(x_0) := \dpesti(x_0, 0)
	\eqfs
\end{equation}
\begin{remark}\mbox{ }
	\begin{enumerate}[label=(\roman*)]
        \item The value of $L$ is to be specified later and depends on $\stepsize$, $\beta$, and $\indset L0\beta$. Typically the estimator $\Esti^{d\to d}$ does not depend on $L$, but the risk $r(\Esti^{d\to d}, \mf P, x)$ does.
        \item If we set $\beta = 1$ and use the componentwise local constant estimator for $\Esti^{d\to d}$, we obtain the procedure of section \ref{ssec:stubble:lipschitz}.
		\item The polynomial interpolation in the second step can be viewed as an Adams–Bashforth method: a linear multistep method for solving ordinary differential equations numerically, see \cite[Chapter 24]{Butcher2016}.
	\end{enumerate}
\end{remark}
\subsubsection{Result}\label{sssec:stubble:general:result}
\begin{theorem}\label{thm:stubble:general}
    Use the model of section \ref{sssec:stubble:general:model} and the estimator of section \ref{sssec:stubble:general:esti}.
	Assume $\stepsize \asymleq 1$.
    Then
	\begin{equation*}
		\EOf{\euclOf{\festi(x_0) - \ftrue(x_0)}^2}
		\asymleq
        \stepsize^{-2}  r(d, \beta, C\stepsize, \sigma, (x_j)_{j\in\nnset m}) + \stepsize^{2\beta}
        \eqfs
	\end{equation*}
    for some constant $C\in\Rpp$ depending only on $\beta$ and $\indset L0\beta$.
\end{theorem}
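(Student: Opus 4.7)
The plan mirrors the Lipschitz argument for \cref{thm:stubble:lip} but replaces linear interpolation by degree-$\beta$ interpolation. Writing $\utrue_{x_0}(t) := U(\ftrue, x_0, t)$ so that $\dutrue_{x_0}(0) = \ftrue(x_0)$, I introduce the \emph{noise-free} analogue of $\pesti(x_0,\cdot)$: the componentwise degree-$\beta$ interpolant $\ptrue(x_0, \cdot)$ of the data $(t_i, \itrue_i(x_0))_{i \in \nnzset\beta}$, and split
\begin{equation*}
    \festi(x_0) - \ftrue(x_0) \;=\; \bigl(\dpesti(x_0, 0) - \dptrue(x_0, 0)\bigr) \;+\; \bigl(\dptrue(x_0, 0) - \ftrue(x_0)\bigr) .
\end{equation*}
The first difference is a stochastic term capturing how the regression errors $\iesti_i(x_0) - \itrue_i(x_0)$ propagate through the degree-$\beta$ interpolation, and the second is a deterministic approximation error arising because a degree-$\beta$ polynomial interpolates a trajectory that is genuinely $(\beta+1)$-smooth.

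\textbf{Smoothness of the increment maps and regression bound.} To legitimately call the regression estimator with smoothness constant $L = C\stepsize$, I first generalize \cref{lmm:stubble:smooth} to arbitrary $\beta$. Starting from the integral identity $\itrue_i(x) = \int_0^{i\stepsize} \ftrue(U(\ftrue, x, s))\,\dl s$ and inserting the bounds on the mixed derivatives of $x \mapsto U(\ftrue, x, s)$ furnished by \cref{lmm:highDerivIncrem}, an iterated chain-rule expansion shows that every derivative $D^k \itrue_i$ for $k \in \nnzset\beta$ is uniformly bounded by a constant times $\stepsize$. Hence $\itrue_i \in \Sigma^{d\to d}(\beta, C\stepsize)$ for a constant $C$ depending only on $\beta$ and $\indset L0\beta$, and the defining property of the regression estimator gives
\begin{equation*}
    \EOf{\euclOf{\iesti_i(x_0) - \itrue_i(x_0)}^2} \;\leq\; r\bigl(d, \beta, C\stepsize, \sigma, (x_j)_{j \in \nnset m}\bigr)
\end{equation*}
for each $i \in \nnset\beta$; the case $i = 0$ is trivial since $\iesti_0 \equiv \itrue_0 \equiv 0$.

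\textbf{Interpolation and combination.} For the stochastic term, I write the difference as $\dpesti(x_0, 0) - \dptrue(x_0, 0) = \sum_{i=0}^{\beta}(\iesti_i(x_0) - \itrue_i(x_0))\, \ell_i'(0)$, where $\ell_0, \dots, \ell_\beta$ are the Lagrange basis polynomials for the equispaced nodes $0, \stepsize, \dots, \beta\stepsize$. The substitution $s = t/\stepsize$ reduces these to the basis for unit-spaced nodes, yielding $\abs{\ell_i'(0)} \leq C_\beta/\stepsize$; Cauchy--Schwarz combined with the previous display then gives
\begin{equation*}
    \EOf{\euclOf{\dpesti(x_0, 0) - \dptrue(x_0, 0)}^2} \;\asymleq\; \stepsize^{-2}\, r\bigl(d, \beta, C\stepsize, \sigma, (x_j)_{j \in \nnset m}\bigr) .
\end{equation*}
For the deterministic term, \cref{coro:trajSmooth} guarantees that $\utrue_{x_0}$ is $(\beta+1)$-times continuously differentiable with derivatives bounded in terms of $\indset L0\beta$, so the classical Lagrange remainder formula $\utrue_{x_0}(t) - (x_0 + \ptrue(x_0, t)) = \frac{(\utrue_{x_0})^{(\beta+1)}(\xi_t)}{(\beta+1)!} \prod_{i=0}^{\beta}(t - t_i)$, differentiated once and evaluated at $t = 0$, produces $\euclOf{\dptrue(x_0, 0) - \ftrue(x_0)} \asymleq \stepsize^{\beta}$. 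Squaring the triangle inequality and using $\stepsize \asymleq 1$ combines the two bounds into the claimed inequality. The main obstacle is the smoothness step: unlike the simple Grönwall argument for the first derivative in \cref{lmm:stubble:smooth}, the higher-order space derivatives of the flow satisfy a cascade of inhomogeneous variational equations whose forcing terms are sums of products of lower-order derivatives, and verifying a uniform $\stepsize$-scaling on all of them is what justifies the $C\stepsize$ inside the regression risk.
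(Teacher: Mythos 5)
Your proposal is correct and follows essentially the same route as the paper: the same split into a stochastic interpolation-propagation term and a deterministic approximation term, with the increment-map smoothness supplied by \cref{lmm:highDerivIncrem} and the trajectory smoothness by \cref{coro:trajSmooth}. The only differences are cosmetic instantiations of the interpolation facts --- you bound the propagated regression error via Lagrange basis derivatives where the paper invokes \cref{lem:polycompare} (Vandermonde inverse), and you use the differentiated Lagrange remainder at the node $t=0$ where the paper invokes \cref{coro:univariatePolyApprox}; both yield the same $\stepsize^{-2}$ and $\stepsize^{2\beta}$ factors.
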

The minimax rate for the squared error in the standard regression problem with smoothness class $\Sigma^{d \to d}(\beta, L)$ is $L^{\frac{2d}{2\beta+d}}n^{-\frac{2\beta}{2\beta+d}}$ up to some constants. It is achieved, for example, by the local polynomial estimator with optimal bandwidth under some conditions on $x_i$, which are fulfilled by a uniform grid. See \cref{cor:localpoly}. Considering $m \asymeq n$, we then have
\begin{equation*}
    r(d, \beta, C\stepsize, \sigma, (x_j)_{j\in\nnset m}) \asymleq \stepsize^{\frac{2d}{2\beta+d}}n^{-\frac{2\beta}{2\beta+d}}
    \eqfs
\end{equation*}
The conditions for the optimal rate for the local polynomial estimator with kernel $K$ of appendix \ref{app:sec:localreg} are as follows.
\begin{assumption}\label{ass:localreg}\mbox{ }
    \begin{itemize}
        \item \newAssuRef{Eigenvalue}:
        There is $\const{egv}\in\Rpp$ such that
        \begin{equation*}
            \lambda_{\ms{min}}(B(x))^{-1} \leq \const{egv}
        \end{equation*}
        for all $x \in [0, T]^d$, where $B(x)$ is defined in \eqref{eq:lp:B}.
        \item \newAssuRef{Cover}:
        There is $\const{cvr}\in\Rpp$ such that
        \begin{equation*}
            \frac1n\sum_{i=1}^n \indOfOf{\ball^d(x, r)}{x_i} \leq \max\brOf{\frac1n,\, \const{cvr}\br{\frac{r}{T}}^d}
            \eqcm
        \end{equation*}
        for all $x\in[0,T]^d$, $r\in\Rpp$.
        \item \newAssuRef{Kernel}:
        The support of the kernel $K\colon \Rp \to \R$ fulfills $\supp(K) \subset [0, 1]$.
        Furthermore, there is $\const{ker}\in\Rpp$ such that
        \begin{equation*}
            K(z) \leq \const{ker}
        \end{equation*}
        for all $z \in \Rp$.
    \end{itemize}
\end{assumption}
The following corollary shows the result of \cref{thm:stubble:general} using a minimax optimal estimator $\Esti^{d\to d}$.
\begin{corollary}\label{cor:stubble:general}
    Use the model of section \ref{sssec:stubble:general:model}. Use the estimator of section \ref{sssec:stubble:general:esti} with the componentwise local polynomial estimator of degree $\ell := \beta-1$ with kernel $K$ and optimal bandwidth as the estimator $\Esti^{d\to d}$.
    Assume $\stepsize \asymleq 1$.
    Assume \assuRef{Cover}, \assuRef{Eigenvalue}, and \assuRef{Kernel}.
    Then, for all $x_0\in[0,1]^d$,
    \begin{equation*}
        \EOf{\euclOf{\festi(x_0) - \ftrue(x_0)}^2}
        \asymleq \br{\stepsize^2 n}^{-\frac{2\beta}{2\beta+d}} + \stepsize^{2\beta}
        \eqfs
    \end{equation*}
\end{corollary}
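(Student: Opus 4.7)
The plan is a direct plug-in of the black-box bound from \cref{thm:stubble:general} combined with the classical minimax rate for the multivariate local polynomial estimator. First, I would invoke \cref{thm:stubble:general}, which gives
\begin{equation*}
\EOf{\euclOf{\festi(x_0) - \ftrue(x_0)}^2} \asymleq \stepsize^{-2}\, r(d,\beta,C\stepsize,\sigma,(x_j)_{j\in\nnset m}) + \stepsize^{2\beta}
\end{equation*}
for some $C\in\Rpp$ depending only on $\beta$ and $\indset L0\beta$. This reduces the task to controlling the standard nonparametric regression risk $r$ on the Hölder class $\Sigma^{d\to d}(\beta, C\stepsize)$.

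Next, I would apply \cref{cor:localpoly} from the appendix, which (under \assuRef{Eigenvalue}, \assuRef{Cover}, and \assuRef{Kernel}) ensures that the componentwise local polynomial estimator of degree $\ell = \beta - 1$ with optimal bandwidth attains the minimax rate $L^{2d/(2\beta+d)} n^{-2\beta/(2\beta+d)}$ uniformly over $x_0 \in [0,1]^d$ on $\Sigma^{d\to d}(\beta, L)$. Substituting $L = C\stepsize$ yields
\begin{equation*}
r(d,\beta,C\stepsize,\sigma,(x_j)_{j\in\nnset m}) \asymleq \stepsize^{\frac{2d}{2\beta+d}}\, n^{-\frac{2\beta}{2\beta+d}}\eqfs
\end{equation*}

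Finally, I would combine the exponents:
\begin{equation*}
\stepsize^{-2}\cdot \stepsize^{\frac{2d}{2\beta+d}} = \stepsize^{\frac{2d - 2(2\beta + d)}{2\beta+d}} = \stepsize^{-\frac{4\beta}{2\beta+d}}\eqcm
\end{equation*}
so the first term becomes $(\stepsize^2 n)^{-2\beta/(2\beta+d)}$, which is exactly the dominant term in the claimed bound; the $\stepsize^{2\beta}$ term from \cref{thm:stubble:general} is carried through unchanged. There is no genuine obstacle beyond bookkeeping: one only needs to verify that the Hölder constant of the increment maps truly scales as $C\stepsize$ (already built into \cref{thm:stubble:general}, as foreshadowed by \cref{lmm:stubble:smooth} in the Lipschitz case) and that \assuRef{Cover}, \assuRef{Eigenvalue}, \assuRef{Kernel} coincide with the hypotheses of \cref{cor:localpoly}, which they do by design.
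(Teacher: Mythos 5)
Your proposal is correct and follows exactly the paper's route: plug the local polynomial risk bound from \cref{cor:localpoly} with $L = C\stepsize$ into the black-box bound of \cref{thm:stubble:general}, giving $r \asymleq \stepsize^{2d/(2\beta+d)} n^{-2\beta/(2\beta+d)}$ and hence $\stepsize^{-2} r = (\stepsize^2 n)^{-2\beta/(2\beta+d)}$. The exponent bookkeeping checks out, so nothing is missing.
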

\begin{remark}\mbox{ }
    \begin{enumerate}[label=(\roman*)]
        \item
        Of course, the local polynomials can be replaced by any estimator that achieves the same (optimal) rate of convergence.
        \item
        By \cite[Corollary 3.11]{lowerbounds}, the error rate in \cref{cor:stubble:general} is minimax optimal.
    \end{enumerate}
\end{remark}
The following corollary minimizes the error bound over $\stepsize$, i.e., it shows the asymptotic behavior of $\stepsize$ that allows the best estimates of $f$ for the same amount of data.
\begin{corollary}\label{cor:stubble:fast}
    Use the setting and assumptions of \cref{cor:stubble:general}.
	Assume
    \begin{equation*}
		\stepsize \asymeq n^{-\frac1{2(\beta+1)+d}}
        \eqfs
	\end{equation*}
    Then
	\begin{equation*}
		\EOf{\euclOf{\festi(x_0) - \ftrue(x_0)}^2}
		\asymleq
		n^{-\frac{2\beta}{2(\beta+1)+d}}
        \eqfs
	\end{equation*}
\end{corollary}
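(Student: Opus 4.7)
The plan is to simply substitute the prescribed choice of $\stepsize$ into the error bound from \cref{cor:stubble:general}, which asserts
\begin{equation*}
    \EOf{\euclOf{\festi(x_0) - \ftrue(x_0)}^2} \asymleq \br{\stepsize^2 n}^{-\frac{2\beta}{2\beta+d}} + \stepsize^{2\beta}\eqfs
\end{equation*}
The exponent $1/(2(\beta+1)+d)$ in the hypothesis $\stepsize \asymeq n^{-1/(2(\beta+1)+d)}$ is precisely the one that balances the two terms above, so the argument is essentially a one-line algebraic verification.

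First, I would handle the bias term: with $\stepsize \asymeq n^{-1/(2(\beta+1)+d)}$, we immediately have $\stepsize^{2\beta} \asymeq n^{-2\beta/(2(\beta+1)+d)}$. Second, for the stochastic term I would compute $\stepsize^2 n \asymeq n^{1 - 2/(2(\beta+1)+d)} = n^{(2\beta+d)/(2(\beta+1)+d)}$, and then raise this to the power $-2\beta/(2\beta+d)$, yielding $(\stepsize^2 n)^{-2\beta/(2\beta+d)} \asymeq n^{-2\beta/(2(\beta+1)+d)}$. Adding the two terms gives a bound of the same order, which is exactly the claimed rate.

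There is no genuine obstacle here; the work was done in proving \cref{thm:stubble:general} and \cref{cor:stubble:general}. The only thing worth remarking on is that the balancing choice of $\stepsize$ is the unique (up to constants) minimizer, which can be seen by setting $(\stepsize^2 n)^{-2\beta/(2\beta+d)} = \stepsize^{2\beta}$ and solving for $\stepsize$: this gives $\stepsize^{2\beta(2\beta+d+2)/(2\beta+d)} \asymeq n^{-2\beta/(2\beta+d)}$, which simplifies to $\stepsize \asymeq n^{-1/(2(\beta+1)+d)}$. Consequently, no other polynomial choice of $\stepsize$ produces a faster rate from this particular upper bound.
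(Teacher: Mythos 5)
Your proof is correct and is exactly the intended argument: the paper states \cref{cor:stubble:fast} without a separate proof precisely because it is the direct substitution of the balancing choice $\stepsize \asymeq n^{-1/(2(\beta+1)+d)}$ into the bound of \cref{cor:stubble:general}, and your algebra for both terms checks out. The added remark on uniqueness of the balancing exponent is a correct bonus observation, not required for the statement.
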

\subsubsection{Proof}
For $x_0\in\R^d$, let $\ptrue(x_0, \cdot)\in\Poly1\beta$ be the (univariate) polynomial of at most degree $\beta$ that interpolates $(t_i, \itrue_{i}(x_0))_{i\in\nnzset{\beta}}$. By the definition of $\festi$ in \eqref{eq:stubble:general:festi}, we can write
\begin{equation*}
	\festi(x_0) - \ftrue(x_0) =
	\dpesti(x_0, 0) - \dptrue(x_0, 0) + \dptrue(x_0, 0) - \ftrue(x_0)
	\eqfs
\end{equation*}
Using this, we split the error into two parts,
\begin{equation}\label{eq:stubble:general:split}
	\EOf{\euclOf{\festi(x_0) - \ftrue(x_0)}^2}
	\leq
	2\EOf{\euclOf{\dpesti(x_0, 0) - \dptrue(x_0, 0)}^2} + 2 \euclOf{\dptrue(x_0, 0) - \ftrue(x_0)}^2
	\eqfs
\end{equation}
We start with the second part: The polynomial $t\mapsto \ptrue(x_0, t)$ interpolates the translated solution $\bar u^\star(x_0, t) := U(\ftrue, x_0, t) - x_0$. By \cref{coro:trajSmooth}, we have $\abs{D^{\beta+1} \bar u^\star(x_0, \cdot)}_{\infty} \leq C_L$, where $C_L$ depends only on $\beta$ and $\indset L0\beta$.
By the definition of $\bar u^\star(x_0, t)$, it fulfills $\dot{\bar u}^\star(x_0, 0) = \ftrue(x_0)$.
Thus, \cref{coro:univariatePolyApprox} provides a constant $C_\beta$ depending only on $\beta$ such that
\begin{align*}
	\euclOf{\dptrue(x_0, 0) - \ftrue(x_0)}
	&=
	\euclOf{\dptrue(x_0, 0) - \dot{\bar u}^\star(x_0, 0)}
	\\&\leq
	C_{\beta} C_L \stepsize^{\beta}
	\eqfs
\end{align*}
In other words,
\begin{equation}\label{eq:stubbles:general:polyderiv}
	\euclOf{\dptrue(x_0, 0) - \ftrue(x_0)}^2 \asymleq \stepsize^{2\beta}
	\eqfs
\end{equation}
Let us turn our attention to the first part of the right-hand side of \eqref{eq:stubble:general:split}: By \cref{lmm:highDerivIncrem}, $\abs{D^\beta\itrue_i}_\infty \leq C \stepsize$ for some constant $C$ depending only on $d,\beta$, $\indset{L}{0}{\beta}$ and the maximal $\stepsize$ as $\ftrue \in\bar\Sigma^{d\to d}(\beta, \indset L0\beta)$ and $\stepsize \asymleq 1$.
As $\Esti^{d\to d}$ achieves the rate of convergence $r(d, \beta, L, \sigma, (x_j)_{j\in\nnset m})$ on $\Sigma^{d\to d}(\beta, L)$, we obtain
\begin{equation}\label{eq:stubble:increamentMSE}
	\EOf{\euclOf{\iesti_{i}(x_0) - \itrue_{i}(x_0)}^2} \asymleq r(d, \beta, C\stepsize, \sigma, (x_j)_{j\in\nnset m})
	\eqfs
\end{equation}
Applying \cref{lem:polycompare} with the bound \eqref{eq:stubble:increamentMSE} yields
\begin{equation}\label{eq:stubbles:general:polyesti}
	\EOf{\euclOf{\dpesti(x_0, 0) - \dptrue(x_0, 0)}^2} \asymleq \stepsize^{-2} r(d, \beta, C\stepsize, \sigma, (x_j)_{j\in\nnset m})
	\eqfs
\end{equation}
We combine \eqref{eq:stubbles:general:polyderiv} and \eqref{eq:stubbles:general:polyesti} and get
\begin{equation*}
	\EOf{\euclOf{\festi(x_0) - \ftrue(x_0)}^2}
	\asymleq \stepsize^{-2} r(d, \beta, C\stepsize, \sigma, (x_j)_{j\in\nnset m}) + \stepsize^{2\beta}
    \eqfs
\end{equation*}
\section{The Snake Model and Estimation of the Observed Solutions}\label{sec:snake}
In the Snake model, we observe one (or a few) solution(s) to an ODE. For each solution, we have many observations. The associated estimation problem depends strongly on how the trajectories of the solutions are located in the state space.

In this section, we first present an explicit estimation procedure for a Lipschitz-continuous class of model functions and an upper bound on the maximal risk in sup norm. The estimation procedure is based on a local linear estimator for the solutions and a nearest neighbor interpolation for the model function. It is rate-optimal in some settings. In the second part of this section, we generalize these results: For a general Hölder-smoothness class, we present a black box estimation strategy based on a generic nonparametric estimator for the solutions and a multivariate polynomial interpolation for the model function. If the nonparametric estimator enjoys certain optimality criteria with respect to the standard nonparametric regression problem, it induces a procedure for nonparametric ODE estimation that is optimal in some settings.
\subsection{Lipschitz}\label{ssec:snake:lipschitz}
In this specific instance of the general ODE estimation model, we consider Lipschitz-continuous model functions $\ftrue$ and an estimation procedure $\festi$ that is based on local linear estimators $\uesti$ and $\duesti$ for the observed solution $U(\ftrue, x_1, \cdot)$ and its derivative, and a nearest neighbor interpolation of $\uesti(t) \mapsto \duesti(t)$ for the model function. We obtain an upper bound on the expected sup norm error. For this, we either require the trajectories $U(\ftrue, x_1, [0, T_1])$ to cover the fixed domain of interest $[0,1]^d$ in a suitable way or we adapt our domain of interest to these trajectories, i.e., the upper bound only holds close to the trajectory.
\begin{notation}\mbox{ }
        Let $d\in\N$, $r\geq 0$, and $z\in \R^d$. Denote the closed ball with radius $r$ around $z$ as
        \begin{equation*}
            \ball^d(z, r) := \setByEle{x\in\R^d}{\euclOf{x-z} \leq r}
            \eqfs
        \end{equation*}
        Let $\mc Z\subset \R^d$. Denote the closed ball with radius $r$ around $\mc Z$ as
        \begin{equation*}
            \ball^d(\mc Z, r) := \bigcup_{z\in\mc Z} \ball^d(z, r)
            \eqfs
        \end{equation*}
\end{notation}
\subsubsection{Model}\label{sssec:snake:lip:model}
The following is a restriction of the general model of section \ref{ssec:prelim:model}.

Let $d\in\N_{\geq 2}$ and $L_0, L_1\in\Rpp$.
Set $\mc F := \bar\Sigma^{d\to d}(1, L_0, L_1)$.
Let $\ftrue\in\mc F$.
Set $m=1$. Let $x_1 \in \R^d$, $n=n_1\in\N$, and $\stepsize \in \Rpp$.
Set $t_{1,i} := t_i := i \stepsize$ for $i=\nnzset{n}$ and $T := T_1 := n \stepsize$.
Let $\sigma\in\Rp$.
Let $\epsilon_i := \epsilon_{1,i}$, $i\in\nnset{n}$ be independent $\R^d$-valued random variables such that $\Eof{\epsilon_i} = 0$ and $\Eof{\euclof{\epsilon_i}^2} \leq \sigma^2$.
Set
\begin{equation*}
	Y_i := Y_{1,i} := U(\ftrue, \uztrue, t_i) + \epsilon_i \qquad \text{for } i\in\nnset{n}
	\eqfs
\end{equation*}
We observe $Y_i$ and know $x_1$ and $\stepsize$, but $\ftrue$ is unknown and to be estimated.
We assume $d$, $L_0, L_1$, and $\sigma$ to be fixed. For an estimator $\festi$, we are interested in asymptotic upper bounds for the mean squared sup-norm of $\ftrue - \festi$ in the domain of interest $[0,1]^d$ depending on the asymptotics of $n$ and $\stepsize$.
\subsubsection{Estimator}\label{sssec:snake:lip:esti}
Let $\uesti \colon[0, T] \to\R$ be the componentwise local linear estimator (as described in \cref{app:sec:localreg}, \eqref{eq:lp:def} with $\ell = 1$, $s=0$, $d=1$) of $\utrue := U(\ftrue, \uztrue, \cdot)$ using the data $(t_i, Y_i)_{i\in\nnset{n}}$. Similarly, denote by $\duesti$ the local linear estimator of the first derivative of $U(\ftrue, \uztrue, \cdot)$  ($\ell = 1$, $s=1$, $d=1$ in terms of \cref{app:sec:localreg}). We assume that the bandwidth $h$ is chosen of optimal order for the error bound in sup-norm, see \cref{cor:localpoly:supnorm}.

Let $\festi$ be the nearest neighbor interpolation of $(\uesti, \duesti)$: For $x\in\R^d$ and a continuous function $u\colon[0, T]\to \R^d$, let
\begin{equation*}
    \nn(u, T, x) \in \argmin_{t\in[0, T]} \euclof{u(t) - x}
\end{equation*}
with an arbitrary choice if the minimizer is not unique. Then, for $x\in\R^d$, we define the estimator of $\ftrue$ as
\begin{equation}\label{eq:snake:lip:esti}
	\festi(x) := \duesti\brOf{\nn(\uesti, T, x)}
	\eqfs
\end{equation}
\subsubsection{Result}\label{sssec:snake:lip:result}
For our results in sup-norm, we require the noise to be sub-Gaussian, which is a standard assumption for regression results in sup-norm, see \cite[section 1.6.2]{Tsybakov09Introduction}.
\begin{definition}[Sub-Gaussian]\label{def:subgauss:variable}
    A real-valued random variable $Z$ is called \textit{sub-Gaussian} if $\Eof{\abs{Z}} < \infty$ and there is $v\in\Rpp$ such that
    \begin{equation*}
        \PrOf{\abs{Z - \EOf{Z}} \geq t} \leq 2 \exp\brOf{-\frac{t^2}{2v}}
    \end{equation*}
    for all $t\in\Rpp$. In this case, $v$ is called \textit{variance parameter}.
\end{definition}
\begin{assumption}\mbox{ }
	\begin{itemize}
	   \item \newAssuRef{SubGaussian}:  In each component, the noise $\noise_i$ is sub-Gaussian with variance parameter $\sigma^2$.
	\end{itemize}
\end{assumption}
For $\mc X \subset \R^d$ and a function $u\colon[0, T]\to \R^d$, define $\delta_{\ms{NN}}(u, T, \mc X)$ as smallest radius so that the ball around $u([0,T])$ includes $\mc X$:
\begin{equation*}
    \delta_{\ms{NN}}(u, T, \mc X) := \sup_{x\in\mc X} \inf_{t\in[0, T]} \euclOf{u(t) - x}
    \eqfs
\end{equation*}
The results come in two versions with respect to the domain of interest: We either fix the domain of interest, e.g., $\mc X = [0,1]^d$ and make the error bound depend on $\delta_{\ms{NN}}(\utrue, T, \mc X)$, or we fix $\delta$ (or a sequence $\delta = \delta_n$) and adapt our domain of interest to be $\mc X = \ball^d(\utrue([0, T]), \delta)$.
\begin{theorem}\label{thm:snake:lip}
    Use the model of section \ref{sssec:snake:lip:model} and the estimator of section \ref{sssec:snake:lip:esti}.
    Assume \assuRef{StrictKernel}, \assuRef{SubGaussian}.
	Assume
	\begin{equation}\label{eq:snake:Tbound}
        \br{\frac{n}{\log(n)}}^{-\frac{1}{4}} \asymleq T \asymleq n \log(n)^{\frac14}
        \eqfs
    \end{equation}
    \begin{enumerate}[label=(\roman*)]
        \item\label{thm:snake:lip:self}
        Let $\delta \in\Rp$, potentially changing with $n$.
        Let $\mc X := \ball^d(\utrue([0, T]), \delta)$.
        Then
        \begin{equation*}
            \EOf{\sup_{x\in\mc X}\euclOf{\festi(x) - \ftrue(x)}^2}
            \asymleq
            \delta^2 + \br{\frac{T\log n}{n}}^{\frac25}
            \eqfs
        \end{equation*}
        \item\label{thm:snake:lip:domain}
        Let $\delta = \delta_{\ms{NN}}(\utrue, T, [0,1]^d)$.
        Then
        \begin{align*}
            \EOf{\sup_{x\in[0,1]^d}\euclOf{\festi(x) - \ftrue(x)}^2}
            \asymleq
            \delta^2 + \br{\frac{T\log n}{n}}^{\frac25}
            \eqfs
        \end{align*}
    \end{enumerate}
\end{theorem}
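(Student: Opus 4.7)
The plan is to split the pointwise error by inserting the nearest-time point on the \emph{true} trajectory, namely $t^\star(x) := \nn(\utrue, T, x)$, and writing
\[
\festi(x) - \ftrue(x)
= \br{\duesti(\hat t) - \dutrue(\hat t)} + \br{\dutrue(\hat t) - \dutrue(t^\star)} + \br{\dutrue(t^\star) - \ftrue(x)},
\]
where $\hat t = \hat t(x) := \nn(\uesti, T, x)$. The first summand is the sup-norm estimation error of $\duesti$; the remaining two are handled geometrically using the ODE identity $\dutrue = \ftrue\circ\utrue$ together with the Lipschitz constant $L_1$ of $\ftrue$.

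For the first summand, I would note that $\utrue$ is twice continuously differentiable with $\supNormof{\ddot\utrue} \leq L_0 L_1$, since $\ddot\utrue = Df(\utrue)\ftrue(\utrue)$. Thus $\utrue$ lies componentwise in a smoothness class for which the local linear estimator on $[0,T]$ attains the sup-norm minimax rate with effective smoothness $\beta=2$. Invoking \cref{cor:localpoly:supnorm} under \assuRef{StrictKernel} and \assuRef{SubGaussian} on $[0,T]$, with \eqref{eq:snake:Tbound} providing the admissible range of $T$, I obtain
\[
\E\big[\supNormof{\duesti - \dutrue}^2\big] \asymleq \br{T\log n/n}^{2/5},
\qquad
\E\big[\supNormof{\uesti - \utrue}^2\big] \asymleq \br{T\log n/n}^{4/5}.
\]

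For the third summand, Lipschitz continuity of $\ftrue$ together with $\dutrue(t^\star) = \ftrue(\utrue(t^\star))$ yields $\euclof{\dutrue(t^\star) - \ftrue(x)} \leq L_1 \euclof{\utrue(t^\star) - x} \leq L_1\delta$, where the last inequality uses $x\in\ball^d(\utrue([0,T]),\delta)$ and the definition of $t^\star$. For the middle summand, the same identity gives $\euclof{\dutrue(\hat t) - \dutrue(t^\star)} \leq L_1 \euclof{\utrue(\hat t) - \utrue(t^\star)}$; the crucial step is then to combine the triangle inequality with the defining minimality $\euclof{\uesti(\hat t) - x} \leq \euclof{\uesti(t^\star) - x}$ of $\hat t$, which yields
\[
\euclof{\utrue(\hat t) - x}
\leq \supNormof{\uesti - \utrue} + \euclof{\uesti(t^\star) - x}
\leq 2\supNormof{\uesti - \utrue} + \delta,
\]
and hence $\euclof{\utrue(\hat t) - \utrue(t^\star)} \leq 2\supNormof{\uesti - \utrue} + 2\delta$. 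Squaring and combining the three summands with $(a+b+c)^2 \leq 3(a^2+b^2+c^2)$ gives
\[
\sup_{x\in\mc X}\euclof{\festi(x) - \ftrue(x)}^2 \asymleq \supNormof{\duesti - \dutrue}^2 + \supNormof{\uesti - \utrue}^2 + \delta^2;
\]
taking expectations and absorbing the $\br{T\log n/n}^{4/5}$ term into the dominant $\br{T\log n/n}^{2/5}$ proves part \ref{thm:snake:lip:self}. Part \ref{thm:snake:lip:domain} is then immediate, since $[0,1]^d \subset \ball^d(\utrue([0,T]), \delta_{\ms{NN}}(\utrue, T, [0,1]^d))$ holds by the very definition of $\delta_{\ms{NN}}$, so the bound follows at once from \ref{thm:snake:lip:self} with $\delta = \delta_{\ms{NN}}(\utrue, T, [0,1]^d)$.

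The main subtlety lies in the middle summand: a naive approach via a pointwise bound on $|\hat t - t^\star|$ would require assumptions preventing the trajectory $\utrue$ from nearly revisiting itself in state space (two distinct times could both be nearly optimal projections of $x$, making the time-difference arbitrarily large while the state-difference stays small). Bypassing time via the identity $\dutrue = \ftrue\circ\utrue$ sidesteps this entirely and reduces the problem to Euclidean distances in state space, which are controlled by the sup-norm error of $\uesti$ together with the nearest-point optimality of $\hat t$.
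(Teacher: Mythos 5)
Your proposal is correct and follows essentially the same route as the paper: the paper's Lemma~\ref{lmm:nnbound} performs the same decomposition (merely combining your middle and last summands into one term $\ftrue(\utrue(\hat\tau))-\ftrue(x_0)$ and bounding $\euclof{\utrue(\hat\tau)-x_0}$ via the same minimality-of-$\hat\tau$ argument), and the theorem is then concluded exactly as you do, by invoking \cref{cor:localpoly:supnorm} for the sup-norm rates of $\uesti$ and $\duesti$ on the $\beta=2$ smoothness class of $\utrue$. The only detail you gloss over is the verification (via \cref{prp:uniformgrid}) that \assuRef{StrictKernel} on the uniform time grid yields \assuRef{Kernel}, \assuRef{Eigenvalue}, and \assuRef{Cover}, which is routine.
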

\begin{remark}\mbox{ }\label{rem:snake:results:lip}
    \begin{enumerate}[label=(\roman*)]
        \item
        Compare the upper error bound in \cref{thm:snake:lip} \ref{thm:snake:lip:domain} with the lower error bound \cite[Corollary 4.11]{lowerbounds}
        (for simplicity, we here ignore all factors in the error bounds that are polynomial in $\log(n)$):
        \begin{equation*}
        	\delta + \delta^{\frac{d-1}{4+d}} \br{\frac{T}{n}}^{\frac1{4+d}}
        	\asymleq
        	\EOf{\sup_{x\in[0,1]^d}\euclOf{\festi(x) - \ftrue(x)}^2}^{\frac12}
        	\asymleq
        	\delta + \br{\frac{T}{n}}^{\frac15}
        	\eqfs
        \end{equation*}
        Note that 
        \begin{equation*}
        	\br{\frac Tn}^{\frac15} \leq \delta
        	\quad\Leftrightarrow\quad
        	\br{\frac Tn}^{\frac15} \leq \delta^{\frac{d-1}{4+d}}\br{\frac{T}{n}}^{\frac1{4+d}}
        	\quad\Leftrightarrow\quad
        	\delta^{\frac{d-1}{4+d}}\br{\frac{T}{n}}^{\frac1{4+d}} \leq \delta
        	\eqfs
        \end{equation*}
        Thus, the error rate in \cref{thm:snake:lip} \ref{thm:snake:lip:domain} is minimax optimal if
        \begin{equation}\label{eq:space:separate}
            \br{\frac{T}{n}}^{\frac{1}{5}} \asymleq \delta
            \eqcm
        \end{equation}
        ignoring log-factors.
        \item
        Note that \cite[Corollary 4.11]{lowerbounds} uses $m\asymgt 1$ many solutions. But in \cite[Appendix E]{lowerbounds}, it is argued that $m=1$ suffices.
        \item One can think of \eqref{eq:space:separate} as the requirement of large distances between temporally different parts of trajectory. If this is not the case, then the estimator seems suboptimal: For the estimation of $\uesti$ and $\duesti$, it ignores information from observations that are distant in time but close in state space. An estimator that is optimal for regimes where different trajectory parts are not well-separated, must not ignore this information.
    \end{enumerate}
\end{remark}
\begin{corollary}\label{cor:snake:lip}
    Use the model of section \ref{sssec:snake:lip:model} and the estimator of section \ref{sssec:snake:lip:esti}.
    Assume \assuRef{StrictKernel}, \assuRef{SubGaussian}.
    \begin{enumerate}[label=(\roman*)]
        \item
        Let $\delta \in\Rp$, potentially changing with $n$.
        Let $\mc X := \ball^d(\utrue([0, T]), \delta)$.
        Assume $T \asymleq \br{\frac{n}{\log n}}^{\frac{d-1}{4+d}}$ and $\delta \asymleq \br{\frac{n}{\log n}}^{-\frac{1}{4+d}}$.
        Then
        \begin{equation*}
           \EOf{\sup_{x\in\mc X}\euclOf{\festi(x) - \ftrue(x)}^2}
            \asymleq \br{\frac n{\log n}}^{-\frac{2}{4+d}}
            \eqfs
        \end{equation*}
        \item
        Let $\delta = \delta_{\ms{NN}}(\utrue, T, [0,1]^d)$.
        Assume $T \asymleq \br{\frac{n}{\log n}}^{\frac{d-1}{4+d}}$ and $\delta \asymleq \br{\frac{n}{\log n}}^{-\frac{1}{4+d}}$.
        Then
        \begin{equation*}
            \EOf{\sup_{x\in[0,1]^d}\euclOf{\festi(x) - \ftrue(x)}^2}
            \asymleq \br{\frac n{\log n}}^{-\frac{2}{4+d}}
            \eqfs
        \end{equation*}
    \end{enumerate}
\end{corollary}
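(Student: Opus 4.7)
The plan is to apply Theorem~\ref{thm:snake:lip} directly and verify that, under the stated asymptotic choices of $T$ and $\delta$, the two error contributions $\delta^2$ and $(T\log n/n)^{2/5}$ both reduce to the claimed order $(n/\log n)^{-2/(4+d)}$. Parts (i) and (ii) of the corollary correspond exactly to parts (i) and (ii) of Theorem~\ref{thm:snake:lip} --- they differ only in the choice of $\mc X$ --- so I would handle them simultaneously: once the theorem's bound is in hand, only algebra remains.

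First I would verify the hypotheses of Theorem~\ref{thm:snake:lip}. The assumptions \assuRef{StrictKernel} and \assuRef{SubGaussian} carry over verbatim. The upper bound $T \asymleq n \log(n)^{1/4}$ in \eqref{eq:snake:Tbound} follows from the corollary's assumption on $T$ together with $(d-1)/(4+d) < 1$ for $d \geq 2$. The lower bound $T \asymgeq (n/\log n)^{-1/4}$ in \eqref{eq:snake:Tbound} is a mild implicit condition (time must be long enough to observe any dynamics) and can be taken as standing; without it the theorem's bound is anyway dominated by the $\delta^2$ term.

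The remaining step is a routine algebraic substitution. The assumption on $\delta$ immediately gives $\delta^2 \asymleq (n/\log n)^{-2/(4+d)}$. For the bias--variance term, I would compute
\[
    \frac{T \log n}{n} \asymleq \br{\frac{n}{\log n}}^{\frac{d-1}{4+d} - 1} = \br{\frac{n}{\log n}}^{-\frac{5}{4+d}},
\]
and raise to the $2/5$-th power to obtain $(T\log n/n)^{2/5} \asymleq (n/\log n)^{-2/(4+d)}$. Both terms coincide in order, so Theorem~\ref{thm:snake:lip} yields precisely the claimed rate.

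There is no substantial obstacle here: the point of the corollary is simply that this particular pairing of $T$ and $\delta$ is the one that balances the variance-type term $(T\log n/n)^{2/5}$ against the geometric bias $\delta^2$, and all real work has already been done in Theorem~\ref{thm:snake:lip}.
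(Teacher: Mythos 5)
Your proposal is correct and matches the paper's (implicit) argument: the corollary is stated without a separate proof precisely because it is the direct specialization of \cref{thm:snake:lip}, and your algebra showing $\delta^2 \asymleq (n/\log n)^{-2/(4+d)}$ and $(T\log n/n)^{2/5} \asymleq (n/\log n)^{-2/(4+d)}$ is exactly what is needed. Your observation that the lower bound on $T$ in \eqref{eq:snake:Tbound} is not among the corollary's stated hypotheses is a fair catch of an omission in the statement rather than a defect of your argument, though note that one cannot literally ``fall back on the $\delta^2$ term'' when the theorem's hypothesis fails --- the clean fix is simply to carry that lower bound on $T$ over as a standing assumption.
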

\begin{remark}\mbox{ }
        The conditions on $T$ and $\delta$ ensure an optimal trade-off between the ability to reconstruct $\utrue$ and coverage of $[0,1]^d$.
        They can be fulfilled for certain $f$, but it is rather restrictive. Furthermore, to cover the hypercube $[0,1]^d$ with $\ball^d(u([0, T]), \delta)$, where $u$ has a speed bounded by $L$, we require $T \asymgeq \delta^{-(d-1)}$. Hence, the conditions on $\delta$ and $T$ in \cref{thm:snake:lip} are tight in the sense that one variable determines the other up to a constant.
\end{remark}
\subsubsection{Proof}
\begin{lemma}\label{lmm:nnbound}
	Let $\delta > 0$.
	Let $x_0\in\ball^d(\utrue([0, T]), \delta)$. Then
	\begin{align*}
		\euclOf{\festi(x_0) - \ftrue(x_0)}
		\leq
		L_1 \left(\delta + 2\sup_{t\in[0,T]}\euclOf{\uesti(t) - \utrue(t)}\right) + \sup_{t\in[0,T]}\euclOf{\duesti(t) - \dutrue(t)}
		\eqfs
	\end{align*}
\end{lemma}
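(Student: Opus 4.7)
The plan is to bound $\|\festi(x_0) - \ftrue(x_0)\|$ by inserting well-chosen intermediate points and exploiting three facts: (i) $\dutrue(t) = \ftrue(\utrue(t))$ by definition of the solution, (ii) $\ftrue$ is $L_1$-Lipschitz, and (iii) $\hat t := \nn(\uesti, T, x_0)$ minimizes $t \mapsto \|\uesti(t) - x_0\|$ over $[0,T]$.

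First I would split
\begin{equation*}
    \euclOf{\festi(x_0) - \ftrue(x_0)}
    = \euclOf{\duesti(\hat t) - \ftrue(x_0)}
    \leq \euclOf{\duesti(\hat t) - \dutrue(\hat t)} + \euclOf{\dutrue(\hat t) - \ftrue(x_0)}
    \eqfs
\end{equation*}
The first term is bounded by $\sup_{t\in[0,T]}\euclof{\duesti(t) - \dutrue(t)}$, which gives one of the two contributions we want. For the second term, write $\dutrue(\hat t) = \ftrue(\utrue(\hat t))$ and apply the Lipschitz property of $\ftrue$ to get
\begin{equation*}
    \euclOf{\dutrue(\hat t) - \ftrue(x_0)} \leq L_1 \euclOf{\utrue(\hat t) - x_0}
    \eqfs
\end{equation*}

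The key step is then bounding $\euclof{\utrue(\hat t) - x_0}$. I would insert $\uesti(\hat t)$ to get
\begin{equation*}
    \euclOf{\utrue(\hat t) - x_0}
    \leq \euclOf{\utrue(\hat t) - \uesti(\hat t)} + \euclOf{\uesti(\hat t) - x_0}
    \leq \sup_{t\in[0,T]}\euclOf{\uesti(t) - \utrue(t)} + \inf_{t\in[0,T]}\euclOf{\uesti(t) - x_0}\eqcm
\end{equation*}
using the definition of $\hat t$ for the last infimum. Next, since $x_0 \in \ball^d(\utrue([0, T]), \delta)$, there exists $t_0 \in [0, T]$ with $\euclof{\utrue(t_0) - x_0} \leq \delta$, and therefore
\begin{equation*}
    \inf_{t\in[0,T]}\euclOf{\uesti(t) - x_0}
    \leq \euclOf{\uesti(t_0) - x_0}
    \leq \euclOf{\uesti(t_0) - \utrue(t_0)} + \euclOf{\utrue(t_0) - x_0}
    \leq \sup_{t\in[0,T]}\euclOf{\uesti(t) - \utrue(t)} + \delta
    \eqfs
\end{equation*}
Combining the last two displays yields $\euclof{\utrue(\hat t) - x_0} \leq \delta + 2\sup_t\euclof{\uesti(t) - \utrue(t)}$, and plugging this into the initial triangle inequality gives the claimed bound.

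No step here is truly an obstacle — the argument is a short chain of triangle inequalities. The only subtlety worth watching is that the \emph{nearest neighbor on $\uesti$}, not on $\utrue$, is what the estimator uses, so one cannot directly exploit that $x_0$ is close to $\utrue$; the trick is to compare $\uesti(\hat t)$ with the specific anchor $\uesti(t_0)$ (where $t_0$ realizes the approximate closeness to $\utrue$), which costs two copies of $\sup_t\euclof{\uesti(t)-\utrue(t)}$ rather than one.
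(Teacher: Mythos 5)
Your proof is correct and follows essentially the same route as the paper's: the same initial triangle-inequality split via $\dutrue(\hat t)=\ftrue(\utrue(\hat t))$, the same use of the Lipschitz property, and the same comparison of $\uesti(\hat t)$ against $\uesti(t_0)$ where $t_0$ (the paper's $\ttrue$) witnesses $x_0\in\ball^d(\utrue([0,T]),\delta)$, incurring two copies of $\sup_t\euclof{\uesti(t)-\utrue(t)}$. No issues.
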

\begin{proof}
	Recall $\utrue = U(\ftrue, \uztrue, \cdot)$. Denote $\ttrue := \nn(\utrue, T, x_0)$ and $\hat\tau := \nn(\uesti, T, x_0)$.
	Using the triangle inequality and the definition of $\festi$ \eqref{eq:snake:lip:esti} and $\dutrue$, we obtain
	\begin{align*}
		\euclOf{\festi(x_0) - \ftrue(x_0)}
		&\leq
		\euclOf{\festi(x_0) - \ftrue(\utrue(\hat\tau))} + \euclOf{\ftrue(\utrue(\hat\tau)) - \ftrue(x_0)}
		\\&=\label{eq:nnbound:split}
		\euclOf{\duesti(\hat\tau) - \dutrue(\hat\tau)} + \euclOf{\ftrue(\utrue(\hat\tau)) - \ftrue(x_0)}
        \eqfs
	\end{align*}
    Let us consider the second term of the last line.
	As $\ftrue$ is $L_1$-Lipschitz, we have
	\begin{equation*}
		\euclOf{\ftrue(\utrue(\hat\tau)) - \ftrue(x_0)} \leq L_1 \euclOf{\utrue(\hat\tau) - x_0}
		\eqfs
	\end{equation*}
	We use the triangle inequality in $\euclOf{\utrue(\hat\tau) - x_0} \leq \euclOf{\utrue(\hat\tau)-\uesti(\hat\tau)} + \euclOf{\uesti(\hat\tau) - x_0}$ and bound
	\begin{align*}
		\euclOf{\uesti(\hat\tau) - x_0}
		&\leq
		\euclOf{\uesti(\ttrue) - x_0}
		\\&\leq
		\euclOf{ \uesti(\ttrue) - \utrue(\ttrue)} + \euclOf{\utrue(\ttrue) - x_0}
		\eqcm
	\end{align*}
	where we used the minimizing property of $\hat\tau$ and again the triangle inequality.
	Putting all previous bounds together, we obtain
	\begin{align*}
    	&\euclOf{\festi(x_0) - \ftrue(x_0)}
    	\\&\leq
    	\euclOf{\duesti(\hat\tau) - \dutrue(\hat\tau)}
    	+
    	L_1 \br{\euclOf{\utrue(\hat\tau)-\uesti(\hat\tau)} + \euclOf{ \uesti(\ttrue) - \utrue(\ttrue)} + \euclOf{\utrue(\ttrue) - x_0}}
    	\eqfs
	\end{align*}
	The claim follows by taking supremum over all possible values of $\ttrue,\hat\tau\in[0,T]$ and noting the definition of $x_0$ and $\ttrue$.
\end{proof}
\begin{proof}[Proof of \cref{thm:snake:lip}]
	As $\ftrue\in\bar\Sigma(1, L_0, L_1)$, we have $\utrue\in\bar\Sigma(2, \infty, L_0, \tilde L_1)$, where $\tilde L_1$ depends on $L_0$ and $L_1$, by \cref{coro:trajSmooth}.
	We want to apply \cref{cor:localpoly:supnorm} to obtain bounds on the sup-norm of the error for $\uesti$ and $\duesti$.
    \assuRef{StrictKernel} together with the uniform grid $t_i = i\stepsize$ implies \assuRef{Kernel}, \assuRef{Eigenvalue}, and \assuRef{Cover} by \cref{prp:uniformgrid}.
    \assuRef{SubGaussian} is assumed and \eqref{eq:snake:Tbound} implies \eqref{eq:lp:Tbound}. Thus, \cref{cor:localpoly:supnorm} yields
	\begin{align*}
		\EOf{\sup_{t\in[0, T]}\euclOf{\uesti(t) - \utrue(t)}^2} &\asymleq \br{\frac{T \log(n)}{n}}^{\frac{4}{5}}
		\eqcm\\
		\EOf{\sup_{t\in[0, T]}\euclOf{\duesti(t) - \dutrue(t)}^2} &\asymleq \br{\frac{T \log(n)}{n}}^{\frac{2}{5}}
		\eqfs
	\end{align*}
	Together with \cref{lmm:nnbound}, we obtain
	\begin{align*}
		\EOf{\sup_{x\in[0,1]^d}\euclOf{\festi(x) - \ftrue(x)}^2}
		&\asymleq \delta^2 + \EOf{\sup_{t\in[0, T]}\euclOf{\uesti(t) - \utrue(t)}^2}  + \EOf{\sup_{t\in[0, T]}\euclOf{\duesti(t) - \dutrue(t)}^2}
		\\&\asymleq \delta^2 + \br{\frac{T\log (n)}{n}}^{\frac{4}{5}} + \br{\frac{T \log(n)}{n}}^{\frac25}
		\\&\asymleq \delta^2 + \br{\frac{T\log (n)}{n}}^{\frac25}
		\eqfs
	\end{align*}
\end{proof}
\subsection{General}\label{ssec:snake:general}
We now want to consider estimation under higher order smoothness. We generalize the estimation procedure of section \ref{sssec:snake:lip:esti} as follows: We replace the local linear estimation of the solution $\utrue$ and its derivative $\dutrue$ by arbitrary regression estimators $\Esti^{1\to d}$ and $\dEsti^{1\to d}$, respectively. E.g., this could be local polynomial estimators of the appropriate degree. Furthermore, the nearest neighbor interpolation is replaced by multivariate polynomial interpolation of the appropriate degree. The convergence rate results are given in a black box fashion, i.e., they depend on the convergence rates of the chosen regression estimators $\Esti^{1\to d}$ and $\dEsti^{1\to d}$.
\begin{notation}\mbox{ }
    \begin{enumerate}[label=(\roman*)]
        \item
        Let $A \subset \R^d$. Denote the diameter of $A$ as
        \begin{equation*}
            \diam(A) := \sup_{a,a\pr\in A} \euclOf{a-a\pr}
            \eqfs
        \end{equation*}
        \item
        Let $A \subset \R^d$. Denote the convex hull of $A$ as
        \begin{equation*}
            \hull(A) := \setByEle{\sum_{k=1}^K w_i a_i}{K\in\N,\ a_1,\dots,a_K\in A,\ w_1,\dots,w_K\in [0, 1], \sum_{k=1}^K w_i = 1}
            \eqfs
        \end{equation*}
        \item
        Let $A \subset \R^d$, $\mu\in\Rp$. Denote the relative $\mu$-interior of the convex hull of $A$ as
        \begin{equation*}
        	\hull_\mu(A) := \setByEle{x \in \R^d}{\ball^d(x,\mu\diam(A))\subset \hull(A)}
        	\eqfs
        \end{equation*}
    \end{enumerate}
\end{notation}
\subsubsection{Model}\label{sssec:snake:general:model}
The following is a restriction of the general model of section \ref{ssec:prelim:model}.

Let $d\in\N_{\geq 2}$, $\beta\in\N_{\geq2}$, and $\indset L0\beta\subset\Rpp$.
Set $\FSmooth := \bar\Sigma^{d\to d}(\beta, \indset{L}{0}{\beta})$, see \cref{def:Hoelder}.
Let $\ftrue\in\FSmooth$. Set $m=1$. Let $x_1 \in \R^d$, $n = n_1\in\N$, and $T := T_1 \in\Rpp$.
Set $t_{1,i} := t_i\in[0, T]$ for $i\in\nnzset{n}$ with $0 = t_0 \leq \dots \leq t_n = T$.
Let $\sigma\in\Rp$.
Let $\epsilon_i := \epsilon_{1,i}$, $i\in\nnset{n}$ be independent $\R^d$-valued random variables such that $\Eof{\epsilon_i} = 0$ and $\Eof{\euclof{\epsilon_i}^2} \leq \sigma^2$.
Set
\begin{equation*}
    Y_i := Y_{1,i} := U(\ftrue, \uztrue, t_i) + \epsilon_i \qquad \text{for } i\in\nnset{n}
    \eqfs
\end{equation*}
We observe $Y_i$ and know $x_1$ and $t_i$, but $\ftrue$ is unknown and to be estimated.
We assume $d$, $\indset{L}{0}{\beta}$, and $\sigma$ to be fixed. For an estimator $\festi$, we are interested in asymptotic upper bounds for the sup-norm of $\ftrue - \festi$ in the domain of interest $[0,1]^d$ in probability depending on the asymptotics of $n$ and $T$.
\begin{remark}\mbox{ }
    \begin{enumerate}[label=(\roman*)]
        \item
            It does not matter whether $x_1$ is known or not: Below, we derive upper bounds on the estimation error for the potentially more difficult problem of unknown $x_1$. The lower bounds from \cite[Corollary 4.11]{lowerbounds} are formulated with known $x_1$, which are then also lower bounds for the case of unknown $x_1$.
        \item
            This model and following estimation procedure can easily be generalized to larger $m$ as long as $\min_{j\in\nnset m} n_j \asymgt 1$ fast enough and the $t_{j,i}$ behave in an appropriate way.
   \end{enumerate}
\end{remark}
\subsubsection{Multivariate Polynomial Interpolation}\label{sssec:snake:general:poly}
The estimation strategy for the model described in section \ref{sssec:snake:general:model} uses multivariate polynomial interpolation. In this section, we introduce some basics of polynomial interpolation and some further objects that are required to describe the ODE estimator. Recall \cref{not:poly} on polynomials.
\begin{definition}[Polynomial interpolation]\label{def:poly}
    Let $\ell,\dm x,\dm y\in\N$.
    \begin{enumerate}[label=(\roman*)]
        \item
        Set
        \begin{equation*}
            N := N_{{\dm x},\ell} := \dim(\Poly{\dm x}\ell) = \begin{pmatrix} \ell + {\dm x}\\{\dm x}\end{pmatrix}
            \eqfs
        \end{equation*}
        \item
        For $x\in\R^{\dm x}$, denote the vector of monomials of degree at most $\ell$ in $\dm x$ dimensions as
        \begin{equation*}
            \psi(x) := \psi_{\ell}(x) := (x^\alpha)_{|\alpha|\leq \ell} \in \R^N
            \eqfs
        \end{equation*}
        For $M\in\N$, $\mo x = (x_1, \dots, x_M)\in(\R^{\dm x})^M$, denote $\Psi(\mo x) := \Psi_{\ell}(\mo x) := \br{\psi(x_1), \dots, \psi(x_M)}\tr \in \R^{M\times N}$.
        \item
        For $x\in\R^{\dm x}$, $\mo x\in(\R^{\dm x})^N$ with $\Psi(\mo x) \in \R^{N \times N}$, $\mo y\in(\R^{\dm y})^N = \R^{N \times {\dm y}}$, denote the \textit{componentwise polynomial interpolation} as
        \begin{equation*}
            I(\mo x, \mo y, \cdot ) := I_\ell(\mo x, \mo y, \cdot ) \colon \R^{\dm x} \to \R^{\dm y},\, x \mapsto
            \begin{cases}
                \psi(x)\tr\Psi(\mo x)^{-1}\mo y &\text{ if } \Psi(\mo x) \text{ is invertible,}\\
                0 &\text{ otherwise.}
            \end{cases}
        \end{equation*}
    \end{enumerate}
\end{definition}
\begin{remark}\mbox{ }
    \begin{enumerate}[label=(\roman*)]
        \item
        Let $\mo x = (x_1, \dots, x_N) \in (\R^{\dm x})^N$ and $\mo y = (y_1, \dots, y_N) \in (\R^{\dm y})^N$.
        The function $I(\mo x, \mo y, \cdot )$ is an \textit{interpolation} as $I(\mo x, \mo y, x_k) = y_k$. It is \textit{polynomial} as $x\mapsto \Pi_k I(\mo x, \mo y, x) \in \Poly{\dm x}{\ell}$ for each $k\in\nnset{\dm y}$. It is \textit{componentwise} as
        \begin{equation*}
            I(\mo x, \mo y, x) = \br{I\brOf{\mo x, \br{\Pi_k y_1, \dots \Pi_k y_N}, x}}_{k\in\nnset{\dm y}}
            \eqfs
        \end{equation*}
        \item The definition of $I(\mo x, \mo y, \cdot ) = 0 $ if $\Psi(\mo x)$ is not invertible is for convenience only. The value 0 is not of importance.
        \item
        If $\dm x = 1$, the matrix $\Psi(\mo x)\in\R^{N\times N}$ is also called \textit{Vandermonde matrix}. It is invertible if and only if all $x_k\in\R$ are distinct. In the multivariate case, conditions for invertibility of $\Psi(\mo x)$ (and hence, unique existence of the polynomial interpolation) are more complex, see \cite[Theorem 4.1]{Olver06}.
    \end{enumerate}
\end{remark}
The geometric configuration of the base points $\mo x$ for the multivariate polynomial interpolation influences the approximation quality. We next define a normalization that removes location and scale from $\mo x$ to isolate the geometric configuration. 
\begin{definition}[Normalization]\label{def:normalization}
    Let $d,N\in\N$.
    For $\mo x = (x_1, \dots, x_N) \in(\R^d)^N$ with $\diam(\mo x) >0$, let
    \begin{equation*}
        \eta_{\mo x}\colon \R^d \to \R^d\eqcm
        x \mapsto \frac{x-\frac{1}{N}\sum_{k=1}^{N}x_k}{\diam(\mo x)}
        \eqfs
    \end{equation*}
    Furthermore, denote  $\eta_{\mo x}(\tilde{\mo x}) = (\eta_{\mo x}(\tilde x_1), \dots, \eta_{\mo x}(\tilde x_M))$ for $\tilde{\mo x} = (\tilde x_1, \dots, \tilde x_M) \in(\R^d)^M$, $M\in\N$.
\end{definition}
Next, we define the set $\mc H$ of sets of base points $\mo x$ that can be used in a polynomial interpolation to suitably approximate the function value at $x$, the set $\mc G$ of points $x$ with $\mc H \neq \emptyset$, and an (almost) optimal choice $\chi$ of $\mo x \in \mc H$.
\begin{definition}\label{def:snake:general:sets}
    Let $d,\ell\in\N$, $N = \dim(\Poly d\ell)$.
    Let $\mc X \subset \R^d$ and $s,\delta\in\Rpp$.
    \begin{enumerate}[label=(\roman*)]
        \item
        Let $\mu\in\Rp$.
        Define $\mc H_{\ell, s, \mu}(\mc X, x)$ as the set of all sets $\mo x\in \mc X^N$ such that $x \in \hull_\mu(\mo x)$ and $\opNormOf{\Psi(\eta_{\mo x}(\mo x))^{-1}} \leq s$.
        \item
        Fix a constant $D \in (1,\infty)$.
        Fix a constant $\mu \in (0, 1/N)$.
        Define $\mc G_{\ell,\delta,s}(\mc X)$ as the set of all points $x \in \R^d$ with following property: There is a set of points $\mo x\in \mc H_{\ell, s, \mu}(\mc X, x)$ with $D^{-1}\delta \leq \diam(\mo x) \leq D\delta$.
        \item
        Let $\chi_{\ell, s}(\mc X, x)$ be an element $\mo x_0 \in \mc H_{\ell, 2s, 0}(\mc X, x)$ such that
        \begin{equation*}
            \opNormOf{\Psi(\eta_{\mo x_0}(\mo x_0))^{-1}} \diam(\mo x_0)^{\ell+1} \leq 2 \min_{\mo x \in \mc H_{\ell, 2s, 0}(\mc X, x)}\opNormOf{\Psi(\eta_{\mo x}(\mo x))^{-1}} \diam(\mo x)^{\ell+1}
            \eqfs
        \end{equation*}
    \end{enumerate}
\end{definition}
\begin{remark}
	The parameters $\mu,s,D$ are fixed and do not change with $n$. They ensure the stability of the multivariate polynomial interpolation and are used in some technical arguments of the main proof.
	\begin{enumerate}[label=(\roman*)]
		\item For the polynomial interpolation, we have to solve a system of linear equations given by the matrix $\Psi(\mo x)$. The requirements $\opNormOf{\Psi(\eta_{\mo x}(\mo x))^{-1}} \leq s$ and $\diam(\mo x) \leq D\delta$ ensure stability and precision of the obtained approximation.
		\item 
		The lower bound on the diameter of the interpolation base points, $D^{-1}\delta \leq \diam(\mo x)$, ensures the stability of the normalization $\eta_{\mo x}$, where we divide by $\diam(\mo x)$.
		\item 
		Multivariate polynomial approximation results are known for points $x_0\in\R^d$ in the convex hull of the base points $\mo x\in (\R^d)^N$, see \cref{lmm:polyinterpol}. To ensure that $x_0$ belongs also to the convex hull of sufficiently accurate estimates of the base points $\hat{\mo x}$, we employ the concept of the $\mu$-interior of the convex hull: Specifically, $x_0\in \hull_\mu(\mo x)$ with $\mu>0$ implies $x_0\in \hull(\hat{\mo x})$ if the estimation error is small enough relative $\diam(\mo x)$.
	\end{enumerate}
\end{remark}
\subsubsection{Estimator}\label{sssec:snake:general:esti}
Let $L\in\Rpp$.
Let $\Esti^{1\to d}$ and $\dEsti^{1\to d}$ be arbitrary regression estimators for the regression function and its derivative in the standard regression model $\mf P := \mf P^{1 \to d}(\Sigma^{1\to d}(\beta+1, L), T, (t_i)_{i\in\nnset n}, \sigma)$, see section \ref{ssec:prelim:regression}.
With these estimators, we estimate the observed ODE-solution and its derivative,
\begin{equation*}
    \uesti(t) :=  \Esti^{1\to d}\brOf{(t_i, Y_i)_{i\in\nnset{n}}, t}
    \qquad\text{and}\qquad
    \duesti(t) := \dEsti^{1\to d}\brOf{(t_i, Y_i)_{i\in\nnset{n}}, t}
    \eqfs
\end{equation*}
Denote the maximal sup-norm risk in probability of the regression estimators, see \eqref{eq:reg:supnprmrisk}, as
\begin{align*}
    \rate := \rate(n, \beta+1,  L, T) &:= r^{\ms{pr}, \ms{sup}}_0(\Esti^{1\to d}, \mf P)
    \eqcm\\
    \drate := \drate(n, \beta+1,  L, T) &:= r^{\ms{pr}, \ms{sup}}_1(\dEsti^{1\to d}, \mf P)
    \eqfs
\end{align*}
Set $\ell := \beta-1$.
Fix $s \in\Rpp$. Let $x\in\R^d$. Assume $\chi_{\ell, s}(\uesti([0, T]), x)$ exists.
Let $\hat{\mo x}(x) := (\uesti(\hat\tau_1), \dots, \uesti(\hat\tau_N))$, where $\hat\tau_i$ is chosen so that $\mo{\hat x}(x) = \chi_{\ell, s}(\uesti([0, T]), x)$.
Let $\hat{\mo y}(x) := (\duesti(\hat\tau_1), \dots, \duesti(\hat\tau_N))$. Define the estimator
\begin{equation}\label{eq:snake:general:esti}
	\festi(x) := I_\ell(\mo{\hat x}(x), \hat{\mo y}(x), x)
	\eqfs
\end{equation}
\begin{remark}\mbox{ }\label{rem:snake:general:estimator}
    The estimator is only defined for $x\in\mc G_{\ell,\infty,s}(\uesti([0, T]))$, which is a subset of the convex hull of $\uesti([0, T])$. If it is not defined, a fallback such as nearest neighbor (see section \ref{ssec:snake:lipschitz}) could be applied in practice.
\end{remark}
\subsubsection{Results}
For $\mc X \subset \R^d$ and a function $u\colon[0, T]\to \R^d$, define
\begin{equation*}
    \delta_{\ms{NN}}^{\ell, s}(u, T, \mc X) := \inf\cbOf{\delta\pr \geq 0\colon \mc X \subset \mc G_{\ell,\delta\pr,s}(u([0,T]))}
    \eqfs
\end{equation*}
\begin{theorem}\label{thm:snake:general}
    Use the model of section \ref{sssec:snake:general:model} and the estimator of section \ref{sssec:snake:general:esti}.
	Let $\delta \in\Rp$, potentially changing with $n$.
    Assume $T \asymlt n$, $\rate \asymlt 1$, $\rate \asymlt \delta$, $\rate \asymleq \drate \asymleq 1$.
    Set $L : = \beta!\sup_{k\in\nnset\beta}{L_k}^{\beta+1}$.
    \begin{enumerate}[label=(\roman*)]
        \item
        Set $\mc X := \mc G_{\ell,\delta,s}(\utrue([0,T]))$.
        Then
        \begin{equation*}
            \sup_{x\in \mc X}\euclOf{\festi(x) - \ftrue(x)}^2\in \Op\brOf{\delta^{2\beta} + \drate(n, \beta+1,  L, T)^2}
            \eqfs
        \end{equation*}
        \item
        Set $\delta := \delta_{\ms{NN}}^{\ell, s}(\utrue, T, [0,1]^d)$.
        Then
        \begin{equation*}
            \sup_{x\in [0,1]^d}\euclOf{\festi(x) - \ftrue(x)}^2\in \Op\brOf{\delta^{2\beta} + \drate(n, \beta+1,  L, T)^2}
            \eqfs
        \end{equation*}
    \end{enumerate}
\end{theorem}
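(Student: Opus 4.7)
The plan is to exploit the ODE identity $\ftrue(\utrue(t))=\dutrue(t)$: interpolating $\dutrue$ at points $\utrue(\tau_i)$ is equivalent to interpolating $\ftrue$ at these points. Fix $x\in\mc X$ and let $\hat\tau_1,\dots,\hat\tau_N$ be the times such that $\hat{\mo x}(x)=\chi_{\ell,s}(\uesti([0,T]),x)=(\uesti(\hat\tau_i))_i$. Introduce the oracle base points and values on the true trajectory,
\begin{equation*}
\mo x^\star(x):=(\utrue(\hat\tau_1),\dots,\utrue(\hat\tau_N)),\qquad \mo y^\star(x):=(\dutrue(\hat\tau_1),\dots,\dutrue(\hat\tau_N)),
\end{equation*}
and observe that $\mo y^\star_i=\ftrue(\mo x^\star_i)$. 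Then split
\begin{equation*}
\festi(x)-\ftrue(x)=\bigl[I_\ell(\hat{\mo x},\hat{\mo y},x)-I_\ell(\mo x^\star,\mo y^\star,x)\bigr]+\bigl[I_\ell(\mo x^\star,\mo y^\star,x)-\ftrue(x)\bigr].
\end{equation*}

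\textbf{Interpolation term.} Because $\mo y^\star_i=\ftrue(\mo x^\star_i)$, the second bracket is the multivariate polynomial interpolation error of $\ftrue$ at the nodes $\mo x^\star$ evaluated at $x$. Applying the standard interpolation bound from appendix \ref{app:sec:interpol} with $\ell=\beta-1$ yields a bound of order $\opNormOf{\Psi(\eta_{\mo x^\star}(\mo x^\star))^{-1}}\cdot L_\beta\cdot \diam(\mo x^\star)^\beta$, provided $x\in\hull(\mo x^\star)$. To secure these conditions I would use the $\mu$-interior property: since $\|\uesti-\utrue\|_\infty\in\Op(\rate)$ and $\rate\asymlt\delta$, we have $\|\hat{\mo x}_i-\mo x^\star_i\|\in\Op(\rate)\asymlt\mu\diam(\hat{\mo x})$, so the containment $x\in\hull_\mu(\hat{\mo x})$ coming from $\hat{\mo x}\in\mc H_{\ell,2s,\mu}$ upgrades to $x\in\hull(\mo x^\star)$. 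The same small-perturbation argument, together with the constants $D,\mu$ built into $\mc G_{\ell,\delta,s}$ and the near-optimality of $\chi$, keeps $\diam(\mo x^\star)\asymeq\delta$ and $\opNormOf{\Psi(\eta_{\mo x^\star}(\mo x^\star))^{-1}}$ uniformly bounded, so this whole term is $\Op(\delta^\beta)$.

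\textbf{Stability term.} Writing $I_\ell(\mo x,\mo y,x)=\psi(x)\tr\Psi(\mo x)^{-1}\mo y$ and splitting,
\begin{equation*}
I_\ell(\hat{\mo x},\hat{\mo y},x)-I_\ell(\mo x^\star,\mo y^\star,x)=\psi(x)\tr\Psi(\hat{\mo x})^{-1}(\hat{\mo y}-\mo y^\star)+\psi(x)\tr\bigl(\Psi(\hat{\mo x})^{-1}-\Psi(\mo x^\star)^{-1}\bigr)\mo y^\star.
\end{equation*}
After normalization by $\eta_{\hat{\mo x}}$ the operator norm of $\Psi(\eta_{\hat{\mo x}}(\hat{\mo x}))^{-1}$ is bounded by $2s$ by construction of $\chi$, so the first summand is controlled by $\|\hat{\mo y}-\mo y^\star\|_\infty\le\|\duesti-\dutrue\|_\infty\in\Op(\drate)$, giving an $\Op(\drate)$ contribution. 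For the second summand, use $\Psi(\hat{\mo x})^{-1}-\Psi(\mo x^\star)^{-1}=\Psi(\hat{\mo x})^{-1}(\Psi(\mo x^\star)-\Psi(\hat{\mo x}))\Psi(\mo x^\star)^{-1}$; entries of $\Psi(\mo x^\star)-\Psi(\hat{\mo x})$ are polynomial differences of degree $\le\ell$ of inputs differing by at most $\Op(\rate)$, and $\|\mo y^\star\|_\infty\le L_0$, producing an $\Op(\rate)$ contribution that is absorbed into $\Op(\drate)$ by the hypothesis $\rate\asymleq\drate$.

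\textbf{Main obstacle and part (ii).} The principal technical hurdle is the uniform-in-$x$ stability step: one must transfer the admissibility guarantees packaged into $\chi_{\ell,s}(\uesti([0,T]),x)\in\mc H_{\ell,2s,0}$ into statements about the oracle configuration relative to $\utrue([0,T])$ and $\mc H_{\ell,s,\mu}$. The technical slack built into the definition of $\chi$ (namely $2s$ vs $s$, and the constants $D,\mu$) is precisely designed so that this transition goes through under $\rate\asymlt\delta$. Combining the two bounds gives $\euclof{\festi(x)-\ftrue(x)}^2\in\Op(\delta^{2\beta}+\drate^2)$ uniformly over $\mc X=\mc G_{\ell,\delta,s}(\utrue([0,T]))$, which proves part~(i). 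For part~(ii), the choice $\delta=\delta_{\ms{NN}}^{\ell,s}(\utrue,T,[0,1]^d)$ ensures by definition that $[0,1]^d\subset\mc G_{\ell,\delta,s}(\utrue([0,T]))$, and part~(i) applied with $\mc X=[0,1]^d$ yields the claimed uniform bound.
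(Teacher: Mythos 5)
Your overall architecture (estimate $\utrue,\dutrue$, interpolate, split into a stochastic term and a deterministic interpolation term) matches the paper, but your specific decomposition through the oracle configuration $\mo x^\star=(\utrue(\hat\tau_i))_i$, $\mo y^\star=(\dutrue(\hat\tau_i))_i$ creates two gaps that the paper's decomposition avoids. First, your interpolation term $I_\ell(\mo x^\star,\mo y^\star,x)-\ftrue(x)$ needs $x\in\hull(\mo x^\star)$, and your justification rests on the claim that $\hat{\mo x}\in\mc H_{\ell,2s,\mu}$ with $\mu>0$. That is a misreading of the construction: $\chi_{\ell,s}$ selects from $\mc H_{\ell,2s,0}$, so the only guaranteed containment is $x\in\hull(\hat{\mo x})$, possibly on its boundary; perturbing the nodes by $\gamma=\sup_t\euclOf{\uesti(t)-\utrue(t)}$ can then push $x$ outside $\hull(\mo x^\star)$, and \cref{lmm:polyinterpol} no longer applies. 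The $\mu$-interior condition lives on the \emph{oracle} configuration in the definition of $\mc G_{\ell,\delta,s}$ and is used in the opposite direction (to show $\mc H_{\ell,2s,0}(\uesti([0,T]),x)\neq\emptyset$, i.e.\ that $\festi(x)$ exists at all --- a step you also need but do not address). The paper sidesteps the hull problem by taking the deterministic term to be $I_\ell(\hat{\mo x},\check{\mo y},x)-\ftrue(x)$ with $\check{\mo y}_i=\ftrue(\uesti(\hat\tau_i))$, so that nodes and values are consistent on exactly the configuration for which $x\in\hull(\hat{\mo x})$ is known.

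Second, your stability term's summand $\psi(x)\tr\bigl(\Psi(\hat{\mo x})^{-1}-\Psi(\mo x^\star)^{-1}\bigr)\mo y^\star$ is not $\Op(\rate)$ as claimed. The resolvent identity must be applied to the \emph{normalized} matrices (and the two configurations have different normalizations $\eta_{\hat{\mo x}}\neq\eta_{\mo x^\star}$); the correct perturbation bound, as in \cref{lmm:perturb}, is of order $s^2\gamma/\diam(\mo x)\asymeq\rate/\delta$. Since $\euclOf{\mo y^\star_i}\leq L_0$ is only $O(1)$ (not $O(\delta^{\beta})$), this summand contributes $\Op(\rate/\delta)$, which is $o(1)$ but need not be dominated by $\drate+\delta^{\beta}$ under the stated hypotheses ($\rate\asymleq\drate$ does not give $\rate/\delta\asymleq\drate$). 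In the paper the inverse-matrix perturbation only ever multiplies $\diam(\hat{\mo x})^{\beta}\asymeq\delta^{\beta}$ (it enters through the interpolation constant $C_\ell(\hat{\mo x})$ and the near-minimality of $\chi$), yielding a contribution $\asymleq(\rate/\delta)\delta^{\beta}\asymlt\delta^{\beta}$, which is harmless. To repair your argument you would essentially have to re-route both problematic pieces through the pair $(\hat{\mo x},\check{\mo y})$, which is the paper's proof.
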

We want to use the componentwise local polynomial estimators of appendix \ref{app:sec:localreg} as the estimators $\Esti^{1\to d}$ and $\dEsti^{1\to d}$ (appendix \ref{app:sec:localreg}: $d=1$, $\ell = \beta$, and $s = 0$ and $s = 1$, respectively). We assume \assuRef{Kernel}, \assuRef{Eigenvalue}, and \assuRef{Cover} for the observation times $t_i$ and \assuRef{SubGaussian} for the noise $\noise_i$. Furthermore, we require \eqref{eq:lp:Tbound}. Then, assuming $L \asymeq 1$,  \cref{cor:localpoly:supnorm} yields
\begin{align*}
    \rate(n, \beta+1,  L, T) &\asymleq \br{\frac{T\log n}{n}}^{\frac{\beta+1}{2(\beta+1)+1}}\eqcm\\
    \drate(n, \beta+1,  L, T) &\asymleq \br{\frac{T\log n}{n}}^{\frac{\beta}{2(\beta+1)+1}}
    \eqfs
\end{align*}
\begin{corollary}\label{cor:snake:general}
   Use the model of section \ref{sssec:snake:general:model} and the estimator of section \ref{sssec:snake:general:esti} with the componentwise local polynomial estimator of degree $\beta$ (see appendix \ref{app:sec:localreg}) as $\Esti^{1\to d}$ and $\dEsti^{1\to d}$.
   Let $\delta \in\Rp$, potentially changing with $n$, and assume
   \begin{equation*}
       \br{\frac{T\log n}{n}}^{\frac{\beta+1}{2(\beta+1)+1}} \asymlt \delta
       \eqfs
   \end{equation*}
   Assume \assuRef{Kernel}, \assuRef{Eigenvalue}, \assuRef{Cover}, where the symbols $d$ and $x_i$ in the assumptions are $d=1$ and $\indset x1n  = \indset t1n$. Assume \assuRef{SubGaussian} and
   \begin{equation*}
       \br{\frac{\log(n)}{n}}^{\frac{1}{2(\beta+1)}} \asymleq T \asymlt n
       \eqfs
   \end{equation*}
    \begin{enumerate}[label=(\roman*)]
        \item\label{cor:snake:general:ball}
        Set $\mc X := \mc G_{\ell,\delta,s}(\utrue([0,T]))$.
        Then
        \begin{equation*}
            \sup_{x\in \mc X}\euclOf{\festi(x) - \ftrue(x)}^2 \in
            \Op\brOf{\delta^{2\beta} + \br{\frac {T \log n}n}^{\frac{2\beta}{2(\beta+1)+1}}}
            \eqfs
        \end{equation*}
        \item\label{cor:snake:general:cube}
        Set $\delta := \delta_{\ms{NN}}^{\ell, s}(\utrue, T, [0,1]^d)$.
        Then
        \begin{equation}\label{eq:snake:general:cor}
            \sup_{x\in [0,1]^d}\euclOf{\festi(x) - \ftrue(x)}^2 \in
            \Op\brOf{\delta^{2\beta} + \br{\frac {T \log n}n}^{\frac{2\beta}{2(\beta+1)+1}}}
            \eqfs
        \end{equation}
    \end{enumerate}
\end{corollary}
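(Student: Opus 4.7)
The plan is to derive this corollary as a direct specialization of \cref{thm:snake:general} with the chosen local polynomial estimators, so the work is entirely a matter of verifying the hypotheses of \cref{thm:snake:general} and of the sup-norm local polynomial error bound \cref{cor:localpoly:supnorm}, then substituting the resulting rates for $\rate$ and $\drate$.

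First I would invoke \cref{cor:localpoly:supnorm} for the componentwise local polynomial estimators $\Esti^{1\to d}$ and $\dEsti^{1\to d}$ of degree $\beta$ to justify
\begin{equation*}
    \rate(n,\beta+1,L,T) \asymleq \br{\tfrac{T\log n}{n}}^{\frac{\beta+1}{2(\beta+1)+1}}\eqcm
    \qquad
    \drate(n,\beta+1,L,T) \asymleq \br{\tfrac{T\log n}{n}}^{\frac{\beta}{2(\beta+1)+1}}\eqfs
\end{equation*}
For this I need \assuRef{Kernel}, \assuRef{Eigenvalue}, \assuRef{Cover} (hypothesized), \assuRef{SubGaussian} (hypothesized), the fact that the regression function $\utrue\in\bar\Sigma^{1\to d}(\beta+1,\dots)$ with constants controlled by $\indset L0\beta$ (which follows from \cref{coro:trajSmooth}, giving $L\asymeq 1$ as a function of the fixed $\indset L0\beta$), and the sup-norm regime condition \eqref{eq:lp:Tbound}. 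The latter is exactly what the hypothesis $T \asymgeq (\log n/n)^{1/(2(\beta+1))}$ and $T\asymlt n$ guarantees.

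Next I would verify the four conditions needed to apply \cref{thm:snake:general}: $T\asymlt n$ is hypothesized directly; $\rate\asymleq\drate\asymleq 1$ follows from the displayed rates since $T\log n /n \to 0$ under $T\asymlt n$; and $\rate \asymlt \delta$ is precisely the hypothesis
\begin{equation*}
    \br{\tfrac{T\log n}{n}}^{\frac{\beta+1}{2(\beta+1)+1}} \asymlt \delta
    \eqfs
\end{equation*}
The constant $L := \beta!\sup_{k\in\nnset\beta}L_k^{\beta+1}$ prescribed by \cref{thm:snake:general} is fixed and depends only on $\indset L0\beta$, so it behaves as a constant in the asymptotic bounds of the local polynomial estimator.

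With all hypotheses verified, \cref{thm:snake:general} gives, in both \ref{cor:snake:general:ball} and \ref{cor:snake:general:cube}, the bound
\begin{equation*}
    \sup_{x\in \mc X}\euclOf{\festi(x)-\ftrue(x)}^2 \in \Op\brOf{\delta^{2\beta} + \drate(n,\beta+1,L,T)^2}
    \eqcm
\end{equation*}
and substituting the local polynomial sup-norm rate $\drate^2 \asymleq (T\log n/n)^{2\beta/(2(\beta+1)+1)}$ yields the claimed bound. Part \ref{cor:snake:general:cube} is obtained by choosing $\delta=\delta_{\ms{NN}}^{\ell,s}(\utrue,T,[0,1]^d)$, in which case $[0,1]^d \subset \mc G_{\ell,\delta,s}(\utrue([0,T]))$ by definition of $\delta_{\ms{NN}}^{\ell,s}$, so the sup over $[0,1]^d$ is controlled by the sup over $\mc G_{\ell,\delta,s}(\utrue([0,T]))$.

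I expect no serious obstacle: the whole argument is a bookkeeping exercise with the rates, and the only mildly delicate step is making sure the hypothesis $T\asymgeq (\log n / n)^{1/(2(\beta+1))}$ is exactly the regime where \eqref{eq:lp:Tbound} holds so that \cref{cor:localpoly:supnorm} applies with the stated sup-norm rates for both $\rate$ and $\drate$.
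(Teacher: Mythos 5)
Your proposal is correct and matches the paper's (implicit) proof exactly: the paper likewise derives the rates for $\rate$ and $\drate$ from \cref{cor:localpoly:supnorm} in the paragraph preceding the corollary (using \cref{coro:trajSmooth} to place $\utrue$ in the smoothness class with $L\asymeq 1$ and the condition on $T$ to secure \eqref{eq:lp:Tbound}) and then substitutes them into \cref{thm:snake:general}. One minor slip: $T\asymlt n$ does \emph{not} imply $T\log n/n\to 0$ (take $T=n/\log\log n$); to verify $\rate\asymlt 1$ and $\rate\asymleq\drate\asymleq 1$ you should instead combine $\rate\asymlt\delta$ with the boundedness of $\delta$ in the regime where the stated bound is non-vacuous.
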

\begin{remark}\mbox{ }
    \begin{enumerate}[label=(\roman*)]
        \item From \cite[Corollary 4.11]{lowerbounds}, we obtain
        \begin{equation*}
            \EOf{\sup_{x\in [0,1]^d}\euclOf{\festi(x) - \ftrue(x)}^2} \asymgeq \delta^{2\beta} + \br{\delta^{d-1}\frac{T \log(n)}{n}}^{\frac{2\beta}{2(\beta+1) + d}}\eqfs
        \end{equation*}
        if $\log (n) \asymeq \log(\delta^{-(d-1)}n T^{-1})$ and
        \begin{equation*}
            \max\brOf{\br{\frac{T}{n}}^{2\beta+d+1}, T^{-1}} \asymleq \delta^{d-1} \asymleq \min\brOf{1, \frac{n}{T}}
            \eqfs
        \end{equation*}
        Thus, the rate given in \eqref{eq:snake:general:cor} is minimax optimal if $\delta^{2\beta}$ is the dominating term the error bound, i.e., if
        \begin{equation*}
            \delta \asymgeq \br{\frac {T \log n}n}^{\frac{1}{2(\beta+1)+1}}
            \eqfs
        \end{equation*}
        This condition is fulfilled in the setting of \cref{cor:snake:general:fast} below.
        See \cref{rem:snake:results:lip} for a discussion of the condition for $\beta =1$.
        \item
        In section \ref{ssec:snake:lipschitz}, we can extrapolate beyond the convex hull of $\uesti([0,T])$ in the case of $\beta=1$ using nearest neighbors. In contrast, our general results are only available for interpolation, i.e., for $x \in \hull(\uesti([0,T]))$ --- to be precise, only for the even more restrictive assumption $x\in\mc G_{\ell,\delta,s}(\utrue([0,T]))$. Extrapolation seems also possible using polynomials of degree $>0$, but the technicalities seem more difficult.
        \item
        In practice, finding $\chi_{\ell, s}$ may be computationally demanding. Furthermore, using only results of interpolation may be inconveniently restrictive. Thus, one may want to replace the polynomial interpolation step by a polynomial regression step on the $k \geq N$ nearest neighbors of $\uesti(\tau_j)$ for some chosen time points $\tau_1, \dots, \tau_J \in [0,T]$. Another alternative to polynomials would be Gaussian process interpolation (or regression) \cite{Rasmussen2006Gaussian}, potentially also restricted to $k$ nearest neighbors for performance reasons.
    \end{enumerate}
\end{remark}
The lowest upper bound on the error is obtained if the two terms in \eqref{eq:snake:general:cor} are balanced. Additionally, we require $T \delta^{d-1} \asymgeq 1$ to be able to cover the domain of interest with $\ball^d(U(\ftrue, x_1, [0, T]), \delta)$, which is necessary for $[0,1]^d \subset \mc G_{\ell,\delta,s}(\utrue([0,T]))$.
\begin{corollary}\label{cor:snake:general:fast}
    Use the setting and assumptions of \cref{cor:snake:general}.
    Set $\delta := \delta_{\ms{NN}}^{\ell, s}(\utrue, T, [0,1]^d)$.
    Assume $T \asymleq \br{\frac{n}{\log n}}^{\frac{d-1}{2(\beta+1)+d}}$ and $\delta \asymleq \br{\frac{n}{\log n}}^{-\frac{1}{2(\beta+1)+d}}$.
    Then
    \begin{equation*}
    	\sup_{x\in[0,1]^d}\euclOf{\festi(x) - \ftrue(x)}^2 \in \Op\brOf{
    	\br{\frac n{\log n}}^{-\frac{2\beta}{2(\beta+1)+d}}}
    	\eqfs
    \end{equation*}
\end{corollary}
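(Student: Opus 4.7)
The plan is to invoke \cref{cor:snake:general}\ref{cor:snake:general:cube} directly and reduce to an exponent calculation. That corollary yields
\begin{equation*}
\sup_{x\in [0,1]^d}\euclOf{\festi(x) - \ftrue(x)}^2 \in \Op\brOf{\delta^{2\beta} + \br{\frac{T \log n}{n}}^{\frac{2\beta}{2(\beta+1)+1}}}
\eqcm
\end{equation*}
so the task reduces to plugging in the sharper asymptotic bounds $T \asymleq (n/\log n)^{(d-1)/(2(\beta+1)+d)}$ and $\delta \asymleq (n/\log n)^{-1/(2(\beta+1)+d)}$ and showing both summands collapse to the common order $(n/\log n)^{-2\beta/(2(\beta+1)+d)}$. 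No new probabilistic argument is required.

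\textbf{Balancing the two summands.} From the upper bound on $\delta$ it is immediate that $\delta^{2\beta} \asymleq (n/\log n)^{-2\beta/(2(\beta+1)+d)}$. For the second summand, using $\log n/n = (n/\log n)^{-1}$ and the upper bound on $T$, I would compute
\begin{equation*}
\frac{T \log n}{n} \asymleq \br{\frac{n}{\log n}}^{\frac{d-1}{2(\beta+1)+d} - 1} = \br{\frac{n}{\log n}}^{-\frac{2(\beta+1)+1}{2(\beta+1)+d}}
\eqcm
\end{equation*}
since $(d-1) - (2(\beta+1)+d) = -(2(\beta+1)+1)$. Raising to the power $2\beta/(2(\beta+1)+1)$ cancels the factor $2(\beta+1)+1$ in the exponent, producing exactly $(n/\log n)^{-2\beta/(2(\beta+1)+d)}$. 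So the two summands are of the same order, and the claim follows.

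\textbf{Hypothesis check and main obstacle.} It remains to verify that the hypotheses of \cref{cor:snake:general} still hold. The kernel, cover, eigenvalue, and sub-Gaussian assumptions are inherited from the setting of \cref{cor:snake:general}; the lower bound $T \asymgeq (\log n/n)^{1/(2(\beta+1))}$ is also assumed there; and $T \asymleq (n/\log n)^{(d-1)/(2(\beta+1)+d)} \asymlt n$ since $(d-1)/(2(\beta+1)+d) < 1$. The only nontrivial point is the compatibility condition $(T \log n/n)^{(\beta+1)/(2(\beta+1)+1)} \asymlt \delta$. The same telescoping as above with exponent $\beta+1$ in place of $2\beta$ gives
\begin{equation*}
\br{\frac{T \log n}{n}}^{\frac{\beta+1}{2(\beta+1)+1}} \asymleq \br{\frac{n}{\log n}}^{-\frac{\beta+1}{2(\beta+1)+d}}
\eqcm
\end{equation*}
which, since $\beta+1 > 1$, is of strictly smaller order than $(n/\log n)^{-1/(2(\beta+1)+d)}$. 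The geometric covering constraint $T \delta^{d-1} \asymgeq 1$ (needed for $[0,1]^d \subset \ball^d(\utrue([0,T]), \delta)$, with speed bounded by $L_0$) pins $\delta$ to the order of its stated upper bound in this regime, so the compatibility condition holds. The main obstacle is therefore nothing more than this bookkeeping of exponents together with the consistency check between $\delta$ and $T$; the content of the statement is otherwise immediate from \cref{cor:snake:general}\ref{cor:snake:general:cube}.
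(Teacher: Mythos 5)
Your proposal is correct and follows exactly the route the paper intends: the corollary is stated as an immediate consequence of \cref{cor:snake:general}~(ii), obtained by substituting the assumed bounds on $T$ and $\delta$ and checking that both summands reduce to $(n/\log n)^{-2\beta/(2(\beta+1)+d)}$; the exponent arithmetic and the consistency check via $T\delta^{d-1}\asymgeq 1$ match the paper's surrounding remarks. Nothing further is needed.
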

\subsubsection{Proof}
\begin{lemma}\label{lmm:snake:general:exist}
	Let $\delta, s \in \Rpp$.
	Let
	\begin{equation*}
		\gamma := \sup_{t\in[0, T]}\euclOf{\uesti(t) - \utrue(t)}
		\eqfs
	\end{equation*}
	Assume $\gamma \asymlt \min(1, \delta)$.
	Then $\festi(x)$ eventually (meaning for $n$ large enough) exists for all $x \in \mc G_{\beta,\delta, s}(\utrue([0, T]))$.
\end{lemma}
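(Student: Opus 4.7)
The plan is to reduce existence of $\festi(x)$ to nonemptiness of $\mc H_{\beta, 2s, 0}(\uesti([0,T]), x)$. By the construction in section \ref{sssec:snake:general:esti}, $\festi(x)$ is defined exactly when $\chi_{\ell, s}(\uesti([0,T]), x)$ can be chosen, and by \cref{def:snake:general:sets}~(iii) the latter amounts to $\mc H_{\ell, 2s, 0}(\uesti([0,T]), x)$ being nonempty, since any $2$-approximate minimizer of a bounded-below objective then exists.

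First I would invoke the hypothesis $x \in \mc G_{\beta,\delta,s}(\utrue([0, T]))$ to extract times $t^\star_1, \dots, t^\star_N \in [0, T]$ (with $N = \dim \Poly d \beta$) such that $\mo x^\star := (\utrue(t^\star_i))_{i\in\nnset N}$ witnesses membership: $x \in \hull_\mu(\mo x^\star)$, $\opNormOf{\Psi(\eta_{\mo x^\star}(\mo x^\star))^{-1}} \leq s$, and $D^{-1}\delta \leq \diam(\mo x^\star) \leq D\delta$. My candidate for $\mc H$-membership is then $\hat{\mo x} := (\uesti(t^\star_i))_{i \in\nnset N} \in \uesti([0, T])^N$, which satisfies $\euclOf{\hat x_i - x^\star_i} \leq \gamma$ for every $i$ by the definition of $\gamma$. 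It then suffices to check, for $\gamma$ small enough, the two defining conditions of $\mc H_{\ell, 2s, 0}$.

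For $x \in \hull(\hat{\mo x})$, I would first note that transporting any convex combination $\sum w_i x^\star_i$ to $\sum w_i \hat x_i$ shows the Hausdorff distance between $\hull(\mo x^\star)$ and $\hull(\hat{\mo x})$ is at most $\gamma$. If $x$ were to fall outside $\hull(\hat{\mo x})$, let $d := \mr{dist}(x, \hull(\hat{\mo x})) > 0$ and let $v$ be the outward unit normal to a supporting hyperplane at the projection of $x$. The point $y := x - \mu\diam(\mo x^\star) v$ lies in $\ball(x, \mu\diam(\mo x^\star)) \subset \hull(\mo x^\star)$ and has $\mr{dist}(y, \hull(\hat{\mo x})) \geq d + \mu\diam(\mo x^\star)$. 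The Hausdorff bound then forces $d + \mu\diam(\mo x^\star) \leq \gamma$, which is impossible once $\gamma < \mu D^{-1}\delta$; this occurs eventually by $\gamma \asymlt \delta$.

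For the norm condition, the key estimate I would prove is $\euclOf{\eta_{\hat{\mo x}}(\hat x_i) - \eta_{\mo x^\star}(x^\star_i)} = O(\gamma/\delta)$: perturbing vertices by at most $\gamma$ perturbs their barycenter by at most $\gamma$ and their diameter by at most $2\gamma$, so dividing by $\diam(\mo x^\star) \geq D^{-1}\delta$ and using $\euclOf{\eta_{\mo x^\star}(x_i^\star)} \leq 1$ yields the claim. Since every coordinate of $\eta_{\mo x^\star}(x^\star_i)$ lies in $[-1,1]$, each monomial entry of $\Psi$ changes by $O(\gamma/\delta)$, hence $\opNormOf{\Psi(\eta_{\hat{\mo x}}(\hat{\mo x})) - \Psi(\eta_{\mo x^\star}(\mo x^\star))} = O(\gamma/\delta)$. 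A Neumann-series perturbation-of-inverse bound then gives $\opNormOf{\Psi(\eta_{\hat{\mo x}}(\hat{\mo x}))^{-1}} \leq 2s$ as soon as this perturbation drops below $1/(2s)$, which again occurs eventually by $\gamma \asymlt \delta$. The main obstacle is this last condition: one must track the translation (barycenter) and scaling (diameter) perturbations simultaneously and show they propagate through the monomial map without amplification, and this is exactly where the lower bound $\diam(\mo x^\star) \geq D^{-1}\delta$ baked into \cref{def:snake:general:sets}~(ii) is used to control the scaling denominator.
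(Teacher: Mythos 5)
Your proposal takes the same route as the paper: reduce existence of $\festi(x)$ to nonemptiness of $\mc H_{\ell,2s,0}(\uesti([0,T]),x)$, take the witness $\mo x^\star$ supplied by $x\in\mc G_{\beta,\delta,s}(\utrue([0,T]))$, and pass to the perturbed tuple $(\uesti(t_i^\star))_i$. The paper's own proof simply asserts that the perturbed tuple eventually satisfies $x\in\hull(\hat{\mo x})$ and $\opNormOf{\Psi(\eta_{\hat{\mo x}}(\hat{\mo x}))^{-1}}\leq 2s$; the quantitative estimates you supply for the second condition are exactly the content of the paper's \cref{lmm:perturb:norm}, \cref{lmm:perturb:monom}, and \cref{lmm:perturb:inverse}, packaged as \cref{lmm:perturb} and used later in \cref{lmm:snake:general:prob}, so your argument is a legitimate (and more explicit) version of what the paper intends. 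One small slip in the convex-hull step: with $v$ the \emph{outward} normal at the projection of $x$, the point that is simultaneously in $\ball^d(x,\mu\diam(\mo x^\star))\subset\hull(\mo x^\star)$ and at distance at least $d+\mu\diam(\mo x^\star)$ from $\hull(\hat{\mo x})$ is $y:=x+\mu\diam(\mo x^\star)v$, not $x-\mu\diam(\mo x^\star)v$; with that sign corrected the contradiction with $\gamma<\mu D^{-1}\delta$ goes through as you describe.
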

\begin{proof}
	To show that $\festi(x)$, as defined in \eqref{eq:snake:general:esti}, eventually exists, we need to prove that $\mc H_{\ell, 2s, 0}(\uesti([0, T]), x)$ is eventually nonempty.
	As $x\in \mc G_{\beta,\delta, s}(\utrue([0, T]))$, there is $\mo x\in \utrue([0,T])^N$ with $x \in \hull_\mu(\mo x)$ such that $\opNormOf{\Psi(\eta_{\mo x}(\mo x))^{-1}} \leq s$. As $\gamma \asymlt \min(1, \delta)$ and $\mu > 0$, there must eventually be a $\hat{\mo x}\in \uesti([0,T])^N$ with $x \in \hull(\hat{\mo x})$ and $\opNormOf{\Psi(\eta_{\hat{\mo x}}(\hat{\mo x}))^{-1}} \leq 2s$. Thus, $\hat{\mo x}(x) = \chi_{\ell, s}(\uesti([0, T]), x) \in \mc H_{\ell, 2s, 0}(\mc X, x)$ exists and so does $\festi(x)$ for all $x \in \mc G_{\beta,\delta, s}(\utrue([0, T]))$.
\end{proof}
\begin{lemma}\label{lmm:snake:general:prob}
    Let $\delta, s > 0$.
	Let $x_0\in \mc G_{\beta, \delta, s}(\utrue([0, T]))$.
	Let
	\begin{equation}\label{eq:snake:general:prob:uesti}
		\gamma := \sup_{t\in[0, T]}\euclOf{\uesti(t) - \utrue(t)}\eqcm \qquad
		\lambda := \sup_{t\in[0, T]}\euclOf{\duesti(t) - \dutrue(t)}\eqfs
	\end{equation}
    Assume that $\festi(x_0)$ exists.
    Assume 
    \begin{equation}\label{eq:snake:general:prob:cond}
       	\max\brOf{4, 16 N \beta s} \gamma \leq D^{-1} \delta
    \end{equation}
    with $D$ from \cref{def:snake:general:sets}.
    Then
    \begin{equation*}
       	\euclOf{\festi(x_0) - \ftrue(x_0)}
       	\leq
       	c_{\beta,d}\br{ s \br{\lambda + L_1\gamma} + 
       		L_\beta \br{s + \frac{s^2\gamma}{D^{-1} \delta}} \br{D \delta  + \gamma}^\beta}
       	\eqfs
    \end{equation*}
    where $c_{\beta,d} \in \Rpp$ is a constant depending only $\beta, d$.
\end{lemma}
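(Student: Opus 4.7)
The plan is to expand the error by comparing $\festi(x_0)=I_\ell(\hat{\mo x},\hat{\mo y},x_0)$ with $\ftrue(x_0)$ via an intermediate polynomial interpolation whose nodal values are the true function values, namely $\mo y^\dagger := (\ftrue(\hat{\mo x}_i))_{i}$. Using linearity of $I_\ell$ in its value argument,
\begin{equation*}
\festi(x_0)-\ftrue(x_0)=I_\ell(\hat{\mo x},\hat{\mo y}-\mo y^\dagger,x_0)+\br{I_\ell(\hat{\mo x},\mo y^\dagger,x_0)-\ftrue(x_0)}\eqfs
\end{equation*}
Both summands will be bounded using $I_\ell(\mo x,\mo y,x)=\psi_\ell(\eta_{\mo x}(x))^\top\Psi_\ell(\eta_{\mo x}(\mo x))^{-1}\mo y$, which is valid because polynomial interpolation of degree $\leq\ell$ is invariant under affine reparametrization of the input. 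Since $\hat{\mo x}\in\mc H_{\ell,2s,0}(\uesti([0,T]),x_0)$ by definition of $\chi$, we have $x_0\in\hull(\hat{\mo x})$, and hence $\eta_{\hat{\mo x}}(x_0)$ has norm at most $1$ and $\euclOf{\psi_\ell(\eta_{\hat{\mo x}}(x_0))}\leq c_{\beta,d}$.

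For the first summand, the identity $\dutrue=\ftrue\circ\utrue$ lets me split each $\hat{\mo y}_i-\mo y^\dagger_i=\duesti(\hat\tau_i)-\ftrue(\uesti(\hat\tau_i))$ into $\duesti(\hat\tau_i)-\dutrue(\hat\tau_i)$ (of norm $\leq\lambda$) plus $\ftrue(\utrue(\hat\tau_i))-\ftrue(\uesti(\hat\tau_i))$ (of norm $\leq L_1\gamma$ by Lipschitz-continuity of $\ftrue$). Combined with $\opNormOf{\Psi_\ell(\eta_{\hat{\mo x}}(\hat{\mo x}))^{-1}}\leq 2s$, this yields a bound of $c_{\beta,d}\,s\,(\lambda+L_1\gamma)$ for the first summand. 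For the second summand, I would Taylor-expand $\ftrue$ componentwise around $x_0$ to order $\ell=\beta-1$: $\ftrue=T+R$ with $T\in\Poly d\ell$ (componentwise) and $\euclOf{R(z)}\leq L_\beta\euclOf{z-x_0}^\beta/\beta!$. Because $I_\ell(\hat{\mo x},T(\hat{\mo x}),x_0)=T(x_0)=\ftrue(x_0)$, only $I_\ell(\hat{\mo x},R(\hat{\mo x}),x_0)$ remains; using $\euclOf{\hat{\mo x}_i-x_0}\leq\diam(\hat{\mo x})$ (since $x_0\in\hull(\hat{\mo x})$), this is bounded by $c_{\beta,d}L_\beta\opNormOf{\Psi_\ell(\eta_{\hat{\mo x}}(\hat{\mo x}))^{-1}}\diam(\hat{\mo x})^\beta$.

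To control the product $\opNormOf{\Psi_\ell(\eta_{\hat{\mo x}}(\hat{\mo x}))^{-1}}\diam(\hat{\mo x})^\beta$, I exploit the approximate-minimizer property of $\chi$ by constructing a competitor in $\mc H_{\ell,2s,0}(\uesti([0,T]),x_0)$: pick a witness $\mo x^\star\in\utrue([0,T])^N$ for $x_0\in\mc G_{\ell,\delta,s}(\utrue([0,T]))$, write $x^\star_i=\utrue(\tau_i^\star)$, and set $\tilde{\mo x}^\star$ with coordinates $\tilde x^\star_i:=\uesti(\tau_i^\star)$, so that $\euclOf{\tilde x^\star_i-x^\star_i}\leq\gamma$. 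The condition \eqref{eq:snake:general:prob:cond} guarantees both $x_0\in\hull(\tilde{\mo x}^\star)$ (deduced from $x_0\in\hull_\mu(\mo x^\star)$ and $\gamma$ being small compared to $\mu\diam(\mo x^\star)$) and the convergence of a Neumann-series expansion for $\Psi_\ell(\eta_{\tilde{\mo x}^\star}(\tilde{\mo x}^\star))=\Psi_\ell(\eta_{\mo x^\star}(\mo x^\star))+E$ with $\opNormOf{E}\leq c_{\beta,d}\gamma/\diam(\mo x^\star)\leq c_{\beta,d}\gamma/(D^{-1}\delta)$, yielding $\opNormOf{\Psi_\ell(\eta_{\tilde{\mo x}^\star}(\tilde{\mo x}^\star))^{-1}}\leq s+c_{\beta,d}s^2\gamma/(D^{-1}\delta)$ and $\diam(\tilde{\mo x}^\star)\leq D\delta+2\gamma$. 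Near-minimization by $\chi$ then gives $\opNormOf{\Psi_\ell(\eta_{\hat{\mo x}}(\hat{\mo x}))^{-1}}\diam(\hat{\mo x})^\beta\leq c_{\beta,d}\br{s+s^2\gamma/(D^{-1}\delta)}(D\delta+\gamma)^\beta$; substituting into the second-summand bound and adding the first-summand bound yields the claim.

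The main obstacle I expect is the perturbation analysis for $\Psi_\ell(\eta_{\tilde{\mo x}^\star}(\tilde{\mo x}^\star))$: because the normalization $\eta$ itself depends on the perturbed nodes, the $O(\gamma/(D^{-1}\delta))$ error has to be propagated through both centering and rescaling, and the precise threshold in \eqref{eq:snake:general:prob:cond} (with its explicit $16N\beta s$ factor) is exactly what is required for the Neumann series to converge while simultaneously keeping $x_0$ inside $\hull(\tilde{\mo x}^\star)$. Once this is handled, assembling the three pieces produces the asserted inequality up to the absorbed constant $c_{\beta,d}$.
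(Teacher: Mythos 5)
Your proposal is correct and follows essentially the same route as the paper's proof: the same decomposition through the interpolant with true function values $\ftrue(\uesti(\hat\tau_i))$ at the estimated nodes, the same $\lambda + L_1\gamma$ splitting via $\dutrue = \ftrue\circ\utrue$ for the stochastic term, and the same treatment of the approximation term via the near-minimizing property of $\chi$ against the competitor $\uesti(\tau_i^\star)$ combined with a perturbation bound on the normalized Vandermonde inverse (the paper uses the identity $A^{-1}-B^{-1}=A^{-1}(B-A)B^{-1}$ with a Weyl singular-value bound rather than a Neumann series, but these are interchangeable). Your explicit verification that the competitor lies in $\mc H_{\ell,2s,0}(\uesti([0,T]),x_0)$ under condition \eqref{eq:snake:general:prob:cond} is if anything slightly more careful than the paper's write-up.
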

\begin{proof}
	Let us establish some notation.
	Let $\delta_{\ms{min}} := D^{-1} \delta$ and $\delta_{\ms{max}} := D \delta$ with $D$ from \cref{def:snake:general:sets}.
	Let $\hat{\mo x} := \hat{\mo x}(x_0)$, Recall that $\hat{\mo x} = (\uesti(\hat\tau_1), \dots, \uesti(\hat\tau_N)) = \chi_{\ell,s}(\uesti([0, T]), x_0)$. Let $(\utrue(\tau_1), \dots, \utrue(\tau_N)) = \mo x$ be any set such that $x_0\in\hull_\mu(\mo x)$,  $\opNormof{\Psi({\mo x})^{-1}} \leq s$, and $\delta_{\ms{min}} \leq \diam(\mo x) \leq \delta_{\ms{max}}$, which exists as $x_0\in \mc G_{\beta,\delta, s}(\utrue([0, T]))$. We summarize and add some additional notation:
    \begin{align*}
        \mo x &= (\utrue(\tau_1), \dots, \utrue(\tau_N)) \eqcm & \mo y &:= (\dutrue(\tau_1), \dots, \dutrue(\tau_N))\eqcm\\
        \bar{\mo x} & := (\utrue(\hat\tau_1), \dots, \utrue(\hat\tau_N))\eqcm & \bar{\mo y} &:= (\dutrue(\hat\tau_1), \dots, \dutrue(\hat\tau_N))\eqcm\\
        \tilde{\mo x} & := (\uesti(\tau_1), \dots, \uesti(\tau_N))\eqcm & \check{\mo y} &:= (\ftrue(\uesti(\hat\tau_1)), \dots, \ftrue(\uesti(\hat\tau_N)))\eqcm\\
        \hat{\mo x} &= (\uesti(\hat\tau_1), \dots, \uesti(\hat\tau_N)) \eqcm & \hat{\mo y} &:= (\duesti(\hat\tau_1), \dots, \duesti(\hat\tau_N))\eqfs
    \end{align*}
    In the following, $c_{\beta,d}$ will always refer to a constant in $\Rpp$, which depends only on $\beta,d$ and may have a different value each time it occurs.

	Using the definition of $\festi$, \eqref{eq:snake:general:esti}, and the triangle inequality, we can split the error into two parts,
	\begin{equation}\label{eq:snake:split}
		\euclOf{\festi(x_0) - \ftrue(x_0)}
		\leq
		\euclOf{I(\mo{\hat x}, \hat{\mo y}, x_0) - I(\mo{\hat x}, \check{\mo y}, x_0)} + \euclOf{I(\mo{\hat x}, \check{\mo y}, x_0) - \ftrue(x_0)}
		\eqfs
	\end{equation}

    For the first term in \eqref{eq:snake:split}, we apply \cref{lmm:poly:linear} and get
    \begin{align*}
        \euclOf{I(\mo{\hat x}, \hat{\mo y}, x_0) - I(\mo{\hat x}, \check{\mo y}, x_0)}
        &=
        \euclOf{\psi(x_0)\tr \Psi(\mo{\hat x})^{-1} \br{\hat{\mo y} - \check{\mo y}}}
        \\&=
        \euclOf{\psi(\eta_{\mo{\hat x}}(x_0))\tr \Psi(\eta_{\mo{\hat x}}(\mo{\hat x}))^{-1} \br{\hat{\mo y} - \check{\mo y}}}
        \\&\leq
        \euclOf{\psi(\eta_{\mo{\hat x}}(x_0))} \opNormOf{\Psi(\eta_{\mo{\hat x}}(\mo{\hat x}))^{-1}} \opNormOf{\hat{\mo y} - \check{\mo y}}
        \eqfs
    \end{align*}
    To find an upper bound for the first two factors of the last term, we use $\euclof{\eta_{\mo{\hat x}}(x_0)} \leq 1$ and $\mo{\hat x} \in \mc H_{\ell, 2s, 0}(\uesti([0, T]), x_0)$ to obtain
    \begin{equation*}
    	\euclOf{\psi(\eta_{\mo{\hat x}}(x_0))} \opNormOf{\Psi(\eta_{\mo{\hat x}}(\mo{\hat x}))^{-1}}  \leq c_{\beta,d} s
    	\eqfs
    \end{equation*}
    For the third factor, $\opNormOf{\hat{\mo y} - \check{\mo y}} \leq \opNormOf{\hat{\mo y} - \bar{\mo y}} + \opNormOf{\bar{\mo y} - \check{\mo y}}$ by the triangle inequality. By the definition of $\lambda$, $\opNormOf{\hat{\mo y} - \bar{\mo y}} \leq \sqrt{N} \lambda$. Using $L_1$-Lipschitz continuity of $\ftrue$ and the definition of $\gamma$, we obtain
    \begin{align*}
    	\opNormOf{\bar{\mo y} - \check{\mo y}}
    	&\leq
    	\opNormOf{\Big(\ftrue(\utrue(\hat\tau_1)) - \ftrue(\uesti(\hat\tau_1)), \dots,  \ftrue(\utrue(\hat\tau_N)) - \ftrue(\uesti(\hat\tau_N))\Big)}
    	\\&\leq
    	L_1 \sqrt{N} \max_{k\in\nnset N} \euclOf{\utrue(\hat\tau_k) - \uesti(\hat\tau_k)}
    	\\&\leq
    	L_1 \sqrt{N} \gamma
    	\eqfs
    \end{align*}
    Note that $N$ is a constant depending only on $\beta, d$.
    The considerations above yield the following upper bound for the first term in \eqref{eq:snake:split}:
    \begin{equation}\label{eq:snake:termonebound}
        \euclOf{I(\mo{\hat x}, \hat{\mo y}, x_0) - I(\mo{\hat x}, \bar{\mo y}, x_0)} \leq c_{\beta,d} s \br{\lambda + L_1 \gamma}
        \eqfs
    \end{equation}

    For the second term in \eqref{eq:snake:split}, we apply \cref{lmm:polyinterpol} and get
    \begin{equation}\label{eq:snake:termtwo}
        \euclOf{I(\mo{\hat x}, \check{\mo y}, x_0) - \ftrue(x_0)} \leq L_\beta C_\ell(\mo{\hat x}) \diam(\mo{\hat x})^\beta
        \eqfs
    \end{equation}
    With \cref{lmm:snake:poly:constbound} and the minimizing property of $\mo{\hat x} = \chi_{\ell,s}(\uesti([0, T]), x_0)$, we obtain
    \begin{align*}
        C_\ell(\mo{\hat x}) \diam(\mo{\hat x})^\beta
        &\leq
        \frac{N^{\frac32}}{\beta!} \opNormOf{\Psi(\eta_{\hat{\mo x}}(\mo{\hat x}))^{-1}} \diam(\mo{\hat x})^\beta
        \\&\leq
        2\frac{N^{\frac32}}{\beta!} \opNormOf{\Psi(\eta_{\tilde{\mo x}}(\tilde{\mo x}))^{-1}} \diam(\tilde{\mo x})^\beta
        \eqfs
    \end{align*}
    With $\euclof{\uesti(\tau_j) - \utrue(\tau_j)} \leq \gamma$ and the definition of $\mo x$, we have
    \begin{equation}\label{eq:snake:termtwo:b}
        \diam(\tilde{\mo x}) \leq \diam(\mo{x})  + 2 \gamma  \leq \delta_{\ms{max}}  + 2\gamma
        \eqfs
    \end{equation}
    By the triangle inequality
    \begin{equation}\label{eq:snake:termtwo:triangle}
    	\opNormOf{\Psi(\eta_{\tilde{\mo x}}(\tilde{\mo x}))^{-1}}
    	\leq
    	\opNormOf{\Psi(\eta_{{\mo x}}({\mo x}))^{-1}} + \opNormOf{\Psi(\eta_{\tilde{\mo x}}(\tilde{\mo x}))^{-1} - \Psi(\eta_{{\mo x}}(\mo x))^{-1}}
    	\eqfs
    \end{equation}
    As we assume
    \begin{equation*}
    	\max\brOf{4, 16 N \beta s} \gamma \leq \delta_{\ms{min}}
    	\eqcm
    \end{equation*}
    we can use \cref{lmm:perturb} to obtain from \eqref{eq:snake:termtwo:triangle} that
    \begin{equation}\label{eq:snake:termtwo:c}
        \opNormOf{\Psi(\eta_{\tilde{\mo x}}(\tilde{\mo x}))^{-1}}
        \leq
        \opNormOf{\Psi(\eta_{{\mo x}}({\mo x}))^{-1}} + c_{\beta,d} \frac{s^2\gamma}{\delta_{\ms{min}}}
        \eqfs
    \end{equation}

    Putting the previous inequalities into \eqref{eq:snake:termtwo}, we obtain the following bound on the second term of \eqref{eq:snake:split}:
    \begin{equation}\label{eq:snake:termtwobound}
        \euclOf{I(\mo{\hat x}, \check{\mo y}, x) - \ftrue(x)} 
        \leq
        2 L_\beta \frac{N^{\frac32}}{\beta!} \br{\opNormOf{\Psi(\eta_{{\mo x}}({\mo x}))^{-1}} + c_{\beta,d} \frac{s^2\gamma}{\delta_{\ms{min}}}} \br{\delta_{\ms{max}}  + 2\gamma}^\beta
        \eqfs
    \end{equation}

    The bounds \eqref{eq:snake:termonebound} and \eqref{eq:snake:termtwobound} on the two terms of the right-hand side of \eqref{eq:snake:split} together yield
    \begin{equation*}
        \euclOf{\festi(x_0) - \ftrue(x_0)}
        \leq
        c_{\beta,d}\br{ s \br{\lambda + L_1\gamma} + 
        L_\beta \br{s + \frac{s^2\gamma}{\delta_{\ms{min}}}} \br{\delta_{\ms{max}}  + \gamma}^\beta}
        \eqfs
    \end{equation*}
\end{proof}
\begin{proof}[Proof of \cref{thm:snake:general}]
	Set $\tilde\beta = \beta + 1$, $\tilde L_0 = \infty$ and $\tilde L_k = (k-1)! \sup_{\ell\in\nnzset{\beta}}L_\ell^k$ for $k\in\nnsetof{\tilde\beta}$.
    As $f\in\bar\Sigma^{d\to d}(\beta, \indset{L}{0}{\beta})$, \cref{coro:trajSmooth} yields $u \in \bar\Sigma^{1\to d}(\beta+1, \indset{\tilde L}{0}{\tilde\beta})$.
    Hence, using the definitions of the random variables $\lambda$ and $\gamma$ in \cref{lmm:snake:general:prob} as well as of the numbers $\rate$ and $\drate$ in section \ref{sssec:snake:general:esti}, we obtain
	\begin{equation}\label{eq:thm:snake:general:lambdagamma}
        \gamma \in \Op\brOf{\rate(n, \tilde\beta, \tilde L_{\tilde\beta}, T)}
        \qquad\text{and}\qquad
        \lambda \in \Op\brOf{\drate(n, \tilde\beta, \tilde L_{\tilde\beta}, T)}
        \eqfs
    \end{equation}
    As $\Gamma \asymlt 1$, $\festi(x)$ eventually exists for all $x\in\mc X = \mc G_{\beta, \delta, s}(\utrue([0, T]))$ by \cref{lmm:snake:general:exist}.
    The assumption of the theorem, $\rate \asymlt \delta$, implies that the condition of \cref{lmm:snake:general:prob}, \eqref{eq:snake:general:prob:cond}, is fulfilled with probability arbitrarily close to $1$.
    As $\gamma$ and $\lambda$ do not depend on $x$, \cref{lmm:snake:general:prob} therefore yields
    \begin{align*}
        \sup_{x\in \mc X}\euclOf{\festi(x) - \ftrue(x)}^2
        &\leq 
        c_{\beta,d}\br{ s \br{\lambda + L_1\gamma} + 
        	L_\beta \br{s + \frac{s^2\gamma}{D^{-1} \delta}} \br{D \delta  + \gamma}^\beta}^2
        \\&\in
        \Op\brOf{\drate(n, \tilde\beta,  \tilde L_{\tilde\beta}, T)^2 + \delta^{2\beta}}
        \eqcm
    \end{align*}
    where we used \eqref{eq:thm:snake:general:lambdagamma} and $\rate \asymlt \delta$, $\Gamma \asymleq \Lambda$ as well as $s, D, L_1, L_\beta$ being fixed.	
\end{proof}
\section{Extensions}\label{sec:extension}
This section explores simple extensions of error bounds proven in this article and highlights open questions in settings where further research is needed to achieve comprehensive results.

So far, we have shown upper bounds on the error for estimating the model function of an autonomous, first order ODE in two different settings:
In the Stubble model of $m \asymeq n$ short ($n_j \asymeq 1$) trajectories with equidistant measurement times, we obtained a bound on the point-wise mean squared error, see \cref{cor:stubble:general}. 
In the Snake model of a single trajectory ($m=1$) with $n_1 \asymeq n$ measurements with a flexible time step design and sub-Gaussian noise, we obtained a bound on the sup-norm error in probability, see \cref{cor:snake:general}.

Both results are available in a black box form (\cref{thm:stubble:general} and \cref{thm:snake:general}, respectively), where arbitrary nonparametric regression estimators can be plugged in. This makes these results quite powerful. E.g., we can apply the black box results with adaptive nonparametric regression estimators to obtain adaptive ODE estimators. In this manner, questions regarding hyperparameter optimization can be answered in the same way as for the regression problem. Only the multivariate polynomial interpolation in the general case of the Snake model with $\beta \geq 2$ requires additional hyperparameters ($s$, $D$) regarding the stability of the interpolation. But note that the major purpose of the proposed estimator in this setting is the mathematical derivation of upper bounds on the estimation error (which are minimax optimal in some settings) rather than demonstrating a practical algorithm. In particular, computational requirements may be unfeasible. In contrast, in the Snake model with $\beta=1$ and in the Stubble model, the proposed estimators do not suffer from such obstacles for application.

It is straight forward to extend the error bounds in the Stubble model to integrated and sup-norm loss (the latter typically requires sub-Gaussian noise and adds a factor that is polynomial in $\log(n)$ to the variance part of the error). Furthermore, the bound in expectation implies the bound in probability, so that we can obtain results that are weaker but more directly comparable to those of the Snake model. 
An extension of the Snake model results to point-wise or integrated errors (without the log-factor) seems more difficult as our proof already requires a bound on the maximal error of the initial regressions, independent of the target error measure.

Further trivial extensions include: going from $m = 1$ to $m \asymeq 1$ in the Snake model (even some slowly growing $m$ seems possible without much difficulty); non-equidistant timesteps in the Stubble model given that they are equal between different trajectories and asymptotically equal; treating non-autonomous and higher order ODEs as autonomous first order ODEs, see section \ref{ssec:prelim:ode}. 

Finally, an open question that is more difficult to answer is how we construct a suitable estimator with minimax optimal rates of convergence in the cases with few or an intermediate amount of trajectories ($m \asymgeq 1$, $n_j \asymgt 1$) that is not (optimally) covered by the estimator of the Snake model, i.e., in the case
\begin{equation*}
	\delta \asymlt \br{\frac{T}{n}}^{\frac{1}{2(\beta+1)+1}}
	\eqfs
\end{equation*}
Note that this setting is particularly interesting for the study of chaotic systems \cite{Strogatz2024}, where one expects to revisit points arbitrarily close to past points in the future of the same long trajectory, i.e., $\delta$ is small.
Some practical algorithms for such a setting are compared in \cite{schotz2024machinelearningpredictingchaotic} in a simulation study.
\begin{appendix}
	\section{Relation to Other Models}\label{app:sec:relation}
The ODE models introduced in this article are novel, and the problem of nonparametric ODE estimation has received almost no attention in the mathematical statistics literature. Thus, it is prudent to compare these models with more established ones.

\textbf{Regression on the solutions.} The ODE model can be viewed as a regression model, with the ODE as a constraint on the regression function $\utrue = U(\ftrue, x, \cdot)$ in $[0, T]$. Solutions have a smoothness parameter $\tilde\beta = \beta+1$ if the model function is $\beta$-smooth. Thus, we can solve the reanalysis problem, i.e., estimation of $\utrue$ in $[0, T]$, with a the standard nonparametric rate of convergence for the squared error
\begin{equation*}
    \EOf{\normOf{\uesti(t) - \utrue(t)}^2} \asymleq \br{\frac{T}{n}}^{\frac{2(\beta+1)}{2(\beta+1)+1}}
\end{equation*}
by ignoring the ODE-constraint and using a suitable estimator $\uesti$, see, e.g., \cite{Tsybakov09Introduction}. Furthermore, the derivative of the solution can similarly be estimated with
\begin{equation*}
    \EOf{\normOf{\duesti(t) - \dutrue(t)}^2} \asymleq \br{\frac{T}{n}}^{\frac{2\beta}{2(\beta+1)+1}}
    \eqfs
\end{equation*}
As $\dutrue(t) = \ftrue(\utrue(t))$, this essentially yields an estimate of $\ftrue$ on the set $\utrue([0, T])$. This view is taken for our estimator in the Snake model of section \ref{sec:snake}.

\textbf{Minimax rate of nonparametric regression.}
For standard nonparametric estimation of the $s$-th derivative of a $\tilde \beta$-smooth regression function $g^\star\colon\R^d\to \R$ (or $\R^d\to \R^d$), we have the minimax rate
\begin{equation}\label{eq:intro:regrate}
    \EOf{\normOf{\widehat{D^s g}(x) - D^s g^\star(x)}^2} \asymleq n^{-\frac{2(\tilde\beta-s)}{2\tilde\beta+d}}
    \eqcm
\end{equation}
see, e.g., appendix \ref{app:sec:localreg}.
In the ODE model, we estimate a first derivative ($s=1$) in $d$-dimensions of the solutions that have smoothness $\tilde\beta = \beta +1$. So, the rate of convergence that we obtain in this work \eqref{eq:bestrate} fits the formula of the minimax regression rate \eqref{eq:intro:regrate}, even though the models are quite different.

\textbf{Errors in variables.}
Assume that time steps are constant, i.e., $t_{i+1} - t_{i} = \stepsize$. Consider the so-called propagator $x \mapsto U(\ftrue, x, \stepsize)$. If $\stepsize\to0$, we can infer $f$ from it, as
\begin{equation*}
    \frac{U(\ftrue, x, \stepsize) - x}{\stepsize} =\frac{U(\ftrue, x, \stepsize) - U(\ftrue, x, 0)}{\stepsize} \xrightarrow{\stepsize\to0} \ftrue(x)
    \eqfs
\end{equation*}
Estimating the propagator from the data $(Y_{i-1}, Y_{i})$ can be viewed as an errors-in-variables regression problem, e.g., \cite{mammen12}. In such models, the response $g(x_i)$ as well as the predictors $x_i$ are unknown and observed with noise. Specifically, the data are given as $Z_i = g(x_i) + \varepsilon_i$ and $X_i = x_i + \xi_i$, where $\varepsilon_i$ and $\xi_i$ are noise terms. Setting $(X_i, Z_i) = (Y_{i-1}, Y_{i})$ aligns the problem of propagator estimation in the ODE model with errors-in-variables regression. The Stubble model of section \ref{sec:stubble} avoids the difficulties attached to errors-in-variables by assuming known initial conditions and observing many solutions for only a short time.

\textbf{Nonparametric time series analysis.}
The ODE model with the sequence $(Y_i)$ can be viewed from the perspective of nonparametric time series analysis, where discrete-time stochastic processes in one variable (time) are studied. See, e.g., \cite[Chapter 6]{Fan2003}. On one hand, the focus on discrete time and often only one state dimension, allows for a deep understanding of more complex time evolutions than in an one-dimensional ODE model. On the other hand, it seems that the structure of higher dimensional, time continuous ODEs with measurement noise is typically not captured in such models.

\textbf{Stochastic differential equations.}
The noise in stochastic differential equation (SDE) models, e.g., \cite{Strauch2016, Comte2020} can be described as \textit{system noise} in contrast to the \textit{measurement noise} of the ODE model: It changes the state of the system and solutions are stochastic processes. In the ODE model, the state of the system is not influenced by the noise and solutions are deterministic. In comparison to a model with observed solutions of a SDE in one variable (time), estimation in the ODE model seems more difficult: The observation $Z_t$ at time $t$ of the solution $(Z_t)$ in the SDE model is the true state of the system at that time. If the SDE is Markovian, then the distribution of the future states $(Z_{t\pr})_{t\pr>t}$ only depends on the now known state $Z_t$. In the ODE model, we do not know the true state $\utrue(t)$, but only a noisy observation of it. Even though the future evolution is deterministic, it can be hard to predict if the initial state is not known perfectly (see the topic of \textit{chaotic systems}, e.g., in \cite{Strogatz2024}). Despite these differences, there are similarities between the literature on SDE drift estimation and the two models discussed in this article: \cite{Strauch2016} considers a single, increasingly long trajectory of an SDE solution, analogous to the Snake model, while \cite{Comte2020} considers an increasing number of solutions over a fixed time interval, akin to the Stubble model.

\textbf{Hidden markov models.}
As the true state of the system $\utrue(t)$ is hidden from us and only noisily observed, the ODE model is connected to hidden Markov models (HMMs), e.g., \cite{Bickel1998}. In HMMs, one observes $Z_i$ which depends on the unobserved random variable $X_i$, which in turn depends on $X_{i-1}$. But $Z_{i}$ is independent of $X_{i-1}$ and $Z_{i-1}$ given $X_i$. If we set $X_i = \utrue(t_i)$, $Z_i = Y_i$. We can view the ODE model as a HMM. In contrast to HMMs, the transition from $\utrue(t_i)$ to $\utrue(t_{i+1})$ is deterministic, and the time and state domains are continuous.
\section{Derivatives}\label{app:sec:derivative}
In this section, we introduce notation for derivatives in multiple dimensions and Hölder-type smoothness classes. Furthermore, some elementary analytical results are presented for reference in the main proofs.

For any finite-dimensional $\R$-vector space $V$, we denote the Euclidean norm as $\euclof{x}$ for $x\in V$.
For $k\in\N$ and finite-dimensional $\R$-vector spaces $V$ and $W$, let $\mc L_k(V, W)$ be the set of $k$-multilinear functions from $V^k$ to $W$.
An element of $\mc L_k(V, W)$ is called symmetric if it is invariant under permutations of its $k$ arguments.
A function $f\colon V \to W$ is differentiable at $x\in V$ if there is $A \in \mc L_1(V, W)$ such that
\begin{equation}\label{eq:derivative:first}
    \lim_{\euclOf{v} \to 0} \frac{\euclOf{f(x+v)-f(x) - A(v)}}{\euclOf{v}} = 0
    \eqfs
\end{equation}
In this case we write $D f(x) = A$.
The function $f$ is differentiable if it is differentiable at all $x\in V$. Denote the set of differentiable functions from $V$ to $W$ as $\mc D(V, W)$.
From now on, assume $V = \R^d$ for a $d\in\N$. Define $\mb D_d := \setByEle{v\in\R^d}{\euclOf{v} = 1}$.
The directional derivative of $f\in\mc D(V, W)$ at $x\in V$ in the direction $v\in\mb D_d$ is
\begin{equation*}
    D_v f(x) := D f(x)(v)
    \eqfs
\end{equation*}
We set $D^0$ to the identity, i.e., $D^0 f = f$.
Let $k\in\N$. Define the set of $k$-times differentiable functions $\mc D^k(V, W)$ and the $k$-th derivative operator $D^k$ recursively:
For $k = 1$, we set $\mc D^1(V, W) := \mc D(V, W)$ and $D^1 := D$.
Let $k \in \N_{\geq 2}$.
A function $f\in\mc D^{k-1}(V,W)$ is $k$-times differentiable at $x\in V$ if there is symmetric $A \in \mc L_{k}(V,W)$ such that
\begin{equation*}
    \lim_{\euclOf v\to 0} \sup_{\mo v\in\mb D_d^{k-1}} \frac{\euclOf{D^{k-1}_{\mo v}f(x+v)-D^{k-1}_{\mo v}f(x) - A((\mo v, v))}}{\euclOf{v}} = 0
    \eqfs
\end{equation*}
In this case, we write $D^{k} f(x) = A$.
We may use square brackets for the arguments of the linear operator to make formulas more clear, e.g., $D f(x)[v] := D f(x)(v)$
The function $f$ is $k$-times differentiable if it is $k$-times differentiable at all $x\in V$. Denote the set of $k$-times differentiable functions from $V$ to $W$ as $\mc D^k(V, W)$.
The $k$-th directional derivative of $f\in\mc D^k(V, W)$ at $x\in V$ in the directions $\mo v\in\mb D_d^k$ is
\begin{equation*}
    D_{\mo v} f(x) := D^k f(x)[\mo v]
    \eqfs
\end{equation*}
We may indicate that differentiation takes place with respect to $x$ by $D_{x,\mo v} g(x,y) = D_{\mo v} f_y(x)$, where $f_y(x) = g(x, y)$.
Let $p\in\N$. For $f\in\mc D^k(\R^d, \R^p)$, the derivative operator acts component-wise, i.e., for $f = (f_1, \dots, f_p)$ with $f_\ell\in\mc D^k(\R^d, \R)$, we have
\begin{equation*}
    D_{\mo v}f(x) = \begin{pmatrix}	D_{\mo v}f_1(x) \\ \vdots \\ D_{\mo v}f_p(x) \end{pmatrix}
    \eqcm
\end{equation*}
for $x\in\R^d$ and $\mo v\in \mb D_d^k$.
The operator norm for $A \in \mc L_k(V, W)$ is
\begin{equation*}
    \opNormOf{A} := \sup_{\mo v \in\mb D_d^k} \euclOf{A(\mo v)}
    \eqfs
\end{equation*}
Define the sup-norm of the $k$-th derivative as
\begin{equation*}
    \supNormof{D^kf} := \sup_{x\in\R^d} \opNormof{D^kf(x)} = \sup_{x\in\R^d}  \sup_{\mo v\in \mb D_d^k} \euclOf{D_{\mo v}f(x)}
    \eqfs
\end{equation*}
\begin{notation}
     Let $x\in\R$. Denote the largest integer strictly smaller than $x$ as $\llfloor x\rrfloor := \max \Z_{<x}$.
\end{notation}
\begin{definition}[Hölder-smoothness classes]\label{def:Hoelder}
    Let $d\in\N$ and $L\in\Rpp \cup \{\infty\}$.
    Define
    \begin{equation*}
        \Sigma^{d\to1}(0, L) := \setByEle{f\colon\R^d\to\R \text{ Lebesgue measurable}}{\supNormOf{f} \leq L}\eqfs
    \end{equation*}
    For $\beta\in(0,1]$, define
    \begin{equation*}
        \Sigma^{d\to1}(\beta, L) := \setByEle{f\colon\R^d\to\R}{\forall x,\tilde x \in\R^d\colon\abs{f(x)-f(\tilde x)} \leq L \euclof{x-\tilde x}^{\beta}}\eqfs
    \end{equation*}
    Let $\beta\in\Rppo$ and $\ell:=\llfloor\beta\rrfloor$. Define
    \begin{equation*}
        \Sigma^{d\to1}(\beta, L) := \setByEle{f \in \mc D^{\ell}(\R^d, \R)}{\forall x,\tilde x \in\R^d\colon\opNormof{D^\ell f(x)-D^\ell f(\tilde x)} \leq L \euclof{x-\tilde x}^{\beta - \ell}}\eqfs
    \end{equation*}
    For $\beta\in\Rp$, use the short notation
    \begin{equation*}
        \Sigma(\beta, L) := \Sigma^{1\to1}(\beta, L)\eqfs
    \end{equation*}
    Let $L_0, \dots, L_\ell, \Lbeta \in \Rpp\cup\{\infty\}$. Define
    \begin{equation*}
        \Sigma^{d\to 1}(\beta, \indset{L}{0}{\ell}, \Lbeta) := \Sigma^{d\to1}(\beta, \Lbeta) \cap \bigcap_{k=0}^\ell \Sigma^{d\to1}(k, L_k)\eqfs
    \end{equation*}
    Let $\dm{in},\dm{out}\in\N$.  Define
    \begin{equation*}
        \Sigma^{\dm{in}\to \dm{out}}(\beta, \indset{L}{0}{\ell}, \Lbeta) := \br{\Sigma^{\dm{in}\to 1}(\beta, \indset{L}{0}{\ell}, \Lbeta)}^{\dm{out}}\eqcm
    \end{equation*}
    where $f = (f_1, \dots, f_{\dm{out}})\in \Sigma^{\dm{in}\to \dm{out}}(\beta, \indset{L}{0}{\ell}, \Lbeta)$ is treated as a function
    \begin{equation*}
        f\colon \R^{\dm{in}} \to \R^{\dm{out}}, x\mapsto (f_1(x), \dots, f_{\dm{out}}(x))\tr
        \eqfs
    \end{equation*}
    For $\beta \in \N$, we denote
     \begin{equation*}
        \bar\Sigma^{\dm{in}\to \dm{out}}(\beta, \indset{L}{0}{\beta}) := \Sigma^{\dm{in}\to \dm{out}}(\beta, \indset{L}{0}{\beta}) \cap \mc D^\beta(\R^{\dm{in}}, \R^{\dm{out}})\eqfs
    \end{equation*}
\end{definition}
A bound on the derivative yields the Lipschitz constant, as the next lemma shows.
\begin{lemma}\label{app:diff:lem}
    Let $d\in\N$.
    Let $k\in\N$.
    Let $f\in \mc D^{k}(\R^d, \R)$.
    Let $L\in\Rpp$.
    Assume $\supNormof{D^{k}f} \leq L$.
    Then $f \in \Sigma^{d\to1}(k, L)$.
\end{lemma}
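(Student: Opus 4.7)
The target from \cref{def:Hoelder} with integer exponent $\beta=k$ unfolds as follows: for $k=1$ it says $\abs{f(x)-f(\tilde x)}\leq L\euclof{x-\tilde x}$, and for $k\geq 2$ it says $\opNormof{D^{k-1}f(x)-D^{k-1}f(\tilde x)}\leq L\euclof{x-\tilde x}$ (since $\llfloor k\rrfloor = k-1$ and the Hölder exponent reduces to $k-(k-1)=1$). Setting $D^0 f:=f$ and noting that $\opNormof{\cdot}$ collapses to $\abs{\cdot}$ in the scalar case, both cases unify to a single claim: \emph{$D^{k-1}f$ is $L$-Lipschitz in operator norm.} Reducing to this formulation is the only bookkeeping step.

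My plan is a parametric mean-value argument along straight segments. I would fix $x,\tilde x\in\R^d$ with $x\neq \tilde x$ (the case $x=\tilde x$ is trivial) and a test tuple $\mo v\in\mb D_d^{k-1}$. Set $\gamma(t):=(1-t)\tilde x + tx$ and define the scalar auxiliary $\varphi(t):=D^{k-1}_{\mo v}f(\gamma(t))$ on $[0,1]$. Iterating the limit characterization \eqref{eq:derivative:first} (i.e.\ the chain rule applied to $D^{k-1}_{\mo v}f\circ \gamma$) yields differentiability of $\varphi$ on $[0,1]$ with $\varphi'(t) = D^k f(\gamma(t))[(\mo v, x-\tilde x)]$. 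Then, with $w:=(x-\tilde x)/\euclof{x-\tilde x}$, the tuple $(\mo v, w)$ lies in $\mb D_d^k$, so multilinearity and the definition of the operator norm give
\begin{equation*}
\abs{\varphi'(t)} = \euclof{x-\tilde x}\cdot\abs{D^k f(\gamma(t))[(\mo v, w)]} \leq \opNormof{D^k f(\gamma(t))}\cdot \euclof{x-\tilde x} \leq L\euclof{x-\tilde x}.
\end{equation*}
The real-valued mean value inequality applied to $\varphi$ on $[0,1]$ then yields $\abs{\varphi(1)-\varphi(0)}\leq L\euclof{x-\tilde x}$, i.e.\ $\abs{D^{k-1}_{\mo v}f(x)-D^{k-1}_{\mo v}f(\tilde x)}\leq L\euclof{x-\tilde x}$. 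Taking the supremum over $\mo v\in\mb D_d^{k-1}$ upgrades this directional estimate to the required operator-norm bound.

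The only technical subtlety I foresee is that the definition of $\mc D^k$ demands only pointwise $k$-fold differentiability, not $C^k$-regularity, so $\varphi'$ need not be continuous. This is handled by invoking the form of the mean value inequality that only asks for $\varphi$ continuous on $[0,1]$, differentiable on $(0,1)$, and with pointwise bounded derivative --- all of which are in hand. I do not anticipate any other obstacle; the argument is essentially a chain rule plus a one-dimensional mean value estimate, and the substantive content is just matching the indexing of the Hölder definition.
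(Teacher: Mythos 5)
Your proof is correct and follows essentially the same route as the paper: the paper's one-line "mean value theorem" step is exactly your parametric argument along the segment, applied to $t\mapsto D^{k-1}_{\mo v}f(\gamma(t))$ with the derivative bounded by $\supNormof{D^k f}\euclof{x-\tilde x}$, followed by a supremum over $\mo v$. Your additional remarks on unifying the $k=1$ and $k\geq 2$ cases of \cref{def:Hoelder} and on using the mean value inequality under mere pointwise differentiability are accurate refinements of the same argument.
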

\begin{proof}[Proof of \cref{app:diff:lem}]
    Let $\mo v\in\mb D_d^{k-1}$. Then, by the mean value theorem,
    \begin{equation*}
        \abs{D^{k-1}_{\mo v}f(x) - D^{k-1}_{\mo v}f(\tilde x)} \leq \sup_{\tilde v \in \mb D_d}\supNormOf{D^{k}_{(\mo v, \tilde v)}f} \euclOf{x - \tilde x}
    \end{equation*}
    with $(\mo v, \tilde v)\in\mb D_d^{k}$. Thus,
    \begin{equation*}
        \opNormOf{D^{k-1}f(x) - D^{k-1}f(\tilde x)} \leq \supNormOf{D^{k}f} \euclOf{x - \tilde x}
        \eqfs
    \end{equation*}
\end{proof}
The sub-multiplicativity of the operator norm in the following lemma is a direct consequence of its definition.
\begin{lemma}\label{lmm:opnorm}
    Let $k,d,p\in\N$. Let $A \in \mc L_k(\R^d, \R^p)$ and $v \in \mb D_d^k$. Then
    \begin{equation*}
        \euclOf{A(v_1, \dots, v_k)} \leq  \opNormOf{A} \prod_{j=1}^k \euclOf{v_j}
    \end{equation*}
\end{lemma}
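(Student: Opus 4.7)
The plan is to reduce the general case to the definition of the operator norm, which is stated only for inputs on $\mb D_d^k$ (i.e., tuples of unit vectors). Note that as stated the lemma must be intended for arbitrary $v_j\in\R^d$ (otherwise the right-hand side is just $\opNormOf{A}$ and the statement is a tautology), so the first step is to handle arbitrary inputs by a rescaling argument.

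First I would dispose of the degenerate case: if $v_j = 0$ for some $j$, then by $k$-multilinearity $A(v_1, \dots, v_k) = 0$, and the right-hand side is also $0$, so the inequality holds trivially. Otherwise, set $\tilde v_j := v_j/\euclOf{v_j}\in\mb D_d$ for each $j\in\nnset k$. By applying the multilinearity of $A$ in each slot,
\begin{equation*}
    A(v_1, \dots, v_k) = \br{\prod_{j=1}^k \euclOf{v_j}} A(\tilde v_1, \dots, \tilde v_k)\eqfs
\end{equation*}
Since the prefactor is a nonnegative scalar, taking Euclidean norms on both sides yields $\euclOf{A(v_1,\dots,v_k)} = \br{\prod_{j=1}^k\euclOf{v_j}}\euclOf{A(\tilde v_1,\dots,\tilde v_k)}$.

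The proof then concludes by invoking the definition $\opNormOf{A} = \sup_{\mo v\in\mb D_d^k} \euclOf{A(\mo v)}$ given earlier in the appendix, which gives $\euclOf{A(\tilde v_1,\dots,\tilde v_k)}\leq \opNormOf{A}$ because $(\tilde v_1,\dots,\tilde v_k)\in\mb D_d^k$. Combining this with the previous display yields the claim. There is no real obstacle; the only subtlety is noting that the definition of $\opNormOf{A}$ only directly controls evaluations on unit-norm tuples, so the rescaling step is what makes the argument go through.
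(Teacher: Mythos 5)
Your proof is correct and matches the paper's intent: the paper offers no written proof beyond the remark that the bound is ``a direct consequence of the definition'' of $\opNormOf{\cdot}$, and your rescaling-by-multilinearity argument (together with the trivial case $v_j=0$) is exactly the standard way to make that remark precise. Your observation that the hypothesis $v\in\mb D_d^k$ as literally written would render the inequality tautological is also apt --- the statement is clearly intended for arbitrary $v_j\in\R^d$, and your proof covers that case.
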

For reference, we state the chain and product rule as well as Taylor's approximation theorem in the following lemmas.
\begin{lemma}\label{lmm:derivrules}
    Let $d\in\N$. Let $A \colon \R^d \to \mc L_k(\R^d, \R^d)$ symmetric be differentiable.
    Let $v \in \mb D_d$.
    \begin{enumerate}[label=(\roman*)]
        \item \label{lmm:derivrules:product}
        Product rule:
        Let $f_\ell \in \mc D(\R^d, \R^d)$ for $\ell \in \nnset k$. Then
        \begin{align*}
            D_v \br{A(x)[f_1(x),\dots,f_k(x)]}
            &=
            (D_v A(x))[f_1(x),\dots,f_k(x)] \,+
            \\&\phantom{=}\
            \sum_{\ell=1}^k A(x)\abOf{f_1(x),\dots, f_{\ell-1}(x),D_v f_\ell(x), f_{\ell+1}(x),\dots,f_k(x)}
            \eqfs
        \end{align*}
        \item \label{lmm:derivrules:chain}
        Chain rule:
        Let $f \in \mc D(\R^d, \R^d)$ and $a_1, \dots, a_k \in \R^d$. Then
        \begin{equation*}
            D_v \Big(A(f(x))[a_1,\dots,a_k]\Big)
            =
            (D A)(f(x))\abOf{a_1,\dots,a_k, D_vf(x)}
            \eqfs
        \end{equation*}
    \end{enumerate}
\end{lemma}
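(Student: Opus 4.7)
The plan is to derive both identities from the definition of the directional derivative by reducing them to the ordinary product and chain rules, exploiting the multilinearity of $A(x)$ at each fixed point.

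For part \ref{lmm:derivrules:product}, I would form the difference quotient
\begin{equation*}
\frac1h\br{A(x+hv)[f_1(x+hv),\dots,f_k(x+hv)] - A(x)[f_1(x),\dots,f_k(x)]}
\end{equation*}
and telescope it by successively exchanging the arguments one at a time. Inserting the intermediate term $A(x)[f_1(x+hv),\dots,f_k(x+hv)]$ and then using the multilinearity of $A(x)$ to pull out the slot-wise differences $f_\ell(x+hv)-f_\ell(x)$ produces $k+1$ summands: a first one of the form $\frac1h\br{A(x+hv)-A(x)}[f_1(x+hv),\dots,f_k(x+hv)]$, and, for each $\ell\in\nnset k$, a summand $A(x)\abOf{f_1(x),\dots, f_{\ell-1}(x), \frac{f_\ell(x+hv)-f_\ell(x)}h, f_{\ell+1}(x+hv),\dots, f_k(x+hv)}$. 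Letting $h\to0$, the first summand converges to $(D_v A(x))[f_1(x),\dots,f_k(x)]$ by the definition of $D_v A$ together with continuity of each $f_j$ at $x$, and each of the remaining $k$ summands converges to the corresponding term on the right-hand side by the definition of $D_v f_\ell(x)$ and continuity of the $f_j$ in the trailing slots.

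For part \ref{lmm:derivrules:chain}, I would fix $a_1,\dots,a_k\in\R^d$ and set $g\colon\R^d\to\R^d$, $y\mapsto A(y)[a_1,\dots,a_k]$. Since $A$ is differentiable with derivative $DA(y)\in\mc L_{k+1}(\R^d,\R^d)$, evaluating its defining limit on the fixed tuple $(a_1,\dots,a_k)$ and using multilinearity shows $g\in\mc D(\R^d,\R^d)$ with $D_v g(y) = (DA)(y)[a_1,\dots,a_k,v]$. The classical chain rule for maps between Euclidean spaces then yields
\begin{equation*}
D_v\br{A(f(x))[a_1,\dots,a_k]} = Dg(f(x))[D_v f(x)] = (DA)(f(x))[a_1,\dots,a_k, D_v f(x)],
\end{equation*}
which is the desired identity.

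The main obstacle is the bookkeeping in the telescoping estimate of part (i): one has to keep track of which slots carry $f_j(x+hv)$ and which carry $f_j(x)$ in each of the $k+1$ summands, and verify that the mixed limits converge to the same target in which all surviving slots carry $f_j(x)$. This is routine once one notes that \cref{lmm:opnorm} bounds the multilinear evaluation by the product of the arguments' Euclidean norms, and that differentiability of each $f_j$ at $x$ entails continuity there, so one can pass to the limit termwise. Part (ii) is essentially immediate once the derivative of $g$ has been read off from that of $A$ and thus presents no real difficulty.
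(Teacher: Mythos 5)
Your proof is correct. The paper states this lemma without proof, as a standard fact recorded for reference, so there is no argument in the text to compare against; your telescoping difference-quotient argument for the product rule (inserting the intermediate term $A(x)[f_1(x+hv),\dots,f_k(x+hv)]$ and using multilinearity slot by slot, then passing to the limit via the operator-norm bound of \cref{lmm:opnorm} and continuity of the $f_j$) and your reduction of the chain rule to the classical one via the auxiliary map $y\mapsto A(y)[a_1,\dots,a_k]$ are exactly the canonical proofs one would supply. The only point worth making explicit is the identification $\mc L_1(\R^d,\mc L_k(\R^d,\R^d))\cong\mc L_{k+1}(\R^d,\R^d)$ behind writing $(DA)(y)[a_1,\dots,a_k,v]$, which the lemma's statement itself already presupposes.
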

\begin{notation}
    Let $d, m\in\N$. Let $x \in \R^d$. Denote $\{x\}^m = (x, \dots, x) \in (\R^d)^m$.
\end{notation}
\begin{lemma}[Taylor theorem with Peano remainder]\label{lmm:taylor}
    Let $d \in \N$.
    \begin{enumerate}[label=(\roman*)]
        \item
        Let $\ell\in\N$. Let $f \in \mc D^\ell(\R^d, \R)$.
        Then, there is $\gamma \in [0,1]$, such that
        \begin{equation*}
            f(x) = \sum_{k=0}^{\ell-1} \frac{D^k f(x_0)(\{x-x_0\}^k)}{k!} + \frac{D^\ell f\brOf{\tilde x}(\{x-x_0\}^\ell)}{\ell!}
        \end{equation*}
        with $\tilde x := x_0 + \gamma(x-x_0)$.
        \item
        Let $\beta, L \in \Rpp$. Set $\ell:=\llfloor\beta\rrfloor$. Let $f \in \Sigma^{d\to1}(\beta, L)$. Then
        \begin{equation*}
            \abs{f(x) - \sum_{k=0}^{\ell} \frac{D^k f(x_0)(\{x-x_0\}^k)}{k!}} \leq L \frac{\euclOf{x-x_0}^\beta}{\ell!}
            \eqfs
        \end{equation*}
    \end{enumerate}
\end{lemma}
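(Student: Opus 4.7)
The plan is to reduce the multivariate statement to the classical one-dimensional Taylor theorem, then derive part (ii) from part (i) via the Hölder condition on $D^\ell f$.

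For part (i), I would restrict $f$ to the segment from $x_0$ to $x$. Set $g\colon[0,1]\to\R$ by $g(t) := f(x_0 + t(x-x_0))$. A short induction on $k \in \nnzset\ell$, using the chain rule of \cref{lmm:derivrules} \ref{lmm:derivrules:chain} and the fact that the directions $x-x_0$ are constant in $t$, shows
\begin{equation*}
    g^{(k)}(t) = D^k f(x_0 + t(x-x_0))[\{x-x_0\}^k] \eqfs
\end{equation*}
Hence $g \in \mc D^{\ell}([0,1], \R)$, and applying the classical univariate Taylor theorem with Lagrange remainder yields some $\gamma\in[0,1]$ with $g(1) = \sum_{k=0}^{\ell-1} g^{(k)}(0)/k! + g^{(\ell)}(\gamma)/\ell!$. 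Setting $\tilde x := x_0 + \gamma(x-x_0)$ and substituting the formula for $g^{(k)}$ gives the claimed multivariate identity.

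For part (ii), I would distinguish two cases on $\beta$. If $\beta \in (0,1]$, then $\ell := \llfloor \beta \rrfloor = 0$, the sum reduces to $f(x_0)$, and the assertion is exactly the defining Hölder condition of $\Sigma^{d\to 1}(\beta, L)$ in \cref{def:Hoelder}. If $\beta > 1$, then $\ell \geq 1$ and $f \in \mc D^\ell(\R^d, \R)$, so part (i) furnishes some intermediate $\tilde x$ with the Lagrange-type remainder. Subtracting the additional term $D^\ell f(x_0)[\{x-x_0\}^\ell]/\ell!$ from both sides of the identity rewrites the error as
\begin{equation*}
    f(x) - \sum_{k=0}^\ell \frac{D^k f(x_0)[\{x-x_0\}^k]}{k!} = \frac{(D^\ell f(\tilde x) - D^\ell f(x_0))[\{x-x_0\}^\ell]}{\ell!} \eqfs
\end{equation*}
Bounding via \cref{lmm:opnorm} together with the Hölder property $\opNormof{D^\ell f(\tilde x) - D^\ell f(x_0)} \leq L \euclof{\tilde x - x_0}^{\beta-\ell} \leq L \euclof{x-x_0}^{\beta-\ell}$ (valid since $\tilde x$ lies on the segment from $x_0$ to $x$) delivers the stated bound.

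The only nontrivial step is the inductive identity for $g^{(k)}$. This should be routine: since the directions $\{x-x_0\}$ do not depend on $t$, the product-rule contributions in \cref{lmm:derivrules} \ref{lmm:derivrules:product} vanish, and the chain rule appends exactly one further copy of $x-x_0$ at each differentiation step, cleanly yielding the formula by induction. The remainder of the argument is a straightforward application of existing inequalities.
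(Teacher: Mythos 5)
The paper states \cref{lmm:taylor} without proof (it is listed ``for reference'' among classical facts in appendix \ref{app:sec:derivative}), so there is no in-paper argument to compare against; your proposal supplies the standard proof and it is correct. The reduction of part (i) to the univariate Lagrange-remainder Taylor theorem via $g(t) = f(x_0 + t(x-x_0))$ is sound, and the inductive identity $g^{(k)}(t) = D^k f(x_0+t(x-x_0))[\{x-x_0\}^k]$ holds for exactly the reason you give. Your case split in part (ii) matches the case split in \cref{def:Hoelder} ($\ell = 0$ for $\beta\in(0,1]$, where the claim is the defining Hölder inequality; $\ell\geq1$ for $\beta>1$, where the remainder is rewritten as $(D^\ell f(\tilde x)-D^\ell f(x_0))[\{x-x_0\}^\ell]/\ell!$ and bounded via \cref{lmm:opnorm} and $\euclOf{\tilde x - x_0}\leq\euclOf{x-x_0}$). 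Two cosmetic points worth a sentence in a written-out version: \cref{lmm:derivrules}~\ref{lmm:derivrules:chain} is stated for unit directions and for inner maps $\R^d\to\R^d$, so applying it to the curve $t\mapsto x_0+t(x-x_0)$ with the non-unit direction $x-x_0$ implicitly uses the homogeneity of the multilinear form $D^k f$ (i.e., pull out the factor $\euclOf{x-x_0}^k$ and differentiate in the unit direction); and the Lagrange form of the remainder needs only that $g$ is $\ell$-times differentiable on $[0,1]$, which follows from $f\in\mc D^\ell(\R^d,\R)$. Neither affects the validity of the argument.
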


    \section{Local Polynomial Regression}\label{app:sec:localreg}
In order to make this article more self-contained, we provide here upper bounds on the estimation error for a multivariate regression problem using local polynomial estimators. We follow \cite{Tsybakov09Introduction} and extend the proof there from the domain $[0, 1]$ to the domain $[0, T]^d$ and also discuss estimation of derivatives. See also \cite{Ruppert1994}. It is important for the results in the main part of the article that the dependence on $T$ and on the Lipschitz constant $L$ of the regression function are given explicitly.
\subsection{Model}\label{app:sec:localreg:model}
Let $d\in\N$.
Let $\beta, L \in\Rpp$. Set $\ell := \llfloor\beta\rrfloor$.
Let $f \in\Sigma^{d\to1}(\beta, L)$.
Let $n\in\N$.
Let $T\in\Rpp$. For $i\in\nnset n$, let $x_i\in[0, T]^d$.
Let $\sigma \in \Rpp$.
For $i\in\nnset n$, let $\noise_i$ be independent $\R$-valued random variables with $\Eof{\epsilon_i} = 0$ and $\Eof{\epsilon_i^2} \leq \sigma^2$.
For $i\in\nnset n$, set
\begin{equation}\label{eq:regression:model}
    Y_i = f(x_i) + \noise_i
    \eqfs
\end{equation}
We observe $Y_i$ and know $x_i$. The function $f$ is unknown.
Let $s\in\nnzset{\ell}$. Let $\mo v\in\mb D_d^s$ be $s$-many directions.
Our target is to estimate the $s$-th directional derivative $ D_{\mo v} f \colon \R^d \to \R$ of $f$.
For the local polynomial estimator $\hat g$ defined below, we want to show upper bounds for the mean squared error $\Eof{\normof{\hat g(x) - D_{\mo v} f(x)}^2}$ at a point $x\in[0, T]^d$ depending on $d, \beta, L, n, T, \sigma, s$.
\subsection{Estimator}\label{app:sec:localreg:estimator}
Recall \cref{not:poly}. Set the number of scalar coefficients for polynomials $\R^d\to \R$ of degree at most $\ell$ as
\begin{equation*}
    N := N_{d,\ell} := \dim(\Poly d\ell) = \begin{pmatrix} \ell + d \\ d\end{pmatrix}
    \eqfs
\end{equation*}
For $x\in\R^d$, denote $\psi(x)  := \psi_\ell(x) := ((\alpha!)^{-1}x^\alpha)_{\alpha \in \N_0^d,|\alpha|\leq \ell} \in \R^N$.
Let $K\colon \Rp \to \R$. Let $h\in\Rpp$.
Denote the local polynomial mean squared error coefficients as
\begin{equation*}
    \hat\theta(x) \in \argmin_{\theta\in\R^{N}} \sum_{i=1}^{n} \br{Y_i - \theta\tr \psi\brOf{\frac{x_i-x}{h}}}^2 K\brOf{\frac{\euclOf{x_i-x}}{h}}
    \eqfs
\end{equation*}
Then the local polynomial estimator of degree $\ell$ for the $s$-th derivative in directions $\mo v\in\mb D_d^s$ is defined as
\begin{equation}\label{eq:lp:def}
    \hat g(x) := h^{-s} D_{\mo v} \psi(0)\tr \hat\theta(x)
    \eqfs
\end{equation}

For $x\in\R^d$, define the symmetric matrix $B(x) \in\R^{N \times N}$ and the vector $a(x)\in\R^N$ as
\begin{align}\label{eq:lp:B}
    B(x) &:= \frac{T^d}{nh^d} \sum_{i=1}^{n} \psi\brOf{\frac{x_i - x}{h}}\psi\brOf{\frac{x_i - x}{h}}\tr K\brOf{\frac{\euclOf{x_i-x}}{h}}
    \eqcm\\\label{eq:lp:a}
    a(x) &:= \frac{T^d}{nh^d} \sum_{i=1}^{n} Y_i \psi\brOf{\frac{x_i - x}{h}}K\brOf{\frac{\euclOf{x_i-x}}{h}}
    \eqfs
\end{align}
\begin{notation}
    Let $k\in\N$ and $A\in\R^{k\times k}$ be a symmetric matrix. We write $\evmin(A)$ for the smallest eigenvalue of $A$.
\end{notation}
Assume $\evmin(B(x)) > 0$. Then
\begin{equation*}
     \hat\theta(x) = B(x)^{-1} a(x)
     \eqfs
\end{equation*}
Furthermore, with the weights
\begin{equation}\label{eq:lp:defweights}
    w_i(x) := w_{i,s}(x) := \frac{T^d}{nh^{d+s}} D_{\mo v} \psi(0)\tr B(x)^{-1} \psi\brOf{\frac{x_i - x}{h}}K\brOf{\frac{x_i-x}{h}}
    \eqcm
\end{equation}
we can write the estimator as
\begin{equation}\label{eq:regression:lp}
    \hat g(x) = h^{-s} D_{\mo v} \psi(0)\tr \hat\theta(x) = \sum_{i=1}^{n} w_i(x) Y_i
    \eqfs
\end{equation}
\subsection{Result}\label{app:sec:localreg:results}
\begin{notation}\mbox{}
    All lower case $c$, with or without index, are elements of $\Rpp$ and universal insofar as they only depend on the variables written as index, e.g., $c_{d,\beta}$ depends only on $\beta$ and $d$. In particular, a constant $c$ with no index refers to a fixed positive number. Every occurrence of such a variable may refer to a different value.
\end{notation}
Recall \assuRef{Kernel}, \assuRef{Eigenvalue}, and \assuRef{Cover} defined in \cref{ass:localreg}.
\begin{theorem}\label{thm:localpoly}
    Use the model of section \ref{app:sec:localreg:model} and the estimator of section \ref{app:sec:localreg:estimator}.
    Assume \assuRef{Kernel}, \assuRef{Eigenvalue}, and \assuRef{Cover}. Assume $n \geq \const{cvr}^{-1} h^{-d} T^d$.
    Then, for all $x \in [0, T]^d$, we have
    \begin{equation*}
        \EOf{\br{\hat g(x) - D_{\mo v} f(x)}^2}
        \leq
        c_\ell \br{\const{egv}^2 \const{ker}^2 \const{cvr}^2 L^2 h^{2(\beta-s)} + \const{egv}^2 \const{ker}^2 \const{cvr} \frac{\sigma^2 T^{d}}{nh^{d+2s}}}
        \eqfs
    \end{equation*}
\end{theorem}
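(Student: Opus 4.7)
The plan is the classical bias--variance decomposition for the linear estimator \eqref{eq:regression:lp}. First I would write
\[
    \hat g(x) - D_{\mo v} f(x)
    = \Big(\sum_{i=1}^{n} w_i(x) f(x_i) - D_{\mo v} f(x)\Big) + \sum_{i=1}^{n} w_i(x) \noise_i \eqcm
\]
apply $(a+b)^2 \leq 2a^2 + 2b^2$, and take expectation. Independence of the $\noise_i$ together with $\Eof{\noise_i^2} \leq \sigma^2$ reduces the stochastic part of the mean squared error to $2\sigma^2 \sum_{i=1}^{n} w_i(x)^2$, so the two terms to control are $(\sum_i w_i(x) f(x_i) - D_{\mo v} f(x))^2$ and $\sum_i w_i(x)^2$.

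For the bias, the key ingredient is the polynomial-reproduction property: for every $P \in \Poly{d}{\ell}$ one has $\sum_{i=1}^n w_{i,s}(x) P(x_i) = D_{\mo v} P(x)$. This follows directly from the definitions in \eqref{eq:lp:B}, \eqref{eq:lp:a}, \eqref{eq:lp:defweights}: writing $P(x_i) = \tilde\theta^\top \psi((x_i - x)/h)$ with $\tilde\theta$ the appropriately rescaled Taylor coefficients of $P$ at $x$, one gets $B(x)^{-1} a(x) = \tilde\theta$, and $h^{-s} D_{\mo v}\psi(0)^\top \tilde\theta = D_{\mo v} P(x)$ after accounting for the chain rule and the $(\alpha!)^{-1}$ in $\psi$. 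Taking $P$ to be the degree-$\ell$ Taylor polynomial of $f$ at $x$, \cref{lmm:taylor} combined with $f \in \Sigma^{d\to 1}(\beta, L)$ gives $|f(x_i) - P(x_i)| \leq (L/\ell!)\euclof{x_i-x}^\beta$, so the bias simplifies to
\[
    \Big|\sum_{i=1}^n w_i(x)\br{f(x_i)-P(x_i)}\Big| \leq \frac{L}{\ell!}\sum_{i=1}^n |w_i(x)|\,\euclof{x_i-x}^\beta \eqfs
\]

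Next I would produce a pointwise bound on $w_i(x)$. By \assuRef{Kernel}, the support condition on $K$ kills the summand unless $\euclof{x_i - x}\leq h$ and gives $K \leq \const{ker}$; on the closed unit ball, both $\euclof{\psi(u)}$ and $\euclof{D_{\mo v}\psi(0)}$ are bounded by a constant $c_\ell$. Combined with \assuRef{Eigenvalue}, which gives $\opNormof{B(x)^{-1}} \leq \const{egv}$, this yields
\[
    |w_i(x)| \leq c_\ell\, \const{egv}\, \const{ker}\, \frac{T^d}{n h^{d+s}}\,\ind_{\ball^d(x,h)}(x_i) \eqfs
\]
The assumption $n \geq \const{cvr}^{-1} h^{-d} T^d$ makes the $n^{-1}$ term in \assuRef{Cover} dominated, so the number of design points in $\ball^d(x,h)$ is bounded by $n\const{cvr}(h/T)^d$. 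Summing produces
\[
    \sum_{i=1}^n w_i(x)^2 \leq c_\ell\,\const{egv}^2\const{ker}^2\const{cvr}\,\frac{T^d}{nh^{d+2s}} \eqcm\qquad
    \sum_{i=1}^n |w_i(x)|\,\euclof{x_i-x}^\beta \leq c_\ell\,\const{egv}\,\const{ker}\,\const{cvr}\,h^{\beta-s} \eqfs
\]
Squaring the bias bound gives the $L^2 h^{2(\beta-s)}$ term and the variance bound gives the $\sigma^2 T^d/(nh^{d+2s})$ term, matching the claim.

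The main obstacle is the polynomial-reproduction identity: one must carefully track how the change of variables $u = (x_i - x)/h$ in the argument of $\psi$ combines with the outer factor $h^{-s}$ and the directional derivative $D_{\mo v}$ at $0$ in \eqref{eq:lp:def}, and verify that the normalization $(\alpha!)^{-1}$ baked into $\psi$ exactly matches the Taylor coefficients so that feeding in a polynomial $P \in \Poly d\ell$ returns $D_{\mo v} P(x)$ without an $h$-dependent prefactor. Everything else — Taylor remainder, kernel support, eigenvalue lower bound, and covering cardinality — is routine bookkeeping. The extra care is to propagate $\const{egv}$, $\const{ker}$, $\const{cvr}$, $L$, $T$, $\sigma$ through explicitly so that the constants in the theorem statement are obtained verbatim rather than absorbed into a generic $c$.
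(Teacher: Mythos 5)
Your proposal is correct and follows essentially the same route as the paper: an exact bias--variance split of the linear estimator, the polynomial-reproduction identity (the paper's \cref{lmm:reproduction}) applied to the degree-$\ell$ Taylor polynomial of $f$ at $x$ with the Hölder remainder bound from \cref{lmm:taylor}, and the weight bounds $\sum_i|w_i(x)|\lesssim \const{egv}\const{ker}\const{cvr}h^{-s}$ and $\sum_i w_i(x)^2\lesssim \const{egv}^2\const{ker}^2\const{cvr}T^d/(nh^{d+2s})$ derived exactly as in the paper's \cref{lmm:lp:weights} from \assuRef{Kernel}, \assuRef{Eigenvalue}, \assuRef{Cover}, and the condition $n\geq\const{cvr}^{-1}h^{-d}T^d$. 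No gaps; the constants propagate as claimed.
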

\begin{corollary}\label{cor:localpoly}
    Use the model of section \ref{app:sec:localreg:model} and the estimator of section \ref{app:sec:localreg:estimator}.
    Assume \assuRef{Kernel}, \assuRef{Eigenvalue}, and \assuRef{Cover}.
    Assume $n \geq n_0$, where $n_0\in\N$ depends only on $d, \beta, \const{egv}, \const{ker}, \const{cvr}, \sigma, T, L$.
    Then there are constants $C_1, C_2 \in \Rpp$, depending only on $d, \beta, \const{egv}, \const{ker}, \const{cvr}$, with the following property: Set the bandwidth as
    \begin{equation*}
         h = C_1 \br{\frac{\sigma^2 T^d}{L^2n}}^{\frac{1}{2\beta+d}}
         \eqfs
    \end{equation*}
    Then, for all $x \in [0, T]^d$, we have
    \begin{equation*}
        \EOf{\br{\hat g(x) - D_{\mo v} f(x)}^2} \leq C_2 L^{\frac{2d+4s}{2\beta+d}} \br{\frac{\sigma^2 T^d}{n}}^{\frac{2(\beta-s)}{2\beta+d}}
        \eqfs
    \end{equation*}
\end{corollary}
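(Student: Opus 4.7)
The plan is to apply \cref{thm:localpoly} and optimize its upper bound over $h$. That bound has the form of a classical bias--variance trade-off: a term $\asymp L^2 h^{2(\beta-s)}$ (increasing in $h$) plus a term $\asymp \sigma^2 T^d /(n h^{d+2s})$ (decreasing in $h$), up to a prefactor depending only on $\const{egv}, \const{ker}, \const{cvr}$. So the minimum over $h$ is attained (up to constants) when the two summands are of the same order.

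Equating $L^2 h^{2(\beta-s)} = \sigma^2 T^d/(n h^{d+2s})$ and solving for $h$ gives
\begin{equation*}
    h^{2\beta + d} = \frac{\sigma^2 T^d}{L^2 n}\eqcm
\end{equation*}
which motivates the choice $h = C_1 (\sigma^2 T^d/(L^2 n))^{1/(2\beta+d)}$, with $C_1 \in \Rpp$ fixed in terms of $d, \beta, \const{egv}, \const{ker}, \const{cvr}$ so as to balance the two summands. Plugging this $h$ back into each term of the bound of \cref{thm:localpoly} and using the algebraic identity $2 - 4(\beta-s)/(2\beta+d) = (2d+4s)/(2\beta+d)$, a short computation shows that each summand equals a constant multiple of $L^{(2d+4s)/(2\beta+d)} (\sigma^2 T^d/n)^{2(\beta-s)/(2\beta+d)}$. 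Summing them yields the stated bound with a constant $C_2$ depending only on $d, \beta, \const{egv}, \const{ker}, \const{cvr}$.

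It then remains to verify the hypothesis $n \geq \const{cvr}^{-1} h^{-d} T^d$ of \cref{thm:localpoly} for the prescribed $h$. Substituting and isolating $n$, this condition reduces to $n \geq n_0$ for some explicit threshold $n_0$ depending only on $d, \beta, \const{egv}, \const{ker}, \const{cvr}, \sigma, T, L$, which is exactly what the corollary allows us to assume. I expect no serious obstacle: the argument is a routine bias--variance optimization once \cref{thm:localpoly} is in hand. The only mild point worth tracking is that $s \in \nnzset \ell$ with $\ell = \llfloor \beta \rrfloor < \beta$, so $\beta - s > 0$ and the trade-off is nondegenerate; otherwise the exponent of $h$ in the bias term would vanish and the equating step would fail.
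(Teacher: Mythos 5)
Your proposal is correct and is exactly the intended derivation: the paper leaves \cref{cor:localpoly} as the standard bias--variance optimization of the bound in \cref{thm:localpoly}, and your choice of $h$, the exponent identity $2-\tfrac{4(\beta-s)}{2\beta+d}=\tfrac{2d+4s}{2\beta+d}$, and the reduction of the hypothesis $n\geq \const{cvr}^{-1}h^{-d}T^d$ to $n\geq n_0$ all check out. Your remark that $s\leq\ell=\llfloor\beta\rrfloor<\beta$ guarantees $\beta-s>0$ is the right point to verify and is indeed satisfied by the paper's definitions.
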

\begin{remark}\mbox{ }
    \begin{enumerate}[label=(\roman*)]
        \item
        The error rates obtained in \cref{cor:localpoly} are minimax optimal up to the constant $C_2$, which can be seen by comparing them to lower bounds for nonparametric regression, e.g., \cite[Theorem 8]{lowerbounds}.
        \item
        Integrating the point-wise error over $[0, T]^d$ yields the minimax optimal rate for the integrated error. For a bound on the error in sup-norm, we need to do some extra work.
    \end{enumerate}
\end{remark}
\begin{theorem}\label{thm:localpoly:supnorm}
    Assume \assuRef{Kernel}, \assuRef{Eigenvalue}, and \assuRef{Cover}.
    Assume \assuRef{SubGaussian}.
    Assume $n \geq \const{cvr}^{-1} h^{-d} T^d$
    Assume $T/h \geq 2$.
    Then, we have
    \begin{equation*}
        \EOf{\supNormOf{\hat g - D_{\mo v} f}^2}
        \leq
        c_{d,\ell,K}  \br{\br{\const{egv} \const{cvr} L h^{\beta-s}}^2 + \const{cvr} \const{egv}^2 \frac{\sigma^2T^d}{nh^{d+2s}} \log(T/h)}
        \eqfs
    \end{equation*}
\end{theorem}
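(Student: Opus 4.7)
The plan is to follow the classical bias--variance decomposition and combine a uniform bias bound with a sub-Gaussian concentration argument for the stochastic part. Write $\hat g(x) - D_{\mo v} f(x) = b(x) + Z(x)$, where $b(x) := \Eof{\hat g(x)} - D_{\mo v} f(x)$ is the (deterministic) bias and $Z(x) := \sum_{i=1}^n w_i(x)\noise_i$ is the centred stochastic part, so that $\supNormOf{\hat g - D_{\mo v} f}^2 \leq 2\supNormOf{b}^2 + 2\supNormOf{Z}^2$. The bias estimate derived in the proof of \cref{thm:localpoly} is a deterministic pointwise bound resting only on smoothness of $f$ together with \assuRef{Kernel}, \assuRef{Eigenvalue}, \assuRef{Cover}; since it is valid at each fixed $x\in[0,T]^d$, taking the supremum yields $\supNormOf{b}^2 \leq c_\ell\br{\const{egv}\const{ker}\const{cvr} L h^{\beta-s}}^2$, which supplies the first term of the target bound without any logarithmic factor.

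For $Z$, \assuRef{SubGaussian} and independence make $Z(x)$ centred sub-Gaussian with variance parameter $\sigma^2 \sum_i w_i(x)^2$. The weight computation underlying the variance part of \cref{thm:localpoly} gives the uniform bound $\sigma^2\sum_i w_i(x)^2 \leq v_0$ with $v_0 := c_\ell \const{egv}^2 \const{ker}^2 \const{cvr} \sigma^2 T^d/(nh^{d+2s})$. I then introduce a cubic net $\mc N \subset [0,T]^d$ of spacing comparable to $h$, so $\abs{\mc N} \leq c(T/h)^d$. A union bound combined with the sub-Gaussian tail inequality gives $\PrOf{\max_{x\in\mc N} \abs{Z(x)}>t} \leq 2\abs{\mc N}\exp(-t^2/(2v_0))$, and integrating this tail produces $\EOf{\max_{x\in\mc N} Z(x)^2} \leq c\, v_0 \log\abs{\mc N} \leq c_{d,\ell,K}\, v_0 \log(T/h)$, matching the target (using $T/h \geq 2$ to keep the logarithm positive).

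The main obstacle is extending the bound from $\mc N$ to all of $[0,T]^d$. Under \assuRef{Kernel} alone the kernel may be discontinuous, so $\hat g$ is not Lipschitz in $x$; however, as $x$ varies the index set $\cbOf{i : \euclof{x_i-x}\leq h}$ changes only when $x$ crosses one of the $n$ spheres $\partial\ball^d(x_i,h)$, and this partitions $[0,T]^d$ into polynomially many cells on each of which $B(x)^{-1}$, $a(x)$, and hence $\hat g(x)$ are rational functions of $x$ of bounded degree. Within each cell I would control the modulus of continuity of $Z$ via \assuRef{Eigenvalue} (bounding $B(x)^{-1}$ and its derivative), and across cell boundaries I would bound the jump by a single-observation contribution $\abs{w_i(x)\noise_i}$, controlled combinatorially through \assuRef{Cover} and in probability through sub-Gaussian tail bounds on $\max_i\abs{\noise_i}$. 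Refining the net to a spacing polynomially small in $n$ absorbs these oscillation contributions into the constant $c_{d,\ell,K}$ while keeping $\log\abs{\mc N} \asymp \log(T/h)$, which together with the bias bound yields the stated sup-norm estimate.
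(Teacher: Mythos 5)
Your bias bound and your bound for the maximum of $Z$ over the coarse net of spacing $\asymp h$ are both correct and match the paper's intermediate steps. The genuine gap is exactly where you flag "the main obstacle": the passage from the finite net to the supremum over $[0,T]^d$. Refining the net to a spacing polynomially small in $n$ does \emph{not} keep $\log \# \mc N \asymp \log(T/h)$: with spacing $n^{-p}$ one gets $\log \# \mc N \asymp \log T + p\log n$, which in general exceeds $\log(T/h)$ by an unbounded factor (take $T=1$, $h=1/2$, so $\log(T/h)=\log 2$ while $\log n \to\infty$). The alternative of keeping the coarse net and controlling the oscillation between net points deterministically fares no better: any bound through $\max_i\abs{\noise_i}$ (which is what your within-cell Lipschitz control and your across-boundary jump control both reduce to) is of order $\sigma\sqrt{\log n}$, again injecting $\log n$ rather than $\log(T/h)$. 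A single-scale discretization plus union bound therefore only proves the weaker statement with $\log n$ in place of $\log(T/h)$; the bound as stated requires a genuinely multi-scale argument. The paper's proof pulls $\opNormof{B(x)^{-1}} \leq \const{egv}$ out uniformly, observes that the remaining process $x\mapsto \sum_i \psi((x_i-x)/h)K(\euclof{x_i-x}/h)\noise_i$ is sub-Gaussian for the metric $\rho(x,x\pr) \leq a \min(1, \euclof{x-x\pr}/h)$ with $a^2 \asymp \const{cvr} n h^d T^{-d}\sigma^2$, and applies Dudley's entropy integral (\cref{lmm:chaining} together with \cref{lmm:covernum}); the entropy integral of that metric over $[0,T]^d$ is $\asymp a\max(1,\sqrt{\log(T/h)})$, which is where the correct logarithm comes from.

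Two secondary points. First, your concern that \assuRef{Kernel} alone permits a discontinuous kernel is a fair reading, but the proof (and the constant $c_{d,\ell,K}$) implicitly uses Lipschitz continuity of $K$, which is supplied by \assuRef{StrictKernel} wherever the theorem is invoked; the cell decomposition you sketch is therefore unnecessary in the intended setting, and its boundary-jump terms would in any case reintroduce $\max_i\abs{\noise_i}\asymp\sigma\sqrt{\log n}$. Second, \assuRef{Eigenvalue} bounds $\opNormof{B(x)^{-1}}$ but says nothing about its derivative, so "bounding $B(x)^{-1}$ and its derivative" needs an extra perturbation argument; the paper sidesteps this entirely by bounding $\opNormof{B(x)^{-1}}$ uniformly \emph{before} chaining, so that only the explicit, kernel-Lipschitz process $\sum_i b_i(x)\noise_i$ has to be chained.
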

\begin{corollary}\label{cor:localpoly:supnorm}
    Assume \assuRef{Kernel}, \assuRef{Eigenvalue}, and \assuRef{Cover}.
    Assume \assuRef{SubGaussian}.
    There are $C_1, C_2, C_3, C_4 \in \Rpp$ depending only on $d, \beta, \const{egv}, \const{cvr}, K$ with the following property:
    Assume
    \begin{equation}\label{eq:lp:Tbound}
         C_1 \br{\frac{\log(n)}n}^{\frac{d}{2\beta}} \leq C_2 \br{\frac{L^2}{\sigma^2}}^{\frac d{2\beta}} T^d \leq n \log(n)^{\frac{d}{2\beta}}
         \eqfs
    \end{equation}
    Set the bandwidth as
    \begin{equation*}
        h = C_3 \br{\frac{\sigma^2 T^d \log(n)}{L^2n}}^{\frac{1}{2\beta+d}}
        \eqfs
    \end{equation*}
    Then
    \begin{equation*}
        \EOf{\supNormOf{\hat g - D_{\mo v} f}^2}
        \leq
        C_4  L^{\frac{2d+4s}{2\beta+d}} \br{\frac{\sigma^2 T^d \log(n)}{n}}^{\frac{2(\beta-s)}{2\beta+d}}
        \eqfs
    \end{equation*}
\end{corollary}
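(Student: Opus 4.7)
The plan is to apply Theorem \ref{thm:localpoly:supnorm} with the specified bandwidth $h = C_3 \br{\sigma^2 T^d \log(n)/(L^2 n)}^{1/(2\beta+d)}$, which is the classical bias--variance trade-off for the sup-norm: it equates $L^2 h^{2(\beta-s)}$ with $\sigma^2 T^d \log(n)/(n h^{d+2s})$, the two summands in the bound from that theorem, up to constants.

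First, I would verify the two hypotheses of Theorem \ref{thm:localpoly:supnorm} under \eqref{eq:lp:Tbound}, namely (a) $n \geq \const{cvr}^{-1} h^{-d} T^d$ and (b) $T/h \geq 2$. Substituting the chosen $h$ and raising both sides to the power $(2\beta+d)/d$, one sees that (a) is equivalent, up to $C_3$, to $T^d \leq c (\sigma^2/L^2)^{d/(2\beta)} n \log(n)^{d/(2\beta)}$, which is precisely the right-hand inequality of \eqref{eq:lp:Tbound} for a suitable $C_2$. Similarly, (b) becomes $T^d \geq c' (\sigma^2/L^2)^{d/(2\beta)} (\log(n)/n)^{d/(2\beta)}$, i.e., the left-hand inequality of \eqref{eq:lp:Tbound} for a suitable $C_1$. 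Hence both hypotheses hold provided we choose $C_1$ small enough and $C_2, C_3$ large enough.

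Second, I would substitute $h$ into the two terms of Theorem \ref{thm:localpoly:supnorm}. For the bias term, using $2 - 4(\beta-s)/(2\beta+d) = (2d+4s)/(2\beta+d)$, we get
\begin{equation*}
    L^2 h^{2(\beta-s)} \leq c\, L^{(2d+4s)/(2\beta+d)} \br{\sigma^2 T^d \log(n)/n}^{2(\beta-s)/(2\beta+d)}.
\end{equation*}
For the variance term $\sigma^2 T^d \log(T/h)/(n h^{d+2s})$ the identity $2(\beta-s)/(2\beta+d) + (d+2s)/(2\beta+d) = 1$ makes the $\log(n)$ factors combine to a single power of $\log(n)$, and the term matches the bias term up to constants \emph{provided} $\log(T/h)$ can be replaced by $\log(n)$ up to a multiplicative constant. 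Since \eqref{eq:lp:Tbound} forces $T$ to be at most polynomial in $n$ and the chosen $h$ to be at least polynomial in $n^{-1}$, one indeed has $\log(T/h) \leq c_{d,\beta} \log(n)$. Collecting all resulting constants into $C_4$ yields the claimed bound.

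The main obstacle is purely bookkeeping: arranging that $C_1,\dots,C_4$ depend only on the permitted parameters $d,\beta,\const{egv},\const{cvr},K$ while absorbing the constant $c_{d,\ell,K}$ from Theorem \ref{thm:localpoly:supnorm} and the ratio $\log(T/h)/\log(n)$ uniformly over the allowed range of $T$ in \eqref{eq:lp:Tbound}. No new probabilistic or analytic content is needed beyond Theorem \ref{thm:localpoly:supnorm}.
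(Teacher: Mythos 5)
Your proposal is correct and follows exactly the route the paper intends: the corollary is a routine consequence of \cref{thm:localpoly:supnorm} obtained by checking its two side conditions against \eqref{eq:lp:Tbound}, substituting the stated bandwidth, and noting that $T/h \asymleq n^{1/d}$ (the $\sigma^2/L^2$ factors cancel once the right-hand inequality of \eqref{eq:lp:Tbound} is combined with the definition of $h$), so that $\log(T/h) \leq c_{d,\beta}\log(n)$. The exponent identities you use for the bias and variance terms are the right ones, and no further content is needed.
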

\begin{remark}\mbox{ }
        The error rates obtained in \cref{cor:localpoly:supnorm} are minimax optimal up to the constant $C_4$, which can be seen by comparing them to lower bounds for nonparametric regression, e.g., \cite[Theorem 8]{lowerbounds}.
\end{remark}
Following \cite[Lemma 1.4, Lemma 1.5]{Tsybakov09Introduction}, we show that the uniform grid in $[0, T]^d$ fulfills \assuRef{Cover}, and also \assuRef{Eigenvalue} if the kernel is not degenerated. Recall \cref{ass:strictkernel} for \assuRef{StrictKernel}.
\begin{proposition}\label{prp:uniformgrid}
    Let $n_0\in\N$.
    Set $n := n_0^d$.
    Let $x_i \in [0,T]^d$ for $i\in\nnset n$ form a uniform grid in $[0,T]^d$, i.e.,
    \begin{equation*}
        \cb{x_1, \dots, x_n} =
        \setByEle{
            \begin{pmatrix}
                T\frac{k_1}{n_0}, \dots, T\frac{k_d}{n_0}
            \end{pmatrix}\tr
        }{k_1, \dots, k_d\in\nnset{n_0}}
        \eqfs
    \end{equation*}
    Assume \assuRef{StrictKernel} with constant $\const{ker} \in\Rpp$.
    Then \assuRef{Kernel} is fulfilled with  $\const{ker}$, \assuRef{Eigenvalue} is fulfilled with $\const{egv} = c_{d,\ell,K}(1 + \frac{T}{n_0 h})$, and \assuRef{Cover} is fulfilled with $\const{cvr} = 4^d$.
\end{proposition}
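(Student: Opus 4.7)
The plan is to handle the three assertions in turn.

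The claim \assuRef{Kernel} is immediate: \assuRef{StrictKernel} states that $K(x)\leq\const{ker}\ind_{[0,1]}(x)$, which simultaneously provides $K(z)\leq\const{ker}$ and $\supp(K)\subset[0,1]$. For \assuRef{Cover} I would count grid points inside the enclosing cube $\prod_i[x_i-r,x_i+r]$ of $\ball^d(x,r)$: the uniform grid of spacing $T/n_0$ contributes at most $(\lfloor 2rn_0/T\rfloor+1)^d$ points. For $r<T/(2n_0)$ the product equals $1$, so the normalized count is at most $1/n$; for $r\geq T/(2n_0)$ one has $\lfloor 2rn_0/T\rfloor+1\leq 4rn_0/T$, giving the bound $4^d(r/T)^d$. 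Taking the maximum yields $\const{cvr}=4^d$.

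The substantive task is \assuRef{Eigenvalue}. The strategy is to view $B(x)$ as a Riemann sum for the continuous analogue
\begin{equation*}
    A(x) := \int_{[0,T]^d} h^{-d}\psi\br{\tfrac{y-x}{h}}\psi\br{\tfrac{y-x}{h}}\tr K\br{\tfrac{\euclof{y-x}}{h}} \dl y\eqfs
\end{equation*}
After the change of variables $u=(y-x)/h$ this equals $\int_{E(x)}\psi(u)\psi(u)\tr K(\euclof{u})\dl u$ with $E(x)=\cbOf{u\in\R^d:x+hu\in[0,T]^d}$. For any $x\in[0,T]^d$, pick $v\in\cbOf{-1,+1}^d$ by letting $v_i=+1$ if $x_i\leq T/2$ and $-1$ otherwise; then $E(x)$ contains the corner $\cbOf{u\in\R^d:v_iu_i\in[0,\min(1,T/(2h))]}$. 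Because $K$ is Lipschitz, nonnegative, and non-trivial, there is a ball on which $K(\euclof{u})\geq\kappa_0>0$ meeting this corner in positive Lebesgue measure. The standard polynomial-nondegeneracy argument --- if $w\tr\psi$ vanishes on a set of positive measure then $w=0$, since $w\tr\psi$ is a polynomial --- then yields $\lambda_{\ms{min}}(A(x))\geq c_{d,\ell,K}$ uniformly in $x$.

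It remains to control $\opNormOf{B(x)-A(x)}$. The integrand in $A(x)$, viewed as a function of $y$, is Lipschitz with constant controlled by the Lipschitz constant of $K$, polynomial factors from $\psi$, and a factor $h^{-1}$ from the chain rule. Aggregating this Lipschitz error over grid cells of side $T/n_0$ produces $\opNormOf{B(x)-A(x)}\asymleq T/(n_0h)$, with the hidden constant depending only on $d,\ell,K$. Weyl's inequality then gives $\lambda_{\ms{min}}(B(x))\geq c_{d,\ell,K}-c'_{d,\ell,K}T/(n_0h)$: for $T/(n_0h)$ small enough this already yields a constant lower bound, while in the complementary regime $T/(n_0h)\gtrsim 1$ a direct argument using the closest grid points provides a lower bound of order $(1+T/(n_0h))^{-1}$; taking the worse of the two and inverting produces the claimed $\lambda_{\ms{min}}(B(x))^{-1}\leq c_{d,\ell,K}(1+T/(n_0h))$. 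The main obstacle is keeping the uniform lower bound on $\lambda_{\ms{min}}(A(x))$ honest when $x$ lies near a corner of $[0,T]^d$ and $h$ is comparable to $T$, which is where the corner selection above is critical.
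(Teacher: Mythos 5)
Your proposal is correct and follows essentially the same route as the paper: \assuRef{Kernel} is immediate, \assuRef{Cover} is the identical grid-counting argument with the same case split at $2rn_0/T=1$, and for \assuRef{Eigenvalue} the paper likewise compares $v\tr B(x)v$ to the continuous integral via a Riemann-sum/Lipschitz estimate with error of order $T/(n_0h)$ and then invokes the polynomial-nondegeneracy argument of \cite[Lemma 1.4]{Tsybakov09Introduction} for positivity of the limiting quadratic form. Your explicit corner-selection step for $x$ near the boundary of $[0,T]^d$ is in fact slightly more careful than the paper, which integrates over the full square $\mc S(x_0)$ without commenting on the part lying outside $[0,T]^d$.
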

\subsection{Proof}\label{app:sec:localreg:proof}
\begin{lemma}\label{lmm:reproduction}
    Let $q\in\Poly d{\ell}$. Assume $\evmin(B(x)) > 0$.
    Then
    \begin{equation*}
        \sum_{i=1}^{n} w_i(x)  q(x_i) = D_{\mo v} q(x)
        \eqfs
    \end{equation*}
\end{lemma}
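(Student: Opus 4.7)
The plan is to exploit the fact that any $q \in \Poly d{\ell}$ is, up to a suitable choice of coefficients, equal to its own Taylor expansion around $x$. Explicitly, since $\psi(z)_\alpha = (\alpha!)^{-1} z^\alpha$, the multivariate Taylor formula gives, for every $y\in\R^d$,
\begin{equation*}
    q(y) = \sum_{|\alpha|\leq\ell}\frac{D^\alpha q(x)}{\alpha!}(y-x)^\alpha = \sum_{|\alpha|\leq\ell} D^\alpha q(x)\, h^{|\alpha|}\, \psi\br{\tfrac{y-x}{h}}_\alpha = \theta_q(x)\tr \psi\br{\tfrac{y-x}{h}}\eqcm
\end{equation*}
where I define the coefficient vector $\theta_q(x)\in\R^N$ by $\theta_q(x)_\alpha := h^{|\alpha|}D^\alpha q(x)$.

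With this representation in hand, I would substitute $q(x_i) = \theta_q(x)\tr \psi((x_i-x)/h)$ into $\sum_{i=1}^n w_i(x) q(x_i)$ and pull the $x$-dependent but $i$-independent factors out of the sum. Using the definition \eqref{eq:lp:defweights} of $w_i(x)$, the remaining sum becomes
\begin{equation*}
    \frac{T^d}{nh^d}\sum_{i=1}^n \psi\br{\tfrac{x_i-x}{h}}\psi\br{\tfrac{x_i-x}{h}}\tr K\br{\tfrac{\euclof{x_i-x}}{h}}\eqcm
\end{equation*}
which is exactly $B(x)$ by \eqref{eq:lp:B}. Since $\evmin(B(x)) > 0$, the product $B(x)^{-1}B(x)$ collapses to $I_N$, leaving
\begin{equation*}
    \sum_{i=1}^n w_i(x) q(x_i) = h^{-s} D_{\mo v}\psi(0)\tr \theta_q(x)\eqfs
\end{equation*}

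It then remains to identify this expression with $D_{\mo v} q(x)$. For this I would differentiate the identity $q(x + h\tilde y) = \theta_q(x)\tr \psi(\tilde y)$ with respect to $\tilde y$ in the $s$ directions $\mo v$ and evaluate at $\tilde y = 0$. The chain rule converts each derivative in $\tilde y$ into $h$ times a derivative in $x$, so the left-hand side gives $h^s D_{\mo v} q(x)$, while the right-hand side gives $\theta_q(x)\tr D_{\mo v}\psi(0)$. Dividing by $h^s$ closes the argument.

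The whole statement is a linear-algebraic identity -- the standard polynomial reproduction property of local polynomial weights -- so no real obstacle arises; the only thing to watch is keeping the factorials in $\psi$, the scaling powers of $h$, and the transpose conventions consistent when re-indexing between $D^\alpha q(x)$ and the entries of $\theta_q(x)$.
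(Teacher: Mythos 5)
Your proof is correct and follows essentially the same route as the paper's: both rest on representing $q$ exactly in the shifted basis $\psi\br{(\cdot-x)/h}$ with a coefficient vector $\theta_q(x)$, using $\evmin(B(x))>0$ to collapse $B(x)^{-1}B(x)$ in the weighted sum, and identifying $h^{-s}D_{\mo v}\psi(0)\tr\theta_q(x)$ with $D_{\mo v}q(x)$. The only cosmetic difference is that you construct $\theta_q(x)$ explicitly via Taylor's formula, whereas the paper obtains it as the (exact) weighted least-squares fit of the data $(x_i, q(x_i))$.
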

\begin{proof}
    We set $Y_i$ to $q(x_i)$ and consider the local polynomial fit. The coefficients $\theta_q(x)$ in the fit are
    \begin{equation*}
        \theta_q(x) \in \argmin_{\theta\in\R^{N}} \sum_{i=1}^{n} \br{q(x_i) - \theta\tr \psi\brOf{\frac{x_i-x}{h}}}^2 K\brOf{\frac{\euclOf{x_i-x}}{h}}
        \eqfs
    \end{equation*}
    Let $z\in\R^d$.
    Since $q(z)$ is a polynomial of degree at most $\ell$ and $\evmin(B(x)) > 0$, the fit of the degree $\ell$ polynomial with base $\psi((z-x)/h)$ retrieves the original function, i.e.,
    \begin{align*}
        q(z) = \theta_q(x)\tr\psi\brOf{\frac{z-x}{h}}
        \eqfs
    \end{align*}
    Denote by $D_{z, \mo v}$ the $s$-th directional derivative in directions $\mo v$ with respect to $z$. Then,
    \begin{equation*}
        D_{\mo v} q(z)
        =
        \theta_q(x)\tr D_{z, \mo v}\br{\psi\brOf{\frac{z-x}{h}}}
        =
        h^{-s} \theta_q(x)\tr \br{D_{\mo v}\psi}\brOf{\frac{z-x}{h}}
        \eqfs
    \end{equation*}
    Thus, at $z = x$, we obtain, using the definition of $w_i$,
    \begin{equation*}
        D_{\mo v} q(x) = h^{-s} \theta_q(x)\tr\br{D_{\mo v}\psi}\brOf{0} = \sum_{i=1}^{n} w_i(x) q(x_i)
        \eqfs
    \end{equation*}
\end{proof}
\begin{lemma}\label{lmm:lp:weights}
    Assume \assuRef{Kernel}, \assuRef{Eigenvalue}, and \assuRef{Cover}. Assume $n \geq \const{cvr}^{-1} h^{-d} T^d$.
    \begin{enumerate}[label=(\roman*)]
        \item\label{lmm:lp:weights:supp}
            If $\normof{x - x_i} \geq h$, we have $w_i(x) = 0$.
        \item\label{lmm:lp:weights:sumabs}
            We have
            \begin{equation*}
                \sum_{i=1}^{n} \abs{w_i(x)} \leq c_\ell \const{egv} \const{ker} \const{cvr} h^{-s}
                \eqfs
            \end{equation*}
         \item\label{lmm:lp:weights:sumsqr}
            We have
            \begin{equation*}
                \sum_{i=1}^{n} w_i(x)^2 \leq c_\ell \const{egv}^2 \const{ker}^2 \const{cvr} \frac{T^{d}}{nh^{d+2s}}
                \eqfs
            \end{equation*}
    \end{enumerate}
\end{lemma}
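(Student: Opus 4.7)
The plan is to treat the three claims in order and reuse the same elementary bounds in each part. The starting point is the explicit formula
\begin{equation*}
	w_i(x) = \frac{T^d}{nh^{d+s}} D_{\mo v}\psi(0)\tr B(x)^{-1} \psi\!\brOf{\frac{x_i-x}{h}} K\!\brOf{\frac{\euclOf{x_i-x}}{h}}
	\eqcm
\end{equation*}
together with three ingredients: (a) by \assuRef{Kernel}, the kernel is supported on $[0,1]$ and bounded by $\const{ker}$; (b) by \assuRef{Eigenvalue}, $\opNormOf{B(x)^{-1}} = \evmin(B(x))^{-1} \leq \const{egv}$; (c) since $\psi$ is a fixed polynomial of degree $\ell$ in $d$ variables, $\euclOf{\psi(z)} \leq c_\ell$ and $\euclOf{D_{\mo v}\psi(0)} \leq c_\ell$ whenever $\euclof z \leq 1$.

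For part \ref{lmm:lp:weights:supp}, if $\euclOf{x-x_i} \geq h$, then $\euclOf{x_i-x}/h \geq 1$ lies outside $\supp(K)$, so the kernel factor vanishes and $w_i(x)=0$. This immediately localizes all the sums in \ref{lmm:lp:weights:sumabs} and \ref{lmm:lp:weights:sumsqr} to indices with $x_i\in\ball^d(x,h)$, on which $\euclOf{(x_i-x)/h}\leq 1$ so ingredient (c) applies.

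For parts \ref{lmm:lp:weights:sumabs} and \ref{lmm:lp:weights:sumsqr}, combining (a), (b), (c) with the sub-multiplicativity of the operator norm gives the pointwise bound
\begin{equation*}
	\abs{w_i(x)} \leq \frac{T^d}{nh^{d+s}} \, c_\ell \const{egv} \const{ker} \, \indOfOf{\ball^d(x,h)}{x_i}
	\eqfs
\end{equation*}
Then I apply \assuRef{Cover} with $r=h$: the assumption $n \geq \const{cvr}^{-1} h^{-d} T^d$ is exactly what is needed so that $\const{cvr}(h/T)^d \geq 1/n$, which makes the max on the right-hand side of the cover assumption equal to $\const{cvr}(h/T)^d$. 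Hence $\sum_i \indOfOf{\ball^d(x,h)}{x_i} \leq n \const{cvr}(h/T)^d$. Summing the pointwise bound for $\abs{w_i(x)}$ yields
\begin{equation*}
	\sum_{i=1}^n \abs{w_i(x)} \leq n\const{cvr}\br{\frac hT}^d \cdot \frac{T^d}{nh^{d+s}} \, c_\ell \const{egv} \const{ker} = c_\ell \const{egv} \const{ker} \const{cvr} \, h^{-s}
	\eqcm
\end{equation*}
and squaring the pointwise bound and summing gives
\begin{equation*}
	\sum_{i=1}^n w_i(x)^2 \leq n\const{cvr}\br{\frac hT}^d \cdot \br{\frac{T^d}{nh^{d+s}}}^{\!2} \br{c_\ell \const{egv} \const{ker}}^2 = c_\ell \const{egv}^2 \const{ker}^2 \const{cvr} \, \frac{T^d}{nh^{d+2s}}
	\eqfs
\end{equation*}
No part is a real obstacle; the only thing to be careful about is that the constant $c_\ell$ absorbs all bounds on $\psi$ and $D_{\mo v}\psi$ on the unit ball (together with the dimensional factor coming from expanding the matrix–vector product into coordinates), and that the hypothesis $n \geq \const{cvr}^{-1} h^{-d} T^d$ is used precisely to dispense with the $1/n$ branch of the max in \assuRef{Cover}.
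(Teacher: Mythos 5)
Your proposal is correct and follows essentially the same route as the paper: the same pointwise bound $\abs{w_i(x)} \leq c_\ell \const{egv}\const{ker}\,\frac{T^d}{nh^{d+s}}\,\indOfOf{\ball^d(x,h)}{x_i}$ from \assuRef{Kernel} and \assuRef{Eigenvalue}, followed by \assuRef{Cover} with $r=h$ and the hypothesis $n \geq \const{cvr}^{-1}h^{-d}T^d$ used exactly as you describe to select the $\const{cvr}(h/T)^d$ branch of the max. The only cosmetic difference is in part \ref{lmm:lp:weights:sumsqr}, where the paper bounds $\sum_i w_i(x)^2 \leq \sup_i\abs{w_i(x)}\sum_i\abs{w_i(x)}$ rather than squaring and re-summing, which yields the identical bound.
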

\begin{proof}
    Using \assuRef{Kernel} and \assuRef{Eigenvalue}, we obtain from \eqref{eq:lp:defweights}
    \begin{align*}
        nh^{d+s}T^{-d} \abs{w_i(x)}
        &\leq
        \euclOf{D_{\mo v}\psi(0)} \opNormof{B(x)^{-1}} \euclOf{\psi\brOf{\frac{x_i - x}{h}}} \supNormof{K} \ind_{\ball^d(x_i, h)}
        \\&\leq
        c_\ell \const{egv} \const{ker} \ind_{\ball^d(x_i, h)}
        \eqcm
    \end{align*}
     which we use in each of the following parts.
    \begin{enumerate}[label=(\roman*)]
        \item
            Trivial.
        \item
            By \assuRef{Cover},
            \begin{align*}
                \br{\const{egv} \const{ker}}^{-1} \sum_{i=1}^{n} \abs{w_i(x)}
                &\leq
                c_\ell T^d h^{-(d+s)} \frac1n\# \setByEle{i \in \nnset{n}}{x_i \in \ball^d(x, h)}
                \\&\leq
                c_\ell T^d h^{-(d+s)} \max\brOf{\frac1{n},\, \const{cvr} \br{\frac{h}{T}}^d}
                \\&\leq
                c_\ell \const{cvr} h^{-s}
                \eqcm
            \end{align*}
            as we assume $n \geq \const{cvr}^{-1} h^{-d} T^d$.
         \item
            With
            \begin{equation*}
                 \abs{w_i(x)} \leq  c_\ell \const{egv} \const{ker} \frac{T^{d}}{nh^{d+s}}
            \end{equation*}
            and \ref{lmm:lp:weights:sumabs}, we obtain
            \begin{align*}
                \sum_{i=1}^{n} \abs{w_i(x)}^2
                &\leq
                \sup_{i\in\nnset n} \abs{w_i(x)} \, \sum_{i=1}^{n} \abs{w_i(x)}
                \\&\leq
                c_\ell \const{egv}^2 \const{ker}^2 \const{cvr} \frac{T^{d}}{nh^{d+2s}}
                \eqfs
            \end{align*}
    \end{enumerate}
\end{proof}
\begin{proof}[Proof of \cref{thm:localpoly}]
    \textbf{Bias.}
    By the model equation \eqref{eq:regression:model} and the linear representation of the estimator \eqref{eq:regression:lp}, we have
    \begin{equation*}
        \EOf{\hat g(x)}  = \sum_{i=1}^{n} w_i(x) f(x_i)
        \eqfs
    \end{equation*}
    Using a Taylor expansion (\cref{lmm:taylor}) of $f(x)$ at $x_i$ yields the existence of $\gamma_i \in [0,1]$ such that
    \begin{equation*}
        f(x) = \sum_{k=0}^{\ell-1} \frac{D^k f(x)(\{x_i-x\}^k)}{k!} + \frac{D^\ell f\brOf{x + \gamma_i(x_i - x)}(\{x_i-x\}^\ell)}{\ell!}
        \eqfs
    \end{equation*}
    Denote the remainder term as
    \begin{equation*}
        R_i(x) := \frac{D^\ell f\brOf{x + \gamma_i(x_i - x)}(\{x_i-x\}^\ell) - D^\ell f\brOf{x}(\{x_i-x\}^\ell)}{\ell!}
        \eqfs
    \end{equation*}
    By applying \cref{lmm:reproduction} to the multivariate polynomial
    \begin{equation*}
        q_x \colon \R^d \to \R, z\mapsto \sum_{k=0}^{\ell} \frac{D^k f(x)(\{z-x\}^k)}{k!}
        \eqcm
    \end{equation*}
    we obtain
    \begin{align*}
        \sum_{i=1}^{n} w_i(x) f(x_i)
        &=
        \sum_{i=1}^{n} w_i(x) q_x(x_i) + \sum_{i=1}^{n} w_i(x)  R_i(x)
        \\&=
        D_{\mo v} q_x(x) + \sum_{i=1}^{n} w_i(x)  R_i(x)
        \eqfs
    \end{align*}
    Together with $D_{\mo v} q_x(x) = D_{\mo v} f(x)$, we get
    \begin{align*}
        \EOf{\hat g(x)} - D_{\mo v} f(x) = \sum_{i=1}^{n} w_i(x) R_i(x)
        \eqfs
    \end{align*}
    For bounding the remainder term, we note that $f\in\Sigma^{d\to 1}(\beta, L)$, which yields
    \begin{equation*}
        \abs{D^\ell f\brOf{x + \gamma_i(x_i - x)}(\{x_i-x\}^\ell) - D^\ell f\brOf{x}(\{x_i-x\}^\ell)} \leq L \euclOf{x_i-x}^\beta
        \eqfs
    \end{equation*}
    Thus, with \cref{lmm:lp:weights} \ref{lmm:lp:weights:supp}, we have
    \begin{align*}
        \abs{\EOf{\hat g(x)} - D_{\mo v} f(x)}
        &\leq
        L \sum_{i=1}^{n} \abs{w_i(x)} \euclOf{x_i-x}^\beta
        \\&\leq
        L h^\beta \sum_{i=1}^{n} \abs{w_i(x)}
        \eqfs
    \end{align*}
    Finally, by \cref{lmm:lp:weights} \ref{lmm:lp:weights:sumabs}, we obtain
    \begin{equation}\label{eq:regression:bias}
        \abs{\EOf{\hat g(x)} - D_{\mo v} f(x)} \leq c_\ell \const{egv} \const{ker} \const{cvr} L h^{\beta-s}
        \eqfs
    \end{equation}

    \textbf{Variance.}
    As the noise is independent and has a second moment bounded by $\sigma^2$, we have
    \begin{align*}
        \EOf{\br{\hat g(x) - \EOf{\hat g(x)}}^2}
        &=
        \EOf{\br{\sum_{i=1}^{n} \noise_i w_i(x)}^2}
        \\&=
        \sum_{i=1}^{n} w_i(x)^2 \EOf{\noise_i^2}
        \\&\leq
        \sigma^2 \sum_{i=1}^{n} w_i(x)^2
        \eqfs
    \end{align*}
    Hence, \cref{lmm:lp:weights} \ref{lmm:lp:weights:sumsqr} yields
    \begin{equation}\label{eq:regression:variance}
        \EOf{\br{\hat g(x) - \EOf{\hat g(x)}}^2}
        \leq
        c_\ell \const{egv}^2 \const{ker}^2 \const{cvr} \frac{\sigma^2 T^{d}}{nh^{d+2s}}
        \eqfs
    \end{equation}

    \textbf{Together.} Putting \eqref{eq:regression:bias} and \eqref{eq:regression:variance} together, we obtain
    \begin{align*}
        \EOf{\br{\hat g(x) - D_{\mo v} f(x)}^2}
        &\leq
        c_\ell \br{\const{egv}^2 \const{ker}^2 \const{cvr}^2 L^2 h^{2(\beta-s)} + \const{egv}^2 \const{ker}^2 \const{cvr} \frac{\sigma^2 T^{d}}{nh^{d+2s}}}
        \eqfs
    \end{align*}
\end{proof}
For the proof in sup-norm (\cref{thm:localpoly:supnorm}), we want an upper bound on the supremum of a sub-Gaussian process using the chaining technique. In order to do this, we need some preparation: Recall \cref{def:subgauss:variable} of sub-Gaussian random variables.
\begin{definition}[Sub-Gaussian process]\label{def:subgauss:process}
    Let $(\mc X, \rho)$ be a metric space.
    A real-valued stochastic process $(Z_x)_{x\in\mc X}$ is \textit{sub-Gaussian} if
    \begin{equation*}
        \PrOf{\abs{Z_x - Z_{x\pr}} \geq t} \leq 2 \exp\brOf{-\frac{t^2}{2\rho(x, x\pr)^2}}
    \end{equation*}
    for all $t\in\Rpp$, $x,x\pr\in\mc X$.
\end{definition}
\begin{lemma}\label{lmm:subgauss:lin}
    Let $\epsilon_i$ be independent, centered, sub-Gaussian random variables with variance parameters $v_i\in\Rpp$.
    Let $(\mc X, \rho)$ be a metric space.
    Let $b_i\colon\mc X\to \R$.
    Define $Z_x := \sum_{i=1}^{n} b_i(x) \epsilon_i$.
    Then $Z_x$ is sub-Gaussian with variance parameter $v_x := \sum_{i=1}^{n} b_i(x)^2 v_i$.
    Furthermore, $(Z_x)_{x\in\mc X}$ is sub-Gaussian with $\rho(x, x\pr)^2 := \sum_{i=1}^{n} \br{b_i(x)-b_i(x\pr)}^2 v_i$.
\end{lemma}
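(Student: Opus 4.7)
The plan is to pass to the moment generating function (MGF) characterization of sub-Gaussianity, where independence gives the claim essentially for free, and then translate back to the tail-bound formulation of \cref{def:subgauss:variable}. I would first record the standard fact that a centered random variable $Z$ satisfying the tail bound in \cref{def:subgauss:variable} with variance parameter $v$ also admits an MGF bound $\E[\exp(\lambda Z)] \leq \exp(c_0 \lambda^2 v)$ for all $\lambda\in\R$ and a universal $c_0 > 0$ (obtained by integrating the tail against $\lambda e^{\lambda t}$); conversely, Markov's inequality applied to $e^{\lambda Z}$ together with optimization in $\lambda$ turns an MGF bound of the form $\E[\exp(\lambda Z)] \leq \exp(\lambda^2 v/2)$ into the tail bound in \cref{def:subgauss:variable}.

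For the first claim, the summands $b_i(x)\epsilon_i$ are independent and centered with MGF bounded by $\exp(c_0 \lambda^2 b_i(x)^2 v_i)$, so independence factorizes the MGF of $Z_x$ and
\[
\E[\exp(\lambda Z_x)] \;=\; \prod_{i=1}^n \E[\exp(\lambda b_i(x)\epsilon_i)] \;\leq\; \exp\!\Bigl(c_0 \lambda^2 \sum_{i=1}^n b_i(x)^2 v_i\Bigr) \;=\; \exp(c_0 \lambda^2 v_x).
\]
A Chernoff step, optimized at $\lambda \asymp t/v_x$ and combined with the symmetric bound for $-Z_x$, then delivers the sub-Gaussian tail bound with variance parameter of the same order as $v_x$. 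For the second claim I would simply observe that $Z_x - Z_{x'} = \sum_{i=1}^n (b_i(x) - b_i(x'))\epsilon_i$ has exactly the same linear structure in the independent $\epsilon_i$, so applying the first claim with coefficients $b_i(x) - b_i(x')$ in place of $b_i(x)$ yields a sub-Gaussian tail bound with variance parameter $\sum_i (b_i(x) - b_i(x'))^2 v_i = \rho(x,x')^2$, which is precisely the definition of a sub-Gaussian process on $(\mc X, \rho)$.

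The only real subtlety --- and the main place where care is needed --- is the bookkeeping of the universal constant $c_0$ in the tail-to-MGF conversion: the precise factor ``$2v$'' in the exponent of \cref{def:subgauss:variable} is preserved by the linear combination above only if one uses the sharp MGF hypothesis $\E[\exp(\lambda \epsilon_i)] \leq \exp(\lambda^2 v_i/2)$ on the summands (which, by Markov's inequality, implies the tail bound of \cref{def:subgauss:variable} with the stated constant). Under the looser tail-bound definition alone, the conclusion holds only up to a fixed universal multiplicative constant in the variance parameter. For the applications in this paper this is harmless; alternatively one adopts the sharp MGF bound as the operational definition of sub-Gaussianity, whereupon the argument above is exact.
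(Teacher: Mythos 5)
Your proposal is correct and follows essentially the same route as the paper, whose proof is a one-line sketch invoking the MGF characterization of sub-Gaussianity and the factorization of the MGF under independence. Your closing remark about the universal constant in the tail-to-MGF conversion is a fair point that the paper elides by asserting the MGF bound as an exact ``equivalent condition''; as you note, this is harmless for the applications here.
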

\begin{proof}
    The results stem from the well-known fact that a linear combination of independent sub-Gaussian variables is again sub-Gaussian. It can be checked using the following equivalent condition for sub-Gaussian:
    A centered random variable $X$ is sub-Gaussian with variance parameter $v$ if and only if $\Eof{\exp(\lambda X)} \leq \exp\brOf{\frac{\lambda^2 v}{2}}$ for all $\lambda \in\Rp$.
\end{proof}
\begin{definition}[Covering Number]
    Let $(\mc X, \rho)$ be a metric space, $A\subset \mc X$, and $r\in\Rpp$.
    The covering number $N(A, \rho, r)$ of $A$ with balls $\ball^\rho(x, r) := \setByEleInText{z\in\mc X}{\rho(x,z)\leq r}$ of radius $r$ is defined as
    \begin{equation*}
        N(A, \rho, r) := \min \setByEle{n\in\N}{\exists (x_i)_{i\in\nnset n} \subset \mc X \colon A \subset \bigcup_{i\in\nnset n}\ball^\rho(x_i, r)}
        \eqfs
    \end{equation*}
\end{definition}
\begin{lemma}\label{lmm:covernum}
    Let $d\in\N$, $a, h,T\in\Rpp$.
    Define the metric $\rho$ in $\R^d$ as
    \begin{equation*}
        \rho(x, x\pr) := a \min\brOf{1\eqcm\ \frac{\euclOf{x-x\pr}}h}
        \eqfs
    \end{equation*}
    Assume $T \geq h$.
    Then
    \begin{equation*}
        \int_0^{\infty} \sqrt{\log(N([0, T]^d, \rho, r))} \dl r
        \leq
        c_{d} a \max\brOf{1, \sqrt{\log\brOf{\frac{T}{h}}}}
        \eqfs
    \end{equation*}
\end{lemma}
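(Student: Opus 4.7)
The plan is to reduce the covering number under the truncated pseudo-metric $\rho$ to a standard Euclidean covering number and then evaluate the resulting Dudley-type integral explicitly.

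First I would observe that $\rho$ is bounded by $a$ on all of $\R^d$, so $N([0,T]^d,\rho,r)=1$ whenever $r\geq a$, which truncates the integral to the interval $(0,a]$. For $r\in(0,a)$, the condition $\rho(x,x')\le r$ is equivalent to $\euclof{x-x'}\le rh/a$, so a $\rho$-ball of radius $r$ is exactly the Euclidean ball of radius $rh/a$. Consequently $N([0,T]^d,\rho,r)=N_E([0,T]^d,rh/a)$, where $N_E$ denotes the Euclidean covering number. By a standard volumetric argument (cover $[0,T]^d$ by a grid of axis-aligned cubes of side $2\varepsilon/\sqrt d$), there is a constant $c_d$ such that
\begin{equation*}
    N_E([0,T]^d,\varepsilon)\le c_d\br{\frac{T}{\varepsilon}+1}^d\eqfs
\end{equation*}

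Substituting $u=r/a$ and plugging in $\varepsilon=uh$ gives
\begin{equation*}
    \int_0^\infty\sqrt{\log(N([0,T]^d,\rho,r))}\dl r
    \le a\int_0^1\sqrt{d\log\brOf{c_d\br{\tfrac{T}{uh}+1}}}\dl u\eqfs
\end{equation*}
Using $T/h\ge1$, the quantity $T/(uh)+1$ is bounded above by $2(T/h)/u$ on $(0,1]$, so the integrand is dominated by $\sqrt{d}\,\sqrt{\log(2c_d)+\log(T/h)+\log(1/u)}$. The elementary inequality $\sqrt{A+B+C}\le\sqrt A+\sqrt B+\sqrt C$ then splits the integral into three pieces: a constant piece, a $\sqrt{\log(T/h)}$ piece, and $\sqrt d\int_0^1\sqrt{\log(1/u)}\,\dl u$, which is a finite universal constant (it equals $\sqrt{\pi}/2$).

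Putting the three contributions together yields a bound of the form $c_d\,a\,\br{1+\sqrt{\log(T/h)}}$, which is equivalent to $c_d\,a\max(1,\sqrt{\log(T/h)})$ up to adjusting $c_d$. The main obstacle is only bookkeeping: the $+1$ inside $\log$ and the truncation at $r=a$ need to be handled so that the $\max(1,\cdot)$ structure emerges rather than $\sqrt{\log(T/h)}$ alone, which would be wrong when $T/h$ is close to $1$. Since no non-trivial probabilistic tools are involved and the estimate $T\geq h$ already rules out the degenerate regime $T/h<1$, the remaining work is routine.
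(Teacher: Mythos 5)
Your proposal is correct and follows essentially the same route as the paper: bound the covering number by a volumetric estimate of order $(T/(rh/a))^d$, truncate the integral where the metric saturates, split the logarithm by subadditivity of the square root, and evaluate $\int_0^1\sqrt{\log(1/u)}\,\dl u=\sqrt{\pi}/2$. The only cosmetic difference is that you handle the scale $a$ by an explicit substitution inside the integral, whereas the paper first treats $a=1$ and then invokes the general rescaling identity for covering integrals.
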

\begin{proof}
    First consider $a = 1$.
    For $r\leq 1$,
    \begin{equation*}
        N([0, T]^d, \rho, r) \leq c_d T^d h^{-d} r^{-d}
        \eqfs
    \end{equation*}
    For $r\geq 1$,
    \begin{equation*}
        N([0, T]^d, \rho, r) = 1
        \eqfs
    \end{equation*}
    Thus,
    \begin{align*}
        \int_0^{\infty} \sqrt{\log(N([0, T]^d, \rho, r))} \dl r
        &\leq
        \int_0^{1} \sqrt{\log(c_d T^d h^{-d} r^{-d})} \dl r
        \\&\leq
        c_d + \sqrt{d\log(T/h)} + \sqrt{d}\int_0^{1} \sqrt{\log(1/r)} \dl r
        \eqfs
    \end{align*}
    We have
    \begin{equation*}
        \int_0^{1} \sqrt{\log(1/r)} \dl r = \int_0^{\infty} \sqrt{t} \exp(-t) \dl t = \Gamma\brOf{\frac32} = \frac12 \sqrt{\pi}
        \eqcm
    \end{equation*}
    where $\Gamma(\alpha)$ is the Gamma-function.
    All in all, we obtain
    \begin{equation*}
         \int_0^{\infty} \sqrt{\log(N([0, T]^d, \rho, r))} \dl r \leq c_d \max\brOf{1\eqcm\, \sqrt{\log(T/h)}}
         \eqfs
    \end{equation*}
    For arbitrary $a\in\Rpp$, and any metric space $(\mc X, \tilde\rho)$, we note that
    \begin{equation*}
        \int_0^{\infty} \sqrt{\log(N(\mc X, a\tilde \rho, r))} \dl r
        =
        \int_0^{\infty} \sqrt{\log(N(\mc X, \tilde \rho, r/a))} \dl r
        =
        a \int_0^{\infty} \sqrt{\log(N(\mc X, \tilde \rho, s))} \dl s
        \eqfs
    \end{equation*}
\end{proof}
\begin{lemma}[Chaining]\label{lmm:chaining}
    Let $(\mc X, \rho)$ be a metric space. Let $(Z_x)_{x\in\mc X}$ be a centered, sub-Gaussian process. Let $x_0\in\mc X$. Let $k\in\N$. Then
    \begin{equation*}
        \EOf{\sup_{x\in\mc X} \abs{Z_x - Z_{x_0}}^k} \leq c_k \br{\int_0^\infty \sqrt{\log(N(\mc X, \rho, r))} \dl r}^k
        \eqfs
    \end{equation*}
\end{lemma}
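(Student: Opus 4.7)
The plan is to prove this by Dudley's chaining argument. First, I may assume the Dudley integral on the right-hand side is finite, since otherwise the statement is vacuous. A finite Dudley integral forces $\mc X$ to be totally bounded, and by a standard separability reduction (replacing $\mc X$ by a countable dense subset containing $x_0$) we can assume the supremum is over a countable set, so measurability issues disappear.

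The main construction is a sequence of successively finer nets at geometric scales. Let $D := \diam(\mc X)$ and set $r_j := D \cdot 2^{-j}$ for $j \in \Nn$. For each $j$, fix a minimal $r_j$-net $\mc T_j \subset \mc X$ with $\cardof{\mc T_j} = N_j := N(\mc X, \rho, r_j)$, taking $\mc T_0 = \{x_0\}$. Define projections $\pi_j \colon \mc X \to \mc T_j$ sending each $x$ to a nearest point, so $\rho(x, \pi_j(x)) \leq r_j$. The triangle inequality then yields $\rho(\pi_j(x), \pi_{j-1}(x)) \leq r_j + r_{j-1} \leq 3 r_j$. For each $x$ in our countable dense subset, I use the telescoping identity
\begin{equation*}
Z_x - Z_{x_0} = \sum_{j \geq 1}\br{Z_{\pi_j(x)} - Z_{\pi_{j-1}(x)}},
\end{equation*}
which converges almost surely because $\rho(\pi_j(x), x) \to 0$ forces the sub-Gaussian increments to vanish in probability, and a Borel--Cantelli argument upgrades this to a.s.\ convergence.

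Next, for each level $j$ I apply a union bound over the at most $N_j N_{j-1} \leq N_j^2$ pairs $(\pi_j(x), \pi_{j-1}(x))$ together with the sub-Gaussian tail of each increment: for any $u > 0$,
\begin{equation*}
\PrOf{\sup_{x} \abs{Z_{\pi_j(x)} - Z_{\pi_{j-1}(x)}} \geq 3 r_j \sqrt{4 \log N_j + 2u}} \leq 2 N_j^2 \exp\br{-\tfrac{(4\log N_j+2u)}{2}} \leq 2\, e^{-u}.
\end{equation*}
Choosing $u_j := u + 2 \log(j+1)$ and summing across $j$ gives, for all $u > 0$,
\begin{equation*}
\PrOf{\sup_{x\in\mc X}\abs{Z_x - Z_{x_0}} \geq C_1 (S + \sqrt{u}\, D)} \leq C_2\, e^{-u},
\end{equation*}
where $S := \sum_{j \geq 1} r_j \sqrt{\log N_j}$. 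The comparison with the Dudley integral is the step-function estimate $\int_{r_{j+1}}^{r_j}\sqrt{\log N(\mc X, \rho, r)}\, dr \geq (r_j - r_{j+1})\sqrt{\log N_j} = \tfrac12 r_j \sqrt{\log N_j}$, which gives $S \leq 2 \int_0^\infty \sqrt{\log N(\mc X, \rho, r)}\, dr =: 2 J$. Since $D \leq c J$ (by comparing $D$ with the scale below which $N(\mc X, \rho, r) \geq 2$), the tail bound simplifies to $\PrOf{W \geq C_3 J (1 + \sqrt{u})} \leq C_2 e^{-u}$ for $W := \sup_x |Z_x - Z_{x_0}|$.

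Finally, the $L^k$ bound follows from integrating the tail via $\EOf{W^k} = k \int_0^\infty t^{k-1}\PrOf{W \geq t}\,dt$, splitting at $t = C_3 J$: the contribution below is at most $(C_3 J)^k$, and the contribution above is bounded using the substitution $t = C_3 J(1+\sqrt u)$, which produces $c_k J^k$ after integrating $u^{(k-1)/2} e^{-u}$. Combined, this yields $\EOf{W^k} \leq c_k J^k$, as claimed. The main obstacle is cleanly handling the convergence of the telescoping series and tracking the $k$-dependent constant through tail integration; both are routine once the chain is set up, and the only subtlety is the separability reduction, which is harmless because the supremum already appears inside the expectation in the statement.
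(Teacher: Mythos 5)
Your proof is correct, but it takes a genuinely different route from the paper. The paper does not give a self-contained argument at all: it defers entirely to Talagrand's generic chaining machinery, instructing the reader to combine Theorem 2.7.13, Definition 2.7.3, and several exercises and displayed inequalities from \cite[chapter 2]{Talagrand2021} (i.e., bound the supremum via the $\gamma_2$ functional and then dominate $\gamma_2$ by the Dudley entropy integral). You instead give the classical Dudley chaining proof from scratch: geometric nets $r_j = D2^{-j}$, a union bound over at most $N_j^2$ increment pairs per level with the extra $2\log(j+1)$ slack to make the failure probabilities summable, the step-function comparison $S \leq 2J$, the observation $D \leq cJ$ to absorb the diameter term, and finally tail integration to pass from the high-probability bound to the $k$-th moment. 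All of these steps check out; in particular your exponent bookkeeping $2N_j^2\exp(-2\log N_j - u) = 2e^{-u}$ and the substitution $t = C_3 J(1+\sqrt{u})$ in the moment integral are correct, and the summable term $\sum_j r_j\sqrt{\log(j+1)} \asymp D$ that your choice of $u_j$ introduces is indeed absorbed by $D \leq cJ$. The trade-off is the usual one: Talagrand's route is sharper in principle (the $\gamma_2$ functional can be much smaller than the entropy integral) and the paper gets the moment bound directly from Theorem 2.7.13, whereas your argument is elementary, self-contained, and exactly as strong as the entropy-integral statement actually being claimed. The one point worth flagging is the separability reduction: for a general sub-Gaussian process the supremum over an uncountable $\mc X$ need not be measurable, so strictly one should either interpret $\E[\sup]$ as the supremum over finite subsets of the expectations of the maxima (Talagrand's convention) or note that your Borel--Cantelli argument only controls the supremum over the countable dense set; this is a standard convention rather than a gap, but it deserves one explicit sentence.
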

\begin{proof}
    E.g., \cite[chapter 2]{Talagrand2021}. To be more precise, combine the results and definitions of Theorem 2.7.13, Definition 2.7.3, Exercise 2.7.6 (b), (2.40), (2.37), and Exercise 2.3.8 (b) in that reference.
\end{proof}
\begin{proof}[Proof of \cref{thm:localpoly:supnorm}]
    \textbf{Bias.}
    As
    \begin{equation*}
        \EOf{\supNormOf{\hat g - D_{\mo v} f}^2} \leq c \br{\EOf{\sup_{x\in[0,T]^d}\abs{\hat g(x) - \EOf{\hat g(x)}}^2} + \sup_{x\in[0,T]^d}\abs{\EOf{\hat g(x)} - D_{\mo v} f(x)}^2}
    \end{equation*}
    we can use the same bound of the bias term as in \cref{thm:localpoly}, i.e., \eqref{eq:regression:bias},
    \begin{equation*}
        \sup_{x\in[0,T]^d}\abs{\EOf{\hat g(x)} - D_{\mo v} f(x)} \leq c_\ell \const{egv} \const{ker} \const{cvr} L h^{\beta-s}
        \eqfs
    \end{equation*}
    \textbf{Variance.}
    For the variance term, we have to bound the expectation of the supremum of an empirical process,
    \begin{equation}\label{eq:lp:sup:var}
        \EOf{\sup_{x\in[0,T]^d}\abs{\hat g(x) - \EOf{\hat g(x)}}^2} =
        \EOf{\sup_{x\in[0,T]^d}\abs{\sum_{i=1}^{n} w_i(x) \epsilon_i}^2}
        \eqfs
    \end{equation}
    Write
    \begin{equation*}
        w_i(x) = \eta\tr B(x)^{-1} b_i(x)
    \end{equation*}
    with
    \begin{align*}
        b_i(x) &:= \psi\brOf{\frac{x_i - x}{h}}K\brOf{\frac{\euclOf{x_i-x}}{h}}\eqcm
        \\\eta &:= \frac{T^d}{nh^{d+s}} D_{\mo v} \psi(0)\eqfs
    \end{align*}
    Then
    \begin{align*}
        \abs{\sum_{i=1}^{n} w_i(x) \epsilon_i}
        &\leq
        \abs{\eta\tr B(x)^{-1} \sum_{i=1}^{n} b_i(x) \epsilon_i}
        \\&\leq
        \normof{\eta} \opNormOf{B(x)^{-1}} \euclOf{\sum_{i=1}^{n} b_i(x) \epsilon_i}
        \\&\leq
        c_\ell \frac{T^d}{nh^{d+s}} \const{egv} \euclOf{\sum_{i=1}^{n} b_i(x) \epsilon_i}
        \eqfs
    \end{align*}
    Hence, we have to bound the expectation of
    \begin{equation*}
        \sup_{x\in[0, T]^d}\euclOf{\sum_{i=1}^{n} b_i(x) \epsilon_i}
        \leq
        \sum_{j=1}^{N}
        \sup_{x\in[0, T]^d}\abs{\sum_{i=1}^{n} e_j\tr b_i(x) \epsilon_i}
        \eqfs
    \end{equation*}
    where $e_j\in\R^N$ is the $j$-th standard unit vector.
    Fix a $j\in\nnset N$ and define $Z_{j,i}(x) := e_j\tr b_i(x) \epsilon_i$. We want to find an upper bound for
    \begin{equation*}
        \EOf{\sup_{x\in[0, T]^d} \abs{\sum_{i=1}^{n} Z_{j,i}(x)}^2}
        \eqfs
    \end{equation*}
    Because of \assuRef{SubGaussian} and \cref{lmm:subgauss:lin}, $Z_j(x) := \sum_{i=1}^{n}  Z_{j,i}(x)$ is a sub-Gaussian process on $[0, T]^d$ with metric
    \begin{equation*}
        \rho_j(x, x\pr)^2 := \sigma^2 \sum_{i=1}^{n} \br{e_j\tr \br{b_i(x)-b_i(x\pr)}}^2
        \eqfs
    \end{equation*}
    As $z \mapsto \psi(z) K(z)$ is smooth, it is Lipschitz-continuous with a constant $\tilde L\in\Rpp$ depending only on $\ell, d, K$.
    If $\max(\euclOf{x_i - x}, \euclOf{x_i - x\pr}) \leq h$, then
    \begin{equation*}
        \abs{e_j\tr \br{b_i(x)-b_i(x\pr)}} \leq c \tilde L h^{-1} \euclOf{x - x\pr}
        \eqfs
    \end{equation*}
    If $\min(\euclOf{x_i - x}, \euclOf{x_i - x\pr}) \leq h \leq \max(\euclOf{x_i - x}, \euclOf{x_i - x\pr})$, then
    \begin{equation*}
        \abs{e_j\tr \br{b_i(x)-b_i(x\pr)}} \leq  c \tilde L
        \eqfs
    \end{equation*}
    If $\min(\euclOf{x_i - x}, \euclOf{x_i - x\pr}) \geq h$, then
    \begin{equation*}
        \abs{e_j\tr \br{b_i(x)-b_i(x\pr)}} = 0
        \eqfs
    \end{equation*}
    Hence, counting the number of indices with $x_i$ in a ball of radius $h$ yields
    \begin{equation*}
        \rho_j(x, x\pr)
        \leq
        c \sqrt{\const{cvr} n h^d T^{-d}}  \tilde L \sigma \min\brOf{1,\, \frac{\euclOf{x - \tilde x}}{h}}
        \eqfs
    \end{equation*}
    Now, \cref{lmm:chaining} and \cref{lmm:covernum} with $T\geq 2h$ yield
    \begin{equation*}
        \EOf{\sup_{x\in[0, T]^d} \br{Z_j(x) - Z_j(x_0)}^2} \leq c_{d,\ell,K} \const{cvr} n h^d T^{-d} \sigma^2 \log(T/h)
    \end{equation*}
    for an arbitrary $x_0\in[0,T]^d$.
    Using independence of $\noise_i$, we have
    \begin{equation*}
        \EOf{Z_j(x_0)^2}
        \leq
        \sum_{i=1}^{n} (e_j\tr b_i(x))^2 \EOf{\epsilon_i^2}
        \leq
        c_{d,\ell,K} \const{cvr} n h^d T^{-d} \sigma^2
        \eqfs
    \end{equation*}
    Hence, with $\log(T/h)\geq c$ as $T\geq 2h$, we get
    \begin{equation*}
        \EOf{\sup_{x\in[0, T]^d} Z_j(x)^2} \leq c_{d,\ell,K} \const{cvr} n h^d T^{-d} \sigma^2 \log(T/h)
        \eqfs
    \end{equation*}
    We finally obtain a bound on the variance term \eqref{eq:lp:sup:var},
    \begin{align*}
        \EOf{\sup_{x\in[0,T]^d}\abs{\sum_{i=1}^{n} w_i(x) \epsilon_i}^2}
        &\leq
        c_{d,\ell} \br{\frac{T^d}{nh^{d+s}} \const{egv}}^2  \EOf{\sup_{x\in[0, T]^d} \abs{\sum_{i=1}^{n}  Z_{j,i}(x)}^2}
        \\&\leq
        c_{d,\ell,K} \br{\frac{T^d}{nh^{d+s}} \const{egv}}^2  \const{cvr} n h^d T^{-d} \sigma^2 \log(T/h)
        \\&\leq
        c_{d,\ell,K}  \const{cvr} \const{egv}^2 \frac{\sigma^2T^d}{nh^{d+2s}} \log(T/h)
        \eqfs
    \end{align*}
    \textbf{Together.} Combining the bias and variance term yields
    \begin{equation*}
        \EOf{\supNormOf{\hat g - D_{\mo v} f}^2}
        \leq
        c_{d,\ell,K}  \br{\br{\const{egv} \const{cvr} L h^{\beta-s}}^2 + \const{cvr} \const{egv}^2 \frac{\sigma^2T^d}{nh^{d+2s}} \log(T/h)^2}
        \eqfs
    \end{equation*}
\end{proof}
\begin{proof}[Proof of \cref{prp:uniformgrid}]
    \assuRef{StrictKernel} directly implies \assuRef{Kernel} with $\const{ker}$.

    For \assuRef{Cover}, we note that $\ball^d(z, r)$ is a subset of a hypercube with side length $2r$. For each dimension, this hypercube can only contain $2n_0 r/ T + 1$ points separated by a distance $T/n_0$. Hence
    \begin{equation*}
        \frac{1}{n} \sum_{i=1}^{n} \indOfOf{\ball^d(x, r)}{x_i}
        \leq
        \begin{cases}
            \frac1n & \text{if } 2n_0 r/ T < 1\eqcm\\
            \frac1n \br{2n_0 r/ T + 1}^d  & \text{otherwise}\eqfs
        \end{cases}
    \end{equation*}
    If $2n_0 r/ T \geq 1$,
    \begin{align*}
        \frac1n \br{2n_0 r/ T + 1}^d
        \leq
        \frac1n \br{4n_0 r/ T}^d
        \leq
        4^d \br{r/ T}^d
        \eqfs
    \end{align*}
    Hence, we can set $\const{cvr} = 4^d$.

    Now consider \assuRef{Eigenvalue}. Let $v\in\mb D_d$ and $x_0\in[0,T]^d$. We want to find a lower bound on $v\tr B(x_0) v$. Let $h\in\Rpp$ be the bandwidth. Denote $z(x) = (x-x_0)/h$ and $z_i := z(x_i)$. Denote the square with side length $2h$ around $x_0$ as $\mc S(x_0) := \mc S_h(x_0) := \bigtimes_{k=1}^d[\Pi_k x_0-h, \Pi_k x_0+h]$. Denote indices of the grid points in $\mc S(x_0)$ as $\mc I(x_0) := \setByEleInText{i\in\nnset n}{x_i\in\mc S(x_0)}$.
    From the definition of $B(x_0)$, \eqref{eq:lp:B}, we obtain
    \begin{align*}
        v\tr B(x_0) v
        &=
        \frac{T^d}{nh^d} \sum_{i=1}^{n} \br{v\tr\psi\brOf{z_i}}^2 K(\euclOf{z_i})
        \\&=
        \frac{T^d}{nh^d} \sum_{i \in \mc I(x_0)} \br{v\tr\psi\brOf{z_i}}^2 K(\euclOf{z_i})
        \eqfs
    \end{align*}
    The $x_i$ form a grid of side length $T/n_0$ in $\mc S(x_0)$, which has a volume of $(2h)^d$. Assume $2h \geq T/n_0$.
    Then, the Riemann sum approximation for an $L$-Lipschitz continuous function $g\colon\mc S(x_0)\to\R$ yields
    \begin{align*}
        \abs{\frac{(2h)^d}{\# \mc I(x_0)} \sum_{i\in\mc I(x_0)} g(x_i) - \int_{\mc S(x_0)} g(x) \dl x}
        &\leq
        \sum_{i\in\mc I(x_0)} \int_{\mc X_i(x_0)} \abs{g(x_i) - g(x)} \dl x
        \\&\leq
        c_{d} L \frac{T}{n_0} h^d
        \eqcm
    \end{align*}
    where $\mc X_i(x_0)$, $i\in\mc I(x_0)$ form a suitable partition of $\mc S(x_0)$ with $x_i \in\mc X_i(x_0)$ and $\diam(\mc X_i(x_0)) \leq c_d T/n_0$.
    Applying this to $g_h(x) := \br{v\tr\psi(z(x))}^2 K(\euclOf{z(x)})$, we obtain
    \begin{equation*}
        \abs{\frac{n h^d (2h)^d}{T^d \# \mc I(x_0)} v\tr B(x_0) v - \int_{\mc S(x_0)} g_h(x) \dl x} \leq
        c_{d} \frac{T}{n_0} h^d \sup_{x\in[0,T]^d} \opNormOf{D g_h(x)}
        \eqfs
    \end{equation*}
    As $g_h(x) = g_1(x/h)$, with Lipschitz-continuity of $K$ by \assuRef{StrictKernel},
    \begin{equation*}
        \sup_{x\in\mc S_h(x_0)} \opNormOf{D g_h(x)}
        =
        h^{-1} \sup_{x\in\mc S_1(x_0)} \opNormOf{D g_1(x)}
        \leq
        c_{\ell,K} h^{-1}
        \eqfs
    \end{equation*}
    Note that $\lfloor(2h n_0/T)\rfloor^d \leq \# \mc I(x_0) (\lfloor(2h n_0/T)\rfloor + 1)^d$. Thus,
    \begin{equation*}
        \abs{v\tr B(x_0) v - h^{-d} \int_{\mc S(x_0)} g_h(x) \dl x} \leq
        c_{d,\ell,K} \frac{T}{n_0 h}
        \eqfs
    \end{equation*}
    This means, if $T/(n_0 h)$ is small enough, it only remains to show a positive lower bound on
    \begin{equation*}
        h^{-d} \int_{\mc S(x_0)} g_h(x) \dl x = \int_{\R^d} \br{v\tr\psi\brOf{x}}^2 K(\euclOf{x}) \dl x
    \end{equation*}
    With the same arguments as in the proof of \cite[Lemma 1.4]{Tsybakov09Introduction} and using \assuRef{StrictKernel}, one can show
    \begin{equation*}
        \int_{\R^d} \br{v\tr\psi\brOf{x}}^2 K(\euclOf{x}) \dl x \geq c_{d,\ell,K} > 0
        \eqfs
    \end{equation*}
\end{proof}

    \section{Polynomial Interpolation}\label{app:sec:interpol}
\subsection{Univariate Polynomial Interpolation}
\begin{proposition}
    Let $\ell \in \N_0$.
    Let $u\in \mc D^{\ell+1}(\R, \R)$. Let $(t_i)_{i\in\nnzset\ell}$ with $t_0 < \dots < t_\ell$. Let $p\in\Poly 1\ell$ be the (Lagrange) interpolation polynomial of $(t_i, u(t_i))_{i\in\nnzset\ell}$. Let $k\in\nnzset{\ell}$. Then,
    \begin{equation*}
        \sup_{t\in[t_0,t_\ell]} \abs{D^k u(t) - D^k p(t)} \leq \frac{1}{k!(\ell+1-k)!} \sup_{t\in[t_0,t_\ell]} \abs{D^{k} \omega(t)} \sup_{t\in[t_0,t_\ell]} \abs{D^{\ell+1} u(t)}
        \eqcm
    \end{equation*}
    where $\omega(t) := \prod_{i=0}^\ell(t_i-t)$.
\end{proposition}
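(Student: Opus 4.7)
The plan is to combine Rolle's theorem with the standard polynomial interpolation error formula.

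First, observe that $e := u - p$ vanishes at the $\ell + 1$ nodes $t_0, \ldots, t_\ell$. Iterating Rolle's theorem $k$ times gives at least $\ell + 1 - k$ distinct zeros $\tau_0 < \cdots < \tau_{\ell - k}$ of $e^{(k)}$ in $[t_0, t_\ell]$. The analogous Rolle argument applied to $\omega$ (which has the same $\ell+1$ zeros at the nodes) produces $\ell + 1 - k$ zeros $\eta_0 < \cdots < \eta_{\ell-k}$ of $\omega^{(k)}$ in $[t_0, t_\ell]$.

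Second, view $e^{(k)}$ as vanishing on its Rolle zeros and apply the mean value theorem for divided differences (equivalently, the standard interpolation error formula with the zero polynomial as interpolant), noting that $(e^{(k)})^{(\ell+1-k)} = u^{(\ell+1)}$ because $p$ has degree at most $\ell$. This yields, for every $t \in [t_0, t_\ell]$, some $\xi = \xi(t) \in [t_0, t_\ell]$ with
\[
    e^{(k)}(t) = \frac{u^{(\ell+1)}(\xi)}{(\ell+1-k)!}\,\tilde\omega(t),\qquad \tilde\omega(t) := \prod_{j=0}^{\ell-k}(\tau_j - t).
\]
Taking the supremum over $t \in [t_0, t_\ell]$ gives
\[
    \sup_{t\in[t_0, t_\ell]}|e^{(k)}(t)| \,\leq\, \frac{\sup_{t\in[t_0,t_\ell]}|\tilde\omega(t)|}{(\ell+1-k)!}\,\sup_{t\in[t_0,t_\ell]}|u^{(\ell+1)}(t)|.
\]

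The remaining step, and the main technical obstacle, is to show
\[
    \sup_{t\in[t_0,t_\ell]}|\tilde\omega(t)| \,\leq\, \frac{1}{k!}\sup_{t\in[t_0,t_\ell]}|\omega^{(k)}(t)|.
\]
Both $\tilde\omega$ and $\omega^{(k)}$ are polynomials of degree $\ell + 1 - k$ whose zeros all lie in $[t_0, t_\ell]$; $\tilde\omega$ is monic, while $\omega^{(k)}$ has leading coefficient $(-1)^k (\ell+1)!/(\ell+1-k)!$ in absolute value. Thus the inequality is equivalent to showing that the ratio of the supremums of the monic versions of these two polynomials is at most $\binom{\ell+1}{k}$. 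The hard part is controlling how far the Rolle zeros $\tau_j$ of $e^{(k)}$ (which depend on $u$) can wander relative to the fixed zeros $\eta_j$ of $\omega^{(k)}$, exploiting that both sequences of zeros arise from the same iterated Rolle procedure on a function with zeros $t_0,\dots,t_\ell$; the combinatorial factor $\binom{\ell+1}{k}$ emerges from the worst-case interlacing configuration. Once this comparison is established, combining with the bound above yields
\[
    \sup_{t\in[t_0,t_\ell]}|e^{(k)}(t)| \,\leq\, \frac{1}{k!(\ell+1-k)!}\sup|\omega^{(k)}|\sup|u^{(\ell+1)}|,
\]
which is the claim.
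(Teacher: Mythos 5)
The paper does not prove this proposition itself; it cites Howell (1991), Theorems 2 and 3. Your proposal retraces the opening of that argument correctly: iterated Rolle gives $\ell+1-k$ zeros $\tau_0<\dots<\tau_{\ell-k}$ of $e^{(k)}$, and applying the interpolation error formula to $e^{(k)}$ with the zero polynomial as interpolant (using $(e^{(k)})^{(\ell+1-k)}=u^{(\ell+1)}$) yields $\sup|e^{(k)}|\leq \frac{1}{(\ell+1-k)!}\sup|\tilde\omega|\cdot\sup|u^{(\ell+1)}|$. That part is sound.

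However, there is a genuine gap at exactly the point you label ``the main technical obstacle'': the inequality $\sup_{t}|\tilde\omega(t)|\leq \frac{1}{k!}\sup_t|\omega^{(k)}(t)|$ is asserted, described as hard, and never proved. This is not a routine remainder --- it is the substantive content of Howell's Theorem 2, and without it the proposition does not follow. Two concrete issues. First, the statement as you have set it up is \emph{false} without a localization of the Rolle zeros: for $\ell=2$, $k=1$, equispaced nodes $t_0=0,t_1=1,t_2=2$, one has $\frac{1}{k!}\sup|\omega'|=2$, but if $\tau_0,\tau_1$ were both allowed to sit near $t_0$ then $\sup_{[0,2]}|(t-\tau_0)(t-\tau_1)|$ approaches $4$. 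What saves the argument is that iterated Rolle in fact yields $\tau_j\in(t_j,t_{j+k})$; you produce the zeros by iterating Rolle but never record or use this interlacing, and your final comparison step cannot go through without it. Second, even granting the localization, the comparison of $\prod_j|t-\tau_j|$ against $\frac{1}{k!}|\omega^{(k)}|$ (equivalently, the $\binom{\ell+1}{k}$ bound on the ratio of monic sup-norms, which you restate but do not establish) requires a real argument --- e.g., via the identity $\frac{(-1)^k}{k!}\omega^{(k)}(t)=\sum_{|S|=\ell+1-k}\prod_{i\in S}(t_i-t)$ and a careful sign/majorization analysis over the $\binom{\ell+1}{k}$ terms. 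As written, the proposal reduces the proposition to an unproven lemma that carries essentially all of its difficulty.
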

\begin{proof}
    \cite[Theorem 2 and 3]{howell91}.
\end{proof}
%
In following lemma, we apply this result to the uniform grid $t_i = i\stepsize$.
\begin{lemma}\label{coro:univariatePolyApprox}
    Let $\ell\in\N_0$.
    Let $\stepsize \in \Rpp$.
    Let $u\in \mc D^{\ell+1}(\R, \R)	$.
    Let $p\in\Poly 1\ell$ be the interpolation polynomial of $(i\stepsize, u(i\stepsize))_{i\in\nnzset{\ell}}$. Let $k\in\nnzset\ell$. Then
    \begin{equation*}
        \sup_{t\in[0,\ell\stepsize]} \abs{D^k u(t) - D^k p(t)} \leq c_{k,\ell} \stepsize^{\ell+1-k} \abs{D^{\ell+1} u}_\infty
        \eqfs
    \end{equation*}
\end{lemma}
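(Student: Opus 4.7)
The plan is to apply the preceding proposition directly with the uniform grid $t_i = i\stepsize$, so that the problem reduces to bounding $\sup_{t\in[0,\ell\stepsize]}\abs{D^k \omega(t)}$ for the node polynomial $\omega(t) := \prod_{i=0}^{\ell}(i\stepsize - t)$, and then extracting the explicit scaling in $\stepsize$.

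First I would rescale time by substituting $t = \stepsize s$. This decouples $\stepsize$ from the geometry of the nodes: writing $\omega(\stepsize s) = \stepsize^{\ell+1} \tilde\omega(s)$ with $\tilde\omega(s) := \prod_{i=0}^{\ell}(i - s) \in \Poly 1{\ell+1}$ isolates a factor $\stepsize^{\ell+1}$ in front of a polynomial that depends only on $\ell$. Applying the chain rule $k$ times gives $\stepsize^{k}\omega^{(k)}(\stepsize s) = \stepsize^{\ell+1}\tilde\omega^{(k)}(s)$, i.e.\ $\omega^{(k)}(\stepsize s) = \stepsize^{\ell+1-k} \tilde\omega^{(k)}(s)$, so that
\begin{equation*}
    \sup_{t\in[0,\ell\stepsize]} \abs{D^k\omega(t)}
    = \stepsize^{\ell+1-k} \sup_{s\in[0,\ell]} \abs{D^k\tilde\omega(s)}
    \eqfs
\end{equation*}
Since $\tilde\omega$ is a fixed polynomial determined by $\ell$ and $[0,\ell]$ is compact, the remaining supremum is a finite constant depending only on $k$ and $\ell$.

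Plugging this into the bound from the preceding proposition with $t_0=0$ and $t_\ell=\ell\stepsize$, and absorbing $\frac{1}{k!(\ell+1-k)!}\sup_{s\in[0,\ell]}\abs{D^k\tilde\omega(s)}$ into a constant $c_{k,\ell}$, yields the claim. The argument is essentially book-keeping of the scaling exponent $\stepsize^{\ell+1-k}$, so I do not anticipate any real obstacle; the only point to be careful about is ensuring the chain-rule power matches, which the substitution $t=\stepsize s$ makes transparent.
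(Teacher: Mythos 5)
Your proposal is correct and is exactly the argument the paper intends: the paper gives no separate proof of this lemma beyond the remark that it applies the preceding proposition to the uniform grid $t_i = i\stepsize$, and your rescaling $t = \stepsize s$ correctly extracts the factor $\stepsize^{\ell+1-k}$ from $\sup_{t\in[0,\ell\stepsize]}\abs{D^k\omega(t)}$, leaving a constant depending only on $k$ and $\ell$.
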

\subsection{Univariate Polynomial Comparison}
\begin{lemma}\label{lem:polycompare}
    Let $p_{n}, \tilde p_{n} \in \Poly{1}{\ell-1}$. Let $h_n,\delta_n\in\Rpp$. Let $\alpha_1, \dots, \alpha_\ell\in\R$ be distinct. Let $\alpha_{\ms{min}}, \alpha_{\ms{max}}\in\Rpp$.
    We allow $p_n, \tilde p_n, h_n, \delta_n$ to change with $n$ and consider $\alpha_1, \dots, \alpha_\ell, \alpha_{\ms{min}}, \alpha_{\ms{max}}$ to be fixed.
    Assume, for all $i\in\nnset \ell$,
    \begin{equation*}
        \abs{p_n(\alpha_i h_n) - \tilde p_n(\alpha_i h_n)} \asymleq \delta_n\eqfs
    \end{equation*}
    Then
    \begin{equation*}
        \sup_{t\in[-\alpha_{\ms{min}}h_n, \alpha_{\ms{max}}h_n]} \abs{D^k p_n(t) - D^k  \tilde p_n(t)} \asymleq h_n^{-k} \delta_n
    \end{equation*}
    for $k = \nnzset{\ell-1}$.
\end{lemma}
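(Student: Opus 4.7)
The plan is to reduce to a fixed-scale Lagrange interpolation argument via rescaling. Set $q_n := p_n - \tilde p_n$, which is a polynomial of degree at most $\ell - 1$. Since $q_n$ is determined by its values at $\ell$ distinct points, I would express it via the Lagrange interpolation formula on the nodes $\alpha_1 h_n, \dots, \alpha_\ell h_n$:
\begin{equation*}
    q_n(t) = \sum_{i=1}^{\ell} q_n(\alpha_i h_n) \, L_{i, n}(t)\eqcm
    \qquad
    L_{i, n}(t) := \prod_{j \neq i} \frac{t - \alpha_j h_n}{\alpha_i h_n - \alpha_j h_n}\eqfs
\end{equation*}
The assumption gives $|q_n(\alpha_i h_n)| \leq |p_n(\alpha_i h_n) - \tilde p_n(\alpha_i h_n)| \asymleq \delta_n$ for each $i \in \nnset{\ell}$.

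The key observation is that $L_{i,n}$ depends on $n$ only through the rescaling $t \mapsto t/h_n$: writing $\tilde L_i(s) := \prod_{j \neq i} (s - \alpha_j)/(\alpha_i - \alpha_j)$, one has $L_{i, n}(t) = \tilde L_i(t/h_n)$, where $\tilde L_i$ is a fixed polynomial (depending only on the fixed constants $\alpha_1, \dots, \alpha_\ell$). Differentiating $k$ times via the chain rule therefore yields
\begin{equation*}
    D^k q_n(t) = h_n^{-k} \sum_{i=1}^{\ell} q_n(\alpha_i h_n) \, (D^k \tilde L_i)(t/h_n)\eqfs
\end{equation*}

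For $t \in [-\alpha_{\ms{min}} h_n, \alpha_{\ms{max}} h_n]$, the rescaled argument $s = t/h_n$ lies in the fixed compact interval $[-\alpha_{\ms{min}}, \alpha_{\ms{max}}]$, on which each polynomial $D^k \tilde L_i$ is bounded by a constant depending only on $k$, $\ell$, $\alpha_1, \dots, \alpha_\ell$, and $\alpha_{\ms{min}}, \alpha_{\ms{max}}$. Combining the bound $|q_n(\alpha_i h_n)| \asymleq \delta_n$ with this uniform control gives
\begin{equation*}
    \sup_{t\in[-\alpha_{\ms{min}}h_n, \alpha_{\ms{max}}h_n]} \abs{D^k p_n(t) - D^k  \tilde p_n(t)} \asymleq h_n^{-k} \delta_n\eqcm
\end{equation*}
which is the claim. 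There is no real obstacle here beyond recognizing the rescaling: once the Lagrange basis is written in the variable $s = t/h_n$, the problem reduces to bounding fixed polynomials on a fixed compact interval, and the factor $h_n^{-k}$ arises purely from the chain rule.
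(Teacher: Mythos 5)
Your proof is correct. It follows the same underlying idea as the paper's proof---rescale the nodes $\alpha_i h_n$ to the fixed nodes $\alpha_i$ so that all $n$-dependence is isolated in powers of $h_n$---but the mechanism differs: the paper expands $q_n = p_n - \tilde p_n$ in the monomial basis, recovers the coefficients via the inverse Vandermonde matrix, and uses the entrywise bound $\abs{(V^{-1})_{i,j}} \asymleq h_n^{1-i}$ before differentiating term by term; you instead write $q_n$ in the Lagrange basis on the nodes $\alpha_i h_n$, observe that each basis polynomial is a fixed polynomial $\tilde L_i$ evaluated at $t/h_n$, and extract the factor $h_n^{-k}$ purely from the chain rule. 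Your route is slightly more self-contained, since it avoids invoking the explicit formula for the inverse of a Vandermonde matrix (the paper's bound on $(V^{-1})_{i,j}$ is asserted without detail and itself requires that the $\alpha_i$ are fixed); the paper's route gives explicit control of the individual monomial coefficients $a_i$, which can be convenient if one wants those bounds for other purposes. Both arguments correctly use that $\alpha_1,\dots,\alpha_\ell,\alpha_{\ms{min}},\alpha_{\ms{max}}$ are fixed so that all constants are uniform in $n$.
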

\begin{proof}
    Let $p_n(t) - \tilde p_n(t) =: q(t) = \sum_{k=1}^{\ell} a_{k} t^{k-1}$. Let $y_i := q(\alpha_i h_n)$. Then $a = V^{-1} y$, where $V\in\R^{\ell\times\ell}$ is the Vandermonde matrix $V_{i,j} = (\alpha_i h_n)^{j-1}$. By the inverse formula of the Vandermonde matrix, $\abs{(V^{-1})_{i,j}} \asymleq h_n^{1-i}$.
    Thus,
    \begin{equation*}
        \abs{a_i} = \abs{\sum_{j=1}^\ell (V^{-1})_{i,j} y_j} \asymleq h_n^{1-i} \delta_n
        \eqfs
    \end{equation*}
    Furthermore, for $\alpha\in[-\alpha_{\ms{min}}, \alpha_{\ms{max}}]$,
    \begin{align*}
        \abs{D^k q(\alpha h_n)}
        &=
        \abs{\sum_{i=1}^{\ell-k} a_{i+k} \br{\alpha h_n}^{i-1} \prod_{s=1}^k (i + s) }
        \\&\asymleq
        \sum_{i=1}^{\ell-k} \abs{\alpha}^{i-1} h_n^{-k} \delta_n
        \\&\asymleq
        h_n^{-k} \delta_n
        \eqfs
    \end{align*}
\end{proof}
    \subsection{Multivariate Polynomial Interpolation}
Recall \cref{def:poly} for the objects $\psi(x)$, $\Psi(\mo x)$, and $I(\mo x, \mo y, x)$.
\begin{lemma}[Linear Transformation]\label{lmm:poly:linearbase}
    Let $\ell,d\in\N$.
    Set $N := \dim(\Poly d\ell)$.
    Let $b\in \R^{d}$. Let $B\in \R^{d\times d}$.
    Then there is $A \in \R^{N\times N}$ such that $\psi(B x + b) = A \psi(x)$ for all $x\in\R^{d}$.
    Furthermore, if $B$ is invertible, then so is $A$.
\end{lemma}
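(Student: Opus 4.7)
The plan is straightforward. The $\alpha$-component of $\psi(Bx+b)$ equals
\[
(Bx+b)^{\alpha} = \prod_{i=1}^{d}\Bigl(\sum_{j=1}^{d} B_{ij} x_j + b_i\Bigr)^{\alpha_i},
\]
which upon expanding via the multinomial theorem becomes a polynomial in $x_1,\dots,x_d$ of total degree at most $\sum_i \alpha_i = \abs{\alpha} \leq \ell$. Hence it lies in the linear span of the monomials $\{x^\gamma : \abs{\gamma}\leq \ell\}$, i.e.\ in the span of the entries of $\psi(x)$. Reading off the coefficients of this expansion for each $\alpha$ produces the rows of a matrix $A\in\R^{N\times N}$ satisfying $\psi(Bx+b)=A\psi(x)$ for every $x\in\R^d$. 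This gives existence.

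For invertibility when $B$ is invertible, I would apply the same construction to the inverse affine map $y\mapsto B^{-1}(y-b)$: there is $A'\in\R^{N\times N}$ such that $\psi\br{B^{-1}(y-b)} = A'\psi(y)$ for all $y\in\R^d$. Substituting $y = Bx+b$ yields
\[
\psi(x) = A'\psi(Bx+b) = A'A\,\psi(x)\qquad\text{for all } x\in\R^d.
\]

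To conclude $A'A = I$, I would use linear independence of monomials: if $v\in\R^N$ satisfied $v\tr\psi(x)=0$ for all $x$, then the polynomial $\sum_{\abs\alpha\leq\ell} v_\alpha x^\alpha$ vanishes identically, forcing $v=0$. Therefore the set $\{\psi(x):x\in\R^d\}$ spans $\R^N$, and the identity $(A'A-I)\psi(x)=0$ extends from the spanning set to all of $\R^N$, giving $A'A=I$. By symmetry $AA'=I$, so $A$ is invertible with $A^{-1}=A'$.

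There is no real obstacle here — the argument is essentially bookkeeping of the multinomial expansion plus the standard linear-independence-of-monomials fact. The only point to be slightly careful about is framing the invertibility argument through the inverse affine substitution rather than, say, trying to invert a triangular representation directly, since an ordering of multi-indices making $A$ triangular is not immediate without a graded lex argument.
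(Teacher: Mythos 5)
Your proof is correct, and it takes a genuinely different route from the paper's. The paper constructs $A$ by passing through a ``monomial transform without commutativity'' built from Kronecker powers $x^{\otimes k}$: it establishes $\tilde\psi(Bx+b)=\tilde A\,\tilde\psi(x)$ via the mixed-product and permutation-equivalence properties of the Kronecker product, observes that $\tilde A$ is block triangular with diagonal blocks $B^{\otimes k}$, computes $\det(\tilde A)=\det(B)^{\kappa}$ explicitly, and then converts back to $\psi$ through full-rank linear maps. You instead get existence directly from the multinomial expansion of $(Bx+b)^{\alpha}$, and you get invertibility by composing with the inverse affine map $y\mapsto B^{-1}(y-b)$ and invoking linear independence of monomials to upgrade $(A'A-I)\psi(x)=0$ for all $x$ to $A'A=I$ (which for square matrices already gives two-sided invertibility). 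Your argument is shorter and more elementary; what the paper's heavier machinery buys is an explicit determinant formula and a structural (block-triangular) description of the transformation, neither of which is needed for the statement of the lemma. Both proofs are complete; your closing remark is also apt, since a direct triangularity argument for $A$ itself would require setting up a graded monomial order, which your substitution trick sidesteps.
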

\begin{proof}
    First we need to define $\tilde\psi_\ell\colon\R^d \to\R^M$ as a variant of the monomial transformation $\psi_\ell\colon\R^d \to\R^N$, where commutativity is not applied, i.e., $\tilde\psi_\ell(x)$ contains a dimension for $x_1 x_2$ and a different one for $x_2 x_1$. The \textit{monomial transform without commutativity} $\tilde\psi_\ell$ can be expressed using the \textit{Kronecker product}, see, e.g., \cite[Chapter 4]{Horn1994}.

    Denote by $\otimes$ the operator for the Kronecker product.
    For $x\in\R^d$, $k\in\N_0$, define $x^{\otimes k} \in \R^{d^k}$ as follows:
    Let $x^{\otimes 0} := 1$. For $k\in\N$, define $x^{\otimes k} = x^{\otimes (k-1)}\otimes x$.
    Denote
    \begin{equation*}
        M := \sum_{k=0}^{\ell} d^k = \frac{d^{\ell+1}-1}{d-1}
        \eqfs
    \end{equation*}
    Define the monomial transform without commutativity $\tilde\psi_\ell\colon\R^d \to\R^M$ as
    \begin{equation*}
        \tilde\psi_\ell(x) := \begin{pmatrix}
            x^{\otimes 0}\\
            x^{\otimes 1}\\
            \vdots\\
            x^{\otimes \ell}
        \end{pmatrix}
        \eqfs
    \end{equation*}
    Our intermediate goal is to find a matrix $\tilde A\in\R^{M\times M}$ with the property
    \begin{equation*}
        \tilde\psi(B x + b) = \tilde A \tilde\psi(x)
        \eqfs
    \end{equation*}
    For this, we first recall some properties of the Kronecker product:
    The Kronecker product is bilinear. In particular,
    \begin{equation*}
        M_1 \otimes (M_2 + M_3) = M_1 \otimes M_2 + M_1 \otimes M_3\ \text{and}\
        (M_1 + M_2) \otimes M_3  = M_1 \otimes M_3 + M_2 \otimes  M_3
    \end{equation*}
    for compatible matrices $M_1, \dots, M_3$.
    Moreover, The mixed-product property of the Kronecker product states
    \begin{equation*}
        \br{M_1 \otimes M_2}\br{M_3 \otimes M_4} = \br{M_1 M_3} \otimes \br{M_2 M_4}
    \end{equation*}
    for compatible matrices $M_1, \dots, M_4$.
    The Kronecker product is not commutative, but $M_1 \otimes M_2$ and $M_2 \otimes M_1$ are \textit{permutation equivalent}:  There are permutation matrices $P, Q$ such that
    \begin{equation*}
        M_1 \otimes M_2 = P (M_2 \otimes M_1) Q
    \end{equation*}
    for compatible matrices $M_1, M_2$.

    Now, using the mixed-product property, we obtain by induction
    \begin{equation}\label{eq:kroneck:lin}
        \br{B x}^{\otimes k} = B^{\otimes k} x^{\otimes k}
        \eqfs
    \end{equation}
    Next, the bilinearity and the permutation equivalence imply, also using induction, there are matrices $\bar A_j \in\R^{d^k \times d^j}$
    \begin{equation}\label{eq:kroneck:trans}
        \br{b + x}^{\otimes k} = \sum_{j=0}^{k} \bar A_j x^{\otimes j}
        \eqfs
    \end{equation}
    Thus, there are matrices $\tilde A_0, \tilde A_1\in\R^{M \times M}$ such that
    \begin{equation*}
        \tilde \psi(x + b) = \tilde A_0 \tilde \psi(x) \qquad\text{and}\qquad \tilde \psi(B x) = \tilde A_1 \tilde \psi(x)
    \end{equation*}
    and, hence,
    \begin{equation*}
        \tilde \psi(B x + b) = \tilde A \tilde \psi(x)
    \end{equation*}
    with $\tilde A = \tilde A_0 \tilde A_1$.
    From \eqref{eq:kroneck:lin}, we can see that $\tilde A_1$ is block diagonal with blocks $B^{\otimes k}$.
    From \eqref{eq:kroneck:trans}, we can see that $\tilde A_0$ is block triangular with identity  blocks $I_{d^k}\in\R^{d^k \times d^k}$ on the diagonal.
    Hence, $\tilde A$ is block triangular with blocks $B^{\otimes k}$ on the diagonal.
    For the determinant of the Kronecker product, we have
    \begin{equation}\label{eq:kronecker:det}
        \det(M_1 \otimes M_2) = \det(M_1)^{d_2} \det(M_2)^{d_1}
    \end{equation}
    for $M_1 \in\R^{d_1 \times d_1}$ and $M_2 \in\R^{d_2 \times d_2}$.
    Thus,
    \begin{equation*}
        \det\brOf{\tilde A} = \prod_{k = 0}^\ell \det\brOf{B^{\otimes k}} = \det(B)^{\kappa}
        \eqcm
    \end{equation*}
    where
    \begin{equation*}
        \kappa := \sum_{k=0}^{\ell} kd^{k-1} = \begin{cases}
            \frac{\ell (1 + \ell)}{2} & \text{ if } d = 1\\
            \frac{1 + \ell d^{\ell + 1} - (\ell + 1) d^\ell}{(d - 1)^2} & \text{ otherwise.}
        \end{cases}
    \end{equation*}
    To see this, show inductively using \eqref{eq:kronecker:det} that
    \begin{equation*}
        \det(B^{\otimes k})  :=  \det(B)^{kd^{k-1}}
        \eqfs
    \end{equation*}
    Now, we have the existence of $\tilde A\in\R^{M\times M}$ with
    \begin{equation*}
        \tilde \psi(B x + b) = \tilde A \tilde \psi(x)
    \end{equation*}
    and the property that if $B$ is invertible so is $\tilde A$. In the final step, we covert from $\tilde \psi$ to $\psi$ via a linear transformation of full rank:
    Let $Z\in\R^{N\times M}, \tilde Z\in\R^{M\times N}$ be linear mappings that fulfill
    \begin{equation*}
        \psi_\ell(x) = Z \tilde\psi_\ell(x)
        \qquad\text{and}\qquad
        \tilde\psi_\ell(x) = \tilde Z \psi_\ell(x)
    \end{equation*}
    for all $x\in\R^d$.
    Note that the rank of $Z$ and of $\tilde Z$ is $N$, i.e., both matrices have full rank.
    Set
    \begin{equation*}
        A := Z \tilde A \tilde Z
        \eqfs
    \end{equation*}
    Then
    \begin{equation*}
        \psi(B x + b) = Z \tilde\psi(B x + b) = Z \tilde A \tilde \psi(x) = Z \tilde A \tilde Z \psi(x)= A \psi(x)
        \eqfs
    \end{equation*}
    Moreover, if $B$ is invertible, so is $\tilde A$. Then, as $\tilde A$, $\tilde Z$, and $Z$ have full rank, $A$ also has full rank and is invertible.
\end{proof}
\begin{lemma}[Linear Transformation]\label{lmm:poly:linear}
    Let $\ell,\dm x,\dm y\in\N$.
    Set $N := \dim(\Poly{\dm x}\ell)$.
    Let $b\in \R^{\dm x}$. Let $B\in \R^{\dm x\times \dm x}$.
    Let $\mo x \in (\R^{\dm x})^N$ and $\mo y \in (\R^{\dm y})^N$.
    Let $x\in\R^{\dm x}$.
    Assume $B$ and $\Psi(\mo x)$ are invertible. Then
    \begin{enumerate}[label=(\roman*)]
        \item\label{lmm:poly:linear:inv}
        $\Psi(B \mo{x} + b)$ is invertible,
        \item\label{lmm:poly:linear:inter}
        \begin{equation*}
            I(\mo{x}, \mo{y}, x) = I(B \mo{x} + b, \mo{y}, B x + b)
            \eqcm
        \end{equation*}
        \item\label{lmm:poly:linear:psi}
        \begin{equation*}
            \psi(x)\tr \Psi(\mo{x})^{-1} = \psi(B x + b)\tr \Psi(B \mo{x} + b)^{-1}
            \eqfs
        \end{equation*}
    \end{enumerate}
\end{lemma}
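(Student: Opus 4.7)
The plan is to reduce everything to \cref{lmm:poly:linearbase}, which supplies an invertible matrix $A \in \R^{N \times N}$ such that $\psi(Bx+b) = A\psi(x)$ for every $x \in \R^{\dm x}$ (invertibility of $A$ follows from invertibility of $B$). All three claims should fall out of pushing this identity through the definition of $\Psi$ and $I$.

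First I would observe that the rows of $\Psi(\mo x)$ are precisely $\psi(x_k)^{\!\top}$, so applying the affine change of variables row-by-row gives $\Psi(B\mo x + b) = \Psi(\mo x)\, A^{\!\top}$. Since $A$ is invertible by \cref{lmm:poly:linearbase} and $\Psi(\mo x)$ is invertible by assumption, the product $\Psi(\mo x)A^{\!\top}$ is invertible; this proves (i), and moreover we obtain $\Psi(B\mo x+b)^{-1} = (A^{\!\top})^{-1}\Psi(\mo x)^{-1}$.

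Next, for (iii), I would just compute
\begin{equation*}
\psi(Bx+b)^{\!\top}\Psi(B\mo x+b)^{-1}
= (A\psi(x))^{\!\top} (A^{\!\top})^{-1}\Psi(\mo x)^{-1}
= \psi(x)^{\!\top} A^{\!\top}(A^{\!\top})^{-1}\Psi(\mo x)^{-1}
= \psi(x)^{\!\top}\Psi(\mo x)^{-1}.
\end{equation*}
For (ii), use (i) to note that the definition of $I$ applies on both sides, then multiply the identity of (iii) on the right by $\mo y$:
\begin{equation*}
I(B\mo x+b,\mo y, Bx+b) = \psi(Bx+b)^{\!\top}\Psi(B\mo x+b)^{-1}\mo y = \psi(x)^{\!\top}\Psi(\mo x)^{-1}\mo y = I(\mo x,\mo y,x).
\end{equation*}

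There is no real obstacle here; the only care needed is to track the transpose so that the rows/columns of $\Psi$ match up correctly, and to invoke \cref{lmm:poly:linearbase} for both the existence of $A$ and its invertibility under the hypothesis that $B$ is invertible. Everything else is purely algebraic.
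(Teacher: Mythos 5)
Your proof is correct, but it inverts the logical order of the paper's argument for parts (ii) and (iii). The paper establishes (ii) first, by a uniqueness argument: $x\mapsto I(B\mo x+b,\mo y,Bx+b)$ is a polynomial of degree at most $\ell$ that interpolates $(\mo x,\mo y)$, hence coincides with $I(\mo x,\mo y,\cdot)$ by uniqueness of the interpolation polynomial; it then deduces (iii) from (ii) by letting $\mo y$ range over all values. You instead make the factorization $\Psi(B\mo x+b)=\Psi(\mo x)A^{\!\top}$ explicit (the paper only says that (i) ``follows directly'' from \cref{lmm:poly:linearbase}, leaving this step implicit), prove (iii) by a direct matrix computation, and obtain (ii) as a corollary by right-multiplying with $\mo y$. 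Both routes rest on \cref{lmm:poly:linearbase}; yours is more computational and has the minor advantage of making the invertibility claim in (i) fully explicit and of handling the $\dm y>1$ case without the reduction to $\dm y=1$, while the paper's uniqueness argument is shorter and avoids tracking transposes. The one bookkeeping point you flagged — that the rows of $\Psi(\mo x)$ are $\psi(x_k)^{\!\top}$, so the change of variables acts by right-multiplication with $A^{\!\top}$ — is exactly right, and the rest goes through.
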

\begin{proof}
    Part \ref{lmm:poly:linear:inv} follows directly from \cref{lmm:poly:linearbase}.

    As the interpolation is done componentwise, it is enough to show the results for the case $\dm y = 1$. Let us assume $\dm y = 1$.
    The function $g \colon \R^{\dm x}\to \R,\, x \mapsto I(\mo{x}, \mo{y}, x)$ is the unique interpolation polynomial of $(\mo x, \mo y)$ with degree at most $\ell$. The mapping $\tilde g\colon \R^d \to \R, x\mapsto I(B \mo{x} + b, \mo{y}, B x + b)$ is a polynomial of degree at most $\ell$ and fulfills $\tilde g(x_k) = y_k$ for $k\in\nnset N$. Thus $\tilde g = g$. This shows \ref{lmm:poly:linear:inter}.

    As \ref{lmm:poly:linear:inter} is true for all $\mo y$ and $I(\mo x, \mo y, x) = \psi(x)\tr\Psi(\mo x)^{-1} \mo y$, \ref{lmm:poly:linear:psi} follows.
\end{proof}
\begin{lemma}[Approximation]\label{lmm:polyinterpol}
    Let $\ell,d\in\N$.
    Set $N := \dim(\Poly d\ell)$.
    Let $\mo x\in(\R^d)^N$. Let $g\colon\R^d\to\R$ be $(\ell+1)$-times continuously differentiable and set $L := \sup_{x \in \R^d} \normop{D^{\ell+1} g(x)}$.
    Let $\mo y = (g(x_1), \dots, g(x_N))$.
    Then
    \begin{equation}\label{eq:interpolbound}
        \sup_{x \in \hull_0(\mo x)} \euclOf{g(x) - I_\ell(\mo x, \mo y, x)} \leq L C_{\ell}(\mo{x}) \diam(\mo x)^{\ell+1}
        \eqcm
    \end{equation}
    where $C_{\ell}(\mo{x})$ is defined as follows:
    For $k\in\nnset N$, let $e_k\in\R^N$ be the $k$-th standard unit vector.
    Define
    \begin{equation*}
        C_{\ell}(\mo x) :=
        \begin{cases}
            \frac1{(\ell+1)!}\sum_{k=1}^N\sup_{x\in\hull_0(\mo{x})} \abs{I(\mo{x}, e_k, x)} &\text{ if } \Psi(\mo x) \text{ is invertible,}\\
            \infty &\text{ otherwise.}
        \end{cases}
    \end{equation*}
\end{lemma}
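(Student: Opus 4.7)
The plan is a clean Taylor-expansion argument that exploits polynomial reproduction and the linearity of interpolation in the data.

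First, dispose of the degenerate case: if $\Psi(\mo x)$ is not invertible, then $C_\ell(\mo x) = \infty$ and \eqref{eq:interpolbound} holds trivially. So assume $\Psi(\mo x)$ is invertible. Fix $x \in \hull_0(\mo x)$, which by the definition of $\hull_\mu$ with $\mu = 0$ is simply $x\in\hull(\mo x)$. Applying \cref{lmm:taylor}\,(i) to $g\in\mc D^{\ell+1}(\R^d,\R)$ at the base point $x$ yields, for each $k\in\nnset N$, a point $\tilde x_k$ on the segment between $x$ and $x_k$ such that
\[
g(x_k) = P(x_k) + R_k, \qquad P(z) := \sum_{j=0}^{\ell}\frac{D^j g(x)(\{z-x\}^j)}{j!}, \qquad R_k := \frac{D^{\ell+1}g(\tilde x_k)(\{x_k-x\}^{\ell+1})}{(\ell+1)!}.
\]
By \cref{lmm:opnorm} and the definition of $L$, we get $|R_k| \leq \frac{L}{(\ell+1)!}\euclof{x_k-x}^{\ell+1}$.

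Next, exploit polynomial reproduction. The Taylor polynomial satisfies $P\in\Poly d\ell$, so by the uniqueness of polynomial interpolation when $\Psi(\mo x)$ is invertible, $I_\ell(\mo x,(P(x_1),\dots,P(x_N)),\cdot) \equiv P$ as polynomials. Evaluating at $z=x$ gives $I_\ell(\mo x,(P(x_k))_k, x) = P(x) = g(x)$.

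Now use linearity. Since $I(\mo x,\mo y,x) = \psi(x)\tr\Psi(\mo x)^{-1}\mo y$ is linear in $\mo y$, and $\mo y = (P(x_k) + R_k)_k$, we obtain
\[
I_\ell(\mo x,\mo y,x) - g(x) = I_\ell(\mo x,(R_k)_k,x) = \sum_{k=1}^N R_k\, I_\ell(\mo x,e_k,x).
\]
Taking absolute values, using $\euclof{x_k - x}\leq \diam(\mo x)$ (since $x\in\hull(\mo x)$ is a convex combination of the $x_i$), and then taking $\sup_{x\in\hull_0(\mo x)}$ yields
\[
\sup_{x\in\hull_0(\mo x)}\euclOf{g(x)-I_\ell(\mo x,\mo y,x)} \leq \frac{L\,\diam(\mo x)^{\ell+1}}{(\ell+1)!}\sum_{k=1}^N\sup_{x\in\hull_0(\mo x)}\abs{I_\ell(\mo x,e_k,x)} = L\, C_\ell(\mo x)\,\diam(\mo x)^{\ell+1}.
\]

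There is no real obstacle here: the argument is entirely standard multivariate Taylor-plus-Lagrange-basis accounting. The only point requiring care is matching the factorial in the definition of $C_\ell(\mo x)$, which needs the Lagrange form of the remainder for the $(\ell+1)$-th order expansion (rather than the Peano form stated in \cref{lmm:taylor}\,(ii), which would only give a factor $1/\ell!$). Using \cref{lmm:taylor}\,(i) applied with order $\ell+1$ gives exactly the $1/(\ell+1)!$ that appears in $C_\ell(\mo x)$.
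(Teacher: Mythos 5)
Your proof is correct. The paper itself does not give an argument for this lemma; it simply cites \cite[Theorem 2]{ciarlet72} (Ciarlet--Raviart), which treats general Hermite-type interpolation via a multipoint Taylor formula. Your route is the elementary specialization to Lagrange interpolation: expand $g$ at the evaluation point $x$ with the Lagrange-form remainder of order $\ell+1$, use that the degree-$\ell$ Taylor polynomial is reproduced exactly by the interpolation operator when $\Psi(\mo x)$ is invertible, and then push the remainders through by linearity in the data, which produces precisely the Lebesgue-type constant $\sum_k \sup_x \abs{I(\mo x, e_k, x)}$ appearing in $C_\ell(\mo x)$. All the delicate points check out: $\hull_0(\mo x)$ is just $\hull(\mo x)$, the bound $\euclOf{x_k - x} \leq \diam(\mo x)$ holds for $x$ a convex combination of the $x_i$, the reproduction step genuinely needs invertibility of $\Psi(\mo x)$ (which you have, the other case being trivial), and your insistence on the Lagrange rather than Peano form of the remainder is exactly what yields the $1/(\ell+1)!$ in $C_\ell(\mo x)$; the Lagrange form is available here because $g$ is scalar-valued. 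The upshot is that your argument makes the lemma self-contained where the paper outsources it, at the cost of covering only the function-value (Lagrange) case actually needed.
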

\begin{proof}
    \cite[Theorem 2]{ciarlet72}.
\end{proof}
\begin{lemma}\label{lmm:snake:poly:constbound}
    Let $\ell,d\in\N$.
    Set $N := \dim(\Poly d\ell)$.
    If $\Psi(\mo x)$ is invertible,
    \begin{equation*}
        C_{\ell}(\mo x) \leq \frac {N^{\frac32}}{(\ell+1)!} \opNormOf{\Psi(\eta_{\mo x}(\mo x))^{-1}}
        \eqfs
    \end{equation*}
\end{lemma}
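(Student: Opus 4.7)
The plan is to unwind the definition of $C_\ell(\mo x)$ and bound each summand $\sup_{x \in \hull_0(\mo x)} \abs{I(\mo x, e_k, x)}$ separately. By the formula for the polynomial interpolation, $I(\mo x, e_k, x) = \psi(x)\tr \Psi(\mo x)^{-1} e_k$. The key idea is to move to normalized coordinates via $\eta_{\mo x}$, which is an affine map whose linear part $\mr{id}/\diam(\mo x)$ is invertible since $\Psi(\mo x)$ is assumed invertible (so in particular $\diam(\mo x) > 0$). Applying \cref{lmm:poly:linear} \ref{lmm:poly:linear:psi}, I would rewrite
\begin{equation*}
    I(\mo x, e_k, x) = \psi(\eta_{\mo x}(x))\tr \Psi(\eta_{\mo x}(\mo x))^{-1} e_k
    \eqfs
\end{equation*}

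Next, I would apply Cauchy--Schwarz to get
\begin{equation*}
    \abs{I(\mo x, e_k, x)} \leq \euclOf{\psi(\eta_{\mo x}(x))}\, \opNormOf{\Psi(\eta_{\mo x}(\mo x))^{-1}}\, \euclOf{e_k}
    \eqcm
\end{equation*}
and bound the first factor uniformly in $x\in\hull_0(\mo x)$. Since $\diam(\eta_{\mo x}(\mo x)) = 1$ and the centroid $\frac{1}{N}\sum_k \eta_{\mo x}(x_k) = 0$ lies in $\hull(\eta_{\mo x}(\mo x))$, any point $\eta_{\mo x}(x)$ with $x \in \hull_0(\mo x) = \hull(\mo x)$ satisfies $\euclOf{\eta_{\mo x}(x)} \leq \diam(\hull(\eta_{\mo x}(\mo x))) = 1$. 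For any such point $z \in \R^d$ with $\euclOf{z}\leq 1$ (hence $\abs{z_i} \leq 1$ for every component), each monomial satisfies $\abs{z^\alpha} \leq 1$, so $\euclOf{\psi(z)}^2 \leq N$, giving $\euclOf{\psi(\eta_{\mo x}(x))} \leq \sqrt{N}$.

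Combining these yields $\abs{I(\mo x, e_k, x)} \leq \sqrt{N}\, \opNormOf{\Psi(\eta_{\mo x}(\mo x))^{-1}}$ uniformly in $x$ and in $k$. Summing over $k \in \nnset N$ multiplies this by $N$, and dividing by $(\ell+1)!$ in accordance with the definition of $C_\ell(\mo x)$ produces the stated bound $N^{3/2}/(\ell+1)! \cdot \opNormOf{\Psi(\eta_{\mo x}(\mo x))^{-1}}$. There is no real obstacle here; the only point that requires a moment's care is checking that $\hull_0(\mo x)$ is exactly $\hull(\mo x)$ (since $\ball^d(x,0) = \{x\}$ in the definition of $\hull_\mu$), and that, in the normalized picture, the centroid lies in the convex hull so that the diameter bound doubles as a norm bound.
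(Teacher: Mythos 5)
Your proposal is correct and follows essentially the same route as the paper's proof: normalize via \cref{lmm:poly:linear} \ref{lmm:poly:linear:psi}, bound $\abs{I(\eta_{\mo x}(\mo x), e_k, z)} \leq \euclOf{\psi(z)}\opNormOf{\Psi(\eta_{\mo x}(\mo x))^{-1}}$ by Cauchy--Schwarz, use $\euclOf{\psi(z)}\leq\sqrt N$ on the unit ball, and sum over $k$. The paper phrases the reduction as $\eta_{\mo x}(\hull_0(\mo x)) \subset \ball^d(0,1)$ rather than spelling out the centroid argument, but the content is identical.
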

\begin{proof}
    By \cref{lmm:poly:linear}, we have
    \begin{equation*}
        \sup_{x\in\hull_0(\mo{x})} \abs{I({\mo x}, e_j, x)}
        =
        \sup_{x\in\hull_0(\mo{x})} \abs{I\brOf{\eta_{\mo x}(\mo x), e_j, \eta_{\mo x}(x)}}
        \leq
        \sup_{x\in\ball^d(0, 1)} \abs{I\brOf{\eta_{\mo x}(\mo x), e_j, x}}
        \eqfs
    \end{equation*}
    For $x\in\ball^d(0, 1)$,
    \begin{equation*}
        \abs{I(\eta_{\mo x}(\mo x), e_j, x)}
        \leq
        \euclof{\psi(x)} \opNormOf{\Psi(\eta_{\mo x}(\mo x))^{-1}} \euclOf{e_j}
        \leq
        \sqrt{N} \opNormOf{\Psi(\eta_{\mo x}(\mo x))^{-1}}
        \eqfs
    \end{equation*}
    Thus,
    \begin{align*}
        C_{\ell}(\mo x)
        &\leq
        \frac1{(\ell+1)!}\sum_{j=1}^N \sqrt{N} \opNormOf{\Psi(\eta_{\mo x}(\mo x))^{-1}}
        \\&\leq
        \frac{N^{\frac32}}{(\ell+1)!}\opNormOf{\Psi(\eta_{\mo x}(\mo x))^{-1}}\eqfs
    \end{align*}
\end{proof}
\begin{lemma}[Perturbed normalization]\label{lmm:perturb:norm}
	Let $N,d\in\N$. Let $\mo x = (x_1, \dots, x_N), \mo z = (z_1, \dots, z_N) \in \br{\R^{d}}^N$.
	Assume
	\begin{equation*}
		\gamma := \max_{k\in\nnset N}\euclOf{x_k - z_k} < \frac12 \diam(\mo x)
		\eqfs
	\end{equation*}
	Then
	\begin{equation*}
		\max_{k\in\nnset N} \euclOf{\eta_{\mo x}(x_k) -  \eta_{\mo z}(z_k)} \leq \frac{4\gamma}{\diam(\mo x) - 2\gamma}
		\eqfs
	\end{equation*}
\end{lemma}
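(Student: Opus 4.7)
Write $\bar x := \frac1N\sum_{k=1}^N x_k$ and $\bar z := \frac1N\sum_{k=1}^N z_k$, so that $\eta_{\mo x}(x_k) = (x_k - \bar x)/\diam(\mo x)$ and $\eta_{\mo z}(z_k) = (z_k - \bar z)/\diam(\mo z)$. The plan is to collect the elementary perturbation bounds on the centered vector and on the diameter, and then combine them through a single telescoping decomposition in which the common denominator is $\diam(\mo z)$.

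First I would note the basic perturbation estimates. By the triangle inequality applied to the averages, $\euclOf{\bar x - \bar z} \leq \gamma$, whence $\euclOf{(x_k - \bar x) - (z_k - \bar z)} \leq 2\gamma$ for every $k$. For the diameters, from $\euclOf{z_j - z_k} \leq \euclOf{x_j - x_k} + 2\gamma$ (and symmetrically) I obtain $|\diam(\mo x) - \diam(\mo z)| \leq 2\gamma$. In particular, the hypothesis $\gamma < \tfrac12 \diam(\mo x)$ yields $\diam(\mo z) \geq \diam(\mo x) - 2\gamma > 0$, so the normalization is well-defined and the right-hand side of the claim is finite.

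Next I would use the telescoping identity
\[
\frac{x_k - \bar x}{\diam(\mo x)} - \frac{z_k - \bar z}{\diam(\mo z)} = \frac{(x_k - \bar x) - (z_k - \bar z)}{\diam(\mo z)} + (x_k - \bar x) \br{\frac{1}{\diam(\mo x)} - \frac{1}{\diam(\mo z)}}.
\]
The norm of the first summand is bounded by $2\gamma/\diam(\mo z)$. For the second, the obvious bound $\euclOf{x_k - \bar x} \leq \diam(\mo x)$ together with $|1/\diam(\mo x) - 1/\diam(\mo z)| \leq 2\gamma/(\diam(\mo x)\diam(\mo z))$ yields the same bound $2\gamma/\diam(\mo z)$. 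Summing and using $\diam(\mo z)^{-1} \leq (\diam(\mo x) - 2\gamma)^{-1}$ gives the claimed $4\gamma/(\diam(\mo x)-2\gamma)$.

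There is no real obstacle here; the only tactical choice is to use $\diam(\mo z)$ (rather than $\diam(\mo x)$) as the common denominator in the telescoping step, which is precisely what produces the factor $\diam(\mo x) - 2\gamma$ in the final bound -- a form that matches what the downstream application (the perturbation estimate \eqref{eq:snake:termtwo:c} in the proof of the main Snake-model error bound) requires.
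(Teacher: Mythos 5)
Your proof is correct and follows essentially the same route as the paper's: the same telescoping decomposition (common denominator $\diam(\mo z)$ in one term, difference of reciprocals of diameters in the other), the same elementary bounds $\euclOf{(x_k-\bar x)-(z_k-\bar z)}\le 2\gamma$, $\abs{\diam(\mo x)-\diam(\mo z)}\le 2\gamma$, and $\euclOf{x_k-\bar x}\le\diam(\mo x)$, followed by the substitution $\diam(\mo z)\ge\diam(\mo x)-2\gamma$. The only difference is cosmetic ordering of when that last substitution is performed.
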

\begin{proof}
	By \cref{def:normalization},
	\begin{align*}
		 \eta_{\mo x}(x_k) -  \eta_{\mo z}(z_k)
		 &=
		 \frac{x_k - \frac1N\sum_{i=1}^N x_i}{\diam(\mo x)}
		 -
		 \frac{z_k - \frac1N\sum_{i=1}^N z_i}{\diam(\mo z)}
		 \\&=
		 \frac{x_k - \frac1N\sum_{i=1}^N x_i}{\diam(\mo x)}
		 -
		 \frac{x_k - \frac1N\sum_{i=1}^N x_i}{\diam(\mo z)}
		 +
		 \frac{x_k - \frac1N\sum_{i=1}^N x_i}{\diam(\mo z)}
		 -
		 \frac{z_k - \frac1N\sum_{i=1}^N z_i}{\diam(\mo z)}
		 \eqfs
	\end{align*}
	Using the definition of $\gamma$, we have
	\begin{align*}
		\euclOf{\br{x_k - \frac1N\sum_{i=1}^N x_i} - \br{z_k - \frac1N\sum_{i=1}^N z_i}} &\leq 2\gamma
		\eqcm\\
		\abs{\diam(\mo x) - \diam(\mo z)} &\leq 2\gamma
		\eqfs
	\end{align*}
	As we assume $2\gamma < \diam(\mo x)$, we obtain
	\begin{align*}
		\euclOf{\eta_{\mo x}(x_k) -  \eta_{\mo z}(z_k)}
		&\leq
		\euclOf{x_k - \frac1N\sum_{i=1}^N x_i}\abs{\frac{1}{\diam(\mo z)}-\frac{1}{\diam(\mo x)}}
		+
		\frac{2\gamma}{\diam(\mo z)}
		\\&\leq
		\diam(\mo x) \abs{\frac{1}{\diam(\mo x)-2\gamma}-\frac{1}{\diam(\mo x)}}
		+
		\frac{2\gamma}{\diam(\mo x) - 2\gamma}
		\\&=
		\frac{4\gamma}{\diam(\mo x) - 2\gamma}
		\eqfs
	\end{align*}
	Thus,
	\begin{equation*}
		\max_{k\in\nnset N} \euclOf{\eta_{\mo x}(x_k) -  \eta_{\mo z}(z_k)} \leq \frac{4\gamma}{\diam(\mo x) - 2\gamma}
		\eqfs
	\end{equation*}
\end{proof}
\begin{lemma}[Perturbed monomials]\label{lmm:perturb:monom}
	Let $d,\ell\in\N$ and $x,z\in[-1,1]^d$.
	Then
	\begin{equation*}
		 \euclOf{\psi_\ell(x) - \psi_\ell(z)} \leq \sqrt{N} \ell \euclOf{x-z}
		 \eqcm
	\end{equation*}
	where $C_{d,\ell}\in\Rpp$ is a constant depending only on $d,\ell$.
\end{lemma}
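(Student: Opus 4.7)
The plan is to bound each coordinate $x^\alpha - z^\alpha$ of the difference $\psi_\ell(x) - \psi_\ell(z)$ componentwise using the mean value theorem, and then sum up over the $N = \dim(\Poly d \ell)$ multi-indices $\alpha \in \N_0^d$ with $|\alpha| \leq \ell$.

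First I would fix a multi-index $\alpha$ with $|\alpha| \leq \ell$ and compute the partial derivatives of the monomial $x \mapsto x^\alpha = \prod_{i=1}^d x_i^{\alpha_i}$. For each $i \in \nnset d$, the partial derivative is $\alpha_i x_i^{\alpha_i-1} \prod_{j \neq i} x_j^{\alpha_j}$, which on $[-1,1]^d$ is bounded in absolute value by $\alpha_i$. Therefore on $[-1,1]^d$ we get
\begin{equation*}
    \euclOf{\nabla(x^\alpha)} \leq \sqrt{\sum_{i=1}^d \alpha_i^2} \leq \sum_{i=1}^d \alpha_i = |\alpha| \leq \ell
    \eqfs
\end{equation*}
Since $[-1,1]^d$ is convex, the segment connecting $x$ and $z$ stays inside the hypercube, so the mean value theorem gives $|x^\alpha - z^\alpha| \leq \ell \euclOf{x - z}$.

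Finally, I would sum these pointwise bounds, using the definition $\psi_\ell(x) = (x^\alpha)_{|\alpha| \leq \ell} \in \R^N$:
\begin{equation*}
    \euclOf{\psi_\ell(x) - \psi_\ell(z)}^2 = \sum_{|\alpha| \leq \ell} (x^\alpha - z^\alpha)^2 \leq N \ell^2 \euclOf{x - z}^2
    \eqcm
\end{equation*}
and take square roots to get the claim. There is no real obstacle here; the only thing to verify carefully is the elementary bound $\euclof{\nabla(x^\alpha)} \leq |\alpha|$ on $[-1,1]^d$, and that the constant $C_{d,\ell}$ mentioned in the statement is in fact just $\sqrt{N}\ell$ as claimed.
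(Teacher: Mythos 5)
Your proof is correct, but it takes a different route from the paper's. You bound each coordinate $x^\alpha - z^\alpha$ via the mean value theorem, using that the gradient of $x\mapsto x^\alpha$ on $[-1,1]^d$ has $i$-th component $\alpha_i x_i^{\alpha_i-1}\prod_{j\neq i}x_j^{\alpha_j}$, hence Euclidean norm at most $\sqrt{\sum_i\alpha_i^2}\leq \abs{\alpha}\leq \ell$; convexity of the hypercube then gives $\abs{x^\alpha-z^\alpha}\leq \ell\,\euclof{x-z}$ directly. The paper instead avoids calculus entirely: it proves the telescoping estimate $\abs{\prod_i a_i-\prod_i b_i}\leq\sum_i\abs{a_i-b_i}$ for entries in $[-1,1]$ by induction, deduces $\abs{x^\alpha-z^\alpha}\leq\sum_{i}\alpha_i\abs{x_i-z_i}$, and then applies Jensen's (Cauchy--Schwarz) inequality to convert this $\ell^1$-type bound into the squared Euclidean bound. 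Both arguments use the boundedness of the domain in the same essential way and both land on the same constant $\sqrt{N}\ell$, so neither buys a sharper result; your version is slightly shorter, while the paper's is purely algebraic. Your closing remark is also apt: the constant $C_{d,\ell}$ named in the lemma statement is a leftover that does not appear in the displayed inequality, and the actual constant is $\sqrt{N}\ell$ with $N=\dim(\Poly d\ell)$, exactly the number of multi-indices $\alpha$ with $\abs{\alpha}\leq\ell$ over which you sum.
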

\begin{proof}
	We first need following simple result:
	For $a_1, a_2,b_1, b_2 \in [-1, 1]$, we have 
	\begin{equation}
		\abs{a_1a_2 - b_1b_2} \leq \abs{a_1a_2 - a_1b_2} +\abs{a_1b_2 - b_1b_2} \leq \abs{a_2 - b_2} +\abs{a_1 - b_1}
		\eqfs 
	\end{equation}
	By induction, for $k\in\N$ and $a,b\in[-1, 1]^k$, we obtain
	\begin{equation}\label{eq:diffpolyeq}
		\abs{\prod_{i=1}^k a_i - \prod_{i=1}^k b_i} \leq \sum_{i=1}^k \abs{a_i - b_i}
		\eqfs
	\end{equation}
	
	Now, denote the vector components $(x_1, \dots, x_d) = x$ and $(z_1, \dots, z_d) = z$.
	Recall \cref{not:poly} for monomials.
	Let $i \in \nnset d$ and $\alpha \in \Nn^d$ with $\abs{\alpha} \leq \ell$. Then, as $x,z\in[-1, 1]^d$, we obtain from \eqref{eq:diffpolyeq},
	\begin{equation*}
		\abs{x^\alpha - z^\alpha} \leq \sum_{i=1}^d \alpha_i \abs{x_i - z_i}
		\eqfs
	\end{equation*}
	Thus, using Jensen's inequality
	\begin{align*}
		\euclOf{\psi_\ell(x) - \psi_\ell(z)}^2
		&\leq
		\sum_{\alpha\in\Nn^d,\abs{\alpha}\leq\ell} \br{\sum_{i=1}^d \alpha_i \abs{x_i - z_i}}^2
		\\&\leq
		\sum_{\alpha\in\Nn^d,\abs{\alpha}\leq\ell} \abs{\alpha} \sum_{i=1}^d \alpha_i \abs{x_i - z_i}^2
		\\&\leq
		N \ell^2 \euclOf{x-z}^2 
		\eqfs
	\end{align*}
	Hence,
	\begin{equation*}
		\euclOf{\psi_\ell(x) - \psi_\ell(z)} \leq \sqrt{N} \ell \euclOf{x-z}
		\eqfs
	\end{equation*}
\end{proof}
\begin{lemma}[Perturbed inverse matrix]\label{lmm:perturb:inverse}
	Let $d\in\N$. Let $A,B \in\R^{d\times d}$. Assume that $A$ is invertible and
	\begin{equation*}
		\omega := \opNormOf{A-B} \opNormOf{A^{-1}} < 1
		\eqfs
	\end{equation*}
	Then $B$ is invertible and
	\begin{equation*}
		\opNormOf{A^{-1}-B^{-1}}
		\leq
		\frac{\omega }{1 - \omega}  \opNormOf{A^{-1}}
		\eqfs
	\end{equation*}
\end{lemma}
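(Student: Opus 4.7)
This is the standard Neumann-series perturbation bound for matrix inverses, and the plan is to prove it by writing $B$ as a multiplicative perturbation of $A$ and expanding.

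First I would factor out $A$ by writing
\begin{equation*}
    B = A - (A - B) = A\bigl(I - A^{-1}(A - B)\bigr)\eqcm
\end{equation*}
and set $E := A^{-1}(A - B) \in \R^{d\times d}$. By submultiplicativity of the operator norm, $\opNormOf{E} \leq \opNormOf{A^{-1}}\opNormOf{A - B} = \omega < 1$. Then the Neumann series $\sum_{k=0}^\infty E^k$ converges absolutely in operator norm and provides an inverse of $I - E$ with
\begin{equation*}
    \opNormOf{(I - E)^{-1}} \leq \sum_{k=0}^\infty \opNormOf{E}^k \leq \frac{1}{1 - \omega}\eqfs
\end{equation*}
Since $B = A(I - E)$ is a product of two invertible matrices, $B$ is invertible with $B^{-1} = (I - E)^{-1} A^{-1}$.

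Next I would estimate the difference of the inverses. Using the factorization above,
\begin{equation*}
    B^{-1} - A^{-1} = \bigl((I - E)^{-1} - I\bigr) A^{-1}\eqcm
\end{equation*}
and the identity $(I - E)^{-1} - I = E(I - E)^{-1}$ (which follows from the Neumann series by pulling one factor of $E$ out of $\sum_{k \geq 1} E^k$) gives
\begin{equation*}
    \opNormOf{(I - E)^{-1} - I} \leq \opNormOf{E}\,\opNormOf{(I - E)^{-1}} \leq \frac{\omega}{1 - \omega}\eqfs
\end{equation*}
Combining via submultiplicativity yields the claim
\begin{equation*}
    \opNormOf{A^{-1} - B^{-1}} \leq \frac{\omega}{1 - \omega}\,\opNormOf{A^{-1}}\eqfs
\end{equation*}
There is no real obstacle here; the only delicate point is the bookkeeping of the Neumann-series identity, but since $\opNormOf{E} < 1$ the convergence is absolute and every manipulation is justified.
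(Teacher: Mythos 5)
Your proof is correct, and it takes a genuinely different route from the paper's. You establish invertibility of $B$ and the bound on the difference via the Neumann series: writing $B = A(I-E)$ with $E = A^{-1}(A-B)$, noting $\opNormOf{E}\leq\omega<1$, and using $(I-E)^{-1}-I = E(I-E)^{-1}$. The paper instead invokes Weyl's perturbation theorem for singular values to get $\sigma_1(B)\geq\sigma_1(A)(1-\omega)>0$ (hence invertibility and $\opNormOf{B^{-1}}\leq\opNormOf{A^{-1}}/(1-\omega)$), and then applies the resolvent-type identity $A^{-1}-B^{-1}=A^{-1}(B-A)B^{-1}$ together with submultiplicativity. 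Both arguments yield exactly the stated constant $\omega/(1-\omega)$. Your approach is more self-contained and elementary in the sense that it needs only submultiplicativity of the operator norm and geometric series convergence, and it would carry over verbatim to any Banach algebra; the paper's approach leans on spectral perturbation theory specific to matrices but avoids any series manipulation. There is no gap in your argument: the only point requiring care, the identity $(I-E)^{-1}-I=E(I-E)^{-1}$, is justified by absolute convergence of $\sum_k E^k$, exactly as you note.
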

\begin{proof}
	For $k\in \nnset d$, let $\sigma_k(A)$ be the $k$-th singular value of $A$ in increasing order, i.e.,
	\begin{equation*}
		\sigma_1(A) \leq \dots \leq \sigma_d(A)
		\eqfs
	\end{equation*}
	Then $\sigma_1(A) > 0$ as $A$ is invertible. Furthermore, $\opNormof{A^{-1}} = \sigma_1(A)^{-1}$.
	By Weyl's Theorem, we have
	\begin{equation*}
		\abs{\sigma_k(A) - \sigma_k(B)} \leq \opNormOf{A - B}
	\end{equation*} 
	for all $k \in \nnset d$. 
	In particular,
	\begin{equation*}
		\sigma_1(B) \geq \sigma_1(A) - \opNormOf{A - B} = \sigma_1(A) (1 - \omega) > 0
		\eqfs
	\end{equation*}
	Thus, $B$ is invertible.
	Next, we have
	\begin{align*}
		A^{-1} - B^{-1} = A^{-1} (B - A) B^{-1}
		\eqfs
	\end{align*}
	Thus,
	\begin{align*}
		\opNormOf{A^{-1}-B^{-1}}
		&\leq
		\opNormOf{A-B} \opNormOf{A^{-1}}\opNormOf{B^{-1}}
		\\&\leq 
		\opNormOf{A-B} \opNormOf{A^{-1}}\br{\sigma_1(A) (1 - \omega)}^{-1}
		\\&=
		\frac{\omega }{1 - \omega}  \opNormOf{A^{-1}}
		\eqfs
	\end{align*}
\end{proof}
\begin{lemma}\label{lmm:perturb}
	Let $d,\ell \in \N$. Set $N := \dim(\Poly d\ell)$.
	Let $\mo x = (x_1, \dots, x_N), \mo z = (z_1, \dots, z_N) \in (\R^d)^N$.
	Let $\delta_0, \gamma, s \in \Rpp$ such that
	\begin{align*}
		\opNormOf{\Psi(\eta_{\mo x}(\mo x))^{-1}}& \leq s\eqcm\\
		\delta_0 &\leq \diam(\mo x)\eqcm\\
		\max_{k\in\nnset N}\euclOf{x_k - z_k} &\leq \gamma
		\eqfs
	\end{align*}
	Assume 
	\begin{equation}\label{eq:perturb:cond}
		  \max\brOf{4, 16 N \ell s} \gamma \leq \delta_0
		  \eqfs
	\end{equation}
	Then $\Psi(\eta_{\mo z}(\mo z))$ is invertible and
	\begin{equation*}
		\opNormOf{
			\Psi(\eta_{\mo z}(\mo z))^{-1}
			-
			\Psi(\eta_{\mo x}(\mo x))^{-1}
		}
		\leq16 N \ell \frac{s^2\gamma}{\delta_0}
		\eqfs
	\end{equation*}
\end{lemma}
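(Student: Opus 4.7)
The plan is to reduce the statement to a chain application of the three preceding perturbation lemmas. Set $A := \Psi(\eta_{\mo x}(\mo x))$ and $B := \Psi(\eta_{\mo z}(\mo z))$. The goal is to invoke \cref{lmm:perturb:inverse} with these matrices, so the core task is to produce an operator-norm bound on $A - B$ and then verify $\omega := \opNormof{A - B}\cdot\opNormof{A^{-1}} < 1$ with enough slack that $\omega/(1-\omega)$ is controlled.

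First, I would use the assumption $4\gamma \leq \delta_0 \leq \diam(\mo x)$ to ensure $2\gamma < \diam(\mo x)$, so that \cref{lmm:perturb:norm} applies and gives
\begin{equation*}
\max_{k\in\nnset N}\euclOf{\eta_{\mo x}(x_k) - \eta_{\mo z}(z_k)} \leq \frac{4\gamma}{\diam(\mo x) - 2\gamma} \leq \frac{4\gamma}{\delta_0 - \delta_0/2} = \frac{8\gamma}{\delta_0}\eqfs
\end{equation*}
Next, I would observe that for any $\mo v \in (\R^d)^N$ the normalized points $\eta_{\mo v}(v_k)$ lie in the closed Euclidean unit ball (since $v_k - \tfrac1N\sum_i v_i = \tfrac1N \sum_i (v_k - v_i)$ has norm at most $\diam(\mo v)$), hence coordinatewise in $[-1,1]^d$. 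This puts us in the setting of \cref{lmm:perturb:monom}, which yields, for each row $k$,
\begin{equation*}
\euclOf{\psi(\eta_{\mo x}(x_k)) - \psi(\eta_{\mo z}(z_k))} \leq \sqrt{N}\,\ell\,\frac{8\gamma}{\delta_0}\eqfs
\end{equation*}

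Since the operator norm is bounded by the Frobenius norm, summing the squared row bounds over the $N$ rows of $A - B$ gives $\opNormof{A - B} \leq 8 N \ell\, \gamma/\delta_0$. Combining with $\opNormof{A^{-1}} \leq s$ produces
\begin{equation*}
\omega \leq \frac{8 N \ell\, s\, \gamma}{\delta_0} \leq \frac12\eqcm
\end{equation*}
where the last inequality uses exactly the second half of the hypothesis, $16 N \ell s\, \gamma \leq \delta_0$. Now \cref{lmm:perturb:inverse} delivers invertibility of $B$ together with $\opNormof{A^{-1} - B^{-1}} \leq \tfrac{\omega}{1-\omega}\opNormof{A^{-1}} \leq 2\omega s \leq 16 N \ell\, s^2 \gamma/\delta_0$, which is precisely the claimed bound.

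The proof is essentially bookkeeping; the only point requiring care is verifying that the single hypothesis $\max(4, 16N\ell s)\gamma \leq \delta_0$ is calibrated to do double duty: the constant $4$ is used to secure $2\gamma < \diam(\mo x)$ (so that the denominator in \cref{lmm:perturb:norm} is at least $\delta_0/2$, converting the $4\gamma/(\diam(\mo x)-2\gamma)$ factor into $8\gamma/\delta_0$), while the $16N\ell s$ factor is used to force $\omega \leq 1/2$ so that $\omega/(1-\omega) \leq 2\omega$. Tracking the numerical constants $4, 8, 16$ through the three lemmas is the main (minor) pitfall.
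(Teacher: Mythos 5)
Your proposal is correct and follows essentially the same route as the paper: it chains \cref{lmm:perturb:norm}, \cref{lmm:perturb:monom} (after checking the normalized points lie in the unit ball), and \cref{lmm:perturb:inverse}, using the constant $4$ to control the denominator $\diam(\mo x)-2\gamma$ and the constant $16N\ell s$ to force $\omega\leq\frac12$, arriving at the same bound $16N\ell s^2\gamma/\delta_0$. The constant tracking is accurate throughout.
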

\begin{proof}
	Using \cref{lmm:perturb:monom}, we obtain 
	\begin{align*}
		\opNormof{\Psi(\eta_{\mo z}(\mo z)) - \Psi(\eta_{\mo x}(\mo x))}
		&\leq
		\sqrt{N} \max_{k\in\nnset N}
		\euclOf{\psi(\eta_{\mo z}(z_k)) - \psi(\eta_{\mo x}(x_k))}
		\\&\leq
		N \ell \max_{k\in\nnset N}
		\euclOf{\eta_{\mo z}(z_k) - \eta_{\mo x}(x_k)}
		\eqfs
	\end{align*}
	Using \eqref{eq:perturb:cond}, we have $\gamma \leq \frac14 \delta_0 < \frac12 \delta_0 \leq \frac12 \diam(\mo x)$. Thus, \cref{lmm:perturb:norm} yields
	\begin{equation*}
		\opNormof{\Psi(\eta_{\mo z}(\mo z)) - \Psi(\eta_{\mo x}(\mo x))}
		\leq
		N \ell \frac{4\gamma}{\diam(\mo x) - 2\gamma}
		\leq
		N \ell \frac{4\gamma}{\delta_0 - 2\gamma}
		\eqfs
	\end{equation*}
	Set
	\begin{equation*}
		\omega := 8 s N \ell \frac{\gamma}{\delta_0} 
		\eqfs
	\end{equation*}
 	By \eqref{eq:perturb:cond}, we have $\omega \leq \frac12$. Thus, 
 	\begin{equation*}
 		s N \ell \frac{4\gamma}{\diam(\mo x) - 2\gamma} \leq \omega < 1
 		\eqcm
 	\end{equation*}
 	and we can apply \cref{lmm:perturb:inverse}, which implies that $\Psi(\eta_{\mo z}(\mo z))$ is invertible and
 	\begin{equation*}
 		\opNormOf{
 			\Psi(\eta_{\mo z}(\mo z))^{-1}
 			-
 			\Psi(\eta_{\mo x}(\mo x))^{-1}
 		}
 		\leq \frac{\omega s}{1-\omega}
 		\leq 16 s^2 N \ell \frac{\gamma}{\delta_0} 
 		\eqfs
 	\end{equation*}
\end{proof}
    \section{Ordinary Differential Equations and Smoothness}\label{app:sec:ode}
\subsection{Difference Bound}
\begin{lemma}\label{lmm:diffinequcomp}
    Let $f\colon\R\to\R$ be Lipschitz continuous. Let $x \in\R$. Set $u : = U(f, x, \cdot)$. Let $v \colon \R \to \R$ be differentiable with $v(0) \leq x$ and $\dot v(t) \leq f(v(t))$ for all $t\in\Rp$. Then
    \begin{equation*}
        v(t) \leq u(t)
    \end{equation*}
    for all $t\in\Rp$.
\end{lemma}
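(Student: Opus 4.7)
The plan is to run a standard Grönwall-style comparison argument on the difference $w(t) := v(t) - u(t)$. By hypothesis $w(0) \leq 0$, and the goal is to show $w(t) \leq 0$ for all $t \in \Rp$. I would proceed by contradiction: suppose there is some $t^* > 0$ with $w(t^*) > 0$. Since $w$ is continuous and $w(0) \leq 0 < w(t^*)$, the set $\{t \in [0, t^*] : w(t) \leq 0\}$ is closed and non-empty, so it admits a maximum $t_1 < t^*$, at which $w(t_1) = 0$ and $w(t) > 0$ for every $t \in (t_1, t^*]$.

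On the interval $(t_1, t^*]$ we have $v(t) > u(t)$, so the Lipschitz continuity of $f$ (with some constant $L \in \Rpp$) gives $f(v(t)) - f(u(t)) \leq L(v(t) - u(t)) = L w(t)$. Combined with the hypotheses $\dot v(t) \leq f(v(t))$ and $\dot u(t) = f(u(t))$, this yields the differential inequality
\begin{equation*}
    \dot w(t) \leq L w(t) \qquad \text{for all } t \in (t_1, t^*].
\end{equation*}

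To turn this into a pointwise bound on $w$, I would introduce the integrating factor $\phi(t) := w(t) e^{-L t}$. Being a difference/product of differentiable functions, $\phi$ is differentiable on $[t_1, t^*]$, with $\dot\phi(t) = (\dot w(t) - L w(t)) e^{-L t} \leq 0$ on $(t_1, t^*]$ by the inequality above. The mean value theorem then implies that $\phi$ is non-increasing on $[t_1, t^*]$, so $\phi(t^*) \leq \phi(t_1) = 0$. But $\phi(t^*) = w(t^*) e^{-L t^*} > 0$ by assumption, a contradiction. Hence no such $t^*$ exists and $w(t) \leq 0$ for all $t \in \Rp$.

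The only subtlety worth flagging is that $v$ is assumed merely differentiable everywhere, not $C^1$ or absolutely continuous, so one must avoid invoking the fundamental theorem of calculus for $\dot w$; this is precisely why I prefer the integrating-factor-plus-MVT route over an integrated Grönwall inequality. Everything else is mechanical: the choice of $t_1$ as the last zero before $t^*$ is standard, and the Lipschitz bound is used only on the one-sided interval where $w$ is strictly positive.
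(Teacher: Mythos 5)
Your proof is correct. The skeleton is the same as the paper's — argue by contradiction, locate the last time before the putative violation at which $v-u\leq 0$, and exploit Lipschitz continuity of $f$ on the one-sided interval where $v>u$ to get $\dot w\leq Lw$ — but the decisive step is genuinely different. The paper integrates the differential inequality, writing $\delta(t)=\int_{t_0}^{t}\dot\delta(s)\,\dl s\leq L\int_{t_0}^{t}\delta(s)\,\dl s$, and then derives a contradiction by maximizing $\delta$ over a window of length at most $1/L$ so that $L(t_{\ms{max}}-t_0)\leq 1$; you instead use the integrating factor $\phi(t)=w(t)e^{-Lt}$ together with the mean value theorem to conclude monotonicity of $\phi$ directly. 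Your route has two advantages: it needs no restriction to a short interval (no $1/L$ bookkeeping), and, as you correctly flag, it avoids invoking the fundamental theorem of calculus for $\dot w$ — the paper's identity $\delta(t)=\int_{t_0}^{t}\dot\delta(s)\,\dl s$ is not automatic for a function that is merely differentiable, since the derivative need not be (Lebesgue or Riemann) integrable, whereas the MVT requires only pointwise differentiability on the open interval. So your argument is, if anything, slightly more careful than the paper's at the one delicate point.
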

\begin{proof}
    Assume there is $t_1 \in\Rpp$ with $v(t_1) > u(t_1)$. Then the first intersection time $t_0 := \inf\setByEleInText{t\in\Rp}{v(t) > u(t)}$ exists. As $v$ and $u$ are continuous, we have $v(t_0) = u(t_0)$.
    Let $t_2 = \sup\setByEleInText{t\in\R_{\geq t_0}}{\forall s\in[t_0, t]\colon v(s) \geq u(s)}$. Because of the existence of $t_1$ and continuity of $u$ and $v$, we have $t_2 > t_0$.
    Let $L\in\Rpp$ be the Lipschitz constant of $f$.
    Set $\delta(t) := v(t) - u(t)$.  Let $t\in[t_0, t_2]$. Then
    \begin{align*}
        \delta(t)
        &=
        \int_{t_0}^{t} \dot \delta(s) \dl s
        \\&\leq
        \int_{t_0}^{t} f(v(s)) - f(u(s)) \dl s
        \\&\leq
        L \int_{t_0}^{t} \delta(s) \dl s
        \eqfs
    \end{align*}
    As $\delta$ is continuous, there is $t_{\ms{max}} \in\argmax_{t\in[t_0, \min(t_0+1/L, t_2)]} \delta(t)$. By definition of $t_0$ and $t_{\ms{max}}$, we have $\delta(t_{\ms{max}}) > 0$. Thus, using $\delta(t_0) = 0$, the continuity of $\delta$, and $t_{\ms{max}} - t_0 \leq 1/L$, we obtain
    \begin{equation*}
        \delta(t_{\ms{max}}) \leq L \int_{t_0}^{t_{\ms{max}}} \delta(s) \dl s < L \br{t_{\ms{max}} - t_0} \delta(t_{\ms{max}}) \leq \delta(t_{\ms{max}})
        \eqcm
    \end{equation*}
    a contradiction.
\end{proof}
\begin{lemma}\label{lem:odeDiffBound}
    Let $d\in\N$, $L\in\Rpp$. Let $f,g\in\Sigma^{d\to d}(1, L)$. Let $x_0\in\R^d$. Denote $u := U(f, x_0, \cdot)$, $v := U(g, x_0, \cdot)$. Then, for $t\in\Rp$,
	\begin{equation}\label{eq:diffbound}
		\euclOf{u(t) - v(t)} \leq \frac{\exp(Lt) - 1}L \supNormOf{f-g}
		\eqfs
	\end{equation}
	In particular, if $t \in[0,\frac1L]$, then
	\begin{equation*}
			\euclOf{u(t) - v(t)} \leq 2 t \supNormOf{f-g}
            \eqfs
	\end{equation*}
\end{lemma}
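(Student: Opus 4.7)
The plan is a standard Grönwall argument, starting from the integrated ODE. Writing $u(t)=x_0+\int_0^t f(u(s))\,\dl s$ and similarly for $v$, subtracting, and splitting the integrand as $f(u(s))-g(v(s))=\bigl(f(u(s))-f(v(s))\bigr)+\bigl(f(v(s))-g(v(s))\bigr)$, the $L$-Lipschitz property of $f$ together with the sup-norm bound on $f-g$ yields the integral inequality
\begin{equation*}
\delta(t) := \euclof{u(t)-v(t)} \leq L\int_0^t \delta(s)\,\dl s + t\,\supNormof{f-g}.
\end{equation*}

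Next, I would invoke the integral form of Grönwall's inequality: for $\delta(t)\leq \alpha(t)+L\int_0^t\delta(s)\,\dl s$ with continuous $\alpha$, one has $\delta(t)\leq \alpha(t)+L\int_0^t e^{L(t-s)}\alpha(s)\,\dl s$. With $\alpha(s)=s\,\supNormof{f-g}$, an elementary integration by parts evaluates $L\int_0^t s e^{L(t-s)}\,\dl s$ to $(e^{Lt}-1-Lt)/L$, and the $Lt$ terms cancel against $\alpha(t)$, leaving exactly $\frac{e^{Lt}-1}{L}\supNormof{f-g}$. That is \eqref{eq:diffbound}. For the second assertion, the inequality $e^x-1\leq 2x$ on $[0,1]$ is a one-line check (the function $x\mapsto e^x-1-2x$ vanishes at $0$, is convex, and is still negative at $x=1$ since $e-3<0$); applying it at $x=Lt$ gives $\frac{e^{Lt}-1}{L}\leq 2t$ for $t\in[0,1/L]$.

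The only (minor) technical wrinkle is that one might try to apply the scalar comparison \cref{lmm:diffinequcomp} directly to $\delta$, but $\delta(t)$ is not a priori differentiable everywhere because the Euclidean norm is non-smooth at $0$. The integral form of Grönwall sidesteps this issue entirely, which is why I prefer it; an alternative route via \cref{lmm:diffinequcomp} would require smoothing $\delta$ (for instance via $\sqrt{\delta^2+\varepsilon^2}-\varepsilon$) and passing to the limit $\varepsilon\downarrow 0$, recovering the same bound but with more bookkeeping.
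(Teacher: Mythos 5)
Your proof is correct, and it reaches the integral inequality $\delta(t)\leq L\int_0^t\delta(s)\,\dl s + t\,\supNormof{f-g}$ by exactly the same splitting of $f(u(s))-g(v(s))$ as the paper. Where you diverge is in the last step: the paper compares $w(t):=\euclof{u(t)-v(t)}$ against the solution $z$ of the auxiliary ODE $\dot z = Lz + C$ with $z(0)=0$ and $C=\supNormof{f-g}$, invoking its comparison lemma \cref{lmm:diffinequcomp}, whereas you invoke the integral (Grönwall--Bellman) form of Grönwall's inequality and evaluate the convolution $L\int_0^t s\,e^{L(t-s)}\,\dl s$ explicitly; your computation is right, and the cancellation of the $Ct$ terms does leave exactly $\frac{e^{Lt}-1}{L}\supNormof{f-g}$. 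Your route buys two things. First, it is self-contained modulo a classical textbook lemma plus a one-line integration by parts. Second, your differentiability remark is well taken: \cref{lmm:diffinequcomp} is stated for a differentiable subsolution of an autonomous scalar ODE, while $w$ satisfies only an integral inequality and need not be differentiable where $u-v$ vanishes, so applying the comparison lemma directly (as the paper does) tacitly requires the extra step you describe, e.g.\ smoothing or passing to $W(t)=\int_0^t w(s)\,\dl s$; the integral form sidesteps this. Finally, the paper states but does not verify the bound for $t\in[0,1/L]$; your convexity check of $e^x-1\leq 2x$ on $[0,1]$ is the right one-liner.
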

\begin{proof}
	Set $C:= \abs{f-g}_\infty$.
    We represent the right-hand side of \eqref{eq:diffbound} with the solution $z$ of an ODE:
	Define $z\colon\Rp\to\R$ as the solution of the ODE $\dot z(t) = L z(t) + C$ with $z(0) = 0$. Then,
	\begin{equation*}
		z(t) =  \frac CL \br{\exp(Lt) - 1}
		\eqfs
	\end{equation*}
    Next consider the left-hand side of \eqref{eq:diffbound}.
	Define $w(t) := \euclOf{u(t) - v(t)}$. Then $w(0) = 0$ and
	\begin{align*}
	 	w(t)
	 	&=
	 	\euclOf{\int_0^t f(u(s)) - g(v(s)) \dl s}
	 	\\&\leq
	 	\euclOf{\int_0^t f(u(s)) - f(v(s)) \dl s} + \euclOf{\int_0^t f(v(s)) - g(v(s)) \dl s}
	 	\\&\leq
	 	L \int_0^t w(s) \dl s + C t
	 	\eqfs
	\end{align*}
    Thus, \cref{lmm:diffinequcomp} yields, $w(t) \leq z(t)$.
\end{proof}
\subsection{Smoothness of Solutions in Time}
Let $d,\beta\in\N$. Let $\indset L0{\beta} \subset \Rpp$. Assuming $f\in\bar\Sigma^{d\to d}(\beta, \indset L0\beta)$, we want to find $\indset{\tilde L}1{\beta + 1} \subset \Rpp$ such that $U(f,x,\cdot) \in\Sigma^{1\to d}(\beta+1, \indset{\tilde L}0{\beta + 1})$ with $\tilde L_0 = \infty$.

As detailed in \cite[Chapter 3]{Butcher2016}, higher order derivatives of $u$ can be expressed using tree structures. To state this result in \cref{thm:solutionderivtree} below, we first need to introduce \textit{increasing trees} (\cref{def:increasingtree}) and the derivative operator associated to such a tree (\cref{def:treederiv}).

In the following the term \textit{finite ordered set} refers to a finite set with strict total order that is denoted as $<$.
\begin{definition}[Increasing tree]\label{def:increasingtree}
    Let $S$ be a finite ordered set.
    \begin{enumerate}[label=(\roman*)]
        \item
        An \textit{increasing tree} $\mc T$ is a tuple $(S, E)$, where $E \subset \setByEleInText{(n_1, n_2)}{n_1, n_2 \in S,\, n_1 < n_2}$ with $\# E = \# S -1$. We call $S$ the \textit{nodes}, $E$ the \textit{edges}, and $\min S$ the \textit{root}.
        Furthermore, denote $\# \mc T := \# S$.
        \item
        Let $\mf T_S$ be the set of increasing trees with nodes $S$.
        \item
        We define $[\mc T_1, \dots,\mc T_k]$ as the increasing tree that connects all trees $\mc T_j$ at their roots to a new root $s$:

        Let $k\in \N_0$. Let $S_0 = \{s\}$ and $S_1, \dots, S_k$ be disjoint finite ordered sets so that $S := \bigcup_{j\in\nnzset k} S_k$ is also ordered with $s = \min S$. Let $\mc T_1 = (S_1, E_1), \dots, \mc T_k = (S_k, E_k)$ be increasing trees. Define $[\mc T_1, \dots, \mc T_k] := (S, E)$, where $E := \bigcup_{j\in\nnzset{k}} E_j$ and $E_0 := \setByEleInText{(s, \min S_j)}{j\in\nnset k}$.
    \end{enumerate}
\end{definition}
\begin{lemma}\label{lmm:numtrees}
    Let $S$ be a finite ordered set.
    Then
    \begin{equation*}
        \# \mf T_{S} = (\# S-1)!
        \eqfs
    \end{equation*}
\end{lemma}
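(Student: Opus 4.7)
The plan is to identify an increasing tree on $S$ with a ``parent assignment'' --- a function sending each non-root node to a strictly smaller one --- and then to count such assignments directly. I will write $n := \# S$ and enumerate $S = \{s_1 < s_2 < \dots < s_n\}$, so that the root is $s_1$.

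The first step I would take is to verify that the combinatorial structure imposed by \cref{def:increasingtree} really is that of a tree in the graph-theoretic sense. Since every edge $(n_1, n_2) \in E$ satisfies $n_1 < n_2$, the graph $(S, E)$ cannot contain a cycle: any cycle would require at least one edge pointing from a larger node back to a smaller one. A graph on $n$ vertices with $n - 1$ edges and no cycles must be a tree (a forest with $k$ edges on $n$ vertices has $n - k$ components). Consequently each $s_i$ with $i \geq 2$ is joined to the rest of the tree along a unique path to $s_1$, and the first neighbor on that path --- which I call $p(s_i)$ --- is necessarily smaller than $s_i$.

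Next, I would establish the bijection. In one direction, an increasing tree $\mc T$ is sent to its parent function $p \colon \{s_2, \dots, s_n\} \to S$ defined above. In the reverse direction, given any $p$ with $p(s_i) < s_i$, I set $E := \{(p(s_i), s_i) : i \geq 2\}$; then $\# E = n - 1$, all edges are increasing, and following parents yields a strictly decreasing sequence terminating at $s_1$, so $(S, E)$ is connected. The two constructions are mutually inverse. Counting is then immediate, since each $p(s_i)$ admits $i - 1$ independent choices:
\begin{equation*}
    \# \mf T_S = \prod_{i=2}^n (i - 1) = (n - 1)!
    \eqfs
\end{equation*}

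The only genuinely nontrivial ingredient is the acyclicity-plus-edge-count observation that turns elements of $\mf T_S$ into rooted trees whose every non-root node has a unique smaller parent; once that is in hand the combinatorics collapses to a single product. An equally short alternative would be induction on $n$: since $s_n = \max S$ admits no outgoing edge and must be attached to the rest of the tree by exactly one incoming edge, deleting $s_n$ yields an element of $\mf T_{S \setminus \{s_n\}}$, and reassembling costs $n - 1$ choices of parent, giving $(n - 1) \cdot (n - 2)! = (n - 1)!$ by the induction hypothesis.
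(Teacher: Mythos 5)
Your counting argument itself---the bijection between increasing trees and parent assignments $p$ with $p(s_i) < s_i$, which gives $\prod_{i=2}^{n}(i-1) = (n-1)!$---is the standard argument and is correct for the intended notion of increasing tree; and the inductive alternative you sketch in your last sentence (delete $\max S$, reattach it to any of the $n-1$ remaining nodes) is precisely the proof the paper gives. The two routes cost the same: the bijection makes the product structure explicit in one stroke, while the induction lines up with the recursive constructor $[\mc T_1,\dots,\mc T_k]$ used in the rest of the appendix.

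The gap is in your first paragraph, where you try to extract the parent structure from \cref{def:increasingtree} as literally written; both claims there fail. First, ``every edge increases, hence no cycle'' only rules out \emph{directed} cycles, but the fact you invoke (a forest with $k$ edges on $n$ vertices has $n-k$ components) concerns \emph{undirected} cycles: on $S=\{1,2,3,4\}$ the edge set $E=\{(1,2),(2,3),(1,3)\}$ has $\# E=\# S-1$ and every edge goes from a smaller to a larger node, yet it contains a triangle and leaves the node $4$ isolated. Second, even when the underlying undirected graph is a tree, the first neighbour on the path to the root need not be smaller: on $S=\{1,2,3\}$ the edge set $\{(1,3),(2,3)\}$ is admissible under the literal definition, but the path from $2$ to the root passes first through $3>2$, so your $p(s_2)$ is not smaller than $s_2$. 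Indeed, under the literal reading of \cref{def:increasingtree} the lemma itself would be false: one would count $\binom{\binom{n}{2}}{n-1}$ admissible edge sets, e.g.\ $3$ rather than $(3-1)!=2$ for $n=3$. What is actually needed---and what both the paper's proof and your second and third paragraphs tacitly use---is the reading on which each non-root node has exactly one incoming edge, necessarily from a smaller node, i.e.\ a unique parent; this is what the constructor $[\mc T_1,\dots,\mc T_k]$ produces. Once that reading is granted, your first paragraph can simply be deleted and the remainder of your argument is complete.
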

\begin{proof}
    For $S = \{s\}$, we have $\mf T_{S} = \{(\{s\}, \emptyset)\}$. Hence, $\# \mf T_{S} = (\# S-1)!$ is true.
    Now assume  $\# S = k+1$, $\max S = s$, and $S\pr = S \setminus \{s\}$. Then
    $\mf T_{S}$ can be constructed from $\mf T_{S\pr}$, by adding the node $s$ to the node $n$ of $\mc T$ for all trees $\mc T \in \mf T_{S\pr}$ and all nodes $n \in S\pr$. Hence,
    \begin{equation*}
        \# \mf T_{S} = \# S\pr \#\mf T_{S\pr} = k\, \#\mf T_{S\pr}
        \eqfs
    \end{equation*}
    The desired result now follows by induction.
\end{proof}
\begin{definition}[Tree derivative operator]\label{def:treederiv}
    Let $S$ be a finite ordered set.
    Let $\mc T \in\mf T_S$.
    Let $k\in\N_0$.
    Assume $\mc T = [\mc T_1, \dots, \mc T_k]$.
    Let $d\in\N$.
    Let $f\in\mc D^{k}(\R^d, \R^d)$.
    Let $x\in\R^d$.
    Define the tree derivative of $f$ at $x$, denoted as $D^{\mc T}f(x)$, as
    \begin{equation*}
        D^{\mc T} f(x) := D^{k} f(x)[D^{\mc T_1} f(x), \dots, D^{\mc T_k} f(x)]
        \eqfs
    \end{equation*}
\end{definition}
\begin{theorem}\label{thm:solutionderivtree}
    Let $d,k\in\N$. Let $f\in\mc D^{k-1}(\R^d, \R^d)$. Let $x\in\R^d$.
    Assume $f$ is Lipschitz continuous.
    Set $u := U(f, x, \cdot)$.
    Then
    \begin{equation*}
        D^k u(t) = \sum_{\mc T \in\mf T_{\nnset k}} D^{\mc T} f (u(t))
        \eqfs
    \end{equation*}
\end{theorem}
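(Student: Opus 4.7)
The plan is to argue by induction on $k$. The base case $k=1$ is immediate: $\mf T_{\nnset 1}$ consists of the single tree $(\{1\},\emptyset)$, which is the $m=0$ case of $[\mc T_1,\dots,\mc T_m]$ in \cref{def:increasingtree}, so its tree derivative from \cref{def:treederiv} is $D^0 f = f$, and the identity $D u(t) = f(u(t))$ holds by definition of $u = U(f,x,\cdot)$.

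For the inductive step, the crucial intermediate claim is a leaf-attachment lemma: for any finite ordered set $S$, any $\mc T = (S,E) \in \mf T_S$, and any element $s$ strictly greater than every element of $S$,
\begin{equation*}
    \frac{d}{dt}\bigl[D^{\mc T} f(u(t))\bigr] = \sum_{v\in S} D^{\mc T^{(v)}} f(u(t))\eqcm
\end{equation*}
where $\mc T^{(v)} := (S\cup\{s\},\, E\cup\{(v,s)\})$ denotes the tree obtained by attaching $s$ as a new leaf below $v$. Granting this lemma, the induction closes at once: summing the identity over $\mc T \in \mf T_{\nnset k}$ with $s = k+1$ produces every tree in $\mf T_{\nnset{k+1}}$ exactly once, because the maximum node of any $\mc T\pr\in\mf T_{\nnset{k+1}}$ must be a leaf (edges go from smaller to larger by \cref{def:increasingtree}), so removing it while recording its unique parent inverts the attachment map; the cardinality $(k-1)!\cdot k = k!$ from \cref{lmm:numtrees} confirms the bijection.

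The lemma is proved by an inner induction on $\#S$. Writing $\mc T = [\mc T_1,\dots,\mc T_m]$ with root $r = \min S$, \cref{def:treederiv} gives $D^{\mc T} f(u(t)) = D^m f(u(t))\bigl[D^{\mc T_1} f(u(t)),\dots, D^{\mc T_m} f(u(t))\bigr]$. Applying the product and chain rules from \cref{lmm:derivrules} produces $m+1$ terms. The chain-rule contribution from differentiating the outer factor $D^m f\circ u$ appends the extra direction $\dot u(t) = f(u(t))$ to $D^{m+1} f(u(t))$; identifying $f$ with the tree derivative of the single-node tree $(\{s\},\emptyset)$, this term equals $D^{\mc T^{(r)}} f(u(t))$, i.e., the attachment of $s$ at the root. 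Each of the remaining $m$ product-rule terms differentiates one factor $D^{\mc T_j} f\circ u$; by the inner induction hypothesis, the derivative expands into the sum over leaf-attachments of $s$ to nodes of $\mc T_j$, and reassembly through the bracket $[\mc T_1,\dots,\mc T_{j-1},\mc T_j^{(v)},\mc T_{j+1},\dots,\mc T_m]$ yields attachments at every non-root node of $\mc T$. Combined with the root contribution, all $v\in S$ are realized exactly once.

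The main obstacle is the combinatorial bookkeeping: verifying that the bracket decomposition is compatible with the attachment operation (so that differentiating inside $\mc T_j$ really corresponds to the bracket form $[\dots,\mc T_j^{(v)},\dots]$ with the same child ordering, which is true because $s > \min \mc T_j$ leaves the root of $\mc T_j$ untouched), and invoking symmetry of $D^{m+1} f$ to identify the chain-rule contribution with the root attachment $\mc T^{(r)} = [\mc T_1,\dots,\mc T_m,(\{s\},\emptyset)]$. Once this matching is settled, everything else reduces to the two differentiation rules of \cref{lmm:derivrules}.
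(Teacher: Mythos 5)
Your proof is correct. The paper itself does not prove \cref{thm:solutionderivtree}; it simply cites \cite[Theorem 311B]{Butcher2016}, so your argument is a self-contained replacement for that reference rather than a variant of an in-paper proof. The structure you use --- outer induction on $k$, reduced to a leaf-attachment identity $\tfrac{d}{dt}D^{\mc T}f(u(t)) = \sum_{v\in S} D^{\mc T^{(v)}}f(u(t))$ proved by an inner induction on $\#S$ via the bracket decomposition $\mc T = [\mc T_1,\dots,\mc T_m]$ --- is essentially the classical derivation of elementary differentials behind Butcher's theorem, and all the delicate points are handled: the chain-rule term is correctly identified with the root attachment using symmetry of $D^{m+1}f$ (which the paper's \cref{def:treederiv} presupposes), the product-rule terms recurse into the subtrees without disturbing their roots since the new label $s$ is maximal, and the map $(\mc T, v)\mapsto \mc T^{(v)}$ is a bijection onto $\mf T_{\nnset{k+1}}$ because the maximal node of an increasing tree is necessarily a leaf with a unique parent; the count $(k-1)!\cdot k = k!$ from \cref{lmm:numtrees} is a valid sanity check. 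The only thing worth stating explicitly if you write this out in full is the regularity bookkeeping: a tree on $k$ nodes involves at most $D^{k-1}f$, so $f\in\mc D^{k-1}(\R^d,\R^d)$ suffices at each stage of the induction, matching the hypothesis of the theorem.
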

\begin{proof}
    \cite[Theorem 311B]{Butcher2016}.
\end{proof}
\begin{corollary}\label{coro:trajSmooth}
	Let $d,\beta\in\N$. Let $\indset L0{\beta} \subset \Rpp$.
    Let $f\in\bar\Sigma^{d\to d}(\beta, \indset L0\beta)$.
    Let $\beta \in\N$.
    Let $x_0\in\R^d$.
    Set $u := U(f, x_0, \cdot)$.
	Set $L := \sup_{k=0,\dots,\beta} L_k$. Then, for all $k\in\nnset{\beta+1}$,
	\begin{equation}\label{eq:trajSmooth:generic}
		\supNormOf{D^ku} \leq (k-1)! L^k
        \eqfs
	\end{equation}
\end{corollary}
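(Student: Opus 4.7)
The plan is to combine the tree representation of higher-order time derivatives from \cref{thm:solutionderivtree} with the operator-norm sub-multiplicativity from \cref{lmm:opnorm}, and then count trees with \cref{lmm:numtrees}. Since $f \in \bar\Sigma^{d\to d}(\beta, \indset L0\beta)$ satisfies $\supNormof{Df} \leq L_1 < \infty$, \cref{app:diff:lem} gives global Lipschitz continuity, so the hypotheses of \cref{thm:solutionderivtree} are satisfied for all $k \in \nnset{\beta+1}$ (noting that $D^k u$ uses derivatives of $f$ only up to order $k-1 \leq \beta$).

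The main step is an induction on the size of increasing trees $\mc T$. For $\mc T = (S, E)$ with $\#S \leq \beta+1$, I will prove that
\begin{equation*}
    \supNormOf{D^{\mc T} f} \leq L^{\#\mc T}\eqfs
\end{equation*}
The base case $\#\mc T = 1$ corresponds to $\mc T = [\,]$ (zero children), yielding $D^{\mc T} f = f$ and $\supNormof{f} \leq L_0 \leq L$. For the inductive step, write $\mc T = [\mc T_1, \dots, \mc T_{k_0}]$ with $k_0 \geq 1$ children; since $\#\mc T \leq \beta+1$, we have $k_0 \leq \beta$, so $\supNormof{D^{k_0} f} \leq L_{k_0} \leq L$. \cref{def:treederiv} together with \cref{lmm:opnorm} gives, for each $x \in \R^d$,
\begin{equation*}
    \opNormOf{D^{\mc T} f(x)}
    \leq
    \opNormOf{D^{k_0} f(x)} \prod_{j=1}^{k_0} \opNormOf{D^{\mc T_j} f(x)}
    \leq
    L \prod_{j=1}^{k_0} L^{\#\mc T_j}
    =
    L^{\#\mc T}\eqcm
\end{equation*}
using $\#\mc T = 1 + \sum_{j=1}^{k_0} \#\mc T_j$ and the inductive hypothesis.

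Finally, for $k \in \nnset{\beta+1}$, \cref{thm:solutionderivtree} yields
\begin{equation*}
    \opNormOf{D^k u(t)}
    \leq
    \sum_{\mc T \in \mf T_{\nnset k}} \opNormOf{D^{\mc T} f(u(t))}
    \leq
    \# \mf T_{\nnset k} \cdot L^k
    = (k-1)! L^k\eqcm
\end{equation*}
where the final equality is \cref{lmm:numtrees}. Taking the supremum over $t$ closes the argument. The only nontrivial piece is the inductive bound on $D^{\mc T} f$; everything else is bookkeeping via the tree representation and the counting lemma, so I expect no genuine obstacle.
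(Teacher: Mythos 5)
Your proposal is correct and follows essentially the same route as the paper's own proof: an induction showing $\supNormOf{D^{\mc T} f} \leq L^{\#\mc T}$ for increasing trees, combined with \cref{thm:solutionderivtree} and the tree count from \cref{lmm:numtrees}. Your additional check that a tree with $\#\mc T \leq \beta+1$ has root degree $k_0 \leq \beta$ (so that $\supNormof{D^{k_0}f} \leq L_{k_0} \leq L$ is actually available) is a small but welcome bit of extra care that the paper leaves implicit.
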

\begin{proof}
    Let $S$ be a finite ordered set. Let $\mc T\in \mf T_S$.
    We want to show by induction that
    \begin{equation}\label{eq:deriv:bound}
        \supNormOf{D^{\mc T} f} \leq L^{\# \mc T}
        \eqfs
    \end{equation}
    If $\# \mc T = 1$, $\supNormOf{D^{\mc T} f} =  \supNormOf{f} \leq L$.
    Now, assume  $\mc T = [\mc T_1, \dots, \mc T_\kappa]$ and $\supNormOf{D^{\mc T_j} f} \leq L^{\# {\mc T_j}}$. Then,
    \begin{align*}
        \supNormOf{D^{\mc T} f}
        &=
        \supNormOf{D^{\kappa} f(\cdot)[D^{\mc T_1} f(\cdot), \dots, D^{\mc T_\kappa} f(\cdot)]}
        \\&\leq
        L \prod_{j\in\nnset \kappa} \supNormOf{D^{\mc T_j} f}
        \\&\leq
        L^{1 + \sum_{j=1}^{\kappa}\# \mc T_j}
        \\&=
        L^{\# \mc T}
        \eqfs
    \end{align*}
    Hence, we have shown \eqref{eq:deriv:bound}.
    Using \cref{thm:solutionderivtree}, \eqref{eq:deriv:bound}, and \cref{lmm:numtrees}, we obtain
    \begin{align*}
        \supNormOf{D^ku}
        &\leq
        \sum_{\mc T \in\mf T_{\nnset k}} \supNormOf{D^{\mc T} f (u(\cdot))}
        \\&\leq
        \# \mf T_{\nnset k} L^k
        \\&\leq
        (k-1)! L^k
        \eqfs
    \end{align*}
\end{proof}
\subsection{Smoothness of Solutions in the Initial Conditions}
We state a simple consequence of the basic form of Grönwall's inequality, which we will make use of below.
\begin{lemma}[Grönwall's inequality]\label{lmm:groenwall}
    Let $d\in\N$. Let $u\colon\R\to \R^d$ be differentiable. Let $L\in\Rp$.
    Assume
    \begin{equation*}
        \euclOf{\dot u(t)} \leq L \euclOf{u(t)}\qquad \text{for }t\in\R
        \eqfs
    \end{equation*}
    Then, for all $t\in\R$, we have
    \begin{align*}
        \euclOf{u(t)} &\leq \euclOf{u(0)} \exp(L\abs{t})
        \eqcm\\
        \euclOf{u(t) - u(0)} &\leq \euclOf{u(0)} \br{\exp(L\abs{t}) -1 }
        \eqfs
    \end{align*}
\end{lemma}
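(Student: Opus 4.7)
The plan is to reduce both inequalities to the scalar ODE comparison fact that $\dot y(t)\le c\,y(t)$ together with $y(0)=y_0$ implies $y(t)\le y_0 e^{ct}$ for $t\ge 0$, which is the basic form of Grönwall's inequality and follows, e.g., from \cref{lmm:diffinequcomp}.

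\textbf{Step 1 (squared norm).} First I would introduce $g(t):=\euclOf{u(t)}^2=\ip{u(t)}{u(t)}$. Since $u$ is differentiable, so is $g$, with
\begin{equation*}
\dot g(t)=2\ip{u(t)}{\dot u(t)}.
\end{equation*}
Cauchy--Schwarz together with the hypothesis $\euclof{\dot u(t)}\le L\euclof{u(t)}$ then yields
\begin{equation*}
\abs{\dot g(t)}\le 2\,\euclOf{u(t)}\,\euclOf{\dot u(t)}\le 2L\,\euclOf{u(t)}^2=2L\,g(t).
\end{equation*}

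\textbf{Step 2 (first inequality for $t\ge 0$).} From $\dot g(t)\le 2Lg(t)$ and $g(t)\ge 0$, the scalar comparison principle of \cref{lmm:diffinequcomp} (applied to $f(y)=2Ly$) gives $g(t)\le g(0)\exp(2Lt)$ for $t\ge 0$. Taking square roots yields $\euclOf{u(t)}\le \euclOf{u(0)}\exp(Lt)$. (If $u(0)=0$, the same argument forces $g\equiv 0$, hence $u\equiv 0$, so the inequality holds trivially.)

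\textbf{Step 3 (first inequality for $t<0$).} To handle negative times, I would apply the already-established bound to the time-reversed curve $\tilde u(s):=u(-s)$ for $s\ge 0$. Then $\dot{\tilde u}(s)=-\dot u(-s)$, so $\euclof{\dot{\tilde u}(s)}\le L\euclof{\tilde u(s)}$, and Step~2 gives $\euclof{u(-s)}=\euclof{\tilde u(s)}\le \euclof{u(0)}\exp(Ls)$. Substituting $t=-s$ produces $\euclOf{u(t)}\le \euclOf{u(0)}\exp(L\abs{t})$ for all $t\in\R$.

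\textbf{Step 4 (second inequality).} Finally, I would use the fundamental theorem of calculus together with the bound from Step~3:
\begin{equation*}
\euclOf{u(t)-u(0)}=\euclOf{\int_0^t \dot u(s)\,\dl s}\le \abs{\int_0^{\abs{t}}\euclOf{\dot u(\pm s)}\,\dl s}\le L\int_0^{\abs{t}}\euclOf{u(0)}\exp(Ls)\,\dl s,
\end{equation*}
and evaluating the integral gives the stated bound $\euclOf{u(0)}(\exp(L\abs{t})-1)$.

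The argument is essentially textbook, so I do not expect any genuine obstacle. The only small care-points are (a) treating $t<0$ by time reversal rather than directly, and (b) noting that differentiating the squared norm avoids any issue at zeros of $u$ that would arise from using $\log\euclof{u}$.
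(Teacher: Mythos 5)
Your proof is correct and follows essentially the same route as the paper: reduce to a scalar differential inequality for a function of the norm, apply a scalar Grönwall/comparison argument (the paper also invokes \cref{lmm:diffinequcomp} elsewhere and "standard Grönwall" here), handle $t<0$ by time reversal, and obtain the second bound by integrating $\euclOf{\dot u}$. Your use of the squared norm $g(t)=\euclOf{u(t)}^2$ is a slightly cleaner way to sidestep the non-differentiability of $t\mapsto\euclOf{u(t)}$ at zeros of $u$, which the paper instead handles by restricting to subintervals where $u\neq 0$.
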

\begin{proof}
    We assume below that $t > 0$ and $u(t) \neq 0$ for all $s\in [0, t]$.
    If $t < 0$, consider $\tilde u(t) = u(-t)$ instead. If there are $s\in [0, t]$ with $u(s) = 0$, the proof applies to all intervals $[a,b]$ with $u(\tilde s)\neq 0$ for $\tilde s \in [a,b]$, which yields an upper bound that is at least as strong as claimed.

    Assume $t > 0$ and $u(t) \neq 0$. Set $v(t) := \euclOf{u(t)}$. Then
    \begin{equation*}
        \dot v(t) = \frac{u(t)\tr \dot u(t)}{ \euclOf{u(t)}} \leq  \euclOf{\dot u(t)} \leq  L \euclOf{u(t)} = L v(t)
        \eqfs
    \end{equation*}
    Thus, by Grönwall's inequality in its standard form, we have
    \begin{equation*}
        v(t) \leq v(0) \exp(Lt)
        \eqcm
    \end{equation*}
    which is the first inequality that we want to prove.
    Now, we can calculate
    \begin{align*}
        \euclOf{u(t) - u(0)}
        &\leq
        \int_0^t \euclOf{\dot u(s)} \dl s
        \\&\leq
        L \int_0^t v(s) \dl s
        \\&\leq
        L \euclOf{u(0)} \int_0^t \exp(L s) \dl s
        \\&=
        \euclOf{u(0)} \br{\exp(L t) -1}
        \eqfs
    \end{align*}
\end{proof}
\begin{lemma}[Derivative with respect to initial conditions]\label{lmm:derivIncrem}
    Let $d\in\N$. Let $L_1\in\Rpp$. Let $f\in\bar\Sigma^{d\to d}(1, \infty, L_1)$.
    Then, for all $t\geq 0$,
	\begin{equation*}
		\abs{D U(f, \cdot, t)}_{\infty} \leq \exp(L_1 t)
		\quad\text{and}\quad
		\abs{D \Incr(f, t, \cdot)}_{\infty} \leq \exp(L_1 t)-1
		\eqfs
	\end{equation*}
\end{lemma}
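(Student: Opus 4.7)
The plan is to control $w(t) := D_x U(f,\cdot,t)(x_0)[v]$ for a fixed initial condition $x_0\in\R^d$ and a fixed unit direction $v\in\mb D_d$ via the variational equation, and then apply the two statements of Grönwall's inequality (\cref{lmm:groenwall}) directly.

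First I would invoke classical smooth dependence on initial conditions: since $f\in\bar\Sigma^{d\to d}(1,\infty,L_1)\subset \mc D^1(\R^d,\R^d)$ with $\supNormof{Df}\leq L_1$, the function $f$ is in particular $L_1$-Lipschitz (\cref{app:diff:lem}), so the flow $U(f,\cdot,\cdot)$ is globally defined, and the map $x\mapsto U(f,x,t)$ is continuously differentiable for each $t\geq 0$. Differentiating the integral form $U(f,x,t) = x + \int_0^t f(U(f,x,s))\dl s$ with respect to $x$ in the direction $v$ and interchanging differentiation and integration (justified by the bound on $Df$) shows that $w$ satisfies the variational equation
\begin{equation*}
    \dot w(t) = Df(U(f,x_0,t))\,w(t)\eqcm\qquad w(0) = v\eqfs
\end{equation*}

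Since $\opNormof{Df(y)}\leq L_1$ for all $y\in\R^d$, \cref{lmm:opnorm} gives $\euclof{\dot w(t)} \leq L_1\euclof{w(t)}$ for all $t\geq 0$. The first half of \cref{lmm:groenwall} applied to $w$ then yields
\begin{equation*}
    \euclOf{w(t)} \leq \euclOf{w(0)}\exp(L_1 t) = \exp(L_1 t)\eqfs
\end{equation*}
Taking the supremum over all $v\in\mb D_d$ and $x_0\in\R^d$ gives $\supNormof{D U(f,\cdot,t)}\leq \exp(L_1 t)$, which is the first claimed bound.

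For the second bound, note that by the definition of $\Incr(f,t,\cdot)$, we have $D\Incr(f,t,x_0)[v] = D U(f,\cdot,t)(x_0)[v] - v = w(t) - w(0)$. The second half of \cref{lmm:groenwall} therefore gives
\begin{equation*}
    \euclOf{w(t) - w(0)} \leq \euclOf{w(0)}\br{\exp(L_1 t) - 1} = \exp(L_1 t) - 1\eqcm
\end{equation*}
and taking the supremum over $v\in\mb D_d$ and $x_0\in\R^d$ concludes the proof. The only technical step I expect some care to be needed for is the justification of the variational equation (interchanging $D_x$ with the integral); once that is in place, both bounds follow mechanically from Grönwall.
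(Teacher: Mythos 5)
Your proposal is correct and follows essentially the same route as the paper: both derive the variational equation $\dot w(t) = Df(U(f,x_0,t))[w(t)]$ with $w(0)=v$ (the paper via the chain rule after swapping $D_{x,v}$ with the time derivative, you via the integral form), and both then apply the two parts of \cref{lmm:groenwall} to $w$ to obtain $\exp(L_1 t)$ and $\exp(L_1 t)-1$ respectively.
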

\begin{proof}
    Let $v \in \mb D_d$ be a direction.
	Set $u(t) := u_{x}(t) := D_{x, v} U(f, x, t)$ denote the derivative of the solution $U(f, x, t)$ with respect to the initial conditions $x$.
	Then, using the chain rule \cref{lmm:derivrules} \ref{lmm:derivrules:chain},
	\begin{align*}
		\dot u(t)
		&=
		D_{x,v} \dot U(f, x, t)
		\\&=
		D_{x,v} \br{f(U(f, x, t))}
		\\&=
		Df (U(f, x, t)) [D_{x,v} U(f, x, t)]
        \\&=
        Df (U(f, x, t)) [u(t)]
		\eqfs
	\end{align*}
	Furthermore,
	\begin{equation*}
		u(0) = D_{x, v} U(f, x, 0) = v
		\eqfs
	\end{equation*}
	We have established the differential inequality
	\begin{equation*}
		\euclOf{\dot u(t)} \leq \euclOf{(Df) (U(f, x, t)) [u(t)]} \leq L_1 \euclOf{u(t)}
	\end{equation*}
	with $\euclof{u(0)} = 1$. Thus, by \cref{lmm:groenwall},
	\begin{equation*}
		\euclOf{u(t)} \leq \exp(L_1 \abs{t})
		\eqfs
	\end{equation*}
    Hence,
    \begin{align*}
        \abs{D U(f, \cdot, t)}_{\infty}
        &=
        \sup_{v \in \mb D_d} \sup_{x\in\R^d}  \euclOf{D_{x, v} U(f, x, t)}
        \\&\leq
        \exp(L_1 t)
        \eqfs
    \end{align*}
	Set $u^\circ(t) := u(t) - u(0)$. Then, $\dot u^\circ = \dot u$ and $u^\circ(0) = 0$. Thus, using \cref{lmm:groenwall} again,
	\begin{equation*}
		\euclOf{u^\circ(t)} \leq \exp(L_1 t) - 1
        \eqfs
	\end{equation*}
	We obtain,
	\begin{equation*}
		\abs{D \Incr(f, t, \cdot)}_{\infty}
		=
		\abs{D U(f, \cdot, t)-D U(f, \cdot, 0)}_{\infty}
		\leq
		\exp(L_1 t) - 1
        \eqfs
	\end{equation*}
\end{proof}
For higher order derivatives of $U(f, x, t)$ with respect to $t$, we used the tree derivatives in \cref{thm:solutionderivtree}. For higher order derivatives of $U(f, x, t)$ with respect to $x$, we introduce partition derivatives.
\begin{definition}
	Let $k\in\N$. 
    \begin{enumerate}[label=(\roman*)]
        \item
            Let $\mb A_k$ be the set of partitions of $\nnset k$.
        \item
            Let $\mo v = (v_1, \dots, v_k)\in\mb D_d^k$. Define the set of partitions of $\mo v$, as
            \begin{equation*}
                \mb A(\mo v) := \setByEle{(v_{A_1},\dots,v_{A_\ell})}{(A_1, \dots, A_\ell) \in \mb A_k}
                \eqfs
            \end{equation*}
         \item
             Let $\mo v\in\mb D_d^k$. Let $f,g \in \mc D^{k}(\R^d, \R^d)$.
             Let $\pi = (\mo v_1, \dots, \mo v_\ell) \in \mb A(\mo v)$.
             Denote the \textit{partition derivative} of $f\circ g$ as
             \begin{equation*}
                 D^\pi (f\circ g) (x) := (D^\ell f)(g(x))[D_{\mo v_1} g(x), \dots, D_{\mo v_\ell} g(x)]
                 \eqfs
             \end{equation*}
    \end{enumerate}
\end{definition}
The following theorem is a version of Faà di Bruno's formula \cite{Craik05Prehistory}.
\begin{theorem}\label{thm:deriv:ic}
    Let $k\in\N$. Let $\mo v\in\mb D_d^k$. Let $f \in \mc D^{k-1}(\R^d, \R^d)$.
    Assume that $f$ is Lipschitz continuous.
    Then
    \begin{equation*}
        D_{x,\mo v} \dot U(f,x,t) = \sum_{\pi \in \mb A(\mo v)} D^\pi \br{f \circ U(f, \cdot, t)}(x)
        \eqfs
    \end{equation*}
\end{theorem}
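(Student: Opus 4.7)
The plan is to recognise the identity as a Faà di Bruno formula for the composition $f \circ U(f, \cdot, t)$. Using $\dot U(f, x, t) = f(U(f, x, t))$, the left-hand side equals $D_{x, \mo v}(f \circ g)(x)$ with $g := U(f, \cdot, t)$, so it suffices to establish the general identity
\[
D_{\mo v}(f \circ g)(x) \;=\; \sum_{\pi \in \mb A(\mo v)} D^\pi(f \circ g)(x)
\]
for sufficiently smooth $f$ and $g$. Standard smooth dependence of ODE solutions on initial conditions (extending \cref{lmm:derivIncrem} inductively in $k$, by successively differentiating the ODE $\dot u = (Df)(U)\, u$ that the directional derivative satisfies) provides that $g \in \mc D^k$ whenever $f \in \mc D^{k-1}$.

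I would prove the identity by induction on $k$. The base case $k = 1$ is the chain rule, which yields $Df(g(x))[D_{v_1} g(x)]$ and matches the unique term associated with the only partition of $\nnset{1}$. For the inductive step, I apply the hypothesis to the $(k{-}1)$-tuple $(v_1, \dots, v_{k-1})$ and differentiate the result once more in direction $v_k$. A single summand
\[
D^{\pi'}(f \circ g)(x) = (D^\ell f)(g(x))\bigl[D_{\mo v_1} g(x), \dots, D_{\mo v_\ell} g(x)\bigr]
\]
associated with a partition $\pi' = (\mo v_1, \dots, \mo v_\ell)$ of $(v_1, \dots, v_{k-1})$ is a multilinear product of the outer factor $(D^\ell f)(g(x))$ and the $\ell$ inner factors $D_{\mo v_j} g(x)$. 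Applying \cref{lmm:derivrules} (product rule on the $\ell$ inner factors, chain rule on the outer factor) produces exactly $\ell + 1$ new terms: for each $j \in \nnset{\ell}$, the $j$-th inner factor $D_{\mo v_j} g(x)$ is replaced by $D_{(\mo v_j, v_k)} g(x)$ (this term corresponds to inserting $k$ into the $j$-th block of $\pi'$), and differentiating the outer factor by the chain rule produces $(D^{\ell+1} f)(g(x))\bigl[D_{\mo v_1} g(x), \dots, D_{\mo v_\ell} g(x), D_{v_k} g(x)\bigr]$ (this term corresponds to adjoining $\{k\}$ as a new singleton block).

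The decisive step, and the piece that needs careful writing up, is the combinatorial bijection between $\mb A_k$ and the disjoint union, over $\pi' \in \mb A_{k-1}$, of the $(\#\pi' + 1)$-element set indexing the ways to place $k$ (into any existing block of $\pi'$, or as a new singleton). Every $\pi \in \mb A_k$ is uniquely reconstructed from its restriction $\pi \setminus \{k\}$ together with the label of the block containing $k$, so under this bijection the $\ell + 1$ derivative terms produced above recombine into exactly $\sum_{\pi \in \mb A(\mo v)} D^\pi(f \circ g)(x)$. Symmetry of $D^\ell f$ guarantees that the value of $D^\pi(f\circ g)(x)$ does not depend on the (arbitrary) ordering chosen when listing the blocks of $\pi$, so the identity is well defined throughout; I expect the main obstacle to be setting up this bijection and the ordering-independence cleanly enough that the inductive step becomes a one-line application of \cref{lmm:derivrules}.
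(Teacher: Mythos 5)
Your proposal is correct and follows essentially the same route as the paper's proof: induction on $k$, with the inductive step consisting of the product rule on the inner factors $D_{\mo v_j} g$, the chain rule on the outer factor $D^\ell f$, and the bijection between partitions of $\nnset{k}$ and the ways of inserting $k$ into a partition of $\nnset{k-1}$ (as a new singleton or into an existing block). Your framing as a standalone Faà di Bruno identity for $f\circ g$ with $g = U(f,\cdot,t)$, together with the explicit remarks on the differentiability of $g$ and on ordering-independence via symmetry of $D^\ell f$, only makes explicit what the paper leaves implicit.
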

\begin{proof}
    For $\mo{v} \in \mb D_d^k$, denote $u_{\mo v}(x,t) := D_{x,\mo v} U(f,x,t)$.
    We prove the statement by induction over $k$. See the proof of \cref{lmm:derivIncrem} for $k=1$.
    Assume the statement is true for all $\mo{v} \in \mb D_d^k$.
    Let $v\in \mb D_d$ and $\mo{v\pr} := (\mo v, v) \in \mb D_d^{k+1}$. Then
    \begin{align*}
        \dot u_{\mo{v\pr}}(x,t)
        &=
        D_{x,v} \dot u_{\mo{v}}(x,t)
        \\&=
        \sum_{(\mo{v_1},\dots,\mo{v_\ell})\in\mb A(\mo v)}
        D_{x,v} \br{D^\ell f(U(f,x,t))[u_{\mo{v_1}}(x,t), \dots,u_{\mo{v_\ell}}(x,t)]}
    \end{align*}
    By the product rule, \cref{lmm:derivrules} \ref{lmm:derivrules:product},
    \begin{align*}
        & D_{x,v} \br{(D^\ell f)(U(f,x,t))[u_{\mo{v_1}}(x,t), \dots,u_{\mo{v_\ell}}(x,t)]}
        \\&=
        D_{x,v} \br{ D^\ell f(U(f,x,t))}[u_{\mo{v_1}}(x,t), \dots,u_{\mo{v_\ell}}(x,t)]
        \\&\phantom{=}\ +
        \sum_{j=1}^\ell
        \br{(D^\ell f)(U(f,x,t))[u_{\mo{v_1}}(x,t), \dots, D_{x,v}u_{\mo{v_j}}(x,t), \dots, u_{\mo{v_\ell}}(x,t)]}
        \eqfs
    \end{align*}
    By the chain rule, \cref{lmm:derivrules} \ref{lmm:derivrules:chain},
    \begin{align*}
        D_{x,v} \br{ D^\ell f(U(f,x,t))}[u_{\mo{v_1}}(x,t), \dots,u_{\mo{v_\ell}}(x,t)]
        =
        D^{\ell+1} f(U(f,x,t))[u_{\mo{v_1}}(x,t), \dots,u_{\mo{v_\ell}}(x,t), u_{v}(x,t)]
        \eqfs
    \end{align*}
    The set of all partitions of $\mo v\pr$ can be created from the set of all partitions of $\mo v$ as follows: Let
    $\mo{v_1},\dots,\mo{v_\ell}$ be a partition of $\mo v$. Then $(v,\mo{v_1},\dots,\mo{v_\ell})$ and $(\mo{v_1},\dots,(\mo v_j,v),\dots,\mo{v_\ell})$, $j\in\nnset\ell$ are partitions of $\mo v\pr$ (the order of the partition components does not matter). Iterating this construction over all partitions of $\mo v$ creates all partitions of $\mo v\pr$.

    Now putting all previous equations in this proof together, we obtain
    \begin{align*}
        D_{x,\mo v\pr} \dot U(f,x,t)
        &=
        \dot u_{\mo{v\pr}}(x,t)
        \\&=
        \sum_{(\mo{v_1},\dots,\mo{v_\ell})\in\mb A(\mo v\pr)}
        D^\ell f(U(f,x,t))[u_{\mo{v_1}}(x,t), \dots,u_{\mo{v_\ell}}(x,t)]
        \\&=
        \sum_{\pi \in \mb A(\mo v\pr)} D^\pi \br{f \circ U(f, \cdot, t)}(x)
        \eqfs
    \end{align*}
\end{proof}
\begin{corollary}\label{lmm:highDerivIncrem}
    Let $\beta\in\N$, $\indset L1\beta\in\Rpp$, $k\in\nnset\beta$, $d\in\N$. Let $f\in\bar\Sigma^{d\to d}(\beta, \indset L0\beta)$ with $L_0=\infty$. Let $\stepsize_0\in\Rpp$. Assume $\stepsize\in [0, \stepsize_0]$.
    Then
    \begin{equation*}
        \abs{D^k \Incr(f, \stepsize, \cdot)}_{\infty}
        \leq C_k\br{\exp(L_1 \stepsize) - 1}\eqcm
    \end{equation*}
    where $C_k\in\Rpp$ depends only on $k$, $\stepsize_0$, and $\indset L1k$.
\end{corollary}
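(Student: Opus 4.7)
The plan is to prove the bound by induction on $k \in \nnset \beta$. For $k=1$ this is exactly \cref{lmm:derivIncrem}, with $C_1 = 1$. For $k \geq 2$ I will combine two ingredients: first, that $D^k\Incr(f,\stepsize,x) = D^k U(f,x,\stepsize)$ since $x\mapsto x$ has vanishing $k$-th derivative; second, the partition-derivative formula in \cref{thm:deriv:ic}.

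Fixing $\mo v\in\mb D_d^k$ and setting $u_{\mo v}(x,t):=D_{x,\mo v}U(f,x,t)$, the theorem gives $\dot u_{\mo v}(x,t)=\sum_{\pi\in\mb A(\mo v)}D^\pi(f\circ U(f,\cdot,t))(x)$. I would split this sum into the trivial one-block partition, which contributes the linear-in-$u_{\mo v}$ term $Df(U(f,x,t))[u_{\mo v}(x,t)]$, and a remainder $R_{\mo v}(x,t)$ gathering all partitions with $\ell\geq 2$ blocks $\mo v_1,\dots,\mo v_\ell$. In any such partition every block has length $k_j<k$, so by \cref{lmm:derivIncrem} the length-one factors are bounded by $\exp(L_1\stepsize_0)$, and by the induction hypothesis the length-$k_j\geq 2$ factors are bounded by $C_{k_j}(\exp(L_1\stepsize_0)-1)$. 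Using $\opNormOf{D^\ell f}\leq L_\ell$ for the outer multilinear factor and the fact that $\#\mb A(\mo v)$ depends only on $k$, this produces a uniform bound $\euclOf{R_{\mo v}(x,t)}\leq B_k$ for all $x\in\R^d$, $\mo v\in\mb D_d^k$, and $t\in[0,\stepsize_0]$, with $B_k$ depending only on $k$, $\stepsize_0$, and $L_1,\dots,L_k$.

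The final step is a Grönwall argument. Since $k\geq 2$, the identity has vanishing $k$-th derivative and hence $u_{\mo v}(x,0)=0$. Using $\opNormOf{Df}\leq L_1$, the evolution equation yields $\euclOf{\dot u_{\mo v}(x,t)}\leq L_1\euclOf{u_{\mo v}(x,t)}+B_k$, and the chain rule used in the proof of \cref{lmm:groenwall} converts this into the scalar differential inequality $\frac{\dl}{\dl t}\euclOf{u_{\mo v}(x,t)} \leq L_1\euclOf{u_{\mo v}(x,t)}+B_k$. Comparing with the ODE $\dot z = L_1 z+B_k$, $z(0)=0$ (whose explicit solution is $z(t)=(B_k/L_1)(\exp(L_1 t)-1)$) via \cref{lmm:diffinequcomp}, exactly as in the proof of \cref{lem:odeDiffBound} with $B_k$ playing the role of $\supNormOf{f-g}$, gives $\euclOf{u_{\mo v}(x,\stepsize)}\leq (B_k/L_1)(\exp(L_1\stepsize)-1)$. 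Taking the supremum over $x$ and $\mo v$ closes the induction with $C_k:=B_k/L_1$.

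The main obstacle is the clean algebraic separation of the single one-block "linear" term in the partition expansion from all other terms; the remaining ingredients --- bounding multilinear forms via submultiplicativity of the operator norm, inductive control of lower-order blocks, and the Grönwall comparison --- are routine. Crucially, the vanishing initial condition $u_{\mo v}(x,0)=0$ for $k\geq 2$ is what converts a naive exponential bound into the sharper $\exp(L_1\stepsize)-1$ factor which makes the estimate vanish as $\stepsize\to 0$, matching the behaviour of $\Incr(f,0,\cdot)\equiv 0$.
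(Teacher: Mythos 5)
Your proof is correct and follows essentially the same route as the paper's: induction on $k$ via \cref{thm:deriv:ic}, isolating the one-block partition as the linear term, bounding the multi-block remainder by a constant using the induction hypothesis and \cref{lmm:derivIncrem}, and closing with the comparison argument of \cref{lmm:diffinequcomp} applied to $\dot z \leq L_1 z + B_k$ with $z(0)=0$; your $C_k = B_k/L_1$ is exactly the paper's constant. The observation that the vanishing initial condition for $k\geq 2$ is what yields the factor $\exp(L_1\stepsize)-1$ rather than $\exp(L_1\stepsize)$ is also the key point in the paper's argument.
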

\begin{proof}
    Define $C_1 := 1$, $\tilde C_1 := \exp(L_1 \stepsize_0)$,
    and, for $k\in\nset{2}{\beta}$ with an arbitrary $\mo v\in\mb D_d^k$,
    \begin{align*}
        C_k &:= \frac{1}{L_1}\sum_{(\mo v_1, \dots, \mo v_\ell) \in \mb A(\mo v)\setminus\{\mo v\}} L_\ell \prod_{j=1}^\ell \tilde C_{\# \mo v_j}\eqcm\\
        \tilde C_k &:= C_k \br{\exp(L_1 \stepsize_0) - 1}\eqfs
    \end{align*}

    Let $k\in\nnset \beta$. Let $\mo v\in\mb D_d^k$. Set $u_{\mo v}(x, t) = D_{x, \mo v} U(f,t,x)$.
    We will show, by induction over $k$, that
    \begin{equation}\label{eq:highDerivIncrem:induction}
        \sup_{x\in \R^d} \euclOf{u_{\mo v}(x, t)}
        \leq
        \begin{cases}
            \exp(L_1 t) & \text{ if } k=1\eqcm\\
            C_k\br{\exp(L_1 t) - 1}& \text{ otherwise}\eqfs
        \end{cases}
    \end{equation}
    \cref{lmm:derivIncrem} shows the case $k=1$.
    Now let $k\geq 2$ and assume \eqref{eq:highDerivIncrem:induction} for all $\mo v\in\mb D_d^{k-1}$. Let $\mo v\in\mb D_d^{k}$.
    By \cref{thm:deriv:ic},
    \begin{equation*}
        \dot u_{\mo v}(x, t)
        =
        \sum_{(\mo v_1, \dots, \mo v_\ell) \in \mb A(\mo v)} (D^\ell f)(U(f,x, t))[u_{\mo v_1}(x, t), \dots, u_{\mo v_\ell}(x, t)]
        \eqfs
    \end{equation*}
    Thus,
    \begin{align*}
        \euclOf{\dot u_{\mo v}(x, t)}
        &\leq
        \sum_{(\mo v_1, \dots, \mo v_\ell) \in \mb A(\mo v)} \supNormOf{D^\ell f} \prod_{j=1}^\ell \euclOf{u_{\mo v_j}(x, t)}
        \\&\leq
        L_1  \euclOf{u_{\mo v}(x, t)} + \sum_{(\mo v_1, \dots, \mo v_\ell) \in \mb A(\mo v)\setminus\{\mo v\}} L_\ell \prod_{j=1}^\ell \euclOf{u_{\mo v_j}(x, t)}
        \eqfs
    \end{align*}
     Set $z(t) := \euclOf{u_{\mo v}(x, t)}$. Then
    \begin{equation*}
        \dot z(t)
        =
        \frac{\dot u_{\mo v}(x, t)\tr u_{\mo v}(x, t)}{\euclOf{u_{\mo v}(x, t)}}
        \leq
        \euclOf{\dot u_{\mo v}(x, t)}
        \leq
        L_1 \br{z(t) + C_k}
        \eqfs
    \end{equation*}
    Consider the ODE
    \begin{equation*}
        \dot v(t) = L_1  \br{v(t) + C_k}
    \end{equation*}
    with $v(0) = \euclOf{u_{\mo v}(x, 0)} = 0$ for $k\geq 2$.
    Then
    \begin{equation*}
        v(t) = C_k \br{\exp(L_1 t) - 1}\eqfs
    \end{equation*}
    By the \cref{lmm:diffinequcomp},
    \begin{equation*}
        z(t) \leq v(t)
        \eqfs
    \end{equation*}
    Hence
    \begin{equation*}
        \euclOf{u_{\mo v}(x, t)} \leq C_k \br{\exp(L_1 t) - 1}
        \eqfs
    \end{equation*}
    For $k\geq2$, $D^k \Incr(f, t, x) = D^k U(f, x, t)$. Thus,
    \begin{equation*}
        \sup_{x\in \R^d} \normop{D^k \Incr(f, t, x)} \asymleq \sup_{x\in \R^d}  \sup_{\mo v\in\mb D_d^k}\euclof{u_{\mo v}(x, t)} \leq C_k \br{\exp(L_1 t) - 1}
        \eqfs
    \end{equation*}
    For $k=1$, the result follows from \cref{lmm:derivIncrem} with $C_1 = 1$.
\end{proof}
\end{appendix}
\printbibliography
\end{document}